\documentclass[11pt,onesided]{amsart}
\usepackage[utf8]{inputenc}
\usepackage[a4paper]{geometry}
\usepackage{amstext}
\usepackage{amsthm}
\usepackage{amssymb}
\usepackage[unicode=true,
 bookmarks=false,
 breaklinks=false,pdfborder={0 0 1},backref=section,colorlinks=false]
 {hyperref}

\makeatletter
\numberwithin{equation}{section}
\numberwithin{figure}{section}
\theoremstyle{plain}
\newtheorem{thm}{\protect\theoremname}
\theoremstyle{plain}
\newtheorem{prop}[thm]{\protect\propositionname}
\theoremstyle{remark}
\newtheorem{rem}[thm]{\protect\remarkname}
\theoremstyle{definition}
\newtheorem{defn}[thm]{\protect\definitionname}
\theoremstyle{plain}
\newtheorem{fact}[thm]{\protect\factname}
\theoremstyle{definition}
\newtheorem{example}[thm]{\protect\examplename}
\theoremstyle{plain}
\newtheorem{lem}[thm]{\protect\lemmaname}
\theoremstyle{remark}
\newtheorem{claim}[thm]{\protect\claimname}

\usepackage{latexsym}
\usepackage{amsfonts}\usepackage{enumerate}
\usepackage{bbm}
\usepackage{subfigure}
\usepackage[toc,page]{appendix}
\usepackage{multirow}
\usepackage{tikz}
\usepackage{tikz}
\usepackage{tikz-3dplot}

\usepackage{comment}

\usepackage{amsfonts}
\usepackage[all]{xy}
\usepackage{caption}
\usepackage{epstopdf}
\usepackage{color}
\usepackage{bbm}
\usepackage{dsfont}
\usepackage{wrapfig}\usepackage{dsfont}

\usepackage{tikz}
\usetikzlibrary{decorations.pathmorphing}
\tikzset{snake it/.style={decorate, decoration=snake}}
\usetikzlibrary{shapes.geometric,positioning,decorations.pathreplacing} 
\usepackage{pgfplots}

\newtheorem{theorem}{Theorem}

\def\beq{ \begin{equation} }
\def\eeq{ \end{equation} }

\def\square{\vcenter{\vbox{\hrule height .4pt
  \hbox{\vrule width .4pt height 5pt \kern 5pt
        \vrule width .4pt} \hrule height .4pt}}}

\newcommand{\bae}{\begin{equation}\begin{aligned}}
\newcommand{\eae}{\end{aligned}\end{equation}}

\DeclareFontFamily{OML}{rsfs}{\skewchar\font'177}
\DeclareFontShape{OML}{rsfs}{m}{n}{ <5> <6> rsfs5 <7> <8> <9>
rsfs7 <10> <10.95> <12> <14.4> <17.28> <20.74> <24.88> rsfs10 }{}
\DeclareMathAlphabet{\mathfs}{OML}{rsfs}{m}{n}

\usepackage[nomessages]{fp}

\providecommand{\claimname}{Claim}
\providecommand{\definitionname}{Definition}
\providecommand{\examplename}{Example}
\providecommand{\lemmaname}{Lemma}
\providecommand{\propositionname}{Proposition}
\providecommand{\remarkname}{Remark}
\providecommand{\theoremname}{Theorem}

\providecommand{\claimname}{Claim}
\providecommand{\definitionname}{Definition}
\providecommand{\examplename}{Example}
\providecommand{\lemmaname}{Lemma}

\providecommand{\propositionname}{Proposition}
\providecommand{\remarkname}{Remark}
\providecommand{\theoremname}{Theorem}

\makeatother

\providecommand{\claimname}{Claim}
\providecommand{\definitionname}{Definition}
\providecommand{\examplename}{Example}
\providecommand{\factname}{Fact}
\providecommand{\lemmaname}{Lemma}
\providecommand{\propositionname}{Proposition}
\providecommand{\remarkname}{Remark}
\providecommand{\theoremname}{Theorem}
\newtheorem{assumption}[theorem]{Assumption}
\newtheorem{notation}[theorem]{Notation}
\begin{document}
\title{Almost-Boundedness of Collatz-Type Maps on Number Rings via Archimedean
Methods}
\author{Maxwell Siegel}
\address{1626 Thayer Ave., Los Angeles, CA 90024}
\email{siegelmaxellc@ucla.edu}
\author{Rory O'Dwyer }
\address{382 Via Pueblo Mall, Stanford, CA 94305-4013}
\email{rodwyer@stanford.edu}
\begin{abstract}
Let $p$ be an integer $\geq2$, not necessarily prime, and let $K$
be a number field. In 2019, Tao showed that almost every positive
integer (in the sense of logarithmic density) had an ``almost bounded''
trajectory under the Collatz map. In this paper, we generalize this
result to a large class of Collatz-type maps (``$p$-Hydras'') acting
on the ring of integers $\mathcal{O}_{K}$ of a given number field
$K$ of dimension $d$ over $\mathbb{Q}$. Given an ideal $\Lambda\subseteq\mathcal{O}_{K}$
of index $p$, a $p$-Hydra map $H:\mathcal{O}_{K}\rightarrow\mathcal{O}_{K}$
is a collection of affine-linear maps $\left\{ H_{j}:K\rightarrow K\right\} _{j\in\mathcal{O}_{K}/\Lambda}$
of the form $z\mapsto r_{j}z+c_{j}$ for constants $c_{j}\in K$ and
invertible $\mathbb{Q}$-linear maps $r_{j}:K\rightarrow K$ so that
the map $1-r_{0}$ is invertible and so that $H\left(z\right)$ is
defined to be $H_{\left[z\right]_{\Lambda}}\left(z\right)$, where
$\left[z\right]_{\Lambda}$ is the projection of $z$ mod $\Lambda$.
We show that the direct analogue of Tao's almost-boundedness result
holds provided, in addition to several minor conditions, that the
``dimension constraint'' $\left(p-1\right)^{2}\rho_{H}^{d}<1$ is
satisfied, where $\rho_{H}=\prod_{j=0}^{p-1}\left\Vert r_{j}\right\Vert _{\textrm{Mink}}$
is the product of the operator norms of the $r_{j}$s with respect
to the Minkowski embedding of $K$. Unlike Tao's original argument,
which uses a $3$-adic Fourier decay result for the Syracuse Random
Variables, our approach employs analysis of an archimedean flavor,
and does not require any Fourier decay estimates for our generalized
Syracuse Random Variables. Our result includes Tao's original result
as a special case. 
\end{abstract}

\keywords{Collatz Conjecture; Hydra maps; Numen formalism; random dynamical
systems; probabilistic number theory}

\maketitle
\subjclass[2020]{Primary: 11B37; Secondary: 37A45}

\tableofcontents{}

\section{\label{section:intro}Introduction}

In 2019, Terence Tao published on \href{https://arxiv.org/abs/1909.03562}{arXiv}
a major result on the Collatz Conjecture, in which he established
that the trajectory of almost every positive integer under the \textbf{Collatz
Map}, $C:\mathbb{Z}\rightarrow\mathbb{Z}$, defined by: 
\begin{equation}
C\left(n\right)\overset{\textrm{def}}{=}\begin{cases}
\frac{n}{2} & \textrm{if }n\textrm{ is even}\\
3n+1 & \textrm{if }n\textrm{ is odd}
\end{cases}
\end{equation}
is \emph{almost} bounded. Specifically:

\vphantom{} 
\begin{thm}[Tao \cite{Tao Collatz}]
\label{thm:(Tao-2019)}Let $f:\mathbb{N}_{1}\rightarrow\mathbb{R}$
be any function with $f\left(n\right)\rightarrow+\infty$ as $n\rightarrow+\infty$.
Then, there is a set $S\subseteq\mathbb{N}_{1}$ of logarithmic density
$1$ so that, for any $n\in S$: 
\begin{equation}
f\left(n\right)>\min\left\{ n,C\left(n\right),C\left(C\left(n\right)\right),C\left(C\left(C\left(n\right)\right)\right),\ldots\right\} 
\end{equation}
\end{thm}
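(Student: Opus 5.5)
We prove the statement by following Tao's reduction to the accelerated Syracuse map, but we replace his $3$-adic Fourier-decay estimate with an archimedean argument of the kind the paper develops for general Hydra maps. Write $\mathrm{Syr}(N)=(3N+1)/2^{\nu_2(3N+1)}$ on odd $N$; it suffices to prove the claim for $\mathrm{Col}_{\min}$ replaced by $\mathrm{Syr}_{\min}$. Because $f$ may grow arbitrarily slowly, a standard diagonal argument reduces the statement to the following: for each fixed threshold $x$ large, the logarithmic-density proportion of $N\in[y,y^{\alpha}]$ with $\mathrm{Syr}_{\min}(N)>x$ tends to $0$ as $x\to\infty$. Iterating dyadically, via a chain $x=y_0<y_1<\dots$ with $y_{k+1}=y_k^{\alpha}$, the problem reduces to a one-step claim. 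For logarithmically distributed $N$ in $[y,y^{\alpha}]$, with probability $1-O(\log^{-c} y)$ the trajectory first enters $[y^{1/\alpha},y]$ at a point whose distribution is, up to small total-variation error, again approximately logarithmic on that interval. This is Tao's ``stabilisation'' proposition. The claimed density-one set is then obtained by a union bound over the chain, where $\sum_k \log^{-c} y_k<\infty$.

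For the one-step claim we use the explicit affine formula for iterates. If $N$ has $2$-adic valuation vector $\vec a=(a_1,\dots,a_n)$, then
\[
\mathrm{Syr}^n(N)=3^n2^{-|\vec a|}N+F_n(\vec a),\qquad F_n(\vec a)=\sum_{i=1}^{n}3^{n-i}2^{-(a_1+\dots+a_i)}.
\]
For logarithmically random $N$, the vector $\vec a$ is close in total variation to i.i.d.\ Geometric$(2)$ variables, provided $n\approx c\log N$ with $c$ small. A law of large numbers and large-deviation bound on $|\vec a|$ then shows that the first passage below $y$ happens at a time $n$ concentrated near $\log y^{\alpha-1}/\log(4/3)$, and with overwhelming probability it does not overshoot below $y^{1/\alpha}$. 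The hitting point is $3^n2^{-|\vec a|}N+F_n(\vec a)$. Since $N$ is roughly log-uniform on a long range and the multiplier $3^n2^{-|\vec a|}$ is independent of the fine structure of $N$ modulo $2^{|\vec a|}$, the first term is approximately log-uniform. What must be checked is that the additive shift $F_n(\vec a)$, together with the congruence constraint that $N$ lies in the fixed residue class mod $2^{|\vec a|}$ determined by $\vec a$, does not destroy this log-uniformity.

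This last check is the main obstacle. Tao handles it with $3$-adic equidistribution of $F_n(\vec a)\bmod 3^n$, which rests on Fourier decay. We instead exploit a contraction in the real metric. The shift satisfies $F_n(\vec a)\le \sum_i (3/4)^{i}\cdot 3^{n}2^{-|\vec a|}\cdot O(1)$, so typically it is of size $O(3^n2^{-|\vec a|})$, negligible relative to the main term $3^n2^{-|\vec a|}N$. Log-uniformity of the main term at scale $y$ is therefore perturbed only by a relative error of order $1/N$. The congruence class of $N$ mod $2^{|\vec a|}$ is handled by restricting to residue classes. A logarithmic density restricted to a single class mod $2^m$, pushed through the map $N\mapsto 3^n2^{-m}N$, is still log-uniform up to $O(2^m/y)$ error, which is small because $2^{|\vec a|}\le y^{O(c)}$. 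Summing over $\vec a$ with the geometric weights gives the total-variation estimate.

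In the general Hydra setting this is exactly where the quantity $\rho_H$ and the condition $(p-1)^2\rho_H^d<1$ should enter: they bound the size of the shift and the number of residue branches. For Collatz itself, $p=2$, $d=1$ and $\rho_H=3/4$, so the condition is satisfied.
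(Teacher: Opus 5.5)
Your reduction to $\mathrm{Syr}_{\min}$, the diagonal argument in $f$, and the chain over scales would all go through if the one-step claim were true. The gap is that claim, together with the argument you give for it.

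Fix a valuation vector $\vec a=(a_1,\dots,a_n)$. The odd $N$ realizing it form one residue class modulo $2^{|\vec a|+1}$. The map $N\mapsto 3^n2^{-|\vec a|}N+F_n(\vec a)$ sends that class bijectively onto the odd $M$ with $M\equiv F_n(\vec a)\pmod{3^n}$. So, conditionally on $\vec a$, the landing point is log-uniform only at the archimedean (coarse) scale: it is confined to a single residue class modulo $3^n$, and for typical $N$ the passage time makes $3^n$ a positive power of $y$. Your assertion that a log-density on a class mod $2^m$, pushed through $N\mapsto 3^n2^{-m}N$, is ``still log-uniform up to $O(2^m/y)$'' holds only in this coarse sense. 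In total variation on $\mathbb{Z}$ it fails, since the image lies in a translate of $3^n\mathbb{Z}$. Averaging over $\vec a$, the law of the passage location on the integers is a mixture of such classes. The weights are the law of $F_n(\vec a)\bmod 3^n$ under (approximately) geometric $\vec a$, i.e.\ Tao's $\mathbf{Syrac}(\mathbb{Z}/3^n\mathbb{Z})$. That law is far from uniform ($\mathrm{Syr}^n(N)$ is never divisible by $3$), so the one-step claim is false as stated. No bound on the real size of the shift can repair it, because such a bound says nothing about which class modulo $3^n$ is hit. Integer-level control is unavoidable: what you must carry along the chain is the set of integers whose orbit drops to $N_0$ or below, and nothing is known about its arithmetic structure. (Incidentally, $F_n(\vec a)=\sum_{m=1}^{n}3^{n-m}2^{-(a_m+\cdots+a_n)}$, which is typically $O(1)$. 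Your formula accumulates the valuations from the wrong end and would be of order $3^n$.)

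What is actually needed is Tao's stabilization statement. It compares $\mathrm{Pass}_x(\mathcal N_{x^\alpha})$ with $\mathrm{Pass}_x(\mathcal N_{x^{\alpha^2}})$, rather than either with a log-uniform law. It then feeds a telescoping over scales that always restarts from a fresh log-uniform variable; this is the structure of Lemma~\ref{lem:stabilization of first passage} and Theorem~\ref{thm:alt form of main theorem}. When $\mathbb{P}(\mathrm{Pass}_x(\mathcal N_y)=M)$ is written out, $y$ enters only through the window of $k$ over which the quantities $3^{k}\,\mathbb{P}(\mathbf{Syrac}(\mathbb{Z}/3^{k}\mathbb{Z})\equiv M\pmod{3^k})$ are summed. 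Showing these sums are insensitive to the window is Tao's fine-scale mixing, which he deduces from the Fourier decay quoted as Proposition~\ref{prop:decay of characteristic function}. Your proposal has no substitute for this.

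The paper's dimension constraint does not supply one either. The paper obtains the statement as the case $H=T_3$ ($p=2$, $d=1$, $\rho_H=3/4$) of Theorem~\ref{thm:almost boundedness}. But in Section~\ref{subsec:Proof-of-Passage} it makes the same archimedean substitution you do. In bounding $\mathbb{P}(H_{\vec{\mathbf V}}(\mathcal N_y)=z)$ it discards the integrality condition $M_H(\vec{\mathbf V})^{-1}(z-X_H(\vec{\mathbf V}))\in\mathcal O_K$. For $T_3$ that condition is precisely a congruence of $z$ modulo a power of $3$ of the form above. The paper keeps only $\|X_H(\vec{\mathbf V})-z\|_{\mathrm{Mink}}\le x^{\mathrm{c}_\alpha}$, which carries no residue-class information. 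Its concluding bound $\mathbb{P}(\mathrm{Pass}_x(\mathcal N_y)\in S)\ll x^{-(\alpha-1)c}$ for every $S\subseteq U^x$ already fails for $S=U^x$. For $H=T_3$ the passage location lies in $U^x$ with probability one: it is a nonzero integer of absolute value at most $x$, or $1$ if $T_x=\infty$.
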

\vphantom{}

Tao produced this remarkable result by an ingenious application of
probabilistic reasoning to a family of random variables he called
the \textbf{Syracuse Random Variables (SRVs)}, which he denoted $\left\{ \mathbf{Syrac}\left(\mathbb{Z}/3^{n}\mathbb{Z}\right)\right\} _{n\geq0}$.
Here, for each $n$, $\mathbf{Syrac}\left(\mathbb{Z}/3^{n}\mathbb{Z}\right)$
takes values in the set $\left\{ 0,\ldots,3^{n}-1\right\} $. In \textbf{Remark
1.13 }at the bottom of page 10 of his paper, Tao pointed out that
one could view the SRVs as the projections mod $3^{n}$ of a single,
\emph{$3$-adic valued }random variable, $\mathbf{Syrac}\left(\mathbb{Z}_{3}\right)$,
however, he chose not to use that formalism for his work. In Tao's
own words, the most challenging part of his paper was in establishing
a decay estimate for the characteristic function of the SRVs:

\vphantom{} 
\begin{prop}[\textbf{Proposition 1.1.7 (Decay of characteristic function)}]
\label{prop:decay of characteristic function}Let $n\geq1$ and let
$\xi\in\mathbb{Z}/3^{n}\mathbb{Z}$ be co-prime to $3$. Then: 
\begin{equation}
\mathbb{E}\left(e^{-2\pi i\xi\mathbf{Syrac}\left(\mathbb{Z}/3^{n}\mathbb{Z}\right)/3^{n}}\right)\ll_{A}n^{-A}\label{eq:Tao characteristic function bound}
\end{equation}
for any fixed $A>0$.

\vphantom{} 
\end{prop}
\begin{rem}
Tao gives a heuristic argument in favor of the possibility that (\ref{eq:Tao characteristic function bound})
could be improved to $O\left(\exp\left(-cn\right)\right)$ for some
$c>0$. 
\end{rem}
In this paper, we show that the difficult Fourier decay estimate (\ref{eq:Tao characteristic function bound})
is not needed to establish the almost-boundedness result of \textbf{Theorem
\ref{thm:(Tao-2019)}}. This is done by transferring Tao's original
argument to the more general setting of Collatz-type maps defined
on the ring of integers of an algebraic number field. As per the work
in the first author's doctoral dissertation (\cite{my_disseration}),
we shall call these maps \textbf{Hydra maps}; these are defined in
detail in Section \ref{subsec:Hydra-Maps}, though the essence of
the idea is simple enough. First, however, let us recall the notion
of the \textbf{Minkowski embedding} of a number field, and the associated
\textbf{Minkowski norm}.
\begin{defn}
Given a number field $K$, recall that $K$ has a certain number $r_{K,\mathbb{R}}$
of embeddings into $\mathbb{R}$ and has a certain number of embeddings
into $\mathbb{C}$, and that one writes $r_{K,\mathbb{C}}$ to denote
the number of pairs of embedding which are equivalent up to an application
of complex conjugation. The degree $d$ of $K$ over $\mathbb{Q}$
is then equal to $r_{K,\mathbb{R}}+2r_{K,\mathbb{C}}$. Using all
these embeddings simultaneously defines the \textbf{Minkowski embedding}
$M:K\hookrightarrow\mathbb{R}^{r_{K,\mathbb{R}}}\oplus\mathbb{R}^{2r_{K,\mathbb{C}}}$.
The space $\mathbb{R}^{r_{K,\mathbb{R}}}\oplus\mathbb{R}^{2r_{K,\mathbb{C}}}$
is topologized by the $\ell^{\infty}$ norm, which induces the norm
$\left\Vert \cdot\right\Vert _{\textrm{Mink}}$ on $K$ defined by:
\begin{equation}
\left\Vert z\right\Vert _{\textrm{Mink}}\overset{\textrm{def}}{=}\max_{\iota:K\hookrightarrow\mathbb{C}}\left|\iota\left(z\right)\right|_{\infty},\textrm{ }\forall z\in K\label{eq:minkowski norm}
\end{equation}
where the maximum is taken over all field embeddings of $K$ into
$\mathbb{C}$, and where $\left|\cdot\right|_{\infty}$ is the absolute
value on $\mathbb{C}$. $\left\Vert \cdot\right\Vert _{\textrm{Mink}}$
is called the \textbf{Minkowski norm}. Note that if $K$ is a Galois
extension of $\mathbb{Q}$, we can write:
\begin{equation}
\left\Vert z\right\Vert _{\textrm{Mink}}=\max_{\sigma\in\textrm{Gal}\left(K/\mathbb{Q}\right)}\left|\iota\left(\sigma\left(z\right)\right)\right|_{\infty}
\end{equation}
for any embedding $\iota:K\hookrightarrow\mathbb{C}$.
\end{defn}
With this in mind, just as the Collatz map $C$ is constructed from
the two affine linear maps $n\mapsto n/2$ and $n\mapsto3n+1$, a
Hydra map $H:\mathcal{O}_{K}\rightarrow\mathcal{O}_{K}$ on the ring
of integers $\mathcal{O}_{K}$ of a number field $K$ consists of
$p\geq2$ (where $p$ is a positive integer, not necessarily prime)
affine linear maps $z\mapsto r_{j}z+c_{j}$ for $j\in\left\{ 0,\ldots,p-1\right\} $,
where the $r_{j}$s are invertible $\mathbb{Q}$-linear maps $K\rightarrow K$
and the $c_{j}$s are constants in $K$. 

In this set-up, the $r_{j}$s induce $\mathbb{Q}$-linear maps on
the image of $K$ under the Minkowski embedding. If we let $\rho_{H}$
be the product of the Minkowski norms of the $r_{j}$s under this
embedding, our argument shows that the condition:

\begin{equation}
\left(p-1\right)^{2}\rho_{H}^{d}<1\label{eq:dimension constraint first occurrence}
\end{equation}
is sufficient to establish an almost-boundedness result for $H$ in
the manner of \textbf{Theorem \ref{thm:(Tao-2019)}}, without the
need for difficult Fourier estimates. Here, $d$ is the degree of
the $K$ over $\mathbb{Q}$.

Our main result is the following. Note that the term ``proper, integral
$\Lambda$-Hydra map'' is defined in \textbf{Section \ref{subsec:Hydra-Maps}},
in \textbf{Definition \ref{def:definitions for Lambda hydras}} on
page \pageref{def:definitions for Lambda hydras}.
\begin{thm}[Almost-Boundedness]
\label{thm:almost boundedness}Let $K$ be a number field of degree
$d\geq1$ over $\mathbb{Q}$, let $\Lambda\subset\mathcal{O}_{K}$
be a non-trivial ideal of $\mathcal{O}_{K}$ with $\left[\mathcal{O}_{K}:\Lambda\right]=p$,
for some integer $p\geq2$, and let $H:\mathcal{O}_{K}\rightarrow\mathcal{O}_{K}$
be a proper, integral $\Lambda$-Hydra map.

Consider the hypotheses:

\vphantom{}\label{hyp:MH1}MH1: 
\begin{equation}
\rho_{H}<1\label{eq:geometric mean hypothesis}
\end{equation}
where: 
\begin{equation}
\rho_{H}\overset{\textrm{def}}{=}\prod_{j=0}^{p-1}\left\Vert r_{j}\right\Vert _{\textrm{Mink}}
\end{equation}
where: 
\begin{equation}
\left\Vert r_{j}\right\Vert _{\textrm{Mink}}\overset{\textrm{def}}{=}\sup_{z\in K}\frac{\left\Vert r_{j}z\right\Vert _{\textrm{Mink}}}{\left\Vert z\right\Vert _{\textrm{Mink}}}
\end{equation}
is the operator norm induced by the Minkowski norm.

\vphantom{}\label{hyp:MH2}MH2: There is a prime ideal $\mathfrak{P}$
in the factorization of $\Lambda$, with associated valuation $v_{\mathfrak{P}}$,
so that $\left\Vert r_{j}\right\Vert _{\mathfrak{P}}>1$ for all $j\in\left\{ 0,\ldots,p-1\right\} $,
where: 
\begin{equation}
\left\Vert r_{j}\right\Vert _{\mathfrak{P}}\overset{\textrm{def}}{=}\sup_{z\in K}\frac{\left|r_{j}z\right|_{\mathfrak{P}}}{\left|z\right|_{\mathfrak{P}}}
\end{equation}
where $\left|\cdot\right|_{\mathfrak{P}}$ is any non-archimedean
absolute value induced by the valuation $v_{\mathfrak{P}}$.

\vphantom{}\label{hyp:MH3}MH3: 
\begin{equation}
\left(p-1\right)^{2}\rho_{H}^{d}<1\label{eq:MH3}
\end{equation}

\vphantom{}\label{hyp:MH4}MH4: For each $j$, there exist scalars
$\mu_{j}\in K^{\times}$ with $v_{\Lambda}\left(\mu_{j}\right)\leq-1$
and an $\Lambda$-adic isometry $u_{j}:K\rightarrow K$ (i.e., $\left|u_{j}\left(z\right)\right|_{\Lambda}=\left|z\right|_{\Lambda}$,
for all $z\in K$) so that $r_{j}$ factors as $\mu_{j}u_{j}$ in
$\textrm{End}_{\mathbb{Q}}^{\times}\left(K\right)$, where $\left|\cdot\right|_{\Lambda}$
is a non-archimedean absolute value on $K$ induced by $\Lambda$.

If these four hypotheses are satisfied, then, for any function $f:\mathcal{O}_{K}\rightarrow\mathbb{R}$
with $f\left(z\right)\rightarrow+\infty$ as $\left|z\right|_{\infty}\rightarrow+\infty$,
there is a set $S\subseteq\mathcal{O}_{K}$ of logarithmic density
$1$ so that, for any $z\in S$: 
\begin{equation}
f\left(z\right)>\min\left\{ \left|z\right|_{\infty},\left|H\left(z\right)\right|_{\infty},\left|H\left(H\left(z\right)\right)\right|_{\infty},\left|H\left(H\left(H\left(z\right)\right)\right)\right|_{\infty},\ldots\right\} \label{eq:almost-boundedness result}
\end{equation}
where $\left|\cdot\right|_{\infty}$ is any archimedean absolute value
on $K$.

\vphantom{} 
\end{thm}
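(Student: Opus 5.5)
We follow the architecture of Tao's argument: reduce almost-boundedness to a statement that for most starting points $z$ the trajectory enters a ball of smaller radius, then iterate. Write the $n$-fold composition along a branch $\mathbf{j}=(j_{1},\ldots,j_{n})\in(\mathcal{O}_{K}/\Lambda)^{n}$ as $H_{\mathbf{j}}(z)=r_{\mathbf{j}}z+c_{\mathbf{j}}$. Here $r_{\mathbf{j}}=r_{j_{n}}\cdots r_{j_{1}}$, and $c_{\mathbf{j}}=\sum_{k}r_{j_{n}}\cdots r_{j_{k+1}}c_{j_{k}}$ plays the role of Tao's Syracuse value. Hypothesis MH4 gives $r_{\mathbf{j}}=\mu_{\mathbf{j}}u_{\mathbf{j}}$ with $v_{\Lambda}(\mu_{\mathbf{j}})\leq-n$. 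This shows that the set of $z\in\mathcal{O}_{K}$ following the branch $\mathbf{j}$ for $n$ steps is a single coset of $\Lambda^{n}$ (index $p^{n}$). The map $\mathbf{j}\mapsto$ (that coset) is a bijection, and properness/integrality ensures the image lies in $\mathcal{O}_{K}$. Thus a uniformly random residue class mod $\Lambda^{n}$ corresponds to a uniformly random branch $\mathbf{j}$. This is our generalized Syracuse random variable, and it is what replaces Tao's passage through $\mathbf{Syrac}(\mathbb{Z}/3^{n}\mathbb{Z})$.

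\emph{Step 1 (archimedean contraction on typical branches).} For a uniformly random $\mathbf{j}$ we have $\Vert r_{\mathbf{j}}\Vert_{\textrm{Mink}}\leq\prod_{k}\Vert r_{j_{k}}\Vert_{\textrm{Mink}}$. The logarithm of the right-hand side is a sum of i.i.d.\ variables with mean $\frac{1}{p}\log\rho_{H}<0$ by MH1. A Chernoff bound then gives $\Vert r_{\mathbf{j}}\Vert\leq e^{-cn}$ outside an exponentially small set of branches. Likewise $\Vert c_{\mathbf{j}}\Vert_{\textrm{Mink}}$ is bounded by a geometric sum, so it is $O(1)$ with high probability; invertibility of $1-r_{0}$ is used to control the long runs of the index $0$, where the constants can accumulate.

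\emph{Step 2 (counting instead of Fourier decay).} We need the passage from "random branch" to "random $z$ in a large archimedean ball, weighted logarithmically" to be nearly uniform. Tao's route compares the distribution of $c_{\mathbf{j}}$ mod $3^{n}$ with the uniform distribution. In its place we use a lattice-point count. The points $z\in\mathcal{O}_{K}$ with $|z|\leq N$ in a fixed coset of $\Lambda^{n}$ number $\asymp N^{d}/p^{n}$, uniformly, as long as $p^{n}\ll N^{d}$. We then compute the logarithmic-density mass of the "bad" set, namely the union over atypical branches. We choose $n\approx\alpha\log N$ with $\alpha$ tuned to the two competing quantities. One is the branch-count loss, at most $(p-1)^{n}$ per step from non-zero branches, squared because of a second-moment argument. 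The other is the gain $\rho_{H}^{dn}$ from the volume shrinkage of the image ball under $r_{\mathbf{j}}$. MH3, $(p-1)^{2}\rho_{H}^{d}<1$, is exactly the condition that this second-moment sum converges, so the exceptional set has logarithmic density tending to $0$. MH2 ensures that the $\mathfrak{P}$-adic size of $r_{\mathbf{j}}$ grows. This rules out trajectories staying in a bounded non-archimedean region, and hence rules out the degenerate cycles that would break injectivity of the branch-to-coset correspondence.

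\emph{Step 3 (iteration).} From Steps 1–2 we get: for $z$ in a set of logarithmic density $1-o(1)$ at scale $N$, some iterate has $|H^{m}(z)|_{\infty}\leq N^{1-\delta}$. Since all archimedean absolute values are comparable to $\Vert\cdot\rVert_{\textrm{Mink}}$ up to a power, it suffices to work with the Minkowski norm. We iterate this descent along scales $N\to N^{1-\delta}\to\cdots$ down to the threshold $f(z)$, and take a union bound over scales. This works exactly as in Tao's deduction of Theorem \ref{thm:(Tao-2019)} from his one-step result, using that $f\to\infty$. The main obstacle is Step 2. The difficulty is making the second-moment/lattice-count comparison rigorous uniformly over cosets when $n$ is a constant multiple of $\log N$, and verifying that the exponent bookkeeping really reduces to MH3 rather than to something stronger. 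I would attempt it first by bounding the logarithmic measure of $\bigcup_{\mathbf{j}\text{ bad}}\{z:\,z\equiv a_{\mathbf{j}}\bmod\Lambda^{n},\ |H_{\mathbf{j}}(z)|>N^{1-\delta}\}$ directly via volumes of the preimage ellipsoids $r_{\mathbf{j}}^{-1}(B)$.
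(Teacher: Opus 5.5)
\textbf{There is a genuine gap in Step 3.}

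Steps 1--2, even if completed, amount to the paper's passage-time estimate, part (I) of Lemma \ref{lem:stabilization of first passage}. They give a one-scale descent: for log-almost all $z$ at scale $N$, some iterate has $\Vert H^{\circ m}(z)\Vert_{\textrm{Mink}}\le N^{1-\delta}$. This does not iterate by a union bound over scales.

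Your lattice count makes $z$ equidistributed modulo $\Lambda^{n}$ only when $p^{n}\ll N^{d}$. So it controls the branch string for about $\frac{d}{\ln p}\ln N$ steps, whereas descending to bounded size takes about $\frac{p}{\ln\rho_{H}^{-1}}\ln N$ steps. For $T_{3}$ ($p=2$, $d=1$, $\rho_{H}=3/4$) the first number is much smaller, which is exactly why arguments of this kind stop at Korec's exponent $\log3/\log4$.

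After the first descent, the later branches depend on $z$ modulo powers of $\Lambda$ about which the logarithmic law at scale $N$ says nothing. The landing point $H^{\circ m}(z)$ follows a push-forward law that need not resemble $\textrm{Log}$ at scale $N^{1-\delta}$. The exceptional set at the next scale, a union of cosets of a power of $\Lambda$, may therefore carry most of that mass.

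Tao's deduction does not iterate a descent lemma. It iterates a \emph{stabilization} statement: the law of the first passage location below $x$ is nearly the same whether one starts at scale $x^{\alpha}$ or $x^{\alpha^{2}}$. In the paper this is Lemma \ref{lem:stabilization of first passage}(II), i.e.\ (\ref{eq:passage location comparison}), which Theorem \ref{thm:alt form of main theorem} telescopes along $x,x^{\alpha},x^{\alpha^{2}},\ldots$. Nothing in your proposal plays this role.

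\textbf{MH3 does no real work in your scheme.} Step 2 only needs equidistribution of the starting point modulo $\Lambda^{n}$ plus a Chernoff bound over branch strings. These are the analogues of Proposition \ref{prop:equidistribution proposition} and Lemma \ref{lem:Geometric comparison}, and they use only $p^{n}\ll N^{d}$. The ``second-moment sum'' governed by $(p-1)^{2}\rho_{H}^{d}$ is not attached to any estimate you state.

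In the paper, MH3 enters only in the passage-location step, through the weight $\omega_{x}(L)\propto((p-1)^{2}\rho_{H}^{d})^{L-m_{x}}$ in Proposition \ref{prop:boundedness of the omega_x sum}. This is precisely where Tao needs the decay of $\mathbf{Syrac}(\mathbb{Z}/3^{n}\mathbb{Z})$. A landing point $w$ requires $w-c_{\mathbf{j}}\in r_{\mathbf{j}}(\mathcal{O}_{K})$, so the landing law is controlled only if the $c_{\mathbf{j}}$ are equidistributed modulo such lattices.

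Be aware that the paper's own argument at that step concludes $\mathbb{P}(\textrm{Pass}_{x}(\mathcal{N}_{y})\in S)\ll x^{-(\alpha-1)c}$ for \emph{every} $S\subseteq U^{x}$. This cannot hold for $S=U^{x}$, where part (I) makes the probability close to $1$. So it gives you no usable template, and what you are missing is a genuine equidistribution result for the landing point.

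Two minor points. Injectivity of strings into cosets comes from MH4, not MH2. A single archimedean absolute value is not comparable to $\Vert\cdot\Vert_{\textrm{Mink}}$ up to a power, because of units; you only have $|\iota(z)|\le\Vert z\Vert_{\textrm{Mink}}$, which happens to be the direction you need.
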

\begin{rem}
When $\Lambda$ is a prime ideal, we can and must take $\mathfrak{P}=\Lambda$,
and thus write $\left\Vert \cdot\right\Vert _{\Lambda}$ and $v_{\Lambda}$
to denote the absolute value and valuation associated to $\Lambda$,
respectively. In order to keep things simple, in an abuse of notation,
we shall write $\left\Vert \cdot\right\Vert _{\mathfrak{P}}$ and
$v_{\mathfrak{P}}$ as $\left\Vert \cdot\right\Vert _{\Lambda}$ and
$v_{\Lambda}$ even when $\Lambda$ is not prime. 
\end{rem}
\vphantom{} 
\begin{rem}
We call (\ref{eq:MH3}) the \textbf{Contraction/Dimension Constraint}
and thus characterize \textbf{Theorem \ref{thm:almost boundedness}}
as \textbf{contraction/dimension-driven almost boundedness}.

\vphantom{}
\end{rem}
Finally, a word on the structure of this paper. \textbf{Section \ref{sec:Background}
}gives an introduction to Hydra maps and the numen formalism. Though
not strictly necessary for the proof of \textbf{Theorem \ref{thm:almost boundedness}},
we include the material because of its major import to future work;
all probabilistic constructions in this paper can be reduced to finite-depth
symbolic truncations; no subtle issues regarding measurability, ergodicity,
or the like arise. An in-depth presentation of the Hydra and numen
material can be found in the reference document \cite{hydra_numen_formalism}
the first author has \href{https://arxiv.org/pdf/2601.17030}{posted to arXiv},
based on earlier work in \cite{my_disseration,my_first_blog_paper}.
\textbf{Section \ref{sec:Outline-of-the} }gives an outline of the
argument used to prove \textbf{Theorem \ref{thm:almost boundedness}};
the proof is given in \textbf{Sections \ref{sec:technical_machinery}}
through\textbf{ \ref{sec:Proof-of-Lemma-1}}.

\section{\label{sec:Background}Background}

\subsection{Background Material From Algebraic Number Theory and Lattice Theory}

The material in this section will be used primarily in \textbf{Section
\ref{sec:technical_machinery}} to establish asymptotics for certain
weighted lattice sums. Readers familiar with the subject matter can
skip this section.
\begin{defn}
Fix an integer $d\geq1$, let $K$ be a field, and let $V$ be a $d$-dimensional
vector space over $K$. Let $R$ be a ring contained within $K$,
and let $B=\left\{ \mathbf{v}_{1},\ldots,\mathbf{v}_{d}\right\} $
be a $K$-basis for $V$. Then, the \textbf{$R$-lattice in $V$ generated
by} $B$\textbf{ }is: 
\begin{equation}
\textrm{span}_{R}\left(B\right)=\left\{ \sum_{j=1}^{d}a_{j}\mathbf{v}_{j}:a_{1},\ldots,a_{d}\in R\right\} 
\end{equation}
More generally, a \textbf{lattice in $V$ }is an abelian group of
the form $\textrm{span}_{R}\left(B\right)$ for some $R$ and $B$.

Given $B$, let $\mathbf{B}$ denote the $d\times d$ matrix whose
colums are $\mathbf{v}_{1},\ldots,\mathbf{v}_{d}$. Then, the \textbf{Gram
matrix of $B$}, denoted $\textrm{Gram}\left(B\right)$ is defined
as the matrix product $\mathbf{B}^{T}\mathbf{B}$. Given a lattice
$\Lambda=\textrm{span}_{R}\left(B\right)$, if $V$ is a real vector
space, the quantity $\left|\det\left(\textrm{Gram}\left(B\right)\right)\right|$
is denoted $\textrm{vol}\left(\Lambda\right)$, and is called the
\textbf{volume }of $\Lambda$. By the standard geometric properties
of the determinant, this is the volume of the $d$-dimensional parallelepiped
generated by the elements of $B$.

If $V$ is an inner product space with inner product $\left\langle \cdot,\cdot\right\rangle $
the \textbf{dual lattice $\Lambda^{*}$ }of the lattice $\Lambda=\textrm{span}_{R}\left(B\right)$
is defined by:
\begin{equation}
\Lambda^{*}=\left\{ \mathbf{v}\in V:\left\langle \mathbf{x},\mathbf{v}\right\rangle =0,\textrm{ }\forall\mathbf{x}\in\Lambda\right\} 
\end{equation}
\end{defn}
\vphantom{}

Next, we recall some facts about the Minkowski norm.
\begin{fact}
The Minkowski norm is submultiplicative:
\begin{equation}
\left\Vert zw\right\Vert _{\textrm{Mink}}\leq\left\Vert z\right\Vert _{\textrm{Mink}}\left\Vert w\right\Vert _{\textrm{Mink}},\textrm{ }\forall z,w\in K
\end{equation}
\end{fact}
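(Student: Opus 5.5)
The plan is to reduce the claim to the multiplicativity of each individual field embedding together with an elementary inequality about maxima of products of nonnegative reals. Fix $z,w\in K$. By the definition (\ref{eq:minkowski norm}), $\left\Vert zw\right\Vert _{\textrm{Mink}}=\max_{\iota}\left|\iota\left(zw\right)\right|_{\infty}$, where $\iota$ ranges over the finitely many field embeddings $K\hookrightarrow\mathbb{C}$. This maximum is attained because there are only $d$ embeddings.

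The first step is to use that each $\iota$ is a ring homomorphism, so $\iota\left(zw\right)=\iota\left(z\right)\iota\left(w\right)$. Multiplicativity of the complex absolute value then gives
\begin{equation*}
\left|\iota\left(zw\right)\right|_{\infty}=\left|\iota\left(z\right)\right|_{\infty}\left|\iota\left(w\right)\right|_{\infty}.
\end{equation*}

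The second step bounds each factor by its maximum over all embeddings. For every $\iota$ we have $\left|\iota\left(z\right)\right|_{\infty}\leq\left\Vert z\right\Vert _{\textrm{Mink}}$ and $\left|\iota\left(w\right)\right|_{\infty}\leq\left\Vert w\right\Vert _{\textrm{Mink}}$. Since both factors are nonnegative, their product is at most $\left\Vert z\right\Vert _{\textrm{Mink}}\left\Vert w\right\Vert _{\textrm{Mink}}$. This bound holds uniformly in $\iota$, so it survives taking the maximum over $\iota$, which gives the claimed inequality.

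No step is a genuine obstacle. The one point to check is that the paper's definition of the Minkowski norm, as the $\ell^{\infty}$ norm on $\mathbb{R}^{r_{K,\mathbb{R}}}\oplus\mathbb{R}^{2r_{K,\mathbb{C}}}$, agrees with the maximum over complex embeddings in (\ref{eq:minkowski norm}). The $\ell^{\infty}$ norm on real coordinates would instead take the maximum of $\left|\textrm{Re}\right|$ and $\left|\textrm{Im}\right|$ of the complex embeddings, and that quantity need not be submultiplicative. I would therefore state explicitly that the proof uses the definition (\ref{eq:minkowski norm}), in which each complex embedding contributes its full modulus.
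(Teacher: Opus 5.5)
Your proof is correct and is the standard argument: use multiplicativity of each embedding and of the complex modulus, then bound each factor by its maximum over embeddings. The paper records this as a Fact with no proof, so there is nothing further to compare, and this is exactly the reasoning the definition (\ref{eq:minkowski norm}) supports.

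Your caveat is also correct and worth keeping. Under the real-coordinate $\ell^{\infty}$ reading of the Minkowski embedding, take $z=1+i\in\mathbb{Q}(i)$: then $z$ has norm $1$ while $z^{2}=2i$ has norm $2$. So the Fact holds only when the norm is read as the maximum of the full complex moduli $\left|\iota\left(z\right)\right|_{\infty}$.
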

Embedded in $\mathbb{R}^{d}$ in this manner, $K$ satisfies several
useful properties.
\begin{fact}
For the Minkowski embedding $M\left(K\right)$ of $K$:

\vphantom{}I. The standard dot product on $M\left(K\right)$ is equal
to the field trace $\textrm{Tr}_{K/\mathbb{Q}}$. That is:

\begin{equation}
M\left(z\right)\cdot M\left(w\right)=\textrm{Tr}_{K/\mathbb{Q}}\left(zw\right),\textrm{ }\forall z,w\in K
\end{equation}

\vphantom{}II. The image of $\mathcal{O}_{K}$ under $M$ is a $d$-dimensional
lattice. Moreover, letting $\mathbf{B}$ be a $d\times d$ matrix
whose columns form a basis for $M\left(\mathcal{O}_{K}\right)$, one
has:
\begin{equation}
\det\left(\mathbf{B}^{T}\mathbf{B}\right)=\Delta_{K}
\end{equation}
where $\Delta_{K}$ is the \textbf{discriminant of $K$}. Furthermore,
$M\left(\mathcal{O}_{K}\right)$'s volume is $\sqrt{\left|\Delta_{K}\right|}$.

\vphantom{}III. For any non-zero ideal $J\subseteq\mathcal{O}_{K}$,
$M\left(J\right)$ is a $d$-dimensional lattice. Moreover, letting
$\mathbf{B}$ be a $d\times d$ matrix whose columns form a basis
for $M\left(J\right)$, one has: $\textrm{vol}\left(M\left(J\right)\right)=\sqrt{\left|\Delta_{K}\right|}\textrm{N}\left(J\right)$,
where $\textrm{N}\left(J\right)=\left[\mathcal{O}_{K}:J\right]$ is
the \textbf{ideal norm }of $J$. Furthermore, letting $M\left(J\right)^{*}$
denote the dual lattice of $M\left(J\right)$, we have that: 
\begin{equation}
\textrm{vol}\left(M\left(J\right)^{*}\right)=\frac{1}{\textrm{vol}\left(M\left(J\right)\right)}
\end{equation}
\end{fact}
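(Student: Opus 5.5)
The statement to be proved is the last Fact: the facts I, II and III about the Minkowski embedding $M(K)$. I will take the standard coordinates. Order the embeddings as real ones $\sigma_1,\ldots,\sigma_{r_{K,\mathbb{R}}}$ and one representative $\tau_k$ from each complex-conjugate pair. Then $M(z)$ has entries $\sigma_i(z)$ together with the pairs $(\mathrm{Re}\,\tau_k(z),\mathrm{Im}\,\tau_k(z))$, possibly rescaled by $\sqrt{2}$; I will record where that normalization matters.

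\emph{Part I.} I expand the dot product coordinate by coordinate. A real embedding contributes $\sigma_i(z)\sigma_i(w)=\sigma_i(zw)$. A complex pair contributes
\[
2\left(\mathrm{Re}\,\tau\left(z\right)\mathrm{Re}\,\tau\left(w\right)+\mathrm{Im}\,\tau\left(z\right)\mathrm{Im}\,\tau\left(w\right)\right)=2\,\mathrm{Re}\left(\tau\left(z\right)\overline{\tau\left(w\right)}\right),
\]
with the factor $2$ coming from the $\sqrt{2}$ scaling. Summing gives $\sum_{\iota}\iota(z)\overline{\iota(w)}=\mathrm{Tr}_{K/\mathbb{Q}}(z\bar{w})$. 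This equals $\mathrm{Tr}_{K/\mathbb{Q}}(zw)$ exactly when the field is totally real, or after replacing $w$ by its image under the involution compatible with all embeddings. I will therefore state I as the identity $\langle M(z),M(w)\rangle=\sum_\iota\iota(z)\overline{\iota(w)}$, which reduces to the trace form $\mathrm{Tr}(zw)$ in the totally real case, where the paper applies it (for instance $K=\mathbb{Q}$). For determinant computations, the step that really matters is the following complex-linear one. Let $\Sigma=(\iota_j(\omega_i))$ be the $d\times d$ matrix of all $d$ complex embeddings applied to a basis $\{\omega_i\}$. Then $\Sigma^T\Sigma=(\mathrm{Tr}(\omega_i\omega_j))$ holds exactly.

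\emph{Part II.} Let $\omega_1,\ldots,\omega_d$ be an integral basis of $\mathcal{O}_K$. Then $M(\mathcal{O}_K)=\mathbb{Z}M(\omega_1)+\cdots+\mathbb{Z}M(\omega_d)$. These vectors are $\mathbb{R}$-independent because $\det\Sigma\neq0$, i.e.\ $\Delta_K=\det(\mathrm{Tr}(\omega_i\omega_j))=(\det\Sigma)^2\neq0$. So the image is a full lattice. The real matrix $\mathbf{B}$ is obtained from $\Sigma$ by the change of variables $(\tau,\bar\tau)\mapsto(\mathrm{Re}\,\tau,\mathrm{Im}\,\tau)$ on each complex pair, which has determinant of modulus $2^{-1}$ per pair, or $1$ after the $\sqrt{2}$ rescaling. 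Hence $|\det\mathbf{B}|=2^{-r_{K,\mathbb{C}}}|\det\Sigma|=2^{-r_{K,\mathbb{C}}}\sqrt{|\Delta_K|}$ in the unscaled normalization, and $|\det\mathbf{B}|=\sqrt{|\Delta_K|}$ in the scaled one. I adopt the scaled normalization so that $\mathrm{vol}=\sqrt{|\Delta_K|}$ and $\det(\mathbf{B}^T\mathbf{B})=|\Delta_K|$, with the sign $(-1)^{r_{K,\mathbb{C}}}$ of $\Delta_K$ absorbed. Note also that the volume is $|\det\mathbf{B}|=\sqrt{\det\mathrm{Gram}}$, not $\det\mathrm{Gram}$; the paper's definition needs this square root.

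\emph{Part III.} If $J\subseteq\mathcal{O}_K$ is a nonzero ideal, then $J$ is a subgroup of finite index $\mathrm{N}(J)$. Indeed, $J\ni a:=\mathrm{N}_{K/\mathbb{Q}}(\alpha)\neq0$ for any $0\neq\alpha\in J$, so $a\mathcal{O}_K\subseteq J$. Hence $M(J)$ is a full sublattice of $M(\mathcal{O}_K)$. The index of a sublattice equals the ratio of covolumes, because a basis change matrix with determinant $\pm[\mathcal{O}_K:J]$ exists by Smith normal form. This gives $\mathrm{vol}(M(J))=\sqrt{|\Delta_K|}\,\mathrm{N}(J)$. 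For the dual, the paper's displayed definition (orthogonal complement) is evidently meant to be the usual dual $\{v:\langle x,v\rangle\in\mathbb{Z}\ \forall x\}$, and I use that. If $\mathbf{B}$ is a basis matrix, then $(\mathbf{B}^T)^{-1}$ is a basis matrix of the dual, so $\mathrm{vol}(M(J)^*)=|\det\mathbf{B}|^{-1}$.

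The only real obstacle is bookkeeping: the factors of $2$ from complex places and the conjugation in I. Everything else is the standard integral-basis and Smith-normal-form argument.
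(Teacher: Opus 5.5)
The paper gives no proof of this Fact. It is recalled as standard background, and nothing downstream uses more of it than the full-rank property of $M(\mathcal{O}_K)$, which is invoked in the lattice-point count of Lemma \ref{lem:lattice sum asymptotic}; in particular Part I is never used. So there is no competing route to compare. Your argument is the standard integral-basis plus Smith-normal-form proof, and it is correct for the statement as you amend it.

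Your amendments are necessary, not cosmetic:
\begin{itemize}
\item Part I fails as printed whenever $r_{K,\mathbb{C}}>0$. For $K=\mathbb{Q}(i)$ and $z=w=i$, the dot product $M(i)\cdot M(i)$ is positive in any normalization, while $\textrm{Tr}_{K/\mathbb{Q}}(i\cdot i)=-2$.
\item In Part II, $\det(\mathbf{B}^{T}\mathbf{B})=(\det\mathbf{B})^{2}\geq0$ can never equal $\Delta_{\mathbb{Q}(i)}=-4$. With basis $\{1,i\}$ one gets $\det(\mathbf{B}^{T}\mathbf{B})=1$ unscaled and $4=|\Delta_K|$ after the $\sqrt{2}$ rescaling, exactly as your $2^{-r_{K,\mathbb{C}}}$ bookkeeping predicts.
\item The paper's definition $\textrm{vol}=|\det\textrm{Gram}|$ contradicts its own claim $\textrm{vol}=\sqrt{|\Delta_K|}$.
\item The displayed $\Lambda^{*}$ (an orthogonal complement) is $\{0\}$ for a full-rank lattice, so reading it as the usual $\mathbb{Z}$-dual is the right fix.
\end{itemize}

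Two small slips in your write-up. First, if $\Sigma_{ij}=\iota_j(\omega_i)$, then the trace matrix is $\Sigma\Sigma^{T}$ rather than $\Sigma^{T}\Sigma$; this is harmless for determinants. Second, $\textrm{Tr}_{K/\mathbb{Q}}(z\bar{w})$ only makes sense when $K$ carries an involution commuting with every embedding, i.e.\ $K$ is totally real or CM. In general you should stop at $\sum_{\iota}\iota(z)\overline{\iota(w)}$, which is what your final formulation of I does.

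In III you could also make the finite-index step self-contained. Either give the one-line reason that $\textrm{N}_{K/\mathbb{Q}}(\alpha)\in\alpha\mathcal{O}_K$ (the cofactor $\textrm{N}_{K/\mathbb{Q}}(\alpha)/\alpha$ is an algebraic integer lying in $K$), or note directly that $\alpha\mathcal{O}_K\subseteq J$ already has rank $d$.
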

\vphantom{}

Let us also recall:
\begin{defn}
The \textbf{different }of a number field $K$, denoted $\mathfrak{D}_{K}$,
is the ideal of $\mathcal{O}_{K}$ defined by:
\begin{equation}
\mathfrak{D}_{K}\overset{\textrm{def}}{=}\sqrt{\left|\Delta_{K}\right|}\mathcal{O}_{K}
\end{equation}
\end{defn}

\subsection{\label{subsec:Hydra-Maps}Hydra Maps}

Let $K$ be a number field and $\mathcal{O}_{K}$ the ring of $K$-integers.
It is a standard result of algebraic number theory that one can choose
a finite set $\left\{ b_{1},\ldots,b_{N}\right\} $ in $\mathcal{O}_{K}$
so that every element of $\mathcal{O}_{K}$ can be uniquely written
as: 
\begin{equation}
m_{1}b_{1}+\ldots+m_{N}b_{N}
\end{equation}
for some $m_{1},\ldots,m_{N}\in\mathbb{Z}$. That is, as an abelian
group under addition, $\mathcal{O}_{K}$ is a \textbf{lattice}. In
a similar way, every non-zero ideal of $\mathcal{O}_{K}$ can be realized
as lattice, as well.

We use the term \textbf{affine lattice }to refer to a coset of a lattice
in $\mathcal{O}_{K}$; thus, every affine lattice in $\mathcal{O}_{K}$
is of the form: 
\begin{equation}
c+\Lambda\overset{\textrm{def}}{=}\left\{ c+z:z\in\Lambda\right\} 
\end{equation}
for some ideal $\Lambda\subseteq\mathcal{O}_{K}$. We call $\Lambda$
the \textbf{underlying ideal }of $c+\Lambda$. Note that the underlying
ideal is necessarily unique, and is given by $\Lambda=\left\{ z-w:z,w\in c+\Lambda\right\} $.

In this context, letting $\Lambda$ be a lattice in $\mathcal{O}_{K}$
with $p$ distinct cosets, a $p$-Hydra map $H:\mathcal{O}_{K}\rightarrow\mathcal{O}_{K}$
is given by a collection of $p$ affine linear maps, called \textbf{branches},
which we associate to each of the cosets of $\Lambda$. In particular,
given $z\in\mathcal{O}_{K}$, $H\left(z\right)$ is obtained by applying
to $z$ the branch specified by the coset of $\Lambda$ that $z$
happens to lie in.

One difficulty that arises in generalizing maps of this shape to $\mathcal{O}_{K}$
is that while every map of the form: 
\begin{equation}
z\in K\mapsto az+b\in K
\end{equation}
for fixed constants $a,b\in K$ is affine linear, not every affine
linear map on $K$ will be of this form, as there exist linear maps
$K\rightarrow K$ which are not given by multiplication against some
fixed scalar. To deal with this, we introduce the notion of an \textbf{affine
lattice morphism}.

\vphantom{} 
\begin{defn}
We write $\textrm{Aff}_{\mathbb{Q}}K$ to denote the \textbf{group
of all} \textbf{invertible affine $\mathbb{Q}$-linear maps} $K\rightarrow K$;
these are maps of the form: 
\begin{equation}
z\mapsto rz+c\label{eq:affine map}
\end{equation}
where $r\in\textrm{End}_{\mathbb{Q}}^{\times}\left(K\right)$ and
$c\in K$; $\textrm{Aff}_{\mathbb{Q}}K$ is a non-abelian group under
map composition. We write $\left(r,c\right)$ as a shorthand for a
map of the form (\ref{eq:affine map}). 
\end{defn}
\vphantom{} 
\begin{rem}
It is important to distinguish between $\mathbb{Q}$-linear maps on
$K$ and $K$-linear maps on $K$. For the case $K=\mathbb{Q}\left(\sqrt{2}\right)$,
letting $a+b\sqrt{2}\in K$ be arbitrary (where $a,b\in\mathbb{Q}$),
consider the maps $m_{1},m_{2}:K\rightarrow K$ given by: 
\begin{align}
m_{1}\left(a+b\sqrt{2}\right)\overset{\textrm{def}}{=} & \sqrt{2}\left(a+b\sqrt{2}\right)\\
m_{2}\left(a+b\sqrt{2}\right)\overset{\textrm{def}}{=} & a+b
\end{align}
Both $m_{1}$ and $m_{2}$ are $\mathbb{Q}$-linear, but only $m_{1}$
is $K$-linear. Indeed: 
\begin{align}
m_{2}\left(\sqrt{2}\left(a+b\sqrt{2}\right)\right) & =m_{2}\left(2b+a\sqrt{2}\right)=a+2b\\
\sqrt{2}m_{2}\left(a+b\sqrt{2}\right) & =\sqrt{2}\left(a+b\right)\neq a+2b
\end{align}
\end{rem}
\vphantom{}

Now we can give the definition of a Hydra map.

\vphantom{} 
\begin{defn}
A pre-Hydra consists of the following data:

I. A number field $K$.

\vphantom{}

II. A proper, non-zero ideal $\Lambda\subset\mathcal{O}_{K}$.

\vphantom{}

III. For each element $j\in\mathcal{O}_{K}/\Lambda$, an affine linear
map $\left(r_{j},c_{j}\right)\in\textrm{Aff}_{\mathbb{Q}}K$. We often
denote $\left(r_{j},c_{j}\right)$ as $H_{j}$, and call $H_{j}$
the \textbf{$j$th branch} of the pre-Hydra. 
\end{defn}
\begin{rem}
These definitions can likely be modified to apply to general global
fields, rather than simply number fields, but we will not pursue this
in the present paper.
\end{rem}
\begin{defn}
\label{def:A--Hydra-(map)}A \textbf{$\Lambda$-Hydra (map)} on $K$
is a map $H:\mathcal{O}_{K}\rightarrow\mathcal{O}_{K}$ defined by:
\begin{equation}
H\left(z\right)\overset{\textrm{def}}{=}\sum_{j\in\mathcal{O}_{K}/\Lambda}\left[z\overset{\Lambda}{\equiv}j\right]\left(r_{j}z+c_{j}\right)\label{eq:Hydra}
\end{equation}
Here, $\left[z\overset{\Lambda}{\equiv}j\right]$ is the function
which is $1$ if $z$ is congruent to $j$ mod $\Lambda$, and is
$0$ otherwise, and $\left(K,\Lambda,\left\{ \left(r_{j},c_{j}\right)\right\} _{j\in\mathcal{O}_{K}/\Lambda}\right)$
is a pre-Hydra so that, for all $j\in\mathcal{O}_{K}/\Lambda$, we
have $r_{j}z+c_{j}\in\mathcal{O}_{K}$ for all $z\in\mathcal{O}_{K}$.

We call $\Lambda$ the \textbf{underlying ideal }of $H$, and write
$H_{j}$ to denote the affine linear map $\left(r_{j},c_{j}\right)$,
which we call the \textbf{$j$th branch of $H$}.

\vphantom{} 
\end{defn}
\begin{rem}
As defined, $H\left(z\right)$ applies $r_{j}z+c_{j}$, where $j$
is the representative of the coset of $\Lambda$ to which $z$ belongs.
(\ref{eq:Hydra}) can be written in a single line as: 
\begin{equation}
H\left(z\right)=r_{\left[z\right]_{\Lambda}}z+c_{\left[z\right]_{\Lambda}}
\end{equation}
where $\left[z\right]_{\Lambda}$ denotes the image of $z$ under
the canonical projection $\mathcal{O}_{K}\rightarrow\mathcal{O}_{K}/\Lambda$. 
\end{rem}
\vphantom{} 
\begin{rem}
As $\Lambda$ always has at least two cosets in $\mathcal{O}_{K}$,
we say a map $H:\mathcal{O}_{K}\rightarrow\mathcal{O}_{K}$ is a \textbf{$p$-Hydra
map} whenever $H$ is a $\Lambda$-Hydra map for some $\Lambda\subset\mathcal{O}_{K}$
so that $\left|\mathcal{O}_{K}/\Lambda\right|=p$.

\vphantom{} 
\end{rem}
Some notable properties of $\Lambda$-Hydras are as follows:

\vphantom{} 
\begin{defn}
\label{def:definitions for Lambda hydras}Let $H$ be a $\Lambda$-Hydra
on $K$. We say $H$ is:

I. \textbf{Integral}, if for all $z\in\mathcal{O}_{K}$ and all $j\in\mathcal{O}_{K}/\Lambda$,
$H_{j}\left(z\right)\in\mathcal{O}_{K}$ \emph{if and only if} $z\in j+\Lambda$.

\vphantom{}

II. \textbf{Proper}, if the map $1-r_{0}\in\textrm{End}_{\mathbb{Q}}\left(K\right)$
is invertible.

\vphantom{}

III. \textbf{Centered}, if $H_{0}\left(0\right)=0$. 
\end{defn}
\vphantom{} 
\begin{rem}
From the perspective of dynamical systems, the integrality property
of a Hydra is, by far, its most important property, and plays a critical
role in the proof of the \textbf{Correspondence Principle}. See \cite{hydra_numen_formalism}
for details.

As an example, the classic Collatz map $C:\mathbb{Z}\rightarrow\mathbb{Z}$
: 
\begin{equation}
C\left(n\right)=\begin{cases}
\frac{n}{2} & \textrm{if }n\overset{2}{\equiv}0\\
3n+1 & \textrm{if }n\overset{2}{\equiv}1
\end{cases}
\end{equation}
is a $2$-Hydra on $\mathbb{Z}$ which is non-integral, as the ``odd''
branch $H_{1}\left(x\right)=3x+1$ sends both even and odd integers
to integers. However, this is not a significant obstruction; the classic
``shortening'' of $C$ given by the aptly-named \textbf{Shortened
Collatz Map} $T_{3}:\mathbb{Z}\rightarrow\mathbb{Z}$: 
\begin{equation}
T_{3}\left(n\right)\overset{\textrm{def}}{=}\begin{cases}
\frac{n}{2} & \textrm{if }n\textrm{ is even}\\
\frac{3n+1}{2} & \textrm{if }n\textrm{ is odd}
\end{cases}
\end{equation}
\emph{is }integral, as its ``even branch'' $H_{0}$ is integer-valued
only for even integers, while its ``odd'' branch $H_{1}$ is integer-valued
only for odd integers. 
\end{rem}
\vphantom{} 
\begin{rem}
Letting $a\in\mathcal{O}_{K}$ be undetermined, one can often conjugate
an uncentered Hydra $H$ to a centered Hydra $H_{\textrm{center}}$
by writing: 
\begin{equation}
H_{\textrm{center}}\left(z\right)=H\left(z+a\right)-a
\end{equation}
for a judicious choice of $a\in\mathcal{O}_{K}$.

\vphantom{} 
\end{rem}

\subsection{\label{subsec:The-Numen-of}The Numen of a Hydra Map \& the $p$-Adic
Integers}

Throughout this section, let $H:\mathcal{O}_{K}\rightarrow\mathcal{O}_{K}$
be a $\Lambda$-Hydra map, not necessarily proper, integral, or centered.
Following \cite{my_disseration,my_first_blog_paper}, we construct
$X_{H}$, the numen of $H$, like so. 
\begin{defn}
Let $\Sigma_{\Lambda}^{*}$ denote the set of \textbf{strings} (finite
sequences) of elements of $\mathcal{O}_{K}/\Lambda$ including the
empty string, represented by the empty set $\varnothing$. given any
string $\mathbf{j}=\left(j_{1},\ldots,j_{N}\right)\in\Sigma_{\Lambda}^{*}$,
we call $N$ the \textbf{length }of $\mathbf{j}$, and denote it by
$\left|\mathbf{j}\right|$. The empty string is defined as the unique
string of length $0$. We write $\wedge:\Sigma_{\Lambda}^{*}\times\Sigma_{\Lambda}^{*}\rightarrow\Sigma_{\Lambda}^{*}$
to denote the concatenation operator: 
\begin{equation}
\mathbf{i}\wedge\mathbf{j}\overset{\textrm{def}}{=}\left(i_{1},\ldots,i_{\left|\mathbf{i}\right|},j_{1},\ldots,j_{\left|\mathbf{j}\right|}\right)
\end{equation}
We write $\Sigma_{\Lambda}^{\infty}$ to denote the set of strings
of finite or infinite length. Note that we can extend $\wedge$ to
a map $\wedge:\Sigma_{\Lambda}^{*}\times\Sigma_{\Lambda}^{\infty}\rightarrow\Sigma_{\Lambda}^{\infty}$. 
\end{defn}
\begin{defn}
Given any string $\mathbf{j}\in\Sigma_{\Lambda}^{*}$, we define the
\textbf{composition sequence }$H_{\mathbf{j}}\in\textrm{Aff}K$ by:
\begin{equation}
H_{\mathbf{j}}\left(z\right)\overset{\textrm{def}}{=}\left(H_{j_{1}}\circ\cdots\circ H_{j_{\left|\mathbf{j}\right|}}\right)\left(z\right),\textrm{ }\forall z\in K\label{eq:stringmap}
\end{equation}
When $\mathbf{j}$ is the empty string, we define $H_{\varnothing}$
to be the identity map on $K$.

As an element of $\textrm{Aff}K$, the map $H_{\mathbf{j}}$ can be
written in the form $\left(r,c\right)$ for some $r\in\textrm{End}_{\mathbb{Q}}^{\times}\left(K\right)$
and $c\in K$ depending on $\mathbf{j}$. We denote these by $M_{H}\left(\mathbf{j}\right)$
and $X_{H}\left(\mathbf{j}\right)$, respectively, so that: 
\begin{equation}
H_{\mathbf{j}}\left(z\right)=M_{H}\left(\mathbf{j}\right)z+X_{H}\left(\mathbf{j}\right),\textrm{ }\forall z\in K
\end{equation}
In this way, we can realize $M_{H}$ and $X_{H}$ as maps $\Sigma_{\Lambda}^{*}\rightarrow\textrm{End}_{\mathbb{Q}}^{\times}\left(K\right)$
and $\Sigma_{\Lambda}^{*}\rightarrow K$, respectively. $X_{H}$ is
called the \textbf{numen of $H$}, and is defined by: 
\begin{equation}
X_{H}\left(\mathbf{j}\right)\overset{\textrm{def}}{=}H_{\mathbf{j}}\left(0\right),\textrm{ }\forall\mathbf{j}\in\Sigma_{\Lambda}^{*}
\end{equation}
\end{defn}
The next two identities are trivial, and follow immediately from the
definitions: 
\begin{prop}
For any $\mathbf{i},\mathbf{j}\in\Sigma_{\Lambda}^{*}$: 
\begin{equation}
M_{H}\left(\mathbf{i}\wedge\mathbf{j}\right)=M_{H}\left(\mathbf{i}\right)M_{H}\left(\mathbf{j}\right)
\end{equation}
\begin{equation}
X_{H}\left(\mathbf{i}\wedge\mathbf{j}\right)=H_{\mathbf{i}}\left(X_{H}\left(\mathbf{j}\right)\right)
\end{equation}
\end{prop}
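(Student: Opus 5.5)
The two identities follow by unwinding the definition of $H_{\mathbf{j}}$ as an iterated composition, together with the fact that each $H_{\mathbf{j}}$ is an element of $\textrm{Aff}_{\mathbb{Q}}K$ of the form $z\mapsto M_{H}(\mathbf{j})z+X_{H}(\mathbf{j})$.

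The first step is to observe that composition respects concatenation. For $\mathbf{i}=(i_{1},\ldots,i_{m})$ and $\mathbf{j}=(j_{1},\ldots,j_{n})$, definition (\ref{eq:stringmap}) gives
\[
H_{\mathbf{i}\wedge\mathbf{j}}=H_{i_{1}}\circ\cdots\circ H_{i_{m}}\circ H_{j_{1}}\circ\cdots\circ H_{j_{n}}=H_{\mathbf{i}}\circ H_{\mathbf{j}},
\]
by associativity of composition. The degenerate cases, where $\mathbf{i}$ or $\mathbf{j}$ is the empty string, hold because $H_{\varnothing}$ is the identity.

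Next I compute the composite of two affine maps directly. Writing $H_{\mathbf{i}}(z)=M_{H}(\mathbf{i})z+X_{H}(\mathbf{i})$ and similarly for $\mathbf{j}$, and using only the $\mathbb{Q}$-linearity (in fact additivity) of $M_{H}(\mathbf{i})$, I get
\[
H_{\mathbf{i}}\left(H_{\mathbf{j}}(z)\right)=M_{H}(\mathbf{i})M_{H}(\mathbf{j})z+\left(M_{H}(\mathbf{i})X_{H}(\mathbf{j})+X_{H}(\mathbf{i})\right).
\]

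The linear part and the constant of an affine map $K\rightarrow K$ are uniquely determined: the constant is the value at $0$, and the linear part is the map minus that value. Matching the linear parts therefore gives $M_{H}(\mathbf{i}\wedge\mathbf{j})=M_{H}(\mathbf{i})M_{H}(\mathbf{j})$. For the second identity it is simplest to evaluate at $0$:
\[
X_{H}(\mathbf{i}\wedge\mathbf{j})=H_{\mathbf{i}\wedge\mathbf{j}}(0)=H_{\mathbf{i}}\left(H_{\mathbf{j}}(0)\right)=H_{\mathbf{i}}\left(X_{H}(\mathbf{j})\right).
\]

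There is no real obstacle here. The only point needing care is that the product $M_{H}(\mathbf{i})M_{H}(\mathbf{j})$ must be read as composition in $\textrm{End}_{\mathbb{Q}}^{\times}(K)$, in that order, rather than as multiplication of field elements, since the $r_{j}$ need not commute.
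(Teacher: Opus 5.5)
Your proof is correct and is exactly the unwinding of definitions the paper relies on: it calls these identities trivial consequences of the definitions of $H_{\mathbf{j}}$, $M_{H}$, and $X_{H}$ and gives no further proof. Your remark that $M_{H}(\mathbf{i})M_{H}(\mathbf{j})$ means composition in $\textrm{End}_{\mathbb{Q}}^{\times}(K)$, taken in that order, is the one detail worth stating explicitly.
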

\begin{defn}
Letting $p$ denote $\left|\mathcal{O}_{K}/\Lambda\right|$, we fix
\emph{once and for all }choice of an enumeration $\Lambda_{0},\ldots,\Lambda_{p-1}$
of the cosets of $\Lambda$ in $\mathcal{O}_{K}$, with $\Lambda_{0}$
denoting $\Lambda$ itself and $\Lambda_{1},\ldots,\Lambda_{p-1}$
denoting its non-trivial cosets. We then let $\beta:\mathcal{O}_{K}/\Lambda\rightarrow\left\{ 0,\ldots,p-1\right\} $
be the map which sends $i\in\mathcal{O}_{K}/\Lambda$ to the unique
integer $\beta\left(i\right)\in\left\{ 0,\ldots,p-1\right\} $ so
that $i\in\Lambda_{\beta\left(i\right)}$. (Thus, if $i\in\Lambda_{0}$,
then $\beta\left(i\right)=0$; if $i\in\Lambda_{1}$, then $\beta\left(i\right)=1$,
etc.).

Because of this, we shall write $\Sigma_{p}^{*}$ instead of $\Sigma_{\Lambda}^{*}$,
with $\Sigma_{p}^{*}$ denoting the set of all finite strings of the
numbers $\left\{ 0,\ldots,p-1\right\} $, including the empty string.
$\beta$ then induces a bijection between $\Sigma_{\Lambda}^{*}$
and $\Sigma_{p}^{*}$.

Letting $\mathbb{N}_{0}=\left\{ 0,1,2,3,\ldots\right\} $ denote the
\textbf{set of non-negative integers}, we then write $\textrm{DigSum}_{p}:\Sigma_{p}^{*}\rightarrow\mathbb{N}_{0}$
to denote the function: 
\begin{equation}
\textrm{DigSum}_{p}\left(\mathbf{j}\right)=\sum_{n=1}^{\left|\mathbf{j}\right|}\beta\left(j_{n}\right)p^{n-1}\label{eq:DigSum}
\end{equation}
where $\left|\mathbf{j}\right|$ is the \textbf{length }of $\mathbf{j}$
(the number of entries of $\mathbf{j}$). 
\end{defn}
\begin{rem}
Like with writing $\Sigma_{p}^{*}$ instead of $\Sigma_{\Lambda}^{*}$,
going forward, in an very important\textbf{ }abuse of notation, we
will index indexing $r_{j}$ and $c_{j}$ using $j\in\left\{ 0,\ldots,p-1\right\} $
by way of $\beta$, instead of using $j\in\mathcal{O}_{K}/\Lambda$.
Thus, for example, by $r_{1}$, we actually mean $r_{\beta^{-1}\left(1\right)}$,
where $\beta^{-1}\left(1\right)$ is the unique $j\in\mathcal{O}_{K}/\Lambda$
that $\beta$ sends to $1$. 
\end{rem}
\begin{example}
If $\mathcal{O}_{K}=\mathbb{Z}$ and $\Lambda=3\mathbb{Z}$, then
upon identifying $\mathcal{O}_{K}/\Lambda=\mathbb{Z}/3\mathbb{Z}$
with $\left\{ 0,1,2\right\} $, we have that: 
\begin{equation}
\textrm{DigSum}_{3}\left(\mathbf{j}\right)=\sum_{n=1}^{\left|\mathbf{j}\right|}j_{n}3^{n-1}
\end{equation}
Thus, for example: 
\begin{equation}
\textrm{DigSum}_{3}\left(\left(1,2,0,1\right)\right)=1+2\cdot3+0\cdot3^{2}+1\cdot3^{3}
\end{equation}
\end{example}
Although $\textrm{DigSum}_{p}:\Sigma_{p}^{*}\rightarrow\mathbb{N}_{0}$
is surjective, note that it is \emph{not }injective. Indeed, given
any $\mathbf{i}\in\Sigma_{p}^{*}$ if we append finitely many $0$s
to the right of $\mathbf{i}$, we obtain a string $\mathbf{j}\neq\mathbf{i}$
for which $\textrm{DigSum}_{p}\left(\mathbf{i}\right)=\textrm{DigSum}_{p}\left(\mathbf{j}\right)$.
To that end: 
\begin{defn}
Let $\sim$ be the equivalence relation on $\Sigma_{p}^{*}$ defined
by $\mathbf{i}\sim\mathbf{j}$ if and only if $\textrm{DigSum}_{p}\left(\mathbf{i}\right)=\textrm{DigSum}_{p}\left(\mathbf{j}\right)$.
We then write $\Sigma_{p}^{*}/\sim$ to denote the set of equivalence
classes in $\Sigma_{p}^{*}$ under $\sim$. 
\end{defn}
This gives the easy result: 
\begin{prop}
\label{prop:equivalence-relation}The map $\Sigma_{p}^{*}/\sim\rightarrow\mathbb{N}_{0}$
induced by $\textrm{DigSum}_{p}$ is a bijection. 
\end{prop}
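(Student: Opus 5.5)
The map in question sends the class $[\mathbf{j}]$ to $\textrm{DigSum}_{p}\left(\mathbf{j}\right)$. The plan is to check, in order, that this map is well defined, injective and surjective. Well-definedness and injectivity come essentially for free from the definition of $\sim$. Indeed, $\mathbf{i}\sim\mathbf{j}$ holds precisely when $\textrm{DigSum}_{p}\left(\mathbf{i}\right)=\textrm{DigSum}_{p}\left(\mathbf{j}\right)$. So $\sim$ is exactly the kernel relation of $\textrm{DigSum}_{p}$, and the universal property of the quotient by a kernel relation gives both properties at once. Consequently, the only real content of the proposition is the surjectivity of $\textrm{DigSum}_{p}:\Sigma_{p}^{*}\rightarrow\mathbb{N}_{0}$. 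The paper asserts this surjectivity just before defining $\sim$, but I would verify it directly.

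For surjectivity I will use base-$p$ expansions. Take $n\in\mathbb{N}_{0}$.
\begin{itemize}
\item If $n=0$, then the empty string, or equally any string of zeros, has digit sum $0$.
\item If $n\geq1$, write $n=\sum_{k=0}^{N-1}a_{k}p^{k}$ with digits $a_{k}\in\left\{ 0,\ldots,p-1\right\}$. This uses the standard division algorithm, which is valid for any integer $p\geq2$; $p$ need not be prime.
\end{itemize}
Next, set $j_{n}=\beta^{-1}\left(a_{n-1}\right)$ for $n=1,\ldots,N$, where $\beta^{-1}\left(a\right)$ denotes the unique residue class in $\mathcal{O}_{K}/\Lambda$ that $\beta$ sends to $a$. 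Equivalently, under the abuse of notation fixed above, $\mathbf{j}=\left(a_{0},\ldots,a_{N-1}\right)\in\Sigma_{p}^{*}$. This is legitimate because $\beta$ is a bijection: $\Lambda_{0},\ldots,\Lambda_{p-1}$ enumerate all $p$ cosets of $\Lambda$. By (\ref{eq:DigSum}), $\textrm{DigSum}_{p}\left(\mathbf{j}\right)=\sum_{n=1}^{N}a_{n-1}p^{n-1}=n$.

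As an optional sanity check I would also describe the equivalence classes explicitly. Uniqueness of base-$p$ digits shows that two strings are $\sim$-equivalent if and only if they agree after trailing zeros are deleted. Each class therefore has a canonical representative with no trailing $0$, and this matches the remark preceding the definition of $\sim$. Proving this would require checking that the leading digit of a string with a nonzero final entry pins down its length, which follows from $\sum_{k<N-1}(p-1)p^{k}<p^{N-1}$. None of this is needed for the bijection itself.

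I do not expect any genuine obstacle. The only point needing care is keeping the identification between $\mathcal{O}_{K}/\Lambda$ and $\left\{ 0,\ldots,p-1\right\}$ via $\beta$ straight.
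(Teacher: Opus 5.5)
Your proposal is correct and matches the paper's implicit reasoning. The paper gives no written proof: it calls the result easy because $\sim$ is defined as the fiber relation of $\textrm{DigSum}_{p}$, which makes the induced map well defined and injective automatically, and it simply asserts the surjectivity that you verify via base-$p$ expansions. One cosmetic point: you use $n$ both for the target integer and as the string index in $j_{n}=\beta^{-1}\left(a_{n-1}\right)$, so rename one of them.
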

With this, we can realize $X_{H}$ as a function $\mathbb{N}_{0}\rightarrow K$. 
\begin{prop}
If $H$ is centered, then $X_{H}\left(\mathbf{i}\right)=X_{H}\left(\mathbf{j}\right)$
for all $\mathbf{i}$ and $\mathbf{j}$ belonging to the same equivalence
class in $\Sigma_{p}^{*}/\sim$. Hence, $X_{H}:\Sigma_{p}^{*}/\sim\rightarrow K$
is well-defined. We can then view $X_{H}$ as a function $\mathbb{N}_{0}\rightarrow K$
by using $\textrm{DigSum}_{p}$ to identify $\Sigma_{p}^{*}/\sim$
with $\mathbb{N}_{0}$. 
\end{prop}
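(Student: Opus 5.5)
It suffices to show that appending a single $0$ to the right of a string does not change the value of $X_{H}$, i.e.\ that $X_{H}\left(\mathbf{i}\wedge\left(0\right)\right)=X_{H}\left(\mathbf{i}\right)$ for every $\mathbf{i}\in\Sigma_{p}^{*}$. The reason is that two strings lie in the same class of $\Sigma_{p}^{*}/\sim$ exactly when they differ only by trailing zeros. To see this, recall that $\beta$ is a bijection onto $\left\{ 0,\ldots,p-1\right\}$, so the entries $\beta\left(j_{n}\right)$ are honest base-$p$ digits. By uniqueness of base-$p$ expansions, $\textrm{DigSum}_{p}\left(\mathbf{i}\right)=\textrm{DigSum}_{p}\left(\mathbf{j}\right)$ forces the digit sequences to agree once each is padded with zeros to a common length. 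Both $\mathbf{i}$ and $\mathbf{j}$ are then obtained from the same shortest string $\mathbf{k}$ (the one with no trailing zeros) by appending zeros on the right. Iterating the one-zero identity then gives $X_{H}\left(\mathbf{i}\right)=X_{H}\left(\mathbf{k}\right)=X_{H}\left(\mathbf{j}\right)$.

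For the one-zero identity, apply the concatenation identity $X_{H}\left(\mathbf{i}\wedge\mathbf{j}\right)=H_{\mathbf{i}}\left(X_{H}\left(\mathbf{j}\right)\right)$ from the preceding proposition with $\mathbf{j}=\left(0\right)$:
\begin{equation}
X_{H}\left(\mathbf{i}\wedge\left(0\right)\right)=H_{\mathbf{i}}\left(X_{H}\left(\left(0\right)\right)\right)=H_{\mathbf{i}}\left(H_{0}\left(0\right)\right)=H_{\mathbf{i}}\left(0\right)=X_{H}\left(\mathbf{i}\right).
\end{equation}
Here the third equality is exactly the centering hypothesis $H_{0}\left(0\right)=0$. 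The case where $\mathbf{i}$ is the empty string is included, since $H_{\varnothing}$ is the identity and $X_{H}\left(\varnothing\right)=0=X_{H}\left(\left(0\right)\right)$.

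Once well-definedness on $\Sigma_{p}^{*}/\sim$ is established, \textbf{Proposition \ref{prop:equivalence-relation}} identifies $\Sigma_{p}^{*}/\sim$ bijectively with $\mathbb{N}_{0}$. Composing with the inverse of this bijection gives the function $\mathbb{N}_{0}\rightarrow K$. There is no real obstacle in this argument. The only point that needs care is the bookkeeping that relates the equivalence relation to trailing zeros, and that step relies on $\beta$ being a bijection onto $\left\{ 0,\ldots,p-1\right\}$ and on $\Lambda_{0}=\Lambda$ being the coset labelled $0$. That labelling is what makes the branch $H_{0}$, the one appearing in the centering condition, the branch attached to the digit $0$.
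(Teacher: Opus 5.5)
Your argument is correct and is essentially the reasoning the paper intends. You reduce to the one-zero identity $X_{H}(\mathbf{i}\wedge(0))=H_{\mathbf{i}}(H_{0}(0))=H_{\mathbf{i}}(0)=X_{H}(\mathbf{i})$ via the concatenation identity and centering, and use uniqueness of base-$p$ expansions to show that $\sim$-equivalent strings differ only by trailing zeros. The paper states this proposition without a written proof, treating it as immediate from its remark that appending $0$s on the right leaves $\textrm{DigSum}_{p}$ unchanged and from the preceding concatenation identities; your write-up makes that implicit argument explicit.
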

\begin{rem}
If $H$ is \emph{not} centered, one can still formally define $X_{H}$
without reference to strings using the functional equations given
in \textbf{Proposition \ref{prop:functional equations}}, below, provided
they admit a solution. 
\end{rem}
Using the definition of $X_{H}\left(\mathbf{j}\right)$ as $H_{\mathbf{j}}\left(0\right)$,
one can easily give the following functional equation characterization
of $X_{H}:\mathbb{N}_{0}\rightarrow K$. 
\begin{prop}[Siegel (2022); see \cite{my_disseration}]
\label{prop:functional equations}The set of functions $X_{H}:\mathbb{N}_{0}\rightarrow K$
satisfying the functional equations: 
\begin{equation}
X_{H}\left(pn+j\right)=r_{j}X_{H}\left(n\right)+c_{j},\textrm{ }\forall n\in\mathbb{N}_{0},\textrm{ }\forall j\in\left\{ 0,\ldots,p-1\right\} \label{eq:X_H functional equation on N_0}
\end{equation}
(where $r_{j}X_{H}\left(n\right)$ is the image of $X_{H}\left(n\right)\in K$
under $r_{j}\in\textrm{End}_{\mathbb{Q}}^{\times}\left(K\right)$)
is in a bijective correspondence with the set of $z\in K$ for which
$\left(1-r_{0}\right)z=c_{0}$. In particular:

I. If $H$ is proper, then (\ref{eq:X_H functional equation on N_0})
has a unique solution, satisfying $X_{H}\left(0\right)=\left(1-r_{0}\right)^{-1}c_{0}$.

II. If $H$ is non-proper, then (\ref{eq:X_H functional equation on N_0})
has a solution $X_{H}$ if and only if there are $z\in K$ for which
$\left(1-r_{0}\right)z=c_{0}$. For each such $z$, one can set $X_{H}\left(0\right)=z$
and then obtain a distinct solution of (\ref{eq:X_H functional equation on N_0}).
In this way, we make think of $X_{H}\left(0\right)=z$ as an initial
condition subject to which (\ref{eq:X_H functional equation on N_0})
can be solved. 
\end{prop}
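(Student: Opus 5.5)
The plan is to prove the correspondence directly from the base-$p$ structure of $\mathbb{N}_{0}$. Every $m\in\mathbb{N}_{0}$ can be written uniquely as $m=pn+j$ with $n\in\mathbb{N}_{0}$ and $j\in\left\{ 0,\ldots,p-1\right\}$, and $n<m$ whenever $m\geq1$. The only exception to this descent is $m=0$, which corresponds to $n=0$, $j=0$. So the functional equations (\ref{eq:X_H functional equation on N_0}) split into two kinds:
\begin{itemize}
\item the single \emph{self-referential} equation at $m=0$, namely $X_{H}\left(0\right)=r_{0}X_{H}\left(0\right)+c_{0}$, equivalently $\left(1-r_{0}\right)X_{H}\left(0\right)=c_{0}$;
\item for $m\geq1$, \emph{recursive} equations that express $X_{H}\left(m\right)$ in terms of the value at the strictly smaller argument $\left\lfloor m/p\right\rfloor$.
\end{itemize}

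Define the map $\Phi$ from the solution set to $\left\{ z\in K:\left(1-r_{0}\right)z=c_{0}\right\}$ by $\Phi\left(X\right)=X\left(0\right)$. It is well defined, because any solution satisfies the $m=0$ equation.

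For injectivity, suppose two solutions $X,Y$ satisfy $X\left(0\right)=Y\left(0\right)$. Strong induction on $m$ gives $X\left(m\right)=Y\left(m\right)$: for $m=pn+j\geq1$ we have $n<m$, so $X\left(m\right)=r_{j}X\left(n\right)+c_{j}=r_{j}Y\left(n\right)+c_{j}=Y\left(m\right)$.

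For surjectivity, fix $z$ with $\left(1-r_{0}\right)z=c_{0}$. Set $X\left(0\right)=z$ and define $X\left(m\right)=r_{j}X\left(\left\lfloor m/p\right\rfloor\right)+c_{j}$ for $m\geq1$, where $j=m-p\left\lfloor m/p\right\rfloor$. This recursion is well founded because $\left\lfloor m/p\right\rfloor<m$. The resulting $X$ satisfies every equation with $pn+j\geq1$ by construction, and it satisfies the equation at $pn+j=0$ by the choice of $z$. Hence $\Phi$ is a bijection.

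The two special cases follow at once. For I, properness means $1-r_{0}$ is invertible, so the constraint set is the single point $\left(1-r_{0}\right)^{-1}c_{0}$, and there is exactly one solution. For II, the solution set is nonempty exactly when the affine constraint $\left(1-r_{0}\right)z=c_{0}$ has a solution. Distinct admissible $z$ give distinct solutions, by injectivity of $\Phi$.

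A brief remark will reconcile this with the string definition when $H$ is centered. In that case $X_{H}\left(\mathbf{j}\right)=H_{\mathbf{j}}\left(0\right)$ satisfies $X_{H}\left(j\wedge\mathbf{i}\right)=H_{j}\left(X_{H}\left(\mathbf{i}\right)\right)$, and since $\textrm{DigSum}_{p}\left(j\wedge\mathbf{i}\right)=j+p\,\textrm{DigSum}_{p}\left(\mathbf{i}\right)$, it solves the equations with $X_{H}\left(0\right)=0$.

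I do not expect a real obstacle. The only point requiring care is recognizing that $m=0$ is the unique non-descending case of the recursion, and that it is precisely this case that imposes the constraint $\left(1-r_{0}\right)X_{H}\left(0\right)=c_{0}$.
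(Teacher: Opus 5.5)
Your proof is correct and is essentially the argument the paper has in mind (the paper only cites the dissertation). The $m=0$ instance of the functional equation is exactly the condition $(1-r_{0})X_{H}(0)=c_{0}$, and every other instance determines $X_{H}(m)$ from $X_{H}(\lfloor m/p\rfloor)$, so $X\mapsto X(0)$ is a bijection by strong induction. Your recursive construction is also what gives existence for non-centered $H$, where $H_{\mathbf{j}}(0)$ does not give a well-defined function on $\mathbb{N}_{0}$. In string language, the solution attached to $z$ is $X(n)=H_{\mathbf{j}}(z)$ for any $\mathbf{j}$ with $\textrm{DigSum}_{p}(\mathbf{j})=n$, and this does not depend on trailing zeros because $H_{0}(z)=z$.
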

Just like with $\mathbb{N}_{0}$ and $\Sigma_{\Lambda}^{*}$, we can
identify $\mathbb{Z}_{p}$, the ring of $p$-adic integers, with a
set of strings. 
\begin{defn}
We write:

I. $\Sigma_{p}^{\infty}$, to denote the set of all strings of elements
of\emph{ }$\left\{ 0,\ldots,p-1\right\} $ of either finite or infinite
length. We also include the empty string as an element of $\Sigma_{p}^{\infty}$.

II. $\Sigma_{p}^{+\infty}$, to denote the set of all strings of elements
of\emph{ }$\left\{ 0,\ldots,p-1\right\} $ of infinite length. 
\end{defn}
\begin{rem}
We can and will extend $\textrm{DigSum}_{p}$ from a function $\Sigma_{p}^{*}\rightarrow\mathbb{N}_{0}$
to a function $\Sigma_{p}^{\infty}\rightarrow\mathbb{Z}_{p}$, where
$\Sigma_{p}^{\infty}$ is the set of finite- or infinite-length strings
in the symbols $\left\{ 0,\ldots,p-1\right\} $. Letting $\sim$ denote
the equivalence relation on $\Sigma_{p}^{\infty}$ with $\mathbf{i}\sim\mathbf{j}$
if and only if $\textrm{DigSum}_{p}\left(\mathbf{i}\right)=\textrm{DigSum}_{p}\left(\mathbf{j}\right)$,
we then have that $\textrm{DigSum}_{p}$ induces a bijection $\Sigma_{p}^{\infty}/\sim\rightarrow\mathbb{Z}_{p}$,
where $\Sigma_{p}^{\infty}/\sim$ is the set of equivalence classes
of $\Sigma_{p}^{\infty}$ under $\sim$. 
\end{rem}
In \cite{my_disseration}, it was shown that, given mild assumptions
on the $r_{j}$s, one can use the identification of $\Sigma_{p}^{\infty}/\sim$
with $\mathbb{Z}_{p}$ to interpolate $X_{H}:\mathbb{N}_{0}\rightarrow K$
to a function on $\mathbb{Z}_{p}$. The details for this are given
in \textbf{Lemma \ref{lem:extension and measurability of X_H}}, below.
The essential idea is that by making a judicious choice of absolute
values on $K$, one can realize $X_{H}$ as a function (or, in general,
as a distribution) out of $\mathbb{Z}_{p}$ whose regularity depends
on how close the $r_{j}$s are to behaving as contractions with respect
to $K$. The behavior of the $r_{j}$s is captured by their norm with
respect to a given absolute value of $K$. The result given below
is a slightly modified version of the versions originally stated in
\cite{my_first_blog_paper}. The reader should be aware that \textbf{Lemma
\ref{lem:extension and measurability of X_H} }treats $\mathbb{Z}_{p}$
as a measure space equipped with its \textbf{real-valued Haar probability
measure}, and treat $K_{\ell}$ (the completion of $K$ with respect
to the absolute value induced by a place $\ell$ of $K$) as a measure
space equipped with its real-valued Haar measure. We also use the
concept of a \textbf{locally constant function}. Readers unfamiliar
with either of these can find an expository treatment given of them
in \cite{hydra_numen_formalism}. 
\begin{lem}
\label{lem:extension and measurability of X_H}Suppose $H$ is proper.
For each non-trivial place $\ell$ of $K$, let:

\begin{equation}
\rho_{H,\ell}\overset{\textrm{def}}{=}\prod_{j=0}^{p-1}\left\Vert r_{j}\right\Vert _{\ell}
\end{equation}
be the product of the $\ell$-adic norms of the $r_{j}$s. (These
are the operator norms of the maps $K_{\ell}\rightarrow K_{\ell}$
induced by the $r_{j}$s).

Now, consider the extension $X_{H}:\mathbb{Z}_{p}\rightarrow K_{\ell}$
defined by the formula: 
\begin{align}
X_{H}\left(\mathfrak{z}\right) & \overset{K_{\ell}}{=}\lim_{n\rightarrow\infty}X_{H}\left(\left[\mathfrak{z}\right]_{p^{n}}\right)\label{eq:extension of Chi_H}
\end{align}
for all $\mathfrak{z}\in\mathbb{Z}_{p}$ for which $X_{H}\left(\mathfrak{z}\right)$
exists. Then:

I. If $\rho_{H,\ell}<1$, then (\ref{eq:extension of Chi_H}) defines
a measurable function $X_{H}:\mathbb{Z}_{p}\rightarrow K_{\ell}$,
with (\ref{eq:extension of Chi_H}) converging for almost every $\mathfrak{z}\in\mathbb{Z}_{p}$.
In particular, it converges at all $\mathfrak{z}\in\mathbb{Z}_{p}$
for which: 
\begin{equation}
\sum_{j=0}^{p-1}d_{H,\ell,j}\left(\mathfrak{z}\right)\ln\left\Vert r_{j}\right\Vert _{\ell}<0\label{eq:convergence condition}
\end{equation}
where: 
\begin{equation}
d_{H,\ell,j}\left(\mathfrak{z}\right)\overset{\textrm{def}}{=}\begin{cases}
\limsup_{m\rightarrow\infty}\frac{\#_{j}\left(\left[\mathfrak{z}\right]_{p^{m}}\right)}{m} & \textrm{if }\left\Vert r_{j}\right\Vert _{\ell}\leq1\\
\liminf_{m\rightarrow\infty}\frac{\#_{j}\left(\left[\mathfrak{z}\right]_{p^{m}}\right)}{m} & \textrm{if }\left\Vert r_{j}\right\Vert _{\ell}>1
\end{cases}\label{eq:upper density of j}
\end{equation}
is the \textbf{$\left(H,\ell\right)$-normalized upper density of
$j$ in the $p$-adic digits of $\mathfrak{z}$}.

II. If $\rho_{H,\ell}<1$ and $\max_{0\leq j<p}\left\Vert r_{j}\right\Vert _{\ell}<1$,
then (\ref{eq:extension of Chi_H}) converges uniformly with respect
to $\mathfrak{z}\in\mathbb{Z}_{p}$, making $X_{H}:\mathbb{Z}_{p}\rightarrow K_{\ell}$
into a continuous function.

Moreover, given either (I) or (II), we have that: 
\begin{equation}
X_{H}\left(p\mathfrak{z}+j\right)=r_{j}X_{H}\left(\mathfrak{z}\right)+c_{j}\label{eq:X_H functional equation}
\end{equation}
occurs for all $j\in\left\{ 0,\ldots,p-1\right\} $ and all $\mathfrak{z}\in\mathbb{Z}_{p}$
for which $X_{H}\left(\mathfrak{z}\right)$ converges in $K_{\ell}$,
and that the extensions of $X_{H}$ given in (I) and (II), respectively,
are the unique functions satisfying these functional equations. Furthermore,
if $\ell$ is non-archimedean and $\max_{0\leq j<p}\left\Vert r_{j}\right\Vert _{\ell}\leq1$,
then: 
\begin{equation}
\limsup_{N\rightarrow\infty}\sup_{\mathfrak{z}\in\mathbb{Z}_{p}}\left|X_{H}\left(\left[\mathfrak{z}\right]_{p^{N}}\right)\right|_{\ell}\leq\max_{0\leq j<p}\left|c_{j}\right|_{\ell}\label{eq:a.e.  ell-adic bound on X_H in NA case}
\end{equation}
In particular, $X_{H}:\mathbb{Z}_{p}\rightarrow K_{\ell}$ is $\ell$-adically
bounded by $\max_{0\leq j<p}\left|c_{j}\right|_{\ell}$ on its set
of convergence. Here, $\left|\cdot\right|_{\ell}$ is the absolute
value induced by the place $\ell$ on $K$, and hence by extension,
on $K_{\ell}$.
\end{lem}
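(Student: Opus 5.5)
The plan is to write $X_{H}\left(\left[\mathfrak{z}\right]_{p^{n}}\right)$ explicitly as a partial sum and control it term by term. Let $j_{1},j_{2},\ldots$ be the $p$-adic digits of $\mathfrak{z}$ and write $\mathbf{j}_{n}=\left(j_{1},\ldots,j_{n}\right)$. Iterating the functional equation (\ref{eq:X_H functional equation on N_0}) from Proposition \ref{prop:functional equations}, with $X_{H}\left(0\right)=x_{0}:=\left(1-r_{0}\right)^{-1}c_{0}$, gives
\begin{equation}
X_{H}\left(\left[\mathfrak{z}\right]_{p^{n}}\right)=\sum_{k=1}^{n}r_{j_{1}}\cdots r_{j_{k-1}}c_{j_{k}}+r_{j_{1}}\cdots r_{j_{n}}x_{0}.
\end{equation}
Hence
\begin{equation}
X_{H}\left(\left[\mathfrak{z}\right]_{p^{n+1}}\right)-X_{H}\left(\left[\mathfrak{z}\right]_{p^{n}}\right)=r_{j_{1}}\cdots r_{j_{n}}\left(c_{j_{n+1}}+r_{j_{n+1}}x_{0}-x_{0}\right),
\end{equation}
and the bracket is bounded by a constant $C$ independent of $\mathfrak{z}$ and $n$. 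Operator norms are submultiplicative, so this increment has $\ell$-norm at most $C\prod_{j}\left\Vert r_{j}\right\Vert _{\ell}^{\#_{j}\left(\mathbf{j}_{n}\right)}$. Convergence of (\ref{eq:extension of Chi_H}) in the complete field $K_{\ell}$ therefore reduces to summability of these increments.

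For (II), set $\lambda=\max_{j}\left\Vert r_{j}\right\Vert _{\ell}<1$. The increments are then bounded by $C\lambda^{n}$ uniformly in $\mathfrak{z}$. The sequence is uniformly Cauchy, and each truncation is locally constant, so the limit is continuous.

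For (I), write the logarithm of the bound as $\sum_{j}\#_{j}\left(\mathbf{j}_{n}\right)\ln\left\Vert r_{j}\right\Vert _{\ell}$. Condition (\ref{eq:convergence condition}) is set up so that, taking $\limsup$ of $\#_{j}/n$ on the terms with non-positive logarithm and $\liminf$ on the terms with positive logarithm, we get
\begin{equation}
\limsup_{n}\frac{1}{n}\sum_{j}\#_{j}\left(\mathbf{j}_{n}\right)\ln\left\Vert r_{j}\right\Vert _{\ell}\leq\sum_{j}d_{H,\ell,j}\left(\mathfrak{z}\right)\ln\left\Vert r_{j}\right\Vert _{\ell}=:-2\epsilon<0.
\end{equation}
The increments are therefore eventually $O\left(e^{-\epsilon n}\right)$, and the limit exists. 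For the almost-everywhere statement, invoke the strong law of large numbers (or Borel's normal number theorem) for Haar measure on $\mathbb{Z}_{p}$: digits are i.i.d. uniform, so $\#_{j}\left(\mathbf{j}_{n}\right)/n\to1/p$ almost surely. The sum in (\ref{eq:convergence condition}) is then $\frac{1}{p}\ln\rho_{H,\ell}<0$. Measurability follows because $X_{H}$ is an a.e. pointwise limit of locally constant, hence measurable, functions.

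The functional equation (\ref{eq:X_H functional equation}) comes from passing to the limit in $X_{H}\left(\left[p\mathfrak{z}+j\right]_{p^{n+1}}\right)=X_{H}\left(p\left[\mathfrak{z}\right]_{p^{n}}+j\right)=r_{j}X_{H}\left(\left[\mathfrak{z}\right]_{p^{n}}\right)+c_{j}$. This uses continuity of $r_{j}$ on $K_{\ell}$, since $r_{j}$ is a linear map of a finite-dimensional space. Convergence at $p\mathfrak{z}+j$ is equivalent to convergence at $\mathfrak{z}$: it amounts to a shift of digits, and the densities are unchanged.

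For uniqueness, suppose $Y$ solves the same equations on the convergence set. Then $D=X_{H}-Y$ satisfies $D\left(p\mathfrak{z}+j\right)=r_{j}D\left(\mathfrak{z}\right)$. Iterating gives $D\left(\mathfrak{z}\right)=r_{j_{1}}\cdots r_{j_{n}}D\left(\sigma^{n}\mathfrak{z}\right)$, where $\sigma$ is the shift. Uniqueness requires $D$ to be bounded along the orbit, which holds for bounded or continuous solutions, or almost surely by recurrence. The product tends to $0$, so $D=0$.

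I expect this uniqueness step to be the main obstacle. Some class restriction is needed (bounded or essentially bounded $D$), and I would make that restriction explicit. In case (I) it can be obtained by a Poincaré recurrence argument: the shift preserves Haar measure, so $\left\Vert D\right\Vert $ returns infinitely often to a bounded set.

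Finally, the non-archimedean bound (\ref{eq:a.e.  ell-adic bound on X_H in NA case}) follows from the ultrametric inequality applied to the partial-sum formula. Each term satisfies $\left|r_{j_{1}}\cdots r_{j_{k-1}}c_{j_{k}}\right|_{\ell}\leq\left|c_{j_{k}}\right|_{\ell}$, and the tail satisfies $\left|r_{\mathbf{j}_{n}}x_{0}\right|_{\ell}\leq\left|x_{0}\right|_{\ell}$. Since $\left(1-r_{0}\right)x_{0}=c_{0}$, we only get $\left|x_{0}\right|_{\ell}\leq\left\Vert \left(1-r_{0}\right)^{-1}\right\Vert _{\ell}\left|c_{0}\right|_{\ell}$, so the tail needs more care. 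If some digit is nonzero, the leading zeros contribute $r_{0}^{k}$ times a bounded quantity. In the worst case the tail is controlled only in the $\limsup$ by rewriting $x_{0}=c_{0}+r_{0}x_{0}$ repeatedly. I would settle this by handling the case where all digits from some point on are $0$ separately, since then $X_{H}=H_{\mathbf{j}_{n}}\left(x_{0}\right)$ is a fixed-point image.
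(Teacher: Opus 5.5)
Two steps in your proposal cannot be completed as written. In both cases the corresponding claim of the statement is false as literally stated, so the obstruction is in the statement rather than in your technique.

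\textbf{Part (I), the pointwise criterion.} Your inequality
\[
\limsup_{n\to\infty}\frac{1}{n}\sum_{j=0}^{p-1}\#_{j}\left(\mathbf{j}_{n}\right)\ln\left\Vert r_{j}\right\Vert _{\ell}\leq\sum_{j=0}^{p-1}d_{H,\ell,j}\left(\mathfrak{z}\right)\ln\left\Vert r_{j}\right\Vert _{\ell}
\]
goes the wrong way.
\begin{itemize}
\item When $\ln\|r_j\|_\ell<0$ one has $\limsup_n\bigl(\tfrac{\#_j}{n}\ln\|r_j\|_\ell\bigr)=\ln\|r_j\|_\ell\cdot\liminf_n\tfrac{\#_j}{n}$.
\item So an upper bound on the growth rate of $\prod_j\|r_j\|_\ell^{\#_j}$ needs $\liminf$ on the contracting digits and $\limsup$ on the expanding ones. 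The definition of $d_{H,\ell,j}$ prescribes the opposite.
\item With the densities as written, the right-hand side only bounds the $\liminf$ of the left-hand quantity from below.
\end{itemize}
The ``in particular'' clause then genuinely fails. Take the shortened Collatz map $T_3$ with $K=\mathbb{Q}$ and $\ell=\infty$. Here $x_0=0$, $\rho_{H,\infty}=3/4$, and every term $r_{j_1}\cdots r_{j_{k-1}}c_{j_k}$ is $\geq0$. Let the digits of $\mathfrak{z}$ alternate between blocks of $0$s and blocks of $1$s, each block $100$ times longer than all preceding digits together.
\begin{itemize}
\item Then $d_{H,\infty,0}\geq100/101$ and $d_{H,\infty,1}\leq1/101$, so $\sum_j d_{H,\infty,j}\ln\|r_j\|_\infty<0$.
\item Yet near the end of each block of $1$s the term $\tfrac12(1/2)^{\#_0(\mathbf{j}_{k-1})}(3/2)^{\#_1(\mathbf{j}_{k-1})}$ is exponentially large in $k$, so the partial sums diverge.
\end{itemize}
Your argument does prove convergence under the condition with $\limsup$ and $\liminf$ interchanged. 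The almost-everywhere claim is unaffected, since for Haar-a.e. $\mathfrak{z}$ all digit densities exist and equal $1/p$.

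\textbf{The non-archimedean bound.} You correctly locate the difficulty in the tail $r_{j_1}\cdots r_{j_N}x_0$. However, the case you propose to treat separately (digits eventually $0$) is exactly where the claim breaks.
\begin{itemize}
\item For $\mathfrak{z}=0$ one has $X_H([\mathfrak{z}]_{p^N})=X_H(0)=x_0$ for every $N$. Hence the $\limsup$ is at least $|x_0|_\ell$.
\item $|x_0|_\ell$ can exceed $\max_j|c_j|_\ell$ when $\|r_0\|_\ell=1$. Take $K=\mathbb{Q}$, $p=2$, $H_0(z)=6z+1$, $H_1(z)=5z$, $\ell=5$. Then $\max_j\|r_j\|_5=1$, $\rho_{H,5}=1/5$ and $\max_j|c_j|_5=1$, but $x_0=-1/5$ has $|x_0|_5=5$.
\end{itemize}
The ultrametric inequality applied to your partial-sum formula gives the bound $\max\{\max_j|c_j|_\ell,|x_0|_\ell\}$. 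This reduces to $\max_j|c_j|_\ell$ only under an extra hypothesis, and no case split will remove that. Two sufficient hypotheses:
\begin{itemize}
\item $\|r_0\|_\ell<1$: then $x_0=c_0+r_0x_0$ forces $|x_0|_\ell\leq|c_0|_\ell$.
\item $H$ centered: then $x_0=0$.
\end{itemize}

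A smaller point concerns continuity of $r_j$ on $K_\ell$. It comes from $\|r_j\|_\ell<\infty$, which is forced by $\rho_{H,\ell}<1$ since each $\|r_j\|_\ell>0$. It does not come from finite-dimensionality: $r_j$ is only $\mathbb{Q}$-linear and $K$ is dense in $K_\ell$. For example, $a+b\sqrt2\mapsto a+b$ is unbounded at a real place of $\mathbb{Q}(\sqrt2)$.

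The rest of your proposal is sound: the telescoping increment identity, part (II), measurability, the functional equation, and your recurrence-based uniqueness among measurable, a.e.-finite solutions. The paper itself gives no argument for this lemma and only refers to an external source.
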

Proof: See \cite{hydra_numen_formalism}.

Q.E.D.

\vphantom{} 
\begin{rem}
Like with \textbf{Proposition \ref{prop:functional equations}}, if
$H$ is not proper but there exist $z\in K$ for which $\left(1-r_{0}\right)z=c_{0}$,
one can recover the result of \textbf{Lemma \ref{lem:extension and measurability of X_H}
}subject to the condition that the uniqueness of $X_{H}$ is requires
not only the condition (\ref{eq:extension of Chi_H}) and the functional
equations (\ref{eq:X_H functional equation}), but also that $X_{H}$
satisfy the initial condition $X_{H}\left(0\right)=z$. 
\end{rem}
\vphantom{} 

\vphantom{} 
\begin{rem}
IFS studies traditionally use spaces of strings (usually called \textbf{words}
in that subject) where, following \cite{my_disseration}, we use the
$p$-adic integers. In passing from arbitrary strings $\mathbf{j}\in\Sigma_{p}^{\infty}$to
the $p$-adic integers using $\textrm{DigSum}_{p}$, we lose distinction
between those strings that end in infinitely many $0$s. Given any
iterated function system generated by maps $f_{1},\ldots,f_{N}$ on
a complete normed vector space $V$ we can avoid this lost information
by defining $p$ to be $N+1$, and then setting $H_{0}$ to be the
identity map of $V$ and setting $H_{j}=f_{j}$ for all $j\in\left\{ 1,\ldots,p-1\right\} $.
The numen: 
\begin{equation}
X_{H}\left(p\mathfrak{z}+j\right)=H_{j}\left(X_{H}\left(\mathfrak{z}\right)\right),\textrm{ }\forall j\in\left\{ 0,\ldots,p-1\right\} ,\textrm{ }\forall\mathfrak{z}\in\mathbb{Z}_{p}
\end{equation}
is then a surjective map from $\mathbb{Z}_{p}$ onto the fractal attractor
$F$ of the $f_{j}$s. If we set $q=N$ and defined 
\begin{equation}
Y_{H}\left(q\mathfrak{z}+j\right)=f_{j+1}\left(Y_{H}\left(\mathfrak{z}\right)\right),\textrm{ }\forall j\in\left\{ 0,\ldots,q-1\right\} ,\textrm{ }\forall\mathfrak{z}\in\mathbb{Z}_{q}
\end{equation}
the image of $Y_{H}$ would omit all the points of $F$ corresponding
to composition sequences: 
\begin{equation}
f_{j_{1}}\circ f_{j_{2}}\circ\cdots
\end{equation}
so that $j_{n}=1$ for all sufficiently large $n$. In this way, the
numen formalism can potentially be used to unify the study of IFS.
This, too, the first author hopes to explore in greater depth in future
papers. 
\end{rem}
\vphantom{}

\subsection{Probabilistic Formalism}

Throughout this paper, we will use the following construction to turn
a given finite set into a random variable:

\vphantom{} 
\begin{defn}
Let $\mathcal{U}$ be a finite, non-empty subset of $\mathcal{O}_{K}$.
We then define an $\mathcal{O}_{K}$-valued random variable, denoted
$\textrm{Log}\mathcal{U}$, by the probability distribution: 
\begin{equation}
\mathbb{P}\left(\textrm{Log}\mathcal{U}\in E\right)=\frac{\sum_{z\in E\cap\mathcal{U}}\left\Vert z\right\Vert _{\textrm{Mink}}^{-d}}{\sum_{z\in\mathcal{U}}\left\Vert z\right\Vert _{\textrm{Mink}}^{-d}}\label{eq:def of Log U}
\end{equation}
defined for all subsets $E\subseteq\mathcal{O}_{K}$.

With this, we then have the notion of the \textbf{logarithmic density}
of a set $A\subseteq\mathcal{O}_{K}$: 
\begin{equation}
d_{\textrm{log}}\left(A\right)\overset{\textrm{def}}{=}\lim_{x\rightarrow\infty}\mathbb{P}\left(\textrm{Log}\left(\left\{ z\in\mathcal{O}_{K}:0\leq\left\Vert z\right\Vert _{\textrm{Mink}}\leq x\right\} \right)\in A\right)\label{eq:def of logarithmic density}
\end{equation}
defined for all $A$ for which the limit exists. Given a statement
$S\left(z\right)$ depending on an element $z\in\mathcal{O}_{K}$,
we say $S\left(z\right)$ \textbf{holds for log-almost all $z\in\mathcal{O}_{K}$}
whenever the set: 
\begin{equation}
A=\left\{ z\in\mathcal{O}_{K}:S\left(z\right)=\textrm{True}\right\} 
\end{equation}
has logarithmic density $1$.

\vphantom{} 
\end{defn}
Next, we recall the definition of a geometric random variable. \vphantom{} 
\begin{defn}
A\textbf{ geometric random variable of mean $p$}; this is the positive-integer
valued random variable $\mathbf{Geom}\left(p\right)$ with the probability
mass function given by: 
\end{defn}
\begin{equation}
\mathbb{P}\left(\mathbf{Geom}\left(p\right)=m\right)=\frac{1}{p}\left(1-\frac{1}{p}\right)^{m-1},\textrm{ }\forall m\in\mathbb{N}_{1}\label{eq:def of geom}
\end{equation}
For an integer $n\geq1$, we then write $\mathbf{Geom}\left(p\right)^{n}$
to denote an $n$-tuple of independent, identically distributed random
variables of type $\mathbf{Geom}\left(p\right)$.

\vphantom{} 
\begin{defn}
Given a discrete space $R$, let $\mathbf{X}$ and $\mathbf{Y}$ be
RVs taking values in $R$. Then, the \textbf{total variation distance
}between $\mathbf{X}$ and $\mathbf{Y}$ is the metric defined by:
\begin{equation}
d_{\textrm{TV}}\left(\mathbf{X},\mathbf{Y}\right)\overset{\textrm{def}}{=}\sum_{r\in R}\left|\textrm{P}\left(\mathbf{X}=r\right)-\textrm{P}\left(\mathbf{Y}=r\right)\right|\label{eq:def of TV distance}
\end{equation}

\vphantom{} 
\end{defn}
Finally, we borrow the following Chernoff-type bound directly from
Tao's paper, where it occurs as \textbf{Lemma 2.2}:

\vphantom{} 
\begin{lem}[Chernoff Bound \cite{Tao Collatz}]
\label{lem:Chernoff Bound}

Let $d\in\mathbb{N}_{1}$, and suppose there is a constant $c_{0}>0$
and a $\mathbb{Z}^{d}$-valued random variable $\mathbf{v}$ so that:
\begin{equation}
\mathbb{P}\left(\Sigma\left(\mathbf{v}\right)\geq\lambda\right)\ll e^{-c_{0}\lambda},\textrm{ }\forall\lambda\geq0\label{eq:tail decay hypothesis for chernoff bound}
\end{equation}
Also, suppose that $\mathbf{v}$ is not almost-surely concentrated
on any coset of any proper subgroup of $\left(\mathbb{Z}^{d},+\right)$.
Let $\vec{\mu}$ denote the mean of $\mathbf{v}$; in what follows,
the positive constant $c$ and all constants of proportionality depend
on $d$, $c_{0}$, and the distribution of $\mathbf{v}$. Letting
$n\in\mathbb{N}_{0}$, let $\mathbf{v}_{1},\ldots,\mathbf{v}_{n}$
be i.i.d. copies of $\mathbf{v}$, and let $\mathbf{v}_{\left[1,n\right]}$
denote $\mathbf{v}_{1}+\cdots+\mathbf{v}_{n}$, and let $G_{n}:\mathbb{R}^{d}\rightarrow\mathbb{R}$
be defined by: 
\begin{equation}
G_{n}\left(\mathbf{x}\right)\overset{\textrm{def}}{=}e^{-\frac{1}{n}\left\Vert \mathbf{x}\right\Vert ^{2}}+e^{-\left\Vert \mathbf{x}\right\Vert }\label{eq:Gaussian-type weights}
\end{equation}
with $G_{0}\left(\mathbf{x}\right)\overset{\textrm{def}}{=}e^{-\left\Vert \mathbf{x}\right\Vert }$.
Then:

\vphantom{}

I. For all $\vec{L}\in\mathbb{Z}^{d}$: 
\begin{equation}
\mathbb{P}\left(\mathbf{v}_{\left[1,n\right]}=\vec{L}\right)\ll\frac{1}{\left(n+1\right)^{d/2}}G_{n}\left(c\left(\vec{L}-n\vec{\mu}\right)\right)\label{eq:Chernoff I}
\end{equation}

\vphantom{}

II. For any $\lambda\geq0$: 
\begin{equation}
\mathbb{P}\left(\Sigma\left(\mathbf{v}_{\left[1,n\right]}-n\vec{\mu}\right)\geq\lambda\right)\ll G_{n}\left(c\lambda\right)\label{eq:Chernoff II}
\end{equation}
\end{lem}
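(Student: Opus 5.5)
We can choose these up to constants. The proof would follow the standard route of exponential moments combined with a Fourier-analytic local limit theorem, all applied to exponentially tilted laws. I read $\Sigma(\cdot)$ as a norm-type size functional on $\mathbb{Z}^{d}$, for instance the sum of the absolute values of the coordinates. The hypothesis (\ref{eq:tail decay hypothesis for chernoff bound}) then says that the moment generating function $M(s)=\mathbb{E}\,e^{s\cdot\mathbf{v}}$ is finite and real-analytic for $\left\Vert s\right\Vert <c_{0}$. Taylor expanding $\log M$ at $0$ gives
\begin{equation*}
\log M(s)=s\cdot\vec{\mu}+\tfrac{1}{2}s^{T}\Sigma_{\mathbf{v}}s+O(\left\Vert s\right\Vert ^{3}),
\end{equation*}
where $\Sigma_{\mathbf{v}}$ is the covariance matrix of $\mathbf{v}$. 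The covariance matrix is nondegenerate because $\mathbf{v}$ is not concentrated on a coset of a proper subgroup, and in particular not on an affine hyperplane.

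For (II), I would run the classical Chernoff argument. Fix a unit direction $e$ among the finitely many coordinate sign-directions and some $0<t\leq c_{1}$. Independence and the expansion above give
\begin{equation*}
\mathbb{E}\,e^{te\cdot(\mathbf{v}_{[1,n]}-n\vec{\mu})}=\left(e^{-te\cdot\vec{\mu}}M(te)\right)^{n}\leq e^{Cnt^{2}}.
\end{equation*}
Markov's inequality then gives $\mathbb{P}(e\cdot(\mathbf{v}_{[1,n]}-n\vec{\mu})\geq\lambda/2d)\leq e^{Cnt^{2}-t\lambda/2d}$. I would take $t=\min(c_{1},\lambda/(4dCn))$. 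If $\lambda\lesssim n$ this yields $e^{-c\lambda^{2}/n}$, and otherwise it yields $e^{-c\lambda}$. A union bound over the $2d$ directions then gives (\ref{eq:Chernoff II}). The case $n=0$ is trivial.

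For (I), I would begin with the untilted bound $\mathbb{P}(\mathbf{v}_{[1,n]}=\vec{L})\ll(n+1)^{-d/2}$. Write $\varphi(\theta)=\mathbb{E}\,e^{i\theta\cdot\mathbf{v}}$, so that by Fourier inversion $\mathbb{P}(\mathbf{v}_{[1,n]}=\vec{L})=(2\pi)^{-d}\int_{[-\pi,\pi]^{d}}\varphi(\theta)^{n}e^{-i\theta\cdot\vec{L}}\,d\theta$. The non-concentration hypothesis is exactly the statement that $|\varphi(\theta)|<1$ for $\theta\notin2\pi\mathbb{Z}^{d}$. Combined with the nondegenerate quadratic expansion near $0$, compactness gives $|\varphi(\theta)|\leq1-c\left\Vert \theta\right\Vert ^{2}\leq e^{-c\left\Vert \theta\right\Vert ^{2}}$ on the torus. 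Integrating this bound gives $\ll(n+1)^{-d/2}$.

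To obtain the Gaussian and exponential factor I would tilt. For $\left\Vert s\right\Vert \leq c_{1}$, let $\mathbf{v}^{(s)}$ have law $\mathbb{P}(\mathbf{v}^{(s)}=x)=e^{s\cdot x}\mathbb{P}(\mathbf{v}=x)/M(s)$. This gives the exact identity
\begin{equation*}
\mathbb{P}(\mathbf{v}_{[1,n]}=\vec{L})=M(s)^{n}e^{-s\cdot\vec{L}}\,\mathbb{P}(\mathbf{v}_{[1,n]}^{(s)}=\vec{L}).
\end{equation*}
The tilted law has the same support, so it is still non-concentrated. Its characteristic function $\varphi_{s}(\theta)=\varphi(\theta-is)/M(s)$ depends continuously on $(s,\theta)$. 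The main obstacle I expect is making the bound $|\varphi_{s}(\theta)|\leq e^{-c\left\Vert \theta\right\Vert ^{2}}$ uniform over $\left\Vert s\right\Vert \leq c_{1}$. I would handle it by compactness of $\{\left\Vert s\right\Vert \leq c_{1}\}\times\{\delta\leq\left\Vert \theta\right\Vert _{\infty}\leq\pi\}$, together with a uniform second-order Taylor bound near $\theta=0$ coming from the uniformly positive-definite tilted covariances. Granting this, the untilted argument gives $\mathbb{P}(\mathbf{v}_{[1,n]}^{(s)}=\vec{L})\ll(n+1)^{-d/2}$ uniformly in $s$. Hence
\begin{equation*}
\mathbb{P}(\mathbf{v}_{[1,n]}=\vec{L})\ll(n+1)^{-d/2}\exp\left(n\log M(s)-s\cdot\vec{L}\right).
\end{equation*}
Writing $\vec{x}=\vec{L}-n\vec{\mu}$, the exponent is $-s\cdot\vec{x}+O(n\left\Vert s\right\Vert ^{2})$. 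I would choose $s=\epsilon\vec{x}/n$ when $\left\Vert \vec{x}\right\Vert \leq c_{1}n/\epsilon$, which gives $e^{-c\left\Vert \vec{x}\right\Vert ^{2}/n}$. Otherwise I would choose $s=c_{1}\vec{x}/\left\Vert \vec{x}\right\Vert $, which gives $e^{-c\left\Vert \vec{x}\right\Vert }$. In both cases the result is bounded by $G_{n}(c\vec{x})$, which is (\ref{eq:Chernoff I}).
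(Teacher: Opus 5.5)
Your proof is correct. It is essentially Tao's own proof of his Lemma 2.2, which the paper quotes without reproducing: for (II) you use exponential moments and Markov's inequality with an optimized $t$ and a union bound over signed coordinate directions, and for (I) you use Fourier inversion with the non-degeneracy bound $|\varphi(\theta)|\leq e^{-c\Vert\theta\Vert^{2}}$, made uniform over exponential tilts, and choose the tilt $s$ differently in the Gaussian and exponential regimes. Reading $\Sigma(\cdot)$ as a norm is also the right call, since it plays the role of Tao's $|\cdot|$; the paper's literal signed coordinate sum controls only one tail in one direction and would not even guarantee a finite moment generating function near $0$.
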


\section{\label{sec:Outline-of-the}Outline of the Main Argument}

Let everything be as given in \textbf{Theorem \ref{thm:almost boundedness}}.
As with Tao's work, the overarching idea is helpfully straightforward.
Formally, for our branches $H_{j}\left(z\right)=r_{j}z+c_{j}$, fixing
$n\geq1$ and given a string $\mathbf{j}=\left(j_{1},\ldots,j_{n}\right)$,
we have: 
\begin{equation}
H_{\mathbf{j}}\left(z\right)=r_{\mathbf{j}}z+X_{H}\left(\mathbf{j}\right)=\left(\prod_{m=1}^{n}r_{j_{m}}\right)z+\underbrace{\sum_{m=1}^{n}\left(\prod_{k=1}^{m-1}r_{j_{k}}\right)c_{j_{m}}}_{X_{H}\left(\mathbf{j}\right)}
\end{equation}
for all $z\in\mathcal{O}_{K}$, where: 
\begin{equation}
r_{\mathbf{j}}\overset{\textrm{def}}{=}\prod_{m=1}^{n}r_{j_{m}}\overset{\textrm{def}}{=}r_{j_{1}}r_{j_{2}}\cdots r_{j_{n}}
\end{equation}
Note that for any given $z$, there is a unique string $\mathbf{j}\left(z\right)=\left(j_{1}\left(z\right),\ldots,j_{n}\left(z\right)\right)$
of length $n$ so that $H_{\mathbf{j}\left(z\right)}\left(z\right)=H^{\circ n}\left(z\right)$,
where $H^{\circ n}\left(z\right)$ is the result of $n$ iterations
of $z$ under $H$. As such, choosing $\mathbf{j}\left(z\right)$
for $\mathbf{j}$ and taking Minkowski norms, we get: 
\begin{equation}
\left\Vert H_{\mathbf{j}\left(z\right)}\left(z\right)\right\Vert _{\textrm{Mink}}\leq\left(\prod_{k=0}^{p-1}\left\Vert r_{k}\right\Vert _{\textrm{Mink}}^{\#_{k}\left(\mathbf{j}\left(z\right)\right)}\right)\left\Vert z\right\Vert _{\textrm{Mink}}+\sum_{m=0}^{n-1}\left(\prod_{h=0}^{p-1}\left\Vert r_{h}\right\Vert _{\textrm{Mink}}^{\#_{h}\left(\mathbf{j}_{m}\left(z\right)\right)}\right)\left\Vert c_{j_{m}\left(z\right)}\right\Vert _{\textrm{Mink}}\label{eq:H_bold j upper estimate for section 3 intro}
\end{equation}
where $\mathbf{j}_{m}\left(z\right)=\left(j_{1}\left(z\right),\ldots,j_{m}\left(z\right)\right)$,
with $\mathbf{j}_{0}\left(z\right)=\varnothing$. As in Tao's case,
we will study the relation between $z$ and $\mathbf{j}\left(z\right)$
by replacing $z$ with a random variable $\mathcal{N}$ that takes
values in $\mathcal{O}_{K}$.

Now, suppose we can show that there is a choice of probability distribution
for $\mathcal{N}$ so that $\mathcal{N}$'s trajectory under $H$
(i.e., $\mathcal{N},H\left(\mathcal{N}\right),H^{\circ2}\left(\mathcal{N}\right)$)
is uniformly distributed mod $\Lambda$. This would then mean that
the entries of sequence $\mathbf{j}_{m}\left(\mathcal{N}\right)=\left(j_{1}\left(\mathcal{N}\right),\ldots,j_{m}\left(\mathcal{N}\right)\right)$
would be chosen from $\left\{ 0,\ldots,p-1\right\} $ according to
a uniform distribution. As such, by the \textbf{Law of Large Numbers},
we would expect that, as $m\rightarrow\infty$, the number of $k$s
that occur in $\mathbf{j}_{m}\left(\mathcal{N}\right)$ (which, recall,
is the to the number of times $H^{\circ h}\left(\mathcal{N}\right)$
lies in $\Lambda_{k}$ for $h\in\left\{ 0,\ldots,m-1\right\} $) would
satisfy: 
\begin{equation}
\#_{k}\left(\mathbf{j}_{m}\left(\mathcal{N}\right)\right)\approx\frac{m}{p},\textrm{ }\forall k\in\left\{ 0,\ldots,p-1\right\} \label{eq:geometric RV heuristic}
\end{equation}
for all sufficiently large $m$. If we fudge things a little and assume
that (\ref{eq:geometric RV heuristic}) holds true for all $m$, we
would get: 
\begin{align*}
\left\Vert H^{\circ n}\left(\mathcal{N}\right)\right\Vert _{\textrm{Mink}} & \leq\left(\prod_{k=0}^{p-1}\left\Vert r_{k}\right\Vert _{\textrm{Mink}}^{\#_{k}\left(\mathbf{j}\left(\mathcal{N}\right)\right)}\right)\left\Vert \mathcal{N}\right\Vert _{\textrm{Mink}}+\sum_{m=0}^{n-1}\left(\prod_{h=0}^{p-1}\left\Vert r_{h}\right\Vert _{\infty}^{\#_{h}\left(\mathbf{j}_{m}\left(\mathcal{N}\right)\right)}\right)\left\Vert c_{j_{m}\left(\mathcal{N}\right)}\right\Vert _{\textrm{Mink}}\\
 & \approx\left(\prod_{k=0}^{p-1}\left\Vert r_{k}\right\Vert _{\textrm{Mink}}\right)^{n/p}\left\Vert \mathcal{N}\right\Vert _{\textrm{Mink}}+\sum_{m=0}^{n-1}\left(\prod_{h=0}^{p-1}\left\Vert r_{h}\right\Vert _{\textrm{Mink}}\right)^{m/p}\left\Vert c_{j_{m}\left(\mathcal{N}\right)}\right\Vert _{\textrm{Mink}}\\
\left(\max_{0\leq j<p}\left\Vert c_{j}\right\Vert _{\textrm{Mink}}<\infty\right); & \ll\left(\prod_{k=0}^{p-1}\left\Vert r_{k}\right\Vert _{\textrm{Mink}}\right)^{n/p}\left\Vert \mathcal{N}\right\Vert _{\textrm{Mink}}+\underbrace{\sum_{m=0}^{n-1}\left(\prod_{h=0}^{p-1}\left\Vert r_{h}\right\Vert _{\textrm{Mink}}\right)^{m/p}}_{\textrm{sum this}}\\
\left(\prod_{k=0}^{p-1}\left\Vert r_{k}\right\Vert _{\textrm{Mink}}=\rho_{H}\right); & \ll\rho_{H}^{n/p}\left\Vert \mathcal{N}\right\Vert _{\textrm{Mink}}
\end{align*}
Thus, we are led to the prediction that: 
\begin{equation}
\rho_{H}<1\label{eq:geometric mean heuristic}
\end{equation}
implies: 
\begin{equation}
\left\Vert H^{\circ n}\left(\mathcal{N}\right)\right\Vert _{\textrm{Mink}}\ll\rho_{H}^{n/p}\left\Vert \mathcal{N}\right\Vert _{\textrm{Mink}}\label{eq:almost boundedness heuristic}
\end{equation}
In particular, we then have that $\left\Vert H^{\circ n}\left(\mathcal{N}\right)\right\Vert _{\textrm{Mink}}<\left\Vert \mathcal{N}\right\Vert _{\textrm{Mink}}$
should be bounded, provided that $n$ is much smaller than $\ln\left\Vert \mathcal{N}\right\Vert _{\textrm{Mink}}$.
The work before us is simply a matter of making the above heuristics
rigorous.

\subsection{Comparisons to Tuples of Geometric Random Variables}

The first and most important technical difference separating our work
from Tao's is the greater complexity of keeping track of $\mathcal{N}$'s
trajectory under $H$. For Tao's case, given any integers $N\in\mathbb{Z}$
and $n\geq0$, applying the Collatz map $C$ to $N$ a total of $n$
times will result in a length-$n$ sequence of applications of the
even branch ($x\mapsto x/2$) and odd branch ($x\mapsto3x+1$) of
the Collatz map. Because of this, Tao centered his analysis on the
following accelerated version of $C$, the so-called \textbf{Syracuse
map} $\textrm{Syr}:2\mathbb{Z}+1\rightarrow2\mathbb{Z}+1$ defined
by: 
\begin{equation}
\textrm{Syr}\left(N\right)\overset{\textrm{def}}{=}\frac{3N+1}{2^{v_{2}\left(3N+1\right)}},\textrm{ }\forall N\in2\mathbb{Z}+1
\end{equation}
where $v_{2}$ is the $2$-adic valuation. As constructed, when $N$
is odd, $\textrm{Syr}\left(N\right)$ will be the next odd integer
in $N$'s trajectory under $C$. As such, given $n\geq1$, the total
number of times Collatz's even branch would be applied in order to
send $N$ to $\textrm{Syr}^{\circ n}\left(N\right)$ can made arbitrarily
large by making an appropriate choice of $N$. Tao's \textbf{valuation
heuristic} (\textbf{Heuristic 1.8} from his paper) asserted that when
$N$ is very large and $n$ is much smaller than $\ln N$, the number
of applications of the even branch needed for $C$ to map $N$ to
$\textrm{Syr}^{\circ n}\left(N\right)$ behaves like a geometric random
variable. The analogue of this heuristic in our argument is precisely
the approximation (\ref{eq:geometric RV heuristic}) used above. To
see how, let us introduce some simple terminology.

\vphantom{} 
\begin{defn}
Given our $\Lambda$-Hydra $H$ and a $z\in\mathcal{O}_{K}$, let
$\mathbf{j}\left(z\right)=\left(j_{1}\left(z\right),j_{2}\left(z\right),\ldots\right)\in\Sigma_{p}^{\infty}$
be the $p$-ity vector of $z$, so that: 
\[
H^{\circ m}\left(z\right)=\left(H_{j_{1}}\circ H_{j_{2}}\circ\cdots\circ H_{j_{m}}\right)\left(z\right),\textrm{ }\forall m\geq1
\]
where $H^{\circ m}$ is the composition of $m$ copies of $H$. Given
$n\geq1$ and $j\in\left\{ 0,\ldots,p-1\right\} $, we say $z$'s
trajectory has a \textbf{length $n$ run of $j$s} whenever there
is a $k\geq1$ so that $j_{m}\left(z\right)=j$ for all $m\in\left\{ k,k+1,\ldots,k+n-1\right\} $.

More generally, by a \textbf{run}, we mean a consecutive sequence
of applications of a single branch of $H$. 
\end{defn}
\vphantom{}

Tao's definition of $\textrm{Syr}$ is best understood as being built
around runs. Given our random variable $\mathcal{N}$, applying $\textrm{Syr}$
to $\mathcal{N}$ means considering a parity vector for $N$ consisting
of a run of $0$s followed by a $1$. It is a basic fact of probability
that given a sequence $a_{1},a_{2},\ldots$ whose entries are chosen
randomly from a finite set $A$, the lengths of the runs in the sequence
will behave as a geometric random variable. Thus, we see that if $\mathcal{N}$'s
trajectory under the Collatz map is uniformly distributed mod $2$,
the lengths of the runs of $0$s that occur in applying $\textrm{Syr}$
should obey a geometric distribution, precisely as asserted by Tao's
valuation heuristic.

Thus, it is no surprise that Tao's rigorous formulation of his valuation
heuristic in \textbf{Proposition 1.9} of his paper involves comparing
$\mathcal{N}$ to a random variable which is uniformly distributed
modulo $2$; specifically, modulo a sufficiently large power of $2$.

\vphantom{} 
\begin{prop}[\textbf{\textit{Tao - Proposition 1.9}}\textit{ \cite{Tao Collatz}}]
\label{prop:Tao geometric comparison}Let $n\in\mathbb{N}_{0}$,
and let $\mathcal{N}$ be a random variable taking values in $2\mathbb{N}_{0}+1$
. Suppose there exist an absolute constant $c_{0}>0$ and some natural
number $n^{\prime}\geq\left(2+c_{0}\right)n$ such that $\mathcal{N}\textrm{ mod }2^{n^{\prime}}$
is approximately uniformly distributed in the odd residue classes
$\left(2\mathbb{Z}+1\right)/2^{n^{\prime}}\mathbb{Z}$ of $\mathbb{Z}/2^{n^{\prime}}\mathbb{Z}$,
in the sense that: 
\begin{equation}
d_{\textrm{TV}}\left(\mathcal{N}\textrm{ mod }2^{n^{\prime}},\mathbf{Unif}\left(\left(2\mathbb{Z}+1\right)/2^{n^{\prime}}\mathbb{Z}\right)\right)\ll2^{-n^{\prime}}\label{eq:Tao uniform condition}
\end{equation}
Then: 
\begin{equation}
d_{\textrm{TV}}\left(\vec{a}_{\textrm{Tao}}^{\left(n\right)}\left(\mathcal{N}\right),\mathbf{Geom}\left(2\right)^{n}\right)\ll2^{-c_{1}n}\label{eq:Tao local normality lemma estimate}
\end{equation}
for some absolute constant $c_{1}>0$ that depends on $c_{0}$. The
implied constants in the asymptotic notation are also allowed to depend
on $c_{0}$.

\vphantom{} 
\end{prop}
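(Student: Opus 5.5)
The plan is the standard argument, as in Tao's paper: prove that the $2$-valuations generated along the Syracuse orbit are governed by the residue of $\mathcal{N}$ modulo a power of $2$, and then apply the uniformity hypothesis (\ref{eq:Tao uniform condition}). Here $\vec{a}_{\textrm{Tao}}^{(n)}(\mathcal{N})=(a_1,\ldots,a_n)$ is the tuple of valuations $a_i=v_2(3\textrm{Syr}^{\circ(i-1)}(\mathcal{N})+1)$.

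\emph{Step 1: compare with a geometric tuple.} Fix $\vec{a}=(a_1,\ldots,a_n)\in\mathbb{N}_1^n$ with $|\vec{a}|=a_1+\cdots+a_n\le n'$. By induction on $n$ we show that the event $\{\vec{a}^{(n)}_{\textrm{Tao}}(N)=\vec{a}\}$, for odd $N$, is a single residue class modulo $2^{|\vec{a}|+1}$ among the odd classes. The inductive step goes as follows. Suppose the first $n-1$ valuations are prescribed. Then
\[
3^{n-1}N+c_{\vec{a}}=2^{a_1+\cdots+a_{n-1}}\textrm{Syr}^{\circ(n-1)}(N)
\]
for an explicit constant $c_{\vec{a}}$. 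Prescribing $a_n$ means fixing $\textrm{Syr}^{\circ(n-1)}(N)$ modulo $2^{a_n+1}$. Because $3$ is invertible modulo powers of $2$, this carves out exactly one class modulo the next power of $2$. It follows that, for uniformly distributed odd $N$ modulo $2^{n'}$, the probability of this event is exactly $2^{-|\vec{a}|}=\mathbb{P}(\mathbf{Geom}(2)^n=\vec{a})$.

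\emph{Step 2: pass to $\mathcal{N}$.} On the set $\{|\vec{a}|\le n'-1\}$, the event depends only on $\mathcal{N}\bmod 2^{n'}$. The difference between the law of $\vec{a}^{(n)}_{\textrm{Tao}}(\mathcal{N})$ and that of $\mathbf{Geom}(2)^n$ restricted to these tuples is therefore bounded by the total variation in (\ref{eq:Tao uniform condition}). That distance is $\ll 2^{-n'}$, which is negligible.

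\emph{Step 3: the tail.} This is the main obstacle. We must control the event $|\vec{a}|\ge n'$ under both laws. For $\mathbf{Geom}(2)^n$ the mean of the sum is $2n$ and $n'\ge(2+c_0)n$, so a Chernoff bound (e.g. Lemma \ref{lem:Chernoff Bound}(II) with $d=1$) gives probability $\ll 2^{-c_1 n}$. For $\mathcal{N}$ we avoid estimating the tail directly and use complementation. Summing Steps 1 and 2 over $|\vec{a}|<n'$ shows that $\mathbb{P}(|\vec{a}^{(n)}_{\textrm{Tao}}(\mathcal{N})|<n')$ is within $O(2^{-n'})$ of the corresponding geometric probability. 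Hence the tail for $\mathcal{N}$ is also $\ll 2^{-c_1n}+2^{-n'}$.

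Adding the three contributions gives (\ref{eq:Tao local normality lemma estimate}), with $c_1$ depending only on $c_0$.
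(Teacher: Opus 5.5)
Your proof is correct. Steps 1 and 2 are the same residue-class-plus-uniformity argument the paper uses; the paper quotes this result from Tao and reproduces his proof, in generalized form, as Lemma \ref{lem:Geometric comparison}. The genuine difference is how you handle the tail $\{|\vec a|\ge n'\}$ for $\mathcal{N}$.

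The paper, following Tao, proves a freestanding tail bound (Proposition \ref{prop:tail bound}). It decomposes according to the first run index $k$ at which the cumulative count reaches $n'$. It notes that each admissible prefix confines $\mathcal{N}$ to a single class modulo $2^{n'}$ (a coset of $\Lambda^{n'}$ in general). It then counts prefixes by a binomial coefficient and applies Stirling using $n'\ge(2+c_0)n$. Finally, it splits the total-variation sum at an auxiliary threshold $L$ with $n\max\{2,p/c\}\ll L<n'$.

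Your complementation step removes the need for all of this. Both laws are probability measures on $\mathbb{N}_1^n$, since the valuations are finite because $\mathcal{N}>0$. The bulk masses agree with $2^{-|\vec a|}$ up to a total error $O(2^{-n'})$. Hence the tail mass of $\mathcal{N}$ is forced to lie within $O(2^{-n'})$ of $\mathbb{P}(|\mathbf{Geom}(2)^n|\ge n')$. The only large-deviation input is then the Chernoff bound for i.i.d.\ geometrics at the threshold $n'$ itself.

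Your route is shorter, avoids the counting and Stirling estimates, and exhibits $c_1$ directly as the Chernoff rate of $\mathbf{Geom}(2)^n$ at level $(2+c_0)n$. The direct route buys two things in return:
\begin{itemize}
\item it uses only the pointwise consequence $\mathbb{P}(\mathcal{N}\equiv r \bmod 2^{n'})\ll 2^{-n'}$ of the hypothesis, not the exact matching of bulk probabilities;
\item it produces a standalone tail lemma, which the paper reuses in per-branch form in the passage-time estimate.
\end{itemize}

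One minor slip: in Step 1, the exact probability $2^{-|\vec a|}$ under the uniform law modulo $2^{n'}$ needs $|\vec a|\le n'-1$. For $|\vec a|=n'$ the event is a class modulo $2^{n'+1}$, so it is not determined by the residue modulo $2^{n'}$. Your Step 2 already restricts to $|\vec a|\le n'-1$, so nothing breaks.
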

Here, $\vec{a}_{\textrm{Tao}}^{\left(n\right)}\left(\mathcal{N}\right)$,
Tao's version of the \textbf{$n$-path} of $\mathcal{N}$, is the
integer-tuple-valued random variable defined by: 
\begin{equation}
\vec{a}_{\textrm{Tao}}^{\left(n\right)}\left(\mathcal{N}\right)\overset{\textrm{def}}{=}\left(v_{2}\left(3\mathcal{N}+1\right),v_{2}\left(3\textrm{Syr}\left(\mathcal{N}\right)+1\right),\ldots,v_{2}\left(3\textrm{Syr}^{\circ n-1}\left(\mathcal{N}\right)+1\right)\right)\label{eq:Tao's n-path}
\end{equation}
while $\mathbf{Geom}\left(2\right)^{n}$ is an $n$-tuple of i.i.d.
geometric random variables of mean $2$. As defined, (\ref{eq:Tao's n-path})
counts the number of times we divide by $2$ in sending an odd integer
$\mathcal{N}$ to the next odd integer in \emph{$\mathcal{N}$'s}
Collatz trajectory. Our analogue of \textbf{Proposition \ref{prop:Tao geometric comparison}}
is \textbf{Lemma \ref{lem:Geometric comparison}}; the statement of
the Lemma is given below, after some motivating examples and the necessary
collection of definitions. The proof of our Lemma is effectively the
same as Tao's; the differences lie in how our Lemma is formulated.

Tao's set up constructs $\vec{a}^{\left(n\right)}\left(\mathcal{N}\right)$
to keep track of the lengths of the consecutive runs of applications
of the even branch of Collatz, and compares it to a tuple of geometric
random variables. We do much the same, except than instead of singling
out a single branch as Tao does---counting only runs of $H_{0}$---we
will keep track of the lengths of the consecutive runs of \emph{any}
given branch. As the definition is a bit of a mouthful, it helps to
motivate it with an example.

\vphantom{} 
\begin{example}
Let $z\in\mathcal{O}_{K}$, and let $p=3$. Using our abuse of notation
of writing $j\in\left\{ 0,\ldots,p-1\right\} $ to denote the element
of $\mathcal{O}_{K}/\Lambda$ corresponding to the coset $\Lambda_{j}$,
suppose the sequence $\left[z\right]_{\Lambda},\left[H\left(z\right)\right]_{\Lambda},\ldots,\left[H^{\circ11}\left(z\right)\right]_{\Lambda}$
is: 
\begin{equation}
0,0,1,2,2,1,1,1,0,1,1,2\label{eq:sample path vector}
\end{equation}
read from left-to-right. In order to keep track of the runs of consecutive
branches, let $\left\{ \mathbf{e}_{0},\mathbf{e}_{1},\mathbf{e}_{2}\right\} $
be the standard basis for $\mathbb{R}^{3}$ ($\mathbf{e}_{0}=\left(1,0,0\right)$,
$\mathbf{e}_{1}=\left(0,1,0\right)$, $\mathbf{e}_{2}=\left(0,0,1\right)$).
Then, we can annotate (\ref{eq:sample path vector}) by writing: 
\begin{equation}
2\mathbf{e}_{0},\mathbf{e}_{1},2\mathbf{e}_{2},3\mathbf{e}_{1},\mathbf{e}_{0},2\mathbf{e}_{1},\mathbf{e}_{2}\label{eq:sample run length vector}
\end{equation}
Read left-to-right, the subscripts of the vectors in this sequence
list the branch being used in a given run, while the coefficients
of the vectors give the number of branches in a given run. Thus, the
\textbf{first run} of (\ref{eq:sample path vector}) is two instances
of $0$; the \textbf{second run} is one instance of $1$, the \textbf{third
run }is two instances of $2$, and so on. We then call the sequence
(\ref{eq:sample run length vector}) the \textbf{seventh run vector
of} $z$, as it has seven terms.

\vphantom{} 
\end{example}
In this manner, we will keep track of the behavior of $\mathcal{N}$'s
trajectory under $H$ mod $\Lambda$ by using sequences of positive
integer multiples of vectors from the standard basis of $\mathbb{R}^{p}$. 

\vphantom{} 
\begin{defn}
\label{def:path and run vectors}Let $m,n\in\mathbb{N}_{1}$, and
let $z\in\mathcal{O}_{K}$. The \textbf{length $m$} \textbf{path
vector} \textbf{of $z$} is the sequence $\left[z\right]_{\Lambda},\left[H\left(z\right)\right]_{\Lambda},\ldots,\left[H^{\circ m-1}\left(z\right)\right]_{\Lambda}$
in $\mathcal{O}_{K}/\Lambda$. As per our use of the indices $j\in\left\{ 0,\ldots,p-1\right\} $
to label the elements of $\mathcal{O}_{K}/\Lambda$, we shall abuse
notation and treat the path vector of $z$ as a sequence of numbers
in $\left\{ 0,\ldots,p-1\right\} $. Notation-wise, we denote the
length $m$ path vector by: 
\begin{equation}
\mathbf{j}_{m}\left(z\right)\overset{\textrm{def}}{=}\left(\left[z\right]_{\Lambda},\left[H\left(z\right)\right]_{\Lambda},\ldots,\left[H^{\circ m-1}\left(z\right)\right]_{\Lambda}\right)\label{eq:path vector}
\end{equation}
Note the order of the indices in (\ref{eq:path vector}), which is
done to be compatible with our notation for composition sequences.
For brevity, we write: 
\begin{equation}
j_{k}\left(z\right)\overset{\textrm{def}}{=}\left[H^{\circ k}\left(z\right)\right]_{\Lambda}
\end{equation}

Next, note there exist unique positive integers $\nu_{1}\left(z\right),\ldots,\nu_{n}\left(z\right)$
and $\textrm{Stay}_{1}\left(z\right),\ldots,\textrm{Stay}_{n}\left(z\right)\in\left\{ 0,\ldots,p-1\right\} $
that: 
\begin{eqnarray*}
 & z\overset{\Lambda}{\equiv}H\left(z\right)\overset{\Lambda}{\equiv}\cdots\overset{\Lambda}{\equiv}H^{\circ\nu_{1}\left(z\right)-1}\left(z\right)\overset{\Lambda}{\equiv}\Lambda_{\textrm{Stay}_{1}\left(z\right)}\\
 & H^{\circ\nu_{1}\left(z\right)}\left(z\right)\overset{\Lambda}{\equiv}H\left(z\right)\overset{\Lambda}{\equiv}\cdots\overset{\Lambda}{\equiv}H^{\circ\nu_{2}\left(z\right)-1}\left(z\right)\overset{\Lambda}{\equiv}\Lambda_{\textrm{Stay}_{2}\left(z\right)}\\
 & \vdots\\
 & H^{\circ\nu_{n-1}\left(z\right)}\left(z\right)\overset{\Lambda}{\equiv}H\left(z\right)\overset{\Lambda}{\equiv}\cdots\overset{\Lambda}{\equiv}H^{\circ\nu_{n}\left(z\right)-1}\left(z\right)\overset{\Lambda}{\equiv}\Lambda_{\textrm{Stay}_{n}\left(z\right)}
\end{eqnarray*}
Here, $\nu_{k}$ is the lengths of the $k$th run in $z$'s trajectory,
while $\textrm{Stay}_{k}\left(z\right)$ is the index $j$ so that
the $k$th run of $z$ has $z$ staying in the coset $\Lambda_{j}$s
of $\Lambda$. Letting $B=\left\{ \mathbf{e}_{0},\ldots,\mathbf{e}_{p-1}\right\} $
be the standard basis $\mathbb{Z}^{p}$, we define the \textbf{$n$th
run vector of $z$} as: 
\begin{equation}
\vec{a}^{\left(n\right)}\left(z\right)\overset{\textrm{def}}{=}\left(\nu_{1}\left(z\right)\mathbf{e}_{\textrm{Stay}_{1}\left(z\right)},\nu_{n}\left(z\right)\mathbf{e}_{\textrm{Stay}_{n}\left(z\right)}\right)\label{eq:nth run vector def}
\end{equation}
Again, note the order of the indices. The $k$th component of the
$n$th run vector is a vector in $\mathbb{R}^{p}$ whose direction
encodes the branch applied to $z$ during the $k$th run, and whose
magnitude is the length of the $k$th run of $z$. Finally, we write
$\vec{\#}^{\left(n\right)}\left(z\right)$ to denote the element of
$\mathbb{N}_{1}^{p}$ defined by: 
\begin{equation}
\vec{\#}^{\left(n\right)}\left(z\right)\overset{\textrm{def}}{=}\sum_{m=1}^{n}\nu_{m}\left(z\right)\mathbf{e}_{\textrm{Stay}_{m}\left(z\right)}=\Sigma\left(\vec{a}^{\left(n\right)}\left(z\right)\right)\label{eq:j branch count}
\end{equation}
We can also write this as: 
\begin{equation}
\vec{\#}^{\left(n\right)}\left(z\right)=\left(\#_{0}^{\left(n\right)}\left(z\right),\ldots,\#_{p-1}^{\left(n\right)}\left(z\right)\right)
\end{equation}
where $\#_{j}^{\left(n\right)}\left(z\right)$ is the total number
of times $H_{j}$ was applied in sending $z$ to $H^{\circ\nu_{n}\left(z\right)}\left(z\right)$.

As defined, $\vec{a}^{\left(n\right)}\left(z\right)$ is a potentially
very long vector whose entires encode both the length and branch for
each of the first $n$ runs in $z$'s trajectory. In doing so, it
keeps track of the order in which the branches are applied. Meanwhile,
the $j$th entry of the $p$-tuple $\vec{\#}^{\left(n\right)}\left(z\right)$
is the total number of times the $j$th branch of $H$ was applied
during the first $n$ runs of $z$.

Finally, given a vector $\mathbf{v}=\left(v_{0},\ldots,v_{p-1}\right)\in\mathbb{N}_{1}^{p}$,
we write $\vec{\#}^{\left(n\right)}\left(z\right)\geq\mathbf{v}$
to mean that $\#_{j}^{\left(n\right)}\left(z\right)\geq v_{j}$ for
all $j\in\left\{ 0,\ldots,p-1\right\} $; the notation $\vec{\#}^{\left(n\right)}\left(z\right)\leq\mathbf{v}$
is defined similarly. 
\end{defn}
We also have the following $\Sigma$ notation that will be used throughout
the paper: 

\vphantom{}
\begin{defn}
Let $G$ be an abelian group, let $n\geq1$, and let $\mathbf{v}=\left(v_{1},\ldots,v_{n}\right)\in G^{n}$.
Then, we write $\Sigma\left(\mathbf{v}\right)$ to denote the sum
$v_{1}+\cdots+v_{n}$ in $G$.

A specific case of this that will occur in this paper is the following:
$\vec{\mathbf{v}}=\left(\mathbf{v}_{1},\ldots,\mathbf{v}_{n}\right)$,
where $\mathbf{v}_{1},\ldots,\mathbf{v}_{n}$ are elements of $\mathbb{R}^{p}$,
where $p\geq2$. Then, $\vec{\mathbf{v}}\in\left(\mathbb{R}^{p}\right)^{n}$
and $\Sigma\left(\vec{\mathbf{v}}\right)=\mathbf{v}_{1}+\cdots+\mathbf{v}_{n}\in\mathbb{R}^{p}$.
As such, we can apply $\Sigma$ again to get a real number $\Sigma\left(\Sigma\left(\vec{\mathbf{v}}\right)\right)=\Sigma\left(\mathbf{v}_{1}\right)+\cdots+\Sigma\left(\mathbf{v}_{n}\right)$.
We also write $\Sigma_{k}\left(\vec{\mathbf{v}}\right)$ to denote
$\mathbf{v}_{1}+\cdots+\mathbf{v}_{k}$.
\end{defn}
\begin{rem}
Morally, $\Sigma$ occurs when we are summing the entires of a tuple.
The subscript indicates how many entries we are summing. 
\end{rem}
Where Tao compares $\vec{a}_{\textrm{Tao}}^{\left(n\right)}\left(\mathcal{N}\right)$
to an $n$-tuple of geometric random variables of mean $2$, we will
need to construct a hybrid geometric-uniform random variable in order
to use to compare to $\vec{a}^{\left(n\right)}\left(\mathcal{N}\right)$.

\vphantom{} 
\begin{defn}
Let everything be as in \textbf{Definition \ref{def:path and run vectors}}. 

I. $B^{n}$, to denote the set of length $n$ sequences $\vec{\mathbf{v}}=\left(\mathbf{v}_{1},\ldots,\mathbf{v}_{n}\right)$
of vectors $\mathbf{v}_{1},\ldots,\mathbf{v}_{n}\in B$.

II. $B^{n*}$, to denote the subset of of tuples $\vec{\mathbf{v}}\in B^{n}$
so that $j_{k+1}\neq j_{k}$ for any $k\in\left\{ 1,\ldots,n-1\right\} $.

III. $\left(\mathbb{N}_{1}B\right)^{n}$, to denote the set of $n$-tuples
of the form $\left(m_{1}\mathbf{v}_{1},\ldots,m_{n}\mathbf{v}_{n}\right)$,
where $\vec{\mathbf{v}}=\left(\mathbf{v}_{1},\ldots,\mathbf{v}_{n}\right)\in B^{n}$
and where $\mathbf{m}=\left(m_{1},\ldots,m_{n}\right)\in\mathbb{N}_{1}^{n}$
is an $n$-tuple of positive integers. We write elements of $\left(\mathbb{N}_{1}B\right)^{n}$
as either $\vec{\mathbf{V}}=\left(\mathbf{V}_{1},\ldots,\mathbf{V}_{n}\right)$
or $\mathbf{m}\vec{\mathbf{v}}$, where $\mathbf{V}_{k}=m_{k}\mathbf{v}_{k}$
for all $k$. We will use the $\vec{\mathbf{V}}$ notation most often
in \textbf{Section \ref{sec:Proof-of-Lemma-1}}.

IV. $\left(\mathbb{N}_{1}B\right)^{n*}$, to denote the set of $n$-tuples
of the form $\left(m_{1}\mathbf{v}_{1},\ldots,m_{n}\mathbf{v}_{n}\right)$,
where $\vec{\mathbf{v}}=\left(\mathbf{v}_{1},\ldots,\mathbf{v}_{n}\right)\in B^{n*}$
and where $\mathbf{m}=\left(m_{1},\ldots,m_{n}\right)\in\mathbb{N}_{1}^{n}$
is an $n$-tuple of positive integers. 

V. $\mathbf{Path}^{n}\left(B\right)$, to denote the uniformly distributed
random variable taking values in $B^{n*}$.

VI. $\mathbf{Geom}^{n}\left(p\right)$, to denote the $\mathbb{N}_{1}^{n}$-valued
random variable whose entries are i.i.d. RVs of type $\mathbf{Geom}\left(p\right)$.

VII. We write $\mathbf{Run}\left(p\right)^{n}$ to denote the $\left(\mathbb{N}_{1}B\right)^{n*}$-valued
random variable given by $\mathbf{Geom}^{n}\left(p\right)\mathbf{Path}^{n}\left(B\right)$.
This outpus $n$-tuples of the form $\vec{\mathbf{V}}=\left(\mathbf{V}_{1},\ldots,\mathbf{V}_{n}\right)=\mathbf{m}\vec{\mathbf{v}}$,
where $\mathbf{m}=\left(m_{1},\ldots,m_{n}\right)$ is an outcome
of $\mathbf{Geom}^{n}\left(p\right)$ and where $\vec{\mathbf{v}}=\left(\mathbf{v}_{1},\ldots,\mathbf{v}_{n}\right)$
is an outcome of $\mathbf{Path}^{n}\left(B\right)$. Here, $\mathbf{Geom}^{n}\left(p\right)$
and $\mathbf{Path}^{n}\left(B\right)$ are defined to be independent
of one another.

\vphantom{} 
\end{defn}
\begin{rem}
\label{rem:PMF of run}Elementary computations give:

\begin{align}
\mathbb{P}\left(\mathbf{Run}\left(p\right)^{n}=\mathbf{m}\vec{\mathbf{v}}\right) & =\frac{p-1}{p}\frac{\left(1-\frac{1}{p}\right)^{\Sigma\left(\mathbf{m}\right)}}{\left(p-1\right)^{2n}}\prod_{k=2}^{n}\left[\mathbf{v}_{k}\neq\mathbf{v}_{k-1}\right]\label{eq:PMF of many Runs}
\end{align}
for all $n\geq1$, $\mathbf{m}=\left(m_{1},\ldots,m_{n}\right)\in\mathbb{N}_{1}^{n}$,
and $\vec{\mathbf{v}}=\left(\mathbf{v}_{1},\ldots,\mathbf{v}_{n}\right)\in B^{n}$.
The Iverson brackets in the product on the far right is there to remind
us that the probability will be $0$ whenever $\vec{\mathbf{v}}\notin B^{n*}$;
that is, whenever two consecutive entries of $\vec{\mathbf{v}}$ are
equal. 

\vphantom{} 
\end{rem}
\textbf{Lemma \ref{lem:Geometric comparison}}, stated below, generalizes
Tao's \textbf{Proposition \ref{prop:Tao geometric comparison}} by
demonstrating that $\mathcal{N}$ being approximately uniformly distributed
modulo a large power of $\Lambda$ implies that the $n$th run vector
of $\mathcal{N}$ behaves like $\mathbf{Run}\left(p\right)^{n}$ as
$n\rightarrow\infty$.

\vphantom{} 
\begin{lem}[Geometric Comparison Lemma]
\label{lem:Geometric comparison}Let $n\in\mathbb{N}_{0}$, and let
$\mathcal{N}$ be a random variable taking values in $\mathcal{O}_{K}$.
Then, there exists a constant $c_{0}>0$ (independent of $n$ and
$\mathcal{N}$) and an integer $n^{\prime}\geq\left(2+c_{0}\right)n$
so that the coset $\mathcal{N}+\Lambda^{n^{\prime}}$ is approximately
uniformly distributed in $\mathcal{O}_{K}/\Lambda^{n^{\prime}}$,
in the sense that: 
\begin{equation}
d_{\textrm{TV}}\left(\mathcal{N}_{y}\textrm{ mod }\Lambda^{n^{\prime}},\textrm{Unif}\left(\mathcal{O}_{K}/\Lambda^{n^{\prime}}\right)\right)\ll p^{-n^{\prime}}\label{eq:uniformity hypothesis}
\end{equation}
Then, there is a constant $c>0$ depending on $c_{0}$, so that: 
\begin{equation}
d_{\textrm{TV}}\left(\vec{a}^{\left(n\right)}\left(\mathcal{N}\right),\mathbf{Run}\left(p\right)^{n}\right)\ll p^{-cn}\label{eq:geometric comparison}
\end{equation}
The implied constants in the asymptotic notation are also allowed
to depend on $c_{0}$.

\vphantom{} 
\end{lem}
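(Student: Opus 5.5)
We will follow Tao's proof of Proposition \ref{prop:Tao geometric comparison}, using integrality of $H$ together with hypothesis MH4 in place of the explicit arithmetic of the Syracuse map. The basic observation is a \emph{cylinder structure}. For every string $\mathbf{j}=\left(j_{1},\ldots,j_{m}\right)$ of length $m$, the set $E_{\mathbf{j}}=\left\{ z\in\mathcal{O}_{K}:\mathbf{j}_{m}\left(z\right)=\mathbf{j}\right\} $ should be a single coset of $\Lambda^{m}$.

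We prove this by induction on $m$. Integrality says that $H_{j_{1}}\left(z\right)\in\mathcal{O}_{K}$ exactly when $z\in\Lambda_{j_{1}}$. MH4 says that $r_{j_{1}}$ multiplies $\Lambda$-adic absolute values by the fixed factor $\left|\mu_{j_{1}}\right|_{\Lambda}$. Together these should give that $H_{j_{1}}$ maps $\Lambda_{j_{1}}$ bijectively onto $\mathcal{O}_{K}$, and maps $w+\Lambda^{k+1}$ (with $w\in\Lambda_{j_{1}}$) onto a coset of $\Lambda^{k}$. Hence
\[
E_{\left(j_{1}\right)\wedge\mathbf{j}'}=\Lambda_{j_{1}}\cap H_{j_{1}}^{-1}\left(E_{\mathbf{j}'}\right)
\]
is a coset of $\Lambda^{m}$. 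We will check that the index bookkeeping is consistent, namely that $\left[\mathcal{O}_{K}:\Lambda^{m}\right]=p^{m}$ and that the $p^{m}$ strings partition $\mathcal{O}_{K}$ into $p^{m}$ cosets. This is the analogue of Tao's fact that the $2$-adic valuations along the trajectory are determined by $N$ mod a power of $2$. Consequently, if $\mathcal{N}$ is exactly uniform mod $\Lambda^{m}$, then $\mathbf{j}_{m}\left(\mathcal{N}\right)$ is exactly a uniformly random string in $\left\{ 0,\ldots,p-1\right\} ^{m}$.

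Next we relate uniform strings to $\mathbf{Run}\left(p\right)^{n}$. Cut an i.i.d.\ uniform sequence $j_{1},j_{2},\ldots$ into maximal runs. The run lengths are i.i.d.\ $\mathbf{Geom}\left(p\right)$, since a run continues with probability $1/p$. The run labels form a chain whose first label is uniform on $\left\{ 0,\ldots,p-1\right\} $ and whose later labels are uniform among the $p-1$ values different from their predecessor, independent of the lengths. This reproduces the law in Remark \ref{rem:PMF of run}, whose normalization we will double-check against this description and correct if needed. So if infinitely many digits were exactly uniform, $\vec{a}^{\left(n\right)}\left(\mathcal{N}\right)$ would have exactly the law $\mathbf{Run}\left(p\right)^{n}$.

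The error comes from truncating at depth $n'$. On the event $\Sigma\left(\Sigma\left(\vec{a}^{\left(n\right)}\right)\right)\leq n'$, the run vector is a function of $\mathcal{N}$ mod $\Lambda^{n'}$. We therefore bound
\[
d_{\textrm{TV}}\left(\vec{a}^{\left(n\right)}\left(\mathcal{N}\right),\mathbf{Run}\left(p\right)^{n}\right)\leq d_{\textrm{TV}}\left(\mathcal{N}\textrm{ mod }\Lambda^{n'},\textrm{Unif}\right)+2\mathbb{P}\left(\Sigma\left(\mathbf{Geom}\left(p\right)^{n}\right)>n'\right).
\]
The first term is $\ll p^{-n'}$ by hypothesis (\ref{eq:uniformity hypothesis}). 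For the second term, the mean of $\Sigma\left(\mathbf{Geom}\left(p\right)^{n}\right)$ is $pn/\left(p-1\right)\leq2n$, while $n'\geq\left(2+c_{0}\right)n$. Lemma \ref{lem:Chernoff Bound}(II), applied with $d=1$ and $\mathbf{v}=\mathbf{Geom}\left(p\right)$ (exponential tails, not concentrated on a coset), then gives $\ll e^{-c c_{0}n}$. Both terms are $\ll p^{-cn}$ for a suitable $c>0$ depending on $c_{0}$.

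The main obstacle is the cylinder step. MH4 gives only $v_{\Lambda}\left(\mu_{j}\right)\leq-1$, so we must verify that each branch lowers the $\Lambda$-adic level by exactly one. When $\Lambda$ is not prime we must also make precise the absolute value $\left|\cdot\right|_{\Lambda}$, for instance through the valuations at all primes dividing $\Lambda$. A faster expansion would make the cylinders cosets of deeper powers, and the count of $p^{m}$ equal pieces would fail. We would first try to deduce exactness from integrality alone. Since $H_{j}$ is an injective affine map sending $\Lambda_{j}$ into $\mathcal{O}_{K}$ and everything outside $\Lambda_{j}$ out of $\mathcal{O}_{K}$, comparing the indices $\left[\mathcal{O}_{K}:\Lambda\right]=p$ with $\left|\det r_{j}\right|$ should force $H_{j}\left(\Lambda_{j}\right)=\mathcal{O}_{K}$. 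This equality then propagates the coset structure exactly.
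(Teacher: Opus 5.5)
Your strategy is sound and uses the same mechanism as the paper: MH4 plus integrality confine $\mathcal{N}$ to cosets of powers of $\Lambda$, and then the uniformity hypothesis is applied. Your organization is cleaner, since the exact coupling at depth $n'$ replaces the paper's separate tail bound (Proposition \ref{prop:tail bound}) and its split of the total-variation sum at a threshold $L$. Two steps need repair, however.

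\textbf{The cylinder step as you justify it is false.} A branch does not in general map $\Lambda_j$ onto $\mathcal{O}_K$, nor $w+\Lambda^{k+1}$ onto a coset of $\Lambda^{k}$. The shortened Collatz map is integral and satisfies MH4 with $\mu_1=1/2$, $u_1=3$, yet $H_1(2\mathbb{Z}+1)=3\mathbb{Z}+2$ and $H_1(w+2^{k+1}\mathbb{Z})=H_1(w)+3\cdot 2^{k}\mathbb{Z}$. The index comparison cannot rescue this. $H_j(\Lambda_j)$ is a coset of $r_j(\Lambda)$, which is all of $\mathcal{O}_K$ only when $|\det r_j|=1/p$, whereas here $\det r_1=3/2$.

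What you need, and what is true, is the statement modulo powers of $\Lambda$: $H_j$ induces a bijection $\Lambda_j/\Lambda^{k+1}\to\mathcal{O}_K/\Lambda^{k}$. For prime $\Lambda$ this follows from the exact drop $v_\Lambda(r_j z)=v_\Lambda(z)-1$.
\begin{itemize}
\item MH4 gives $v_\Lambda(\mu_j)\le -1$.
\item Integrality gives $r_j(\Lambda)\subseteq\mathcal{O}_K$; applying this to an element of valuation $1$ gives $v_\Lambda(\mu_j)\ge -1$.
\item Well-definedness: $r_j(\Lambda^{k+1})\subseteq\mathcal{O}_K\cap\{v_\Lambda\ge k\}=\Lambda^{k}$.
\item Injectivity: $r_j(z-z')\in\Lambda^{k}$ implies $z-z'\in\Lambda^{k+1}$.
\item Bijectivity: both sides have $p^{k}$ elements.
\end{itemize}
With this in place your induction goes through verbatim.

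Integrality alone does not suffice in a general field, because a merely $\mathbb{Q}$-linear $r_j$ need not carry $\Lambda^{2}$ into $\Lambda$. For composite $\Lambda$, the single $\mathfrak{P}$-adic valuation in MH4 (per the paper's convention) does not control residues modulo $\Lambda^{k}$. So you must assume $\Lambda$ prime, or an MH4-type hypothesis at every prime dividing $\Lambda$. The paper's uniformizer argument has the same limitation.

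\textbf{The uniform-digit run law is not $\mathbf{Run}(p)^{n}$ when $p\ge 3$, and no renormalization fixes this.} The paper defines $\mathbb{P}(\mathbf{Geom}(p)=m)=\frac{1}{p}(1-\frac{1}{p})^{m-1}$, which has mean $p$. A run in an i.i.d.\ uniform digit sequence instead has $\mathbb{P}(\nu=m)=p^{-(m-1)}(1-\frac{1}{p})$, with mean $p/(p-1)$; that is the value you actually use in your Chernoff step. The law you derive is $\frac{p-1}{p}p^{-\Sigma(\mathbf{m})}$ on $(\mathbb{N}_1B)^{n*}$, which agrees with Remark \ref{rem:PMF of run} only for $p=2$.

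In fact, for $p\ge 3$ the lemma as literally stated is false. Under the hypothesis and your cylinder lemma, the first run of $\mathcal{N}$ has length $1$ with probability close to $1-\frac{1}{p}$. But $\mathbf{Geom}(p)=1$ only with probability $\frac{1}{p}$, so the total variation distance stays bounded below. The paper's proof does not detect this because its final estimate bounds each difference $|\mathbb{P}(\vec{a}^{(n)}(\mathcal{N})=\mathbf{m}\vec{\mathbf{v}})-\mathbb{P}(\mathbf{Run}(p)^{n}=\mathbf{m}\vec{\mathbf{v}})|$ by the sum of the two probabilities, rather than matching them.

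You should state outright that you prove the lemma with $\mathbf{Run}(p)^{n}$ replaced by the run-vector law of an i.i.d.\ uniform digit sequence. That corrected version is also the one consistent with the hypothesis $n'\ge(2+c_0)n$, since runs of mean $p$ would require $n'\ge(p+c_0)n$. With these two repairs, your coupling inequality and the Chernoff estimate do prove the lemma.
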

Like in Tao's paper, the specific $\mathcal{N}$ we will use in our
application of \textbf{Lemma \ref{lem:Geometric comparison} }will
be a logarithmically distributed random variable of the form: 
\begin{equation}
\mathcal{N}_{y}=\textrm{Log}U_{y}
\end{equation}
where $y$ is a large positive real number, and where: 
\begin{equation}
U_{y}\overset{\textrm{def}}{=}\left\{ z\in\mathcal{O}_{K}:y\leq\left\Vert z\right\Vert _{\textrm{Mink}}\leq y^{\alpha}\right\} \label{eq:definition of U_y}
\end{equation}
where $1<\alpha<2$ is a constant chosen for the occasion. 

\subsection{Stabilization Property of First Passage Locations}

With our definitions in hand, we can state our analogue of Tao's $\textrm{Syr}$,
and his notions of the first passage time and first passage location.

\vphantom{} 
\begin{defn}
Define $\textrm{Syr}_{H}:\mathcal{O}_{K}\rightarrow\mathcal{O}_{K}$
given by: 
\begin{equation}
\textrm{Syr}_{H}\left(z\right)\overset{\textrm{def}}{=}H^{\circ\Sigma\left(\Sigma\left(\vec{a}^{\left(1\right)}\left(z\right)\right)\right)}\left(z\right),\textrm{ }\forall z\in\mathcal{O}_{K}\label{eq:def of Syr H}
\end{equation}
We call $\textrm{Syr}_{H}$ the \textbf{Syracuse function of $H$}.
As defined $\textrm{Syr}_{H}\left(z\right)$ applies $H$ to $z$\textbf{
}precisely the number of times needed to advance $z$ to the end of
its first run of consecutive branches under $H$.

\vphantom{} 
\end{defn}
\begin{rem}
As defined, we have that: 
\begin{equation}
\textrm{Syr}_{H}^{\circ n}\left(z\right)=H^{\circ\Sigma\left(\Sigma\left(\vec{a}^{\left(n\right)}\left(z\right)\right)\right)}\left(z\right),\textrm{ }\forall n\geq1\label{eq:defining formula for nth iterate of Syr_H}
\end{equation}
That is, $\textrm{Syr}_{H}^{\circ n}\left(z\right)$ entails applying
$H$ to $z$ a number of times so as to advance $z$ to the end of
its $n$th run of consecutive branches under $H$. 
\end{rem}
\vphantom{} 
\begin{defn}
Given any real number $x>0$, the \textbf{first $x$-passage time}
is the function $T_{x}:\mathcal{O}_{K}\rightarrow\mathbb{N}_{0}\cup\left\{ +\infty\right\} $
defined by: 
\begin{equation}
T_{x}\left(z\right)\overset{\textrm{def}}{=}\inf\left\{ n\in\mathbb{N}_{0}:\left\Vert \textrm{Syr}_{H}^{\circ n}\left(z\right)\right\Vert _{\textrm{Mink}}\le x\right\} \label{eq:def of firstpasstime}
\end{equation}
The \textbf{first $x$-passage location}, $\textrm{Pass}_{x}:\mathcal{O}_{K}\rightarrow\mathcal{O}_{K}$,
is then defined by: 
\begin{equation}
\textrm{Pass}_{x}\left(z\right)\overset{\textrm{def}}{=}H^{\circ T_{x}\left(z\right)}\left(z\right)\label{eq:def of firstpassloc}
\end{equation}
Following Tao, in the case $T_{x}\left(z\right)=+\infty$, we simply
define $\textrm{Pass}_{x}\left(z\right)$ to be $1$.
\end{defn}
\vphantom{}

As in Tao's paper, the key proposition is: \vphantom{} 
\begin{lem}[Stabilization of first passage]
\label{lem:stabilization of first passage}Let $H$ be a $\Lambda$-Hydra
map satisfying MH1, MH2, \& MH3, as in \textbf{Theorem \ref{thm:almost boundedness}}.
For each sufficiently large $y>0$, let $\mathcal{N}_{y}$ be a random
variable with distribution: 
\begin{equation}
\textrm{Log}\left(\left\{ z\in\mathcal{O}_{K}:y\leq\left\Vert z\right\Vert _{\textrm{Mink}}\leq y^{\alpha}\right\} \right)
\end{equation}
With all of the above conditions satisfied, there is are absolute
constants $c>0$ and $\alpha\in\left(1,2\right)$ so that, for either
$y=x^{\alpha}$ or $y=x^{\alpha^{2}}$:

\vphantom{}

I. (\textbf{Passage Time Estimate}): For all sufficiently large $x>0$:
\begin{equation}
\mathbb{P}\left(T_{x}\left(\mathcal{N}_{y}\right)=+\infty\right)\ll x^{-c}\label{eq:passing time estimate}
\end{equation}

\vphantom{}

II. (\textbf{Passage Location Comparison}): For all sufficiently large
$x>0$: 
\begin{equation}
d_{\textrm{TV}}\left(\textrm{Pass}_{x}\left(\mathcal{N}_{x^{\alpha}}\right),\textrm{Pass}_{x}\left(\mathcal{N}_{x^{\alpha^{2}}}\right)\right)\ll x^{-\left(\alpha-1\right)c}\label{eq:passage location comparison}
\end{equation}

\vphantom{} 
\end{lem}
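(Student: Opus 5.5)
Following Tao's Section 5, I would prove both parts by combining the Geometric Comparison Lemma (Lemma \ref{lem:Geometric comparison}) with the Chernoff bound (Lemma \ref{lem:Chernoff Bound}), applied to the run vectors $\vec{a}^{(n)}(\mathcal{N}_y)$.

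\textbf{Step 1 (uniformity of $\mathcal{N}_y$).} First I will check that $\mathcal{N}_y$ satisfies the uniformity hypothesis (\ref{eq:uniformity hypothesis}) with $n'$ of order $c'\log y$. This is a lattice-point count with weights $\|z\|_{\textrm{Mink}}^{-d}$ over cosets of $M(\Lambda^{n'})$, a lattice of covolume $\sqrt{|\Delta_K|}\,p^{n'}$. Comparing each coset sum with the integral $\int \|x\|^{-d}\,dx$ over the shell $y\le\|x\|\le y^\alpha$ gives relative error $O(p^{n'}\cdot\textrm{diam}/y)$. This is acceptable once $p^{n'}\le y^{1/(2d)}$ or so.

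\textbf{Step 2 (reduction to random runs).} Set $n\approx c_2\log x$ and apply Lemma \ref{lem:Geometric comparison} to replace $\vec{a}^{(n)}(\mathcal{N}_y)$ by $\mathbf{Run}(p)^n$, at a cost of $p^{-cn}=x^{-c'}$. Along a run path, the $k$-th iterate $\textrm{Syr}_H^{\circ k}(\mathcal{N}_y)$ is controlled by (\ref{eq:H_bold j upper estimate for section 3 intro}):
\[
\|\textrm{Syr}^{\circ k}_H(z)\|_{\textrm{Mink}}\ll \prod_j\|r_j\|_{\textrm{Mink}}^{\#_j^{(k)}(z)}\,\|z\|_{\textrm{Mink}}+O(1)\cdot(\textrm{geometric sum}).
\]
For $\mathbf{Run}(p)^n$, each run length is $\mathbf{Geom}(p)$ and the branch labels form a non-backtracking uniform walk. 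Hence $\vec{\#}^{(n)}$ is a sum of $n$ increments of a Markov chain with mean $\frac{p}{p-1}\cdot\frac{n}{p}(1,\dots,1)$ per coordinate. The log-growth is therefore $\frac{n}{p-1}\log\rho_H<0$ by MH1. The Chernoff bound (part II; one can apply it to pairs of consecutive runs to restore independence, or handle the Markov dependence directly) then shows that with probability $1-x^{-c}$ the orbit falls below $x$ within $n\asymp\log y$ runs. That gives Part I.

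\textbf{Step 3 (Part II).} I would couple the two scales as Tao does. Choose $n_0$ so that after $n_0$ runs $\mathcal{N}_{x^{\alpha^2}}$ typically has norm in $[x^{\alpha},x^{\alpha^2}]$, and then show that $\textrm{Syr}_H^{\circ n_0}(\mathcal{N}_{x^{\alpha^2}})$ is close in total variation to $\mathcal{N}_{x^\alpha}$. Passage locations are determined by the later trajectory, so the TV bound transfers via the Markov property of $\textrm{Syr}_H$. Writing $\textrm{Syr}_H^{\circ n_0}(z)=r_{\mathbf{j}}z+X_H(\mathbf{j})$, I would use the following three inputs:
\begin{itemize}
\item the local limit bound (Chernoff I), for the distribution of $\vec{\#}^{(n_0)}$;
\item the integrality of $H$, to identify the fibers $\{z: \vec{a}^{(n_0)}(z)=\vec{\mathbf{V}}\}$ as cosets of $\Lambda^{\Sigma\Sigma(\vec{\mathbf{V}})}$, with MH2 and MH4 guaranteeing that $r_{\mathbf{j}}$ expands $\Lambda$-adically, so the image of such a coset is all of $\mathcal{O}_K$ in the appropriate region;
\item a comparison of the push-forward of the log-measure under $z\mapsto r_{\mathbf{j}}z+X_H(\mathbf{j})$ with the log-measure at the smaller scale.
\end{itemize}

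\textbf{Main obstacle.} The difficulty is the archimedean smoothing in Step 3, which must replace Tao's Fourier decay. The linear map $r_{\mathbf{j}}$ distorts the Minkowski shell anisotropically. The fiber over each path is a sublattice of index $p^{\Sigma\Sigma}$, and summing the path weights from (\ref{eq:PMF of many Runs}) against the Jacobian $|\det r_{\mathbf{j}}|\le\|r_{\mathbf{j}}\|^d_{\textrm{Mink}}$ produces a factor like $\big((p-1)^2\rho_H^d\big)^{n_0/\dots}$. This is exactly where MH3 should enter, forcing the discrepancy to decay geometrically, so that it is $\ll x^{-(\alpha-1)c}$. I would attempt this by bounding the total variation fiberwise: for each $\vec{\mathbf{V}}$, compare the weighted lattice sums on the image coset with their volume approximations, and add up the errors.
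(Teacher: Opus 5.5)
Your Steps 1--2 follow the paper's route for Part I: equidistribution of $\mathcal{N}_y$ modulo a power of $\Lambda$, then the Geometric Comparison Lemma, then Chernoff-type bounds on the branch counts over $\asymp\ln x$ runs, with $\alpha$ close to $1$. Your coset-by-coset lattice count is a sound replacement for the paper's Fourier computation.

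The gap is in Part II, and it has two layers. First, the coupling target of Step 3 cannot be reached. By Lemma \ref{lem:lattice sum asymptotic}, $\ln\|\mathcal{N}_y\|_{\textrm{Mink}}$ is asymptotically uniform on $[\ln y,\alpha\ln y]$. So $\mathcal{N}_{x^{\alpha^2}}$ spreads its log-norm over length $\alpha^2(\alpha-1)\ln x$, while $\mathcal{N}_{x^{\alpha}}$ uses length $\alpha(\alpha-1)\ln x$. Already for $K=\mathbb{Q}$ (e.g.\ $H=T_3$, which the theorem is supposed to cover), $n_0$ runs change $\ln|z|$ by $\sum_j\#_j\ln|r_j|$ up to a negligible error, and this shift concentrates within $o(\ln x)$ of its mean. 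Hence $\ln|\textrm{Syr}_H^{\circ n_0}(\mathcal{N}_{x^{\alpha^2}})|$ is still spread over length $\sim\alpha^2(\alpha-1)\ln x$. Its total variation distance to $\mathcal{N}_{x^\alpha}$ therefore stays bounded below by a constant depending only on $\alpha$, whatever $n_0$ you choose. It cannot even typically land in $[x^\alpha,x^{\alpha^2}]$, since $\alpha^3-\alpha^2>\alpha^2-\alpha$. Tao does not try to match the two starting laws. He splits by the passage time $n\in I_y$ (as in Lemma \ref{lem:approximate formula for passage location}) and shows each summand is $(\ln y)^{-1}$ times a quantity independent of $n$ and $y$, so that $|I_y|\asymp\ln y$ cancels the normalization.

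Second, proving that $n$-independence needs an arithmetic input that neither your plan nor MH3 supplies. Your second bullet is false. Integrality gives $H_{\vec{\mathbf{V}}}(a+\Lambda^N)=H_{\vec{\mathbf{V}}}(a)+r_{\vec{\mathbf{V}}}\Lambda^N$ with $N=\Sigma(\Sigma(\vec{\mathbf{V}}))$. Here $r_{\vec{\mathbf{V}}}\Lambda^N$ is a sublattice of $\mathcal{O}_K$ of index $p^N|\det_{\mathbb{Q}}r_{\vec{\mathbf{V}}}|$, which equals $3^{\#_1}$ for $T_3$ (e.g.\ $H_1(1+2\mathbb{Z})=2+3\mathbb{Z}$). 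Which coset is hit is decided by $X_H(\vec{\mathbf{V}})$ modulo that lattice. So what you need is fine-scale equidistribution of $\mathbf{S}_H(n)$ modulo these lattices, which is exactly the job of Tao's Fourier decay, and smoothing by the archimedean log density cannot detect it.

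MH3 cannot substitute for this. With honest accounting, the fiber of $\vec{\mathbf{V}}$ has mass $\asymp p^{-N}$, the Jacobian $|\det r_{\vec{\mathbf{V}}}|$ cancels against the index of the image lattice, and the contributions add up to total mass $1$. So no factor $((p-1)^2\rho_H^d)^{n}$ survives. That factor appears only after two lossy steps: bounding $\|r_{\vec{\mathbf{V}}}^{-1}(z-X_H(\vec{\mathbf{V}}))\|_{\textrm{Mink}}^{-d}$ by $\|r_{\vec{\mathbf{V}}}\|_{\textrm{Mink}}^{d}\|z-X_H(\vec{\mathbf{V}})\|_{\textrm{Mink}}^{-d}$, and discarding the indicator that $r_{\vec{\mathbf{V}}}^{-1}(z-X_H(\vec{\mathbf{V}}))\in\mathcal{O}_K$. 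A one-sided bound of this kind cannot control a signed TV difference. If it made every passage probability small, it would contradict Part I: take $S=U^x$, and note that for $T_3$ the orbit of $\mathcal{N}_y\neq0$ never meets $0$.

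Be aware that the paper's own Part II argument (Proposition \ref{prop:boundedness of the omega_x sum} and the proof following Proposition \ref{prop:removing the indicator function from the expectation}) runs on precisely this mechanism. It ends with $\mathbb{P}(\textrm{Pass}_x(\mathcal{N}_y)\in S)\ll x^{-(\alpha-1)c}$ for every $S\subseteq U^x$, so following it will not close this gap either.
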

\begin{rem}
(\ref{eq:passing time estimate}) requires only MH1, MH2, \& MH4.
(\ref{eq:passage location comparison}) requires MH1, MH2, MH3, and
MH4.

\vphantom{} 
\end{rem}
In \textbf{Section \ref{subsec:First-Passage-Stabilization-impl}},
we prove that \textbf{Lemma \ref{lem:stabilization of first passage}}
implies our almost-boundedness result, \textbf{Theorem \ref{thm:almost boundedness}}
While the heart of Tao's paper is his proof of \textbf{Proposition
\ref{prop:decay of characteristic function}}, the most difficult
component of his argument overall, the proof of \textbf{Lemma \ref{lem:stabilization of first passage}}
will be the heart of our paper. Overall, our proof of \textbf{Lemma
\ref{lem:stabilization of first passage}} will follow the same path
as Tao's proof of the corresponding proposition from his paper, albeit
with three key differences. 
\begin{enumerate}
\item Our arguments will need to generalize Tao's approach to Hydra maps
with $3$ or more branches acting on multidimensional spaces. 
\item We will need to convert Tao's $3$-adic machinery into its natural
archimedean analogues. 
\item Most significantly, \emph{unlike }Tao's proof, \emph{our }proof of
(\ref{eq:passage location comparison}) DOES NOT require a Fourier
decay result like \textbf{Proposition \ref{prop:decay of characteristic function}}
(Tao's \textbf{Proposition 1.17}). Rather, the driving force behind
(\ref{eq:passage location comparison}) is the dimension constraint
(\ref{eq:MH3}). 
\end{enumerate}
(3) came as a great surprise to us; we only embarked on writing this
paper once the first author obtained a Fourier decay result for the
characteristic function of $\chi_{3}$, when $\chi_{3}$ is viewed
as a \emph{real}-valued random variable, and we naturally assumed
this decay result would play a key role in our proof, just as it did
in Tao's. And yet, it did not.

The proof of (\ref{eq:passing time estimate}), given in \textbf{Section
\ref{sec:-adic-distribution-of}}, is in a bijective correspondence
with Section 4 of Tao's paper. On the other hand, the parts of our
argument corresponding to the material in Section 5 of Tao's paper
will be ordered differently compared to Tao's original presentation.
First, we have set aside \textbf{Section \ref{sec:technical_machinery}}
to establish the necessary relations between our analogues of the
constants Tao calls $\alpha$, $m_{0}$, and $n_{0}$, as well as
to gather together various technical results, notation, and estimates
that will be used in the arguments going forward. This section also
gives the definition of $\mathbf{Syrac}_{H}\left(n\right)$ (also
written $\mathbf{S}_{H}\left(n\right)$), our analogue of Tao's Syracuse
random variables.

As stated above, \textbf{Section \ref{subsec:First-Passage-Stabilization-impl}}
proves our almost-boundedness result, assuming \textbf{Lemma \ref{lem:stabilization of first passage}}.
\textbf{Section \ref{sec:-adic-distribution-of} }is a 1-to-1 rewrite
of Section 4 of Tao's paper, where we prove \textbf{Lemma \ref{lem:Geometric comparison}}
after some initial legwork. Finally, \textbf{Section \ref{sec:Proof-of-Lemma-1}}
gives the proof of \textbf{Lemma \ref{lem:stabilization of first passage}}.
This is done in two subsections, one for each of the Lemma's two claims.

\section{\label{sec:technical_machinery}Technical Machinery}

We begin with some constants.

\vphantom{} 
\begin{defn}
Set: 
\begin{equation}
\rho_{H}\overset{\textrm{def}}{=}\prod_{j=0}^{p-1}\left\Vert r_{j}\right\Vert _{\textrm{Mink}}
\end{equation}
\begin{equation}
R_{H}\overset{\textrm{def}}{=}\max_{0\leq j<p}\left\Vert r_{j}\right\Vert _{\textrm{Mink}}
\end{equation}
\begin{equation}
\overline{\rho}_{H}\overset{\textrm{def}}{=}\prod_{j=0}^{p-1}\max\left\{ \left\Vert r_{j}\right\Vert _{\textrm{Mink}},\left\Vert r_{j}\right\Vert _{\textrm{Mink}}^{-1}\right\} 
\end{equation}
\end{defn}
In order to replicate Tao's scaling arguments, we need to take some
care in setting up our parameters.

\vphantom{}\begin{assumption}[Parameters] \label{assu:initial parameter assumptions}Let
$x$ be an arbitrarily large positive real number. Let $A_{0},A_{1}>1$
be constants to be determined. Set: 
\begin{equation}
n_{x}\overset{\textrm{def}}{=}\left\lfloor \frac{\ln x}{\ln A_{0}}\right\rfloor \label{eq:def of n_0}
\end{equation}
\begin{equation}
m_{x}\overset{\textrm{def}}{=}\left\lfloor \left(\alpha-1\right)\frac{\ln x}{\ln A_{1}}\right\rfloor \label{eq:def of m_0}
\end{equation}

Next, let $\epsilon_{0},\delta_{0}>0$ be small, and pick $C_{0}$
with: 
\[
p-\epsilon_{0}<C_{0}<p
\]
and choose $\alpha>1$ so that: 
\begin{equation}
\max\left\{ \alpha^{3}-C_{0}\frac{\ln\rho_{H}^{-1}}{\ln A_{0}},C_{0}\frac{p\ln R_{H}}{\ln A_{0}}\right\} <1\label{eq:C_0 A_0 inequality}
\end{equation}
\begin{equation}
\alpha>\frac{\ln\left(R_{H}^{p}\rho_{H}^{-1}\right)}{\ln A_{0}}\label{eq:alpha A_0 inequality}
\end{equation}
\end{assumption}

\vphantom{}Our next asymptotic (\ref{eq:lattice sum lower bound})
is a classical result of algebraic number theory and lattice theory.
\begin{lem}
\label{lem:lattice sum asymptotic}Let $f,g:\left(0,\infty\right)\rightarrow\mathbb{R}$
be functions with $f\left(r\right)<g\left(r\right)$ for all sufficiently
large $r$, with $f\left(r\right),g\left(r\right)\rightarrow\infty$
as $r\rightarrow\infty$, so that:
\[
\lim_{r\rightarrow\infty}\ln\left(\frac{g\left(r\right)}{f\left(r\right)}\right)=\infty
\]
Then, setting:
\begin{equation}
S_{r}=\left\{ z\in\mathcal{O}_{K}:f\left(r\right)\leq\left\Vert z\right\Vert _{\textrm{Mink}}\leq g\left(r\right)\right\} 
\end{equation}
and letting $m\geq d$ (where, recall, $d$ is the dimension of $K$
over $\mathbb{Q}$) we have:
\begin{equation}
\sum_{z\in S_{r}}\left\Vert z\right\Vert _{\textrm{Mink}}^{-m}\sim\begin{cases}
C_{K}d\ln\left(\frac{g\left(r\right)}{f\left(r\right)}\right) & \textrm{if }m=d\\
C_{K}d\frac{\left(f\left(r\right)\right)^{d-m}-\left(g\left(r\right)\right)^{d-m}}{m-d} & \textrm{if }m>d
\end{cases}\textrm{ as }r\rightarrow\infty\label{eq:lattice sum asymptotic}
\end{equation}
where $C_{K}$ is an absolute constant depending only on $K$.
\end{lem}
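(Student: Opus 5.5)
The plan is to reduce the weighted sum to a lattice-point count in dilates of a fixed convex body, and then convert that count into the weighted sum by Abel (Riemann--Stieltjes) summation. Let $B\subset\mathbb{R}^{r_{K,\mathbb{R}}}\oplus\mathbb{R}^{2r_{K,\mathbb{C}}}$ be the unit ball of the norm that $\left\Vert \cdot\right\Vert _{\textrm{Mink}}$ induces there. This norm is the $\ell^{\infty}$ norm on the real coordinates and the modulus on each complex pair, so $B$ is a product of intervals and discs. In particular $B$ is a compact convex body whose boundary is a finite union of Lipschitz-parametrizable pieces. For $t>0$ set
\[
N\left(t\right)\overset{\textrm{def}}{=}\#\left\{ z\in\mathcal{O}_{K}:\left\Vert z\right\Vert _{\textrm{Mink}}\leq t\right\} =\#\left(M\left(\mathcal{O}_{K}\right)\cap tB\right).
\]

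The first step is the count itself. By Fact II, $M\left(\mathcal{O}_{K}\right)$ is a full-rank lattice of covolume $\sqrt{\left|\Delta_{K}\right|}$. Davenport's Lipschitz principle (or the elementary argument of counting fundamental parallelepipeds meeting $t\partial B$) then gives
\[
N\left(t\right)=C_{K}t^{d}+E\left(t\right),\qquad C_{K}=\frac{\textrm{vol}\left(B\right)}{\sqrt{\left|\Delta_{K}\right|}},\qquad E\left(t\right)=O\left(t^{d-1}\right)\textrm{ for }t\geq1.
\]
Here the implied constant depends only on $K$. This is the one genuinely geometric input, and it is where the constant $C_{K}$ comes from.

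Next I write the sum as a Stieltjes integral,
\[
\sum_{z\in S_{r}}\left\Vert z\right\Vert _{\textrm{Mink}}^{-m}=\int_{f\left(r\right)^{-}}^{g\left(r\right)}t^{-m}\,dN\left(t\right),
\]
and split $dN=C_{K}d\,t^{d-1}dt+dE$. The smooth part integrates exactly. When $m=d$ it gives $C_{K}d\ln\left(g/f\right)$, and when $m>d$ it gives $C_{K}d\left(f^{d-m}-g^{d-m}\right)/\left(m-d\right)$. These are precisely the claimed main terms. The error part I integrate by parts:
\[
\int t^{-m}dE=\left[t^{-m}E\left(t\right)\right]_{f}^{g}+m\int_{f}^{g}t^{-m-1}E\left(t\right)dt=O\left(f\left(r\right)^{d-m-1}\right).
\]
This uses $E\left(t\right)=O\left(t^{d-1}\right)$ and $m\geq d$, and it also absorbs the jump of $N$ at the left endpoint. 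Here $f\left(r\right)\rightarrow\infty$ is used.

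The last step is comparing the error with the main term. If $m=d$, the error is $O\left(1/f\left(r\right)\right)=o\left(1\right)$, while the main term $C_{K}d\ln\left(g/f\right)\rightarrow\infty$ by hypothesis. If $m>d$, the hypothesis $g/f\rightarrow\infty$ gives $g^{d-m}=o\left(f^{d-m}\right)$, so the main term is $\asymp f^{d-m}$, and the error $O\left(f^{d-m-1}\right)$ is smaller by a factor $1/f\rightarrow0$. In both cases the ratio tends to $1$, which gives the asymptotic. I expect the main obstacle to be the first step: obtaining the uniform remainder $O\left(t^{d-1}\right)$ for the nonstandard (non-Euclidean) body $B$. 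I would handle it by covering $t\partial B$ with $O\left(t^{d-1}\right)$ translates of a fundamental domain of $M\left(\mathcal{O}_{K}\right)$, which works because each boundary piece is a Lipschitz image of a $\left(d-1\right)$-cube. Once that is done, the rest is routine summation by parts.
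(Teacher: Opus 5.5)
Your proposal is correct and is essentially the paper's proof: the lattice-point count $N(t)=C_K t^d+O(t^{d-1})$, then Abel summation and the $m=d$ / $m>d$ case split. The differences are minor: you split $dN$ into its smooth part and $dE$ before integrating by parts, and you make explicit, where the paper does not, that $g/f\to\infty$ is what makes $O(f^{d-m-1})$ negligible against $f^{d-m}-g^{d-m}$ when $m>d$. One harmless caveat: your explicit value $C_K=\mathrm{vol}(B)/\sqrt{|\Delta_K|}$ depends on the normalization of Lebesgue measure (with the usual identification $\mathbb{C}\cong\mathbb{R}^2$ the covolume of $M(\mathcal{O}_K)$ is $2^{-r_{K,\mathbb{C}}}\sqrt{|\Delta_K|}$), but the lemma only needs $C_K$ to depend on $K$.
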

Proof Let $N_{K}\left(R\right)$ denote the set of all $z\in\mathcal{O}_{K}\backslash\left\{ 0\right\} $
for which $\left\Vert z\right\Vert _{\textrm{Mink}}\leq R$. Since
the Minkowski embedding identifies $\mathcal{O}_{K}$ with a full-rank
lattice in a real vector space of dimension $d$, the standard lattice-point
counting estimate gives:
\begin{equation}
N_{K}\left(R\right)=C_{K}R^{d}+O\left(R^{d-1}\right)\textrm{ as }R\rightarrow\infty
\end{equation}
where $C_{K}$ is a constant depending solely on $K$. 

Next, for each sufficiently large real number $r>0$, set $a=f\left(r\right)$
and $b=g\left(r\right)$, so that $0<a<b$ and that $a$ and $b$
tend to $\infty$ as $r\rightarrow\infty$. Using Abel's Summation
Formula, we have:
\[
\sum_{\begin{array}{c}
z\in\mathcal{O}_{K}\\
f\left(r\right)\leq\left\Vert z\right\Vert _{\textrm{Mink}}\leq g\left(r\right)
\end{array}}\left\Vert z\right\Vert _{\textrm{Mink}}^{-m}=\frac{N_{K}\left(b\right)}{b^{m}}-\frac{N_{K}\left(a^{-}\right)}{a^{m}}+m\int_{a}^{b}\frac{N_{K}\left(t\right)}{t^{m+1}}dt
\]
where $N_{K}\left(a^{-}\right)$ denotes $\lim_{t\uparrow a}N_{K}\left(t\right)$.
The lattice-point estimate from above gives us:
\begin{align*}
N_{K}\left(b\right) & =C_{K}b^{d}+O\left(b^{d-1}\right)\\
N_{K}\left(a^{-}\right) & =C_{K}a^{d}+O\left(a^{d-1}\right)
\end{align*}
and so:
\begin{align*}
\sum_{\begin{array}{c}
z\in\mathcal{O}_{K}\\
f\left(r\right)\leq\left\Vert z\right\Vert _{\textrm{Mink}}\leq g\left(r\right)
\end{array}}\left\Vert z\right\Vert _{\textrm{Mink}}^{-m} & =C_{K}\left(b^{d-m}-a^{d-m}\right)+C_{K}m\int_{a}^{b}t^{d-m-1}dt\\
 & +O_{K}\left(b^{d-m-1}\right)+O_{K}\left(a^{d-m-1}\right)+O_{K,m}\left(\int_{a}^{b}t^{d-m-2}dt\right)
\end{align*}
where the subscripts of the $O$s indicates the objects upon which
the implied constants of proportionality depend.

Since $m\geq d$, we have:
\[
d-m-1\leq-1
\]
and:
\[
d-m-2\leq-2
\]
Since $b\geq a$, we have:
\[
b^{d-m-1}\leq a^{d-m-1}
\]
Furthermore:
\[
\int_{a}^{b}t^{d-m-2}\leq\int_{a}^{\infty}t^{d-m-2}dt\ll_{m}a^{d-m-1}
\]
Hence:
\begin{equation}
\sum_{\begin{array}{c}
z\in\mathcal{O}_{K}\\
f\left(r\right)\leq\left\Vert z\right\Vert _{\textrm{Mink}}\leq g\left(r\right)
\end{array}}\left\Vert z\right\Vert _{\textrm{Mink}}^{-m}=C_{K}\left(b^{d-m}-a^{d-m}\right)+C_{K}m\int_{a}^{b}t^{d-m-1}dt+O_{K,m}\left(a^{d-m-1}\right)\label{eq:1}
\end{equation}

We now consider cases based on the value of $m$. If $m=d$, then:
\[
b^{d-m}-a^{d-m}=1-1=0
\]
while:
\[
m\int_{a}^{b}t^{d-m-1}dt=d\int_{a}^{b}\frac{dt}{t}=d\ln\left(\frac{b}{a}\right)
\]
and so:

\[
\sum_{\begin{array}{c}
z\in\mathcal{O}_{K}\\
f\left(r\right)\leq\left\Vert z\right\Vert _{\textrm{Mink}}\leq g\left(r\right)
\end{array}}\left\Vert z\right\Vert _{\textrm{Mink}}^{-m}=C_{K}d\ln\left(\frac{b}{a}\right)+O_{K}\left(\frac{1}{a}\right)
\]
By our hypotheses, $\ln\left(\frac{b}{a}\right)=\ln\left(\frac{g\left(r\right)}{f\left(r\right)}\right)$
and $1/a$ tend to $\infty$ and $0$, respectively, as $r\rightarrow\infty$,
leaving us with:
\[
\sum_{\begin{array}{c}
z\in\mathcal{O}_{K}\\
f\left(r\right)\leq\left\Vert z\right\Vert _{\textrm{Mink}}\leq g\left(r\right)
\end{array}}\left\Vert z\right\Vert _{\textrm{Mink}}^{-d}\sim C_{K}d\ln\left(\frac{g\left(r\right)}{f\left(r\right)}\right)\textrm{ as }r\rightarrow\infty,\textrm{ when }m=d
\]

Next, if $m>d$, we have:
\[
\int_{a}^{b}t^{d-m-1}dt=\frac{b^{d-m}-a^{d-m}}{d-m}=\frac{a^{d-m}-b^{d-m}}{m-d}
\]
Substituting this into (\ref{eq:1}), we have:
\begin{align*}
\sum_{\begin{array}{c}
z\in\mathcal{O}_{K}\\
f\left(r\right)\leq\left\Vert z\right\Vert _{\textrm{Mink}}\leq g\left(r\right)
\end{array}}\left\Vert z\right\Vert _{\textrm{Mink}}^{-m} & =C_{K}\left(b^{d-m}-a^{d-m}\right)+C_{K}m\frac{a^{d-m}-b^{d-m}}{m-d}+O_{K,m}\left(a^{d-m-1}\right)\\
\left(\frac{m}{m-d}=1+\frac{d}{m-d}\right); & =\frac{C_{K}d}{m-d}\left(a^{d-m}-b^{d-m}\right)+O_{K,m}\left(a^{d-m-1}\right)
\end{align*}
Since $m-d>0$, it follows that:
\[
O_{K,m}\left(a^{d-m-1}\right)=o\left(a^{d-m}-b^{d-m}\right)\rightarrow0\textrm{ as }r\rightarrow\infty
\]
and we are left with:
\[
\sum_{\begin{array}{c}
z\in\mathcal{O}_{K}\\
f\left(r\right)\leq\left\Vert z\right\Vert _{\textrm{Mink}}\leq g\left(r\right)
\end{array}}\left\Vert z\right\Vert _{\textrm{Mink}}^{-m}\sim\frac{C_{K}d}{m-d}\left(a^{d-m}-b^{d-m}\right)\textrm{ when }m>d
\]
Since $a=f\left(r\right)$ and $b=g\left(r\right)$, we get the desired
result.

Q.E.D.

\vphantom{}

Tao's use of the map $\textrm{Aff}_{\vec{a}}$ is specific to the
two-branch nature of the Collatz map. In order to generalize this,
we need to pay attention to the intuition behind it. For positive
integer values of $a$, the map: 
\begin{equation}
\textrm{Aff}_{a}\left(x\right)=\frac{3x+1}{2^{a}}
\end{equation}
controls the length of consecutive applications of Collatz's odd branch.

As such, to generalize it, we need to keep track of the consecutive
applications of any given branch of $H$. We do this with the following
notation.

\vphantom{}\begin{notation} Recall that we write $\left(\mathbb{N}_{1}B\right)^{n}$
to denote the set of $n$-tuples $\vec{\mathbf{V}}=\mathbf{m}\vec{\mathbf{v}}$
where $\mathbf{m}\in\mathbb{N}_{1}^{n}$, for each $k\in\left\{ 1,\ldots,n\right\} $,
there are integers $m_{k}\geq1$ and $\vec{\mathbf{v}}=\left(\mathbf{v}_{1},\ldots,\mathbf{v}_{n}\right)\in B^{n}$. 

Additionally, for any $k\in\left\{ 1,\ldots,n\right\} $, we write:
\begin{equation}
\Sigma_{k}\left(\mathbf{m}\vec{\mathbf{v}}\right)\overset{\textrm{def}}{=}\sum_{i=1}^{k}m_{i}\mathbf{v}_{i}
\end{equation}
Note that $\Sigma_{k}\left(\mathbf{m}\vec{\mathbf{v}}\right)$ will
be an element of $\mathbb{R}^{p}$. For each $j\in\left\{ 0,\ldots,p-1\right\} $,
the quantity $\Sigma_{k}\left(\mathbf{m}\vec{\mathbf{v}}\right)\cdot\mathbf{e}_{j}$
is the $j$th entry of $\Sigma_{k}\left(\mathbf{m}\vec{\mathbf{v}}\right)$,
and consists of the sum of $m_{i}$ for all $i\in\left\{ 1,\ldots,k\right\} $
for which $\mathbf{v}_{i}=\mathbf{e}_{j}$.

\vphantom{} \end{notation}
\begin{defn}
\label{def:v-arrow formalism}For any $n\geq1$, any $\vec{\mathbf{v}}=\left(\mathbf{v}_{1},\ldots,\mathbf{v}_{n}\right)\in B^{n}$
will be of the form $\mathbf{v}_{k}=\mathbf{e}_{j_{k}}$ for some
$j_{k}\in\left\{ 0,\ldots,p-1\right\} $. To that end, we write $H_{\mathbf{e}_{j}}$
to denote $H_{j}$, the $j$th branch of $H$. To that end, we write:
\begin{equation}
H_{\mathbf{m}\vec{\mathbf{v}}}\overset{\textrm{def}}{=}H_{\mathbf{v}_{n}}^{\circ m_{n}}\circ\cdots\circ H_{\mathbf{v}_{1}}^{\circ m_{1}}\label{eq:def of H_v-arrow}
\end{equation}
That is, $\mathbf{m}$'s entries dictate the length of the consecutive
runs of branches, while $\vec{\mathbf{v}}$'s entries dictate the
specific branches used in each run. We then define: 
\begin{equation}
X_{H}\left(\mathbf{m}\vec{\mathbf{v}}\right)\overset{\textrm{def}}{=}H_{\mathbf{m}\vec{\mathbf{v}}}\left(0\right),\textrm{ }\forall\mathbf{m}\vec{\mathbf{v}}\in\bigcup_{n=0}^{\infty}\left(\mathbb{N}_{1}B\right)^{n}\label{eq:def of X_v-arrow}
\end{equation}

\vphantom{} 
\end{defn}
\begin{rem}
Combining this notation with our earlier one, we have that: 
\begin{equation}
H^{\circ\Sigma\left(\vec{\#}^{\left(n\right)}\left(z\right)\right)}\left(z\right)=M_{H}\left(\vec{a}^{\left(n\right)}\left(z\right)\right)z+X_{H}\left(\vec{a}^{\left(n\right)}\left(z\right)\right)
\end{equation}
for all $z\in\mathcal{O}_{K}$ and all $n\in\mathbb{N}_{0}$.

\vphantom{} 
\end{rem}
Using this notation, we can define our analogue of Tao's Syracuse
Random Variables.

\vphantom{} 
\begin{defn}
Let $n\in\mathbb{N}_{1}$. Then, we define $H$'s $n$th \textbf{Syracuse
random variable (SRV)}, denoted $\mathbf{Syrac}_{H}\left(n\right)$
or $\mathbf{S}_{H}\left(n\right)$, for short, as the \emph{discrete}
$K$-valued random variable: 
\begin{equation}
\mathbf{Syrac}_{H}\left(n\right)\overset{\textrm{def}}{=}\mathbf{S}_{H}\left(n\right)\overset{\textrm{def}}{=}X_{H}\mid_{\mathbb{N}_{0}}\left(\mathbf{Run}\left(p\right)^{n}\right)\label{eq:our syracuse random variables}
\end{equation}
where, as indicated, we treat $X_{H}\mid_{\mathbb{N}_{0}}\left(\mathbf{Run}\left(p\right)^{n}\right)$
as the image of $\mathbf{Run}\left(p\right)^{n}$ under $X_{H}:\mathbb{N}_{0}\rightarrow K$,
so that: 
\[
\mathbb{P}\left(\mathbf{S}_{H}\left(n\right)\in E\right)=\mathbb{P}\left(X_{H}\left(\mathbf{Run}\left(p\right)^{n}\right)\in E\right)=\sum_{\begin{array}{c}
\mathbf{m}\vec{\mathbf{v}}\in\left(\mathbb{N}_{1}B\right)^{n}\\
X_{H}\left(\mathbf{m}\vec{\mathbf{v}}\right)\in E
\end{array}}\mathbb{P}\left(\mathbf{Run}\left(p\right)^{n}=\mathbf{m}\vec{\mathbf{v}}\right)
\]
for all measurable $E\subseteq K$, where we use our convention for
making sense of $X_{H}\left(\mathbf{m}\vec{\mathbf{v}}\right)$ to
make sense of $X_{H}\left(\mathbf{Run}\left(p\right)^{n}\right)$.

\vphantom{} 
\end{defn}
\begin{rem}
This is the natural generalization of Tao's $\mathbf{Syrac}\left(\mathbb{Z}/3^{n}\mathbb{Z}\right)$.
For $H=T_{3}$, $\mathbf{S}_{H}\left(n\right)$ is equal to $\mathbf{Syrac}\left(\mathbb{Z}/3^{n}\mathbb{Z}\right)$.

\vphantom{} 
\end{rem}
These next three notations will be used intensively in all that follows:

\vphantom{} 
\begin{defn}
For any $x>0$ and any $L,L^{\prime}\in\mathbb{N}_{0}$ with $L^{\prime}\leq L$,
we define: 
\begin{equation}
\mathcal{B}_{x}^{\left(L\right)}\overset{\textrm{def}}{=}\left\{ \mathbf{n}\in\mathbb{N}_{0}^{p}:\Sigma\left(\mathbf{n}\right)=L\textrm{ \& }\max_{0\leq j<p}\left|\frac{L}{p}-\mathbf{n}\cdot\mathbf{e}_{j}\right|\leq\ln^{0.6}x\right\} \label{eq:definition of B_x L}
\end{equation}
and write $\mathcal{A}^{\left(L^{\prime}\right)}$ to denote the set
of all $\mathbf{m}\vec{\mathbf{v}}=\left(m_{1}\mathbf{v}_{1},\ldots,m_{L^{\prime}}\mathbf{v}_{L^{\prime}}\right)\in\left(\mathbb{N}_{1}B\right)^{L^{\prime}}$
so that: 
\begin{equation}
\max_{0\leq j<p}\left|n-\Sigma_{n}\left(\mathbf{m}\vec{\mathbf{v}}\right)\cdot\mathbf{e}_{j}\right|\leq\frac{1}{p}\ln^{0.6}x,\textrm{ }\forall n\in\left\{ 1,\ldots,L^{\prime}\right\} \label{eq:approximate uniform behavior}
\end{equation}
These are the tuples that represent the behavior of ``generic''
trajectories of $H$. 

Lastly, we write: 
\begin{equation}
I_{y}\overset{\textrm{def}}{=}\left[\frac{\ln\left(y/x\right)}{\ln\rho_{H}^{-1}}+\ln^{0.8}x,\frac{\ln\left(y^{\alpha}/x\right)}{\ln\rho_{H}^{-1}}-\ln^{0.8}x\right]\label{eq:def of I_y}
\end{equation}
where, as per our convention, we either choose $y=x^{\alpha}$ or
$y=x^{\alpha^{2}}$. 

\vphantom{} 
\end{defn}
\begin{rem}
Note that, by the triangle inequality (\ref{eq:approximate uniform behavior})
implies: 
\begin{equation}
\ln^{0.6}x\geq\sum_{j=0}^{p-1}\left|u_{j}pn-\Sigma_{n}\left(\mathbf{m}\vec{\mathbf{v}}\right)\cdot\mathbf{e}_{j}\right|\geq\left|pn\sum_{j=0}^{p-1}d_{j}-\sum_{j=0}^{p-1}\Sigma_{n}\left(\mathbf{m}\vec{\mathbf{v}}\right)\cdot\mathbf{e}_{j}\right|=\left|pn-\Sigma\left(\Sigma_{n}\left(\vec{\mathbf{v}}\right)\right)\right|\label{eq:condition on sum of n-sum of v-arrow}
\end{equation}
for all $n\in\left\{ 1,\ldots,L\right\} $.

\vphantom{} 
\end{rem}
We will need the following inequality relating the condition $I_{y}\subset\left[m_{x},n_{x}\right]$
to the values of $\gamma$, $\alpha$, $A_{0}$, and $A_{1}$. \vphantom{} 
\begin{prop}
\label{prop:gamma inequality proposition}$I_{y}\subset\left[m_{x},n_{x}\right]$
whenever: 
\begin{align}
A_{0}^{\alpha^{3}-1} & <\rho_{H}^{-1}<A_{1}\label{eq:A_0 A_1 A_2 inequalities}\\
A_{0}^{\alpha-1} & <A_{1}\nonumber 
\end{align}
\end{prop}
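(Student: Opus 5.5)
The plan is to compare endpoints directly. For $x$ large, the claim $I_{y}\subset\left[m_{x},n_{x}\right]$ reduces to two inequalities: the left endpoint of $I_{y}$ is at least $m_{x}$, and the right endpoint of $I_{y}$ is at most $n_{x}$. Both endpoints of $I_{y}$ increase with $y$, and $y$ is either $x^{\alpha}$ or $x^{\alpha^{2}}$. So it suffices to check the left endpoint at the smaller choice $y=x^{\alpha}$ and the right endpoint at the larger choice $y=x^{\alpha^{2}}$. Throughout, $\ln\rho_{H}^{-1}>0$ by MH1, and we write $\lambda=\ln\rho_{H}^{-1}$.

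For the left endpoint with $y=x^{\alpha}$, we have $\ln(y/x)=(\alpha-1)\ln x$. The required inequality is therefore
\[
\frac{(\alpha-1)\ln x}{\lambda}+\ln^{0.8}x\geq\left\lfloor (\alpha-1)\frac{\ln x}{\ln A_{1}}\right\rfloor .
\]
This holds whenever $\lambda\leq\ln A_{1}$, because then the first term alone already dominates the unfloored right side. The hypothesis $\rho_{H}^{-1}<A_{1}$ in (\ref{eq:A_0 A_1 A_2 inequalities}) gives this, even with room to spare: the left side exceeds the right by at least $(\alpha-1)\ln x\,(1/\lambda-1/\ln A_{1})>0$.

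For the right endpoint with $y=x^{\alpha^{2}}$, we have $\ln(y^{\alpha}/x)=(\alpha^{3}-1)\ln x$. We need
\[
\frac{(\alpha^{3}-1)\ln x}{\lambda}-\ln^{0.8}x\leq\left\lfloor \frac{\ln x}{\ln A_{0}}\right\rfloor .
\]
The hypothesis $A_{0}^{\alpha^{3}-1}<\rho_{H}^{-1}$ is equivalent to $(\alpha^{3}-1)/\lambda<1/\ln A_{0}$. Set $\kappa=1/\ln A_{0}-(\alpha^{3}-1)/\lambda>0$. The right side is at least $\ln x/\ln A_{0}-1$, so the gap is at least $\kappa\ln x+\ln^{0.8}x-1$, which is positive for all $x\geq e$. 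Thus the floor costs nothing, and this is the only place where strictness of the hypothesis is used.

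Finally, the second condition $A_{0}^{\alpha-1}<A_{1}$ guarantees $m_{x}\leq n_{x}$, since $(\alpha-1)/\ln A_{1}<1/\ln A_{0}$, so $\left[m_{x},n_{x}\right]$ is a genuine interval. This already follows from the first chain of inequalities, because $A_{0}^{\alpha-1}\leq A_{0}^{\alpha^{3}-1}<\rho_{H}^{-1}<A_{1}$, so we only need to note it. The main obstacle is keeping track of which choice of $y$ is extremal for which endpoint, and checking that strict inequality yields a margin linear in $\ln x$ that absorbs both the floors and the $\ln^{0.8}x$ terms. Both of these are routine once the monotonicity in $y$ has been observed.
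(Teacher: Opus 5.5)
Your proof is correct and takes the same route as the paper. Both compare the left endpoint of $I_{y}$ at $y=x^{\alpha}$ with $m_{x}$ and the right endpoint at $y=x^{\alpha^{2}}$ with $n_{x}$, reducing these to $\rho_{H}^{-1}<A_{1}$ and $A_{0}^{\alpha^{3}-1}<\rho_{H}^{-1}$ respectively. You are more careful than the paper's asymptotic ($\asymp$) argument, since you handle the floors exactly and note that $A_{0}^{\alpha-1}<A_{1}$ (which the paper lists to get $m_{x}<n_{x}$) follows from the other hypotheses and is not needed for the inclusion.
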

Proof: We need: 
\[
\left(\alpha-1\right)\frac{\ln x}{\ln A_{1}}\asymp m_{x}<\frac{\ln\left(y/x\right)}{\ln\rho_{H}^{-1}}+\ln^{0.8}x
\]
\[
m_{x}\asymp\left(\alpha-1\right)\frac{\ln x}{\ln A_{1}}<\frac{\ln x}{\ln A_{0}}\asymp n_{x}
\]
\[
\frac{\ln\left(y^{\alpha}/x\right)}{\ln\rho_{H}^{-1}}-\ln^{0.8}x<n_{x}\asymp\frac{\ln x}{\ln A_{0}}
\]
as $x\rightarrow\infty$.

In the first inequality, the upper bound is minimized when $y=x^{\alpha}$,
while in the third inequality, the lower bound is maximized when $y=x^{\alpha^{2}}$.
Using this and dividing out by $\ln x$ in all three inequalities
gives: 
\[
\frac{\alpha-1}{\ln A_{1}}<\frac{\alpha-1}{\ln\rho_{H}^{-1}}+\ln^{-0.2}x
\]
\[
\frac{\alpha-1}{\ln A_{1}}<\frac{1}{\ln A_{0}}
\]
\[
\frac{\alpha^{3}-1}{\ln\rho_{H}^{-1}}-\ln^{-0.2}x<\frac{1}{\ln A_{0}}
\]
These three inequalities will all be true whenever: 
\begin{align*}
\rho_{H}^{-1} & <A_{1}\\
A_{0}^{\alpha-1} & <A_{1}\\
A_{0}^{\alpha^{3}-1} & <\rho_{H}^{-1}
\end{align*}

Q.E.D. 

\vphantom{}We conclude with the principal estimates we will use for
$X_{H}$ and $M_{H}$. We give the archimedean and non-archimedean
estimates separately, as the non-archimedean estimates come with an
attached probabilistic formalism that does not arise in the archimedean
case.

\vphantom{} 
\begin{prop}
\label{prop:elementary properties of X_H of v-arrow}We have the following
explicit formula for $X_{H}\left(\vec{\mathbf{v}}\right)$: 
\begin{equation}
X_{H}\left(\mathbf{m}\vec{\mathbf{v}}\right)=\sum_{k=1}^{n}\left(\prod_{i=0}^{p-1}r_{i}^{\Sigma_{k-1}\left(\mathbf{m}\vec{\mathbf{v}}\right)\cdot\mathbf{e}_{i}}\right)\prod_{j=0}^{p-1}\left(\sum_{h=0}^{m_{k}\left(\mathbf{v}_{k}\cdot\mathbf{e}_{j}\right)-1}r_{j}^{h}c_{j}\right),\textrm{ }\forall\mathbf{m}\vec{\mathbf{v}}\in\left(\mathbb{N}_{1}B\right)^{n},\textrm{ }\forall n\geq1\label{eq:explicit formula for X_H of v-arrow}
\end{equation}
as well as the estimate: 
\begin{equation}
\left\Vert X_{H}\left(\mathbf{m}\vec{\mathbf{v}}\right)\right\Vert _{\textrm{Mink}}<\frac{nR_{H}^{2\Sigma\left(\mathbf{m}\right)}}{R_{H}-1}=O\left(nR_{H}^{2\Sigma\left(\mathbf{m}\right)}\right)\textrm{ as }n\rightarrow\infty,\textrm{ }\forall\mathbf{m}\vec{\mathbf{v}}\in\left(\mathbb{N}_{1}B\right)^{n}\label{eq:estimate for X_H-arrow}
\end{equation}
where the constant of proportionality depends only on $H$.

Furthermore, given $L\in\mathbb{N}_{1}$ and $\mathbf{m}\vec{\mathbf{v}}\in\mathcal{A}^{\left(L\right)}$
we have: 
\begin{align}
\left\Vert X_{H}\left(\vec{\mathbf{v}}\right)\right\Vert _{\textrm{Mink}} & <\frac{1-\rho_{H}^{L}}{1-\rho_{H}}\frac{\overline{\rho}_{H}^{\frac{1}{p}\ln^{0.6}x}}{R_{H}-1}R_{H}^{p+2\ln^{0.6}x}\label{eq:selective estimate for X_H of v arrow}\\
 & \ll\left(R_{H}^{2}\overline{\rho}_{H}^{1/p}\right)^{\ln^{0.6}x}\nonumber 
\end{align}
and: 
\begin{equation}
\left\Vert M_{H}\left(\mathbf{m}\vec{\mathbf{v}}\right)\right\Vert _{\textrm{Mink}}\leq\overline{\rho}_{H}^{\frac{1}{p}\ln^{0.6}x}\rho_{H}^{L}\label{eq:selective estimate for M_H of v arrow}
\end{equation}
as $x\rightarrow\infty$. 
\end{prop}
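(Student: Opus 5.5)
We prove the explicit formula by induction on the number of runs $n$, and then read off the estimates by taking Minkowski operator norms term by term.

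\emph{Explicit formula.} For a single run of length $m$ of branch $j$, unwinding $H_{j}^{\circ m}(z)=r_{j}^{m}z+\sum_{h=0}^{m-1}r_{j}^{h}c_{j}$ gives the contribution $\sum_{h=0}^{m-1}r_{j}^{h}c_{j}$. The product over $j$ in (\ref{eq:explicit formula for X_H of v-arrow}) only encodes that exactly one factor, $j=j_{k}$, is non-trivial; the others are empty sums read as the identity.

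For the inductive step, use the composition rule $X_{H}(\mathbf{i}\wedge\mathbf{j})=H_{\mathbf{i}}(X_{H}(\mathbf{j}))$ and $M_{H}(\mathbf{i}\wedge\mathbf{j})=M_{H}(\mathbf{i})M_{H}(\mathbf{j})$. This shows that the $k$th run's constant term is pre-multiplied by the linear part of the first $k-1$ runs, namely $\prod_{i}r_{i}^{\Sigma_{k-1}(\mathbf{m}\vec{\mathbf{v}})\cdot\mathbf{e}_{i}}$. The order of the $r_{i}$ in this product must follow the run order, since the $r_{i}$ need not commute. We interpret the product as this ordered composition, and note that only its norm matters below.

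\emph{Crude bound (\ref{eq:estimate for X_H-arrow}).} Apply submultiplicativity of the operator norm and bound each $\|r_{i}\|_{\textrm{Mink}}\le R_{H}$. Normalize $\max_{j}\|c_{j}\|_{\textrm{Mink}}\le1$, or absorb it into the constant. Then the $k$th summand is at most $R_{H}^{\Sigma_{k-1}}\cdot\frac{R_{H}^{m_{k}}-1}{R_{H}-1}\le\frac{R_{H}^{2\Sigma(\mathbf{m})}}{R_{H}-1}$. Summing over the $n$ runs gives the claim. This assumes $R_{H}>1$, which MH1 with $p\ge2$ does not force, so a case $R_{H}\le1$ would be trivial, with geometric sums bounded by $m_{k}$.

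\emph{Selective estimates.} For $\mathbf{m}\vec{\mathbf{v}}\in\mathcal{A}^{(L)}$, the defining condition (\ref{eq:approximate uniform behavior}) says each branch count $\Sigma_{k}\cdot\mathbf{e}_{i}$ is within $\frac{1}{p}\ln^{0.6}x$ of the balanced value. The paper's normalization $n$ here is really the per-branch share, which we read as $k$ so that the total is about $pk$.

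Hence
\[
\prod_{i}\|r_{i}\|_{\textrm{Mink}}^{\Sigma_{k}\cdot\mathbf{e}_{i}}\le\rho_{H}^{k}\prod_{i}\max\{\|r_{i}\|,\|r_{i}\|^{-1}\}^{\frac1p\ln^{0.6}x}=\rho_{H}^{k}\,\overline{\rho}_{H}^{\frac1p\ln^{0.6}x}.
\]
Indeed, an excess of a branch exponent costs $\|r_{i}\|$ and a deficit costs $\|r_{i}\|^{-1}$, and both are bounded by the max. Taking $k=L$ gives (\ref{eq:selective estimate for M_H of v arrow}).

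For $X_{H}$, the $k$th summand is bounded by $\rho_{H}^{k-1}\overline{\rho}_{H}^{\frac1p\ln^{0.6}x}$ times the run contribution $\frac{R_{H}^{m_{k}}}{R_{H}-1}$. Membership in $\mathcal{A}^{(L)}$ also controls each single run length. Comparing consecutive $\Sigma_{k}$ and $\Sigma_{k-1}$ through (\ref{eq:approximate uniform behavior}) and (\ref{eq:condition on sum of n-sum of v-arrow}) yields $m_{k}\le p+2\ln^{0.6}x$. This produces the factor $R_{H}^{p+2\ln^{0.6}x}$.

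Summing the geometric series $\sum_{k=1}^{L}\rho_{H}^{k-1}=\frac{1-\rho_{H}^{L}}{1-\rho_{H}}$ yields (\ref{eq:selective estimate for X_H of v arrow}). The $\ll$ form then follows from MH1.

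\emph{Main obstacle.} The delicate step is the bookkeeping between the per-step normalization in $\mathcal{A}^{(L)}$ and the per-run indexing $k$, together with the bound on an individual run length $m_{k}$ extracted from consecutive partial-sum constraints. Some slack in constants, such as $2\ln^{0.6}x$ versus $\frac{2}{p}\ln^{0.6}x$, may need adjusting, but it does not affect the asymptotic form.
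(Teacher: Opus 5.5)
Your steps are the paper's own: unwind each run to $\sum_{h<m_k}r_{j_k}^{h}c_{j_k}$, bound prefactors by submultiplicativity, use the balance condition defining $\mathcal{A}^{(L)}$ to get $\rho_H^{k}\overline{\rho}_H^{\frac{1}{p}\ln^{0.6}x}$, extract $m_k<p+2\ln^{0.6}x$ from consecutive partial sums, and sum a geometric series. Your side remarks about the order of non-commuting $r_i$, the dropped factor $\max_j\|c_j\|_{\textrm{Mink}}$, and the need for $R_H>1$ are points the paper also passes over. One correction there: $R_H<1$ is allowed (e.g.\ $r_0=r_1=\frac{1}{2}$ on $\mathbb{Z}$ satisfies MH1--MH4). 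In that case the right side of (\ref{eq:estimate for X_H-arrow}) is negative, so the case is not trivial; the bound must be restated or the case excluded.

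The step that fails as written is the direction of composition. You invoke $X_H(\mathbf{i}\wedge\mathbf{j})=H_{\mathbf{i}}(X_H(\mathbf{j}))$ with the first run as the outermost map. But Definition \ref{def:v-arrow formalism} sets $H_{\mathbf{m}\vec{\mathbf{v}}}=H_{\mathbf{v}_{n}}^{\circ m_{n}}\circ\cdots\circ H_{\mathbf{v}_{1}}^{\circ m_{1}}$, so the first run is applied first. This is also the order required by $\textrm{Syr}_H^{\circ n}(z)=M_H(\vec{a}^{(n)}(z))z+X_H(\vec{a}^{(n)}(z))$, which is how the proposition is used later. Under that definition, the $k$th run's constant is premultiplied by the \emph{later} runs $k+1,\ldots,n$, and (\ref{eq:explicit formula for X_H of v-arrow}) is false. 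For the Shortened Collatz map with $\mathbf{m}\vec{\mathbf{v}}=(\mathbf{e}_0,\mathbf{e}_1)$, the definition gives $H_1(H_0(0))=\frac{1}{2}$, while the formula gives $c_0+r_0c_1=\frac{1}{4}=H_0(H_1(0))$. The paper's ``direct computation'' has the same defect.

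So you should say explicitly that you prove the formula for the reversed composition, and then redo the selective estimate for the order actually used. There the $k$th prefactor has branch counts $(\Sigma_L(\mathbf{m}\vec{\mathbf{v}})-\Sigma_k(\mathbf{m}\vec{\mathbf{v}}))\cdot\mathbf{e}_i$, which $\mathcal{A}^{(L)}$ only pins to within $\frac{2}{p}\ln^{0.6}x$ of $L-k$. Your argument then gives the same geometric factor $\sum_{k}\rho_H^{L-k}=\frac{1-\rho_H^{L}}{1-\rho_H}$, but with $\overline{\rho}_H^{\frac{2}{p}\ln^{0.6}x}$ in place of $\overline{\rho}_H^{\frac{1}{p}\ln^{0.6}x}$. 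That is still $e^{O(\ln^{0.6}x)}$, which is all the later use needs, but it is not literally (\ref{eq:selective estimate for X_H of v arrow}). The crude bound and the $M_H$ bound do not depend on the order.
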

\begin{rem}
Let's briefly unpack the notation in (\ref{eq:explicit formula for X_H of v-arrow})
before proceeding. Recall that $\mathbf{m}\vec{\mathbf{v}}$ is the
``generic'' version of the $n$th run vector of an algebraic integer
$z$ under $H$; the $k$th entry of $\mathbf{m}$ represents the
length of a $k$th run; the unit vector in the $k$th entry of $\vec{\mathbf{v}}$
indicates which branch was applied in the $k$th run. $\Sigma\left(\mathbf{m}\right)$
is then sum total of the lengths of the first $n$ runs; $\Sigma_{k-1}\left(\mathbf{m}\vec{\mathbf{v}}\right)$,
meanwhile, is a $p$-tuple whose $j$th entry represents the number
of times the $j$th branch was applied in the first $k$ entries of
$\vec{\mathbf{v}}$. Thus, $\Sigma_{k-1}\left(\mathbf{m}\vec{\mathbf{v}}\right)\cdot\mathbf{e}_{i}$
gives us the sum of the coefficients of the first $k-1$ entries of
$\mathbf{m}\vec{\mathbf{v}}$ whose $\vec{\mathbf{v}}$ components
were $\mathbf{e}_{i}$. $m_{k}\left(\mathbf{v}_{k}\cdot\mathbf{e}_{j}\right)$,
meanwhile, is the coefficient of $\mathbf{v}_{k}$ if $\mathbf{v}_{k}=\mathbf{e}_{j}$
and is $0$.
\end{rem}
Proof: The proof of (\ref{eq:explicit formula for X_H of v-arrow})
is by direct computation using the definitions (\ref{eq:def of X_v-arrow})
and (\ref{eq:def of H_v-arrow}). Taking absolute values of (\ref{eq:explicit formula for X_H of v-arrow})
yields: 
\begin{align*}
\left\Vert X_{H}\left(\mathbf{m}\vec{\mathbf{v}}\right)\right\Vert _{\textrm{Mink}} & \leq\left(\max_{0\leq j<p}\left\Vert c_{j}\right\Vert _{\textrm{Mink}}\right)\sum_{k=1}^{n}\left(\prod_{\ell=1}^{k-1}\left\Vert r_{j_{\ell}}\right\Vert _{\textrm{Mink}}^{m_{\ell}}\right)\sum_{h=0}^{m_{k}-1}\left\Vert r_{j_{k}}\right\Vert _{\textrm{Mink}}^{h}\\
\left(R_{H}=\max_{0\leq j<p}\left\Vert r_{j}\right\Vert _{\infty}\right); & \leq\left(\max_{0\leq j<p}\left\Vert c_{j}\right\Vert _{\textrm{Mink}}\right)\sum_{k=1}^{n}\left(\prod_{\ell=1}^{k-1}R_{H}^{m_{\ell}}\right)\sum_{h=0}^{m_{k}-1}R_{H}^{h}=\sum_{k=1}^{n}R_{H}^{\sum_{\ell=1}^{k-1}m_{\ell}}\frac{R_{H}^{m_{k}}-1}{R_{H}-1}\\
 & \leq\left(\max_{0\leq j<p}\left\Vert c_{j}\right\Vert _{\textrm{Mink}}\right)\sum_{k=1}^{n}R_{H}^{\Sigma\left(\mathbf{m}\right)}\frac{R_{H}^{\Sigma\left(\mathbf{m}\right)}-1}{R_{H}-1}\\
 & <n\frac{R_{H}^{2\Sigma\left(\mathbf{m}\right)}}{R_{H}-1}
\end{align*}

As for the more refined estimates, let $\mathbf{m}\vec{\mathbf{v}}\in\mathcal{A}^{\left(L\right)}$.
Then: 
\begin{equation}
\left\Vert M_{H}\left(\mathbf{m}\vec{\mathbf{v}}\right)\right\Vert _{\textrm{Mink}}\leq\prod_{j=0}^{p-1}\left\Vert r_{j}\right\Vert _{\textrm{Mink}}^{\Sigma\left(\mathbf{m}\vec{\mathbf{v}}\right)\cdot\mathbf{e}_{j}}
\end{equation}
because the total number of $r_{j}$s that occur in $M_{H}\left(\mathbf{m}\vec{\mathbf{v}}\right)$
is the $\mathbf{e}_{j}$-coordinate of $\Sigma\left(\mathbf{m}\vec{\mathbf{v}}\right)$.
Our choice of $\mathbf{m}\vec{\mathbf{v}}$ gives: 
\begin{equation}
L-\frac{1}{p}\ln^{0.6}x<\Sigma\left(\mathbf{m}\vec{\mathbf{v}}\right)\cdot\mathbf{e}_{j}<L+\frac{1}{p}\ln^{0.6}x,\textrm{ }\forall j\in\left\{ 0,\ldots,p-1\right\} 
\end{equation}
As such: 
\begin{equation}
\left\Vert r_{j}\right\Vert _{\textrm{Mink}}^{\Sigma\left(\mathbf{m}\vec{\mathbf{v}}\right)\cdot\mathbf{e}_{j}}\leq\left\Vert r_{j}\right\Vert _{\textrm{Mink}}^{L}\left(\max\left\{ \left\Vert r_{j}\right\Vert _{\textrm{Mink}},\left\Vert r_{j}\right\Vert _{\textrm{Mink}}^{-1}\right\} \right)^{\frac{1}{p}\ln^{0.6}x},\textrm{ }\forall j\in\left\{ 0,\ldots,p-1\right\} 
\end{equation}
This gives us: 
\begin{align*}
\left\Vert M_{H}\left(\mathbf{m}\vec{\mathbf{v}}\right)\right\Vert _{\textrm{Mink}} & \leq\prod_{j=0}^{p-1}\left(\left\Vert r_{j}\right\Vert _{\textrm{Mink}}^{L}\underbrace{\left(\max\left\{ \left\Vert r_{j}\right\Vert _{\textrm{Mink}},\left\Vert r_{j}\right\Vert _{\textrm{Mink}}^{-1}\right\} \right)^{\frac{1}{p}\ln^{0.6}x}}_{\prod_{j=0}^{p-1}\textrm{ of this }=\overline{\rho}_{H}^{\ln^{0.6}x}}\right)\\
\left(\times\frac{\left\Vert r_{j}\right\Vert _{\textrm{Mink}}^{L}}{\left\Vert r_{j}\right\Vert _{\textrm{Mink}}^{L}}\right); & \leq\overline{\rho}_{H}^{\frac{1}{p}\ln^{0.6}x}\prod_{j=0}^{p-1}\left\Vert r_{j}\right\Vert _{\textrm{Mink}}^{L}\\
\left(\left\Vert r_{j}\right\Vert _{\textrm{Mink}}\leq\overline{\rho}_{H},\textrm{ }\forall j\right); & \leq\overline{\rho}_{H}^{\frac{1}{p}\ln^{0.6}x}\rho_{H}^{L}
\end{align*}
Note that this proves (\ref{eq:selective estimate for M_H of v arrow}).
Now, turning to $X_{H}\left(\mathbf{m}\vec{\mathbf{v}}\right)$, note
that: 
\begin{equation}
\prod_{i=0}^{p-1}r_{i}^{\Sigma_{k-1}\left(\mathbf{m}\vec{\mathbf{v}}\right)\cdot\mathbf{e}_{i}}=M_{H}\left(m_{1}\mathbf{v}_{1},\ldots,m_{k}\mathbf{v}_{k-1}\right)
\end{equation}
 for $k\in\left\{ 1,\ldots,L\right\} $. As such, (\ref{eq:selective estimate for M_H of v arrow})
yields: 
\begin{equation}
\left\Vert \prod_{i=0}^{p-1}r_{i}^{\Sigma_{k-1}\left(\mathbf{m}\vec{\mathbf{v}}\right)\cdot\mathbf{e}_{i}}\right\Vert _{\textrm{Mink}}=\left\Vert M_{H}\left(m_{1}\mathbf{v}_{1},\ldots,m_{k}\mathbf{v}_{k-1}\right)\right\Vert _{\textrm{Mink}}\leq\overline{\rho}_{H}^{\frac{1}{p}\ln^{0.6}x}\rho_{H}^{k-1}
\end{equation}
which gives: 
\begin{align*}
\left\Vert X_{H}\left(\vec{\mathbf{v}}\right)\right\Vert _{\textrm{Mink}} & \leq\sum_{k=1}^{L}\overline{\rho}_{H}^{\frac{1}{p}\ln^{0.6}x}\rho_{H}^{k-1}\prod_{j=0}^{p-1}\underbrace{\left(\sum_{h=0}^{m_{k}\left(\mathbf{v}_{k}\cdot\mathbf{e}_{j}\right)-1-1}\left\Vert r_{j}\right\Vert _{\textrm{Mink}}^{h}\left\Vert c_{j}\right\Vert _{\textrm{Mink}}\right)}_{0\textrm{ for all}j\neq j_{k}}\\
\left(\mathbf{v}_{k}=\mathbf{e}_{j_{k}}\textrm{ for some }j_{k}\in\left\{ 0,\ldots,p-1\right\} \right); & \leq\overline{\rho}_{H}^{\frac{1}{p}\ln^{0.6}x}\sum_{k=1}^{L}\rho_{H}^{k-1}\sum_{h=0}^{m_{k}-1}R_{H}^{h}\\
 & \leq\overline{\rho}_{H}^{\frac{1}{p}\ln^{0.6}x}\sum_{k=0}^{L-1}\rho_{H}^{k}\frac{R_{H}^{m_{k+1}}-1}{R_{H}-1}\\
 & =\frac{\overline{\rho}_{H}^{\frac{1}{p}\ln^{0.6}x}}{R_{H}-1}\sum_{k=0}^{L-1}\rho_{H}^{k}R_{H}^{m_{k+1}}-\frac{1}{R_{H}-1}\frac{1-\rho_{H}^{L}}{1-\rho_{H}}\overline{\rho}_{H}^{\frac{1}{p}\ln^{0.6}x}
\end{align*}
Since $\sum_{h=1}^{k}m_{h}=\Sigma\left(\Sigma_{k}\left(\mathbf{m}\vec{\mathbf{v}}\right)\right)$,
adding the inequalities: 
\begin{equation}
k-\frac{1}{p}\ln^{0.6}x<\Sigma_{k}\left(\mathbf{m}\vec{\mathbf{v}}\right)\cdot\mathbf{e}_{j}<k+\frac{1}{p}\ln^{0.6}x,\textrm{ }\forall k\in\left\{ 1,\ldots,L\right\} ,\forall j\in\left\{ 0,\ldots,p-1\right\} 
\end{equation}
over all $j$ yields: 
\begin{equation}
pk-\ln^{0.6}x<\sum_{h=1}^{k}m_{h}<pk+\ln^{0.6}x\label{eq:sum of m_h inequality}
\end{equation}
Thus: 
\begin{equation}
\left\Vert X_{H}\left(\mathbf{m}\vec{\mathbf{v}}\right)\right\Vert _{\textrm{Mink}}\ll\overline{\rho}_{H}^{\frac{1}{p}\ln^{0.6}x}R_{H}^{pL+\ln^{0.6}x}=\left(R_{H}\overline{\rho}_{H}^{1/p}\right)^{\ln^{0.6}x}R_{H}^{pL}
\end{equation}
Note also that (\ref{eq:sum of m_h inequality}) implies: 
\begin{equation}
-p\left(k-1\right)-\ln^{0.6}x<-\sum_{h=1}^{k-1}m_{h}<-p\left(k-1\right)+\ln^{0.6}x
\end{equation}
which if we add to (\ref{eq:sum of m_h inequality}) gives: 
\begin{equation}
p-2\ln^{0.6}x<m_{k}<p+2\ln^{0.6}x,\textrm{ }\forall k\in\left\{ 1,\ldots,L\right\} 
\end{equation}
Hence, we can also write: 
\begin{align*}
\left\Vert X_{H}\left(\mathbf{m}\vec{\mathbf{v}}\right)\right\Vert _{\infty} & \leq\frac{\overline{\rho}_{H}^{\frac{1}{p}\ln^{0.6}x}}{R_{H}-1}\sum_{k=0}^{L-1}\rho_{H}^{k}R_{H}^{m_{k+1}}-\frac{1}{R_{H}-1}\frac{1-\rho_{H}^{L}}{1-\rho_{H}}\overline{\rho}_{H}^{\frac{1}{p}\ln^{0.6}x}\\
 & <\frac{\overline{\rho}_{H}^{\frac{1}{p}\ln^{0.6}x}}{R_{H}-1}\sum_{k=0}^{L-1}\rho_{H}^{k}R_{H}^{p+2\ln^{0.6}x}-\frac{1}{R_{H}-1}\frac{1-\rho_{H}^{L}}{1-\rho_{H}}\overline{\rho}_{H}^{\frac{1}{p}\ln^{0.6}x}\\
 & =\frac{1-\rho_{H}^{L}}{1-\rho_{H}}\frac{\overline{\rho}_{H}^{\frac{1}{p}\ln^{0.6}x}}{R_{H}-1}\left(R_{H}^{p+2\ln^{0.6}x}-1\right)\\
 & <\frac{1-\rho_{H}^{L}}{1-\rho_{H}}\frac{\overline{\rho}_{H}^{\frac{1}{p}\ln^{0.6}x}}{R_{H}-1}R_{H}^{p+2\ln^{0.6}x}\\
 & \ll\left(R_{H}^{2}\overline{\rho}_{H}^{1/p}\right)^{\ln^{0.6}x}
\end{align*}

Q.E.D.

\section{\label{subsec:First-Passage-Stabilization-impl}First-Passage Stabilization
implies Almost-Boundedness}

In this section, we will prove \textbf{Theorem \ref{thm:almost boundedness}}
assuming \textbf{Lemma \ref{lem:stabilization of first passage}}.

\vphantom{} 
\begin{thm}[Alternate form of \textbf{Theorem \ref{thm:almost boundedness}}]
\label{thm:alt form of main theorem}Let all parameters be as given
in \textbf{Assumption \ref{assu:initial parameter assumptions}} and
\textbf{Proposition \ref{prop:gamma inequality proposition}}.

If \textbf{Lemma \ref{lem:stabilization of first passage} }is true,
then: 
\begin{equation}
\mathbb{P}\left(\inf_{n\geq1}\left\Vert \textrm{Syr}_{H}^{\circ n}\left(\mathcal{N}_{x}\right)\right\Vert _{\textrm{Mink}}\le N_{0}\right)\ge1-O\left(\left(\ln N_{0}\right)^{-c}\right)\label{eq:alt form of main theorem}
\end{equation}
for all $x,N_{0}\geq2$, for some absolute constant $c>0$.

Moreover particular, (\ref{eq:alt form of main theorem}) implies
\textbf{Theorem \ref{thm:almost boundedness}}. 
\end{thm}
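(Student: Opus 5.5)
The plan is to follow Tao's deduction of his Theorem 1.3 from his Proposition 1.14, step by step.

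\textbf{Iterating the stabilization lemma.} Fix $N_{0}$ large and consider the sequence of scales $x_{0}=N_{0}$, $x_{k+1}=x_{k}^{\alpha}$. Set
\[
E_{x}\overset{\textrm{def}}{=}\mathbb{P}\left(T_{N_{0}}\left(\mathcal{N}_{x}\right)<\infty\right),
\]
the probability that the Syracuse orbit of $\mathcal{N}_{x}$ ever enters the ball $\left\Vert \cdot\right\Vert _{\textrm{Mink}}\leq N_{0}$. The goal is to show $E_{x}\geq1-O\left(\left(\ln N_{0}\right)^{-c}\right)$ uniformly in $x$.

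Lemma \ref{lem:stabilization of first passage} should give, for each $k$,
\[
E_{x_{k}^{\alpha}}\geq E_{x_{k}^{\alpha^{2}}}-O\left(x_{k}^{-c(\alpha-1)}\right)-O\left(x_{k}^{-c}\right).
\]
The reason is the strong Markov property of the deterministic map $\textrm{Syr}_{H}$. On the event $T_{x_{k}}<\infty$, the orbit of $\mathcal{N}$ reaches the ball of radius $N_{0}$ if and only if the orbit of $\textrm{Pass}_{x_{k}}\left(\mathcal{N}\right)$ does. So the quantity in question is $\mathbb{E}\,F\left(\textrm{Pass}_{x_{k}}\left(\mathcal{N}\right)\right)$ for a fixed indicator $F$. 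By part II of the lemma, this expectation changes by at most the total variation distance when we pass from $\mathcal{N}_{x_{k}^{\alpha}}$ to $\mathcal{N}_{x_{k}^{\alpha^{2}}}$. Part I controls the event $T=\infty$.

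\textbf{Telescoping.} Since $x_{k}^{\alpha}=x_{k+1}$, the errors form a telescoping chain whose terms are summable:
\[
\sum_{k}x_{k}^{-c'}=\sum_{k}N_{0}^{-c'\alpha^{k}}\ll N_{0}^{-c'}.
\]
The chain compares $E$ at the bottom scale $x_{1}=N_{0}^{\alpha}$ with $E$ at any larger scale of the form $N_{0}^{\alpha^{k}}$. At the bottom scale, $E_{x_{1}}$ is bounded below using the passage-time estimate together with the contraction heuristic (\ref{eq:almost boundedness heuristic}). For $\mathcal{N}_{N_{0}^{\alpha}}$, the orbit of size about $N_{0}^{\alpha}$ drops below $N_{0}$ with probability $1-O\left(N_{0}^{-c}\right)$, by the Chernoff bound Lemma \ref{lem:Chernoff Bound} applied to the run vector supplied by Lemma \ref{lem:Geometric comparison}.

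\textbf{Arbitrary $x$ and the loss to $\left(\ln N_{0}\right)^{-c}$.} A general $x$ is not of the form $N_{0}^{\alpha^{k}}$. To handle it, I will decompose the log-uniform distribution on $\left\{ \left\Vert z\right\Vert \in\left[x,x^{\alpha}\right]\right\} $ as a mixture of the $\mathcal{N}_{y}$ with $y$ in a geometric range. The decomposition follows from Lemma \ref{lem:lattice sum asymptotic}, since the log-weights of dyadic shells are asymptotically proportional to $\ln$ of the ratio of their radii. I will also start the chain from a base scale $x_{0}$ chosen between $N_{0}$ and a power of it. The polylogarithmic rate arises here, as in Tao: one must also cover the first roughly $\ln\ln N_{0}$ scales, where the error bounds are weakest. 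I expect this bookkeeping to be the main obstacle, namely handling the mismatched scale intervals and the $\ln^{0.8}$ slack in $I_{y}$, rather than any new idea. I will first try to reproduce Tao's Section 3 verbatim with $\log$ replaced by $\ln\left\Vert \cdot\right\Vert _{\textrm{Mink}}$.

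\textbf{Deducing Theorem \ref{thm:almost boundedness}.} Given $f\to\infty$, choose $N_{0}\left(x\right)\to\infty$ slowly, with $N_{0}\left(x\right)\leq\inf_{\left\Vert z\right\Vert \geq x}f\left(z\right)$. Taking the union over scales $x=2^{\alpha^{k}}$ gives a bad set of logarithmic density tending to $0$. The mixing argument above converts this into the logarithmic density statement (\ref{eq:def of logarithmic density}). Finally, because the Syracuse orbit is a subsequence of the $H$-orbit and all archimedean absolute values are bounded by constant multiples of $\left\Vert \cdot\right\Vert _{\textrm{Mink}}$, the bound (\ref{eq:almost-boundedness result}) follows after adjusting $f$ by a constant.
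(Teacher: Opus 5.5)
Your skeleton is the paper's argument, which is Tao's Section 3. For $s\geq N_{0}$, on $\left\{ T_{s}<\infty\right\}$ the orbit reaches the $N_{0}$-ball iff the orbit of $\textrm{Pass}_{s}$ does. Part II then moves you from $\mathcal{N}_{s^{\alpha}}$ to $\mathcal{N}_{s^{\alpha^{2}}}$ at the cost of $d_{\textrm{TV}}$ plus the part I probabilities, and you telescope over $s\mapsto s^{\alpha}$. Two small corrections:
\begin{itemize}
\item The inequality you need is $E_{s^{\alpha^{2}}}\geq E_{s^{\alpha}}-O\left(s^{-(\alpha-1)c}\right)$, not the one you wrote; the total variation bound gives both directions.
\item The errors $s^{-(\alpha-1)c}$ sum to $O\left(N_{0}^{-c'}\right)$, so nothing forces a polylogarithmic loss. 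The $\left(\ln N_{0}\right)^{-c}$ in the statement is simply weaker than what the argument yields, and the $\ln^{0.8}x$ slack in $I_{y}$ lives inside the lemma you are assuming.
\end{itemize}

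The real weak point is the one you flag, arbitrary $x$, and your first remedy fails as stated. $\mathcal{N}_{x}$ is not a mixture of $\mathcal{N}_{y}$'s with $y\neq x$, because every such $\mathcal{N}_{y}$ charges points outside $\left[x,x^{\alpha}\right]$. The domination $\mathbb{P}\left(\mathcal{N}_{x}\in A\right)\ll\mathbb{P}\left(\mathcal{N}_{s}\in A\right)+\mathbb{P}\left(\mathcal{N}_{s^{\alpha}}\in A\right)$ for $s\leq x<s^{\alpha}$ is true and would suffice for the bad event, but it is a different statement. What forced you to start at $N_{0}$ is that your Markov identity needs $s\geq N_{0}$.

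The paper instead anchors the chain at $x$. It picks $J$ with $y=x^{\alpha^{-J}}\in\left[N_{0}^{1/\alpha^{2}},N_{0}^{1/\alpha}\right)$ and tracks $B_{s}=\left\{ T_{s}\left(\mathcal{N}_{s^{\alpha}}\right)<\infty\right\} \cap\left\{ \textrm{Pass}_{s}\left(\mathcal{N}_{s^{\alpha}}\right)\in E_{N_{0}}\right\}$, where $E_{N_{0}}$ is the set of points whose orbit meets the $N_{0}$-ball. This event makes sense for $s<N_{0}$ as well. At the bottom, $\mathcal{N}_{y}$ already lies below $N_{0}$, so $\mathbb{P}\left(B_{y^{1/\alpha}}\right)$ is literally part I, and the top event $B_{x^{1/\alpha}}$ is about $\mathcal{N}_{x}$ itself.

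In your language: start at $x_{0}=x^{\alpha^{-J}}\in\left[N_{0}^{1/\alpha},N_{0}\right)$. Reaching $x_{0}$ already implies reaching $N_{0}$, so part I at scale $x_{0}$ gives $E_{x_{0}^{\alpha}},E_{x_{0}^{\alpha^{2}}}\geq1-O\left(x_{0}^{-c}\right)$ directly. Your chain then runs from $s=x_{0}^{\alpha}\geq N_{0}$ up to $x$.

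Finally, in deducing the main theorem, $\inf_{\left\Vert z\right\Vert _{\textrm{Mink}}\geq x}f\left(z\right)$ need not tend to $\infty$ when $f\to\infty$ only in one archimedean absolute value; units in a real quadratic field give counterexamples. Fix $N_{0}$ instead and use that $\left\{ f\leq N_{0}\right\}$ lies in some $\left\{ \left|z\right|_{\infty}\leq M\right\}$, which has logarithmic density $0$.
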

Proof: We may assume that $N_{0}$ is larger than any given absolute
constant, or else (\ref{eq:alt form of main theorem}) is trivial.
Now, let: 
\begin{equation}
\textrm{Syr}_{H,\textrm{min}}\left(z\right)\overset{\textrm{def}}{=}\inf_{n\geq1}\left\Vert \textrm{Syr}_{H}^{\circ n}\left(z\right)\right\Vert _{\textrm{Mink}}
\end{equation}
and let: 
\begin{equation}
E_{N_{0}}\overset{\textrm{def}}{=}\left\{ z\in\mathcal{O}_{K}:\textrm{Syr}_{H,\textrm{min}}\left(z\right)\leq N_{0}\right\} 
\end{equation}
denote the set of all starting positions $z$ of orbits whose norms
reach $N_{0}$ or below. Let $\alpha=1.001$, let $x\ge2$, and let
$\mathcal{N}_{y}$ be the random variable: 
\begin{equation}
\mathcal{N}_{y}=\textrm{Log}\left(\left\{ z\in\mathcal{O}_{K}:y\leq\left\Vert z\right\Vert _{\textrm{Mink}}\leq y^{\alpha}\right\} \right).
\end{equation}
Next, let $B_{x}$ denote the event: 
\begin{equation}
B_{x}\overset{\textrm{def}}{=}\left\{ T_{x}\left(\mathcal{N}_{x^{\alpha}}\right)<+\infty\right\} \cap\left\{ \ensuremath{\textrm{Pass}_{x}\left(\mathcal{N}_{x^{\alpha}}\right)\in E_{N_{0}}}\right\} .
\end{equation}
This is the event that the trajectory of $z$ under $H$ reaches a
magnitude of $x$ or less and then later reaches a magnitude of $N_{0}$
or less.

If we also have $T_{x}\left(\mathcal{N}_{x^{\alpha^{2}}}\right)<+\infty$
and $\textrm{Pass}_{x}\left(\mathcal{N}_{x^{\alpha}}\right)\in E_{N_{0}}$
then, because $x<x^{\alpha}$, and because any walk that starts at
$\mathcal{N}_{x^{\alpha^{2}}}$ that goes to a point with absolute
value $x$ must first pass through a point with absolute value $x^{\alpha}$,
we have:

\begin{equation}
T_{x^{\alpha}}\left(\mathcal{N}_{\alpha^{2}}\right)\le T_{x}\left(\mathcal{N}_{x^{\alpha^{2}}}\right)<+\infty
\end{equation}
and, because it is the same trajectory, just for possibly longer steps:

\begin{equation}
\left\{ \textrm{Syr}_{H}^{\circ n}\left(\textrm{Pass}_{x}\left(\mathcal{N}_{x^{\alpha^{2}}}\right)\right):n\geq1\right\} \subseteq\left\{ \textrm{Syr}_{H}^{\circ n}\left(\textrm{Pass}_{x^{\alpha}}\left(\mathcal{N}_{x^{\alpha^{2}}}\right)\right):n\geq1\right\} 
\end{equation}
These imply:

$\textrm{Syr}_{H,\min}\left(\textrm{Pass}_{x}\left(\mathcal{N}_{x^{\alpha^{2}}}\right)\right)\le\textrm{Syr}_{H,\min}\left(\textrm{Pass}_{x^{\alpha}}\left(\mathcal{N}_{x^{\alpha^{2}}}\right)\right)\le N_{0}$
so that $B_{x^{\alpha}}$. holds. This implies the event $B_{x^{\alpha}}$
is more likely than: 
\begin{equation}
\left\{ \textrm{Pass}_{x}\left(\mathcal{N}_{x^{\alpha^{2}}}\right)\in E_{N_{0}}\right\} \cap\left\{ T_{x}\left(\mathcal{N}_{x^{\alpha^{2}}}\right)<+\infty\right\} 
\end{equation}
which gives:

\begin{equation}
\mathbb{P}\left(B_{x^{\alpha}}\right)\ge\mathbb{P}\left(\left\{ \textrm{Pass}_{x}\left(\mathcal{N}_{x^{\alpha^{2}}}\right)\in E_{N_{0}}\right\} \cap\left\{ \textrm{Pass}_{x}\left(\mathcal{N}_{x^{\alpha^{2}}}\right)\in E_{N_{0}}\right\} \cap\left\{ T_{x}\left(\mathcal{N}_{x^{\alpha^{2}}}\right)<+\infty\right\} \right)
\end{equation}
Because of (\ref{eq:passing time estimate}), the difference between
these and the simple passing time probability is $x^{-c}$ for some
constant $c>0$, which leaves us with: 
\begin{equation}
\mathbb{P}\left(B_{x^{\alpha}}\right)\ge\mathbb{P}\left(\textrm{Pass}_{x}\left(\mathcal{N}_{x^{\alpha^{2}}}\right)\in E_{N_{0}}\right)-\mathcal{O}\left(x^{-c}\right)
\end{equation}
Applying ($\ref{eq:passage location comparison}$), this lower bound
becomes:

\begin{align*}
\mathbb{P}\left(B_{x^{\alpha}}\right) & \ge\mathbb{P}\left(\textrm{Pass}_{x}\left(\mathcal{N}_{x^{\alpha}}\right)\in E_{N_{0}}\right)-\mathcal{O}\left(x^{-\left(\alpha-1\right)c^{\prime}}\right)\\
 & \geq\mathbb{P}\left(B_{x}\right)-\mathcal{O}\left(x^{-\left(\alpha-1\right)c^{\prime}}\right)
\end{align*}

Now, let $J$ be some number depending on $x$ and $N_{0}$ such that
$y=x^{\alpha^{-J}}$ is less than $N_{0}^{1/\alpha}$. Assuming $N_{0}$
is large, we can substitute $y^{\alpha^{j-2}}$ for $x$ in the above
estimates to get

\begin{equation}
\mathbb{P}\left(B_{y^{\alpha^{j-1}}}\right)\ge\mathbb{P}\left(B_{y^{\alpha^{j-2}}}\right)-\mathcal{O}\left(y^{-\left(\alpha^{j}-1\right)c}\right),\textrm{ }\forall j\in\left\{ 1,\ldots,J\right\} .
\end{equation}
Because $\left|\mathcal{N}_{y}\right|\le y^{\alpha}\le N_{0}$, the
event $B_{y^{\alpha^{-1}}}$ is equivalent to $\left\{ T_{y^{\alpha^{-1}}}\left(\mathcal{N}_{y}\right)<+\infty\right\} $,
as the passage part of the definition of the set $B_{y^{\alpha^{-1}}}$
is satisfied by design. By ($\ref{eq:passing time estimate}$), this
implies that: 
\begin{equation}
\mathbb{P}\left(B_{y^{\alpha^{-1}}}\right)\geq1-\mathcal{O}\left(y^{-c}\right).
\end{equation}
Notice the telescoping sum over $y^{\alpha^{j}}$ in the indices of
the $B$-sets; summing over them, we have

\begin{equation}
\mathbb{P}\left(B_{y^{\alpha^{J-1}}}\right)\ge1-\mathcal{O}\left(y^{-c}+\sum_{j=1}^{J}y^{-\left(\alpha^{j}-1\right)c}\right)
\end{equation}

Ultimately, this is bounded by the decay of $y^{-\left(\alpha^{J}-1\right)c}$.
By construction, $y\ge N_{0}^{1/\alpha^{2}}$ and $y^{\alpha^{J}}=x$,
so:

\begin{equation}
P\left(B_{x^{1/\alpha}}\right)=P\left(B_{y^{\alpha^{J-1}}}\right)\ge1-\mathcal{O}\left(y^{-\left(\alpha^{J}-1\right)c}\right)\ge1-\mathcal{O}\left(N_{0}^{-\left(\alpha^{J}-1\right)c/\alpha^{2}}\right)\label{eq:fin}
\end{equation}

Now, by our construction, $J$ is ultimately order $-\ln\left(\frac{\ln N_{0}}{\ln x}\right)$
as $x\rightarrow\infty$. It obtains larger values when $N_{0}$ is
smaller with respect to $x$. We let $N_{0}>2$ (if we let $N_{0}=1$
then $J$ is identically $0$), so that $J=\mathcal{O}\left(\ln\left(\ln x\right)\right)$
as $x\rightarrow\infty$. From the asymptotic growth rate of J, we
have that the error term in Equation~\ref{eq:fin} decays by at least
$N_{0}^{-\left(\alpha^{\ln\ln x}-1\right)c}$. This expression is
still polynomial, though with a coefficient that depends very slowly
on $x$. When $B_{x^{1/\alpha}}$ holds, we have that $\textrm{Pass}_{x^{1/\alpha}}\left(\mathcal{N}_{x}\right)$
must ultimately lie in the orbit of $H$ and: 
\begin{equation}
\textrm{Syr}_{H,\min}\left(x\right)\le\textrm{Syr}_{H,\min}\left(\textrm{Pass}_{x^{1/\alpha}}\left(\mathcal{N}_{x}\right)\right)\le N_{0}.
\end{equation}
The set of events where the iterates of $H$ on $\mathcal{N}_{x}$
that stay larger than $N_{0}$ are in the complement of $B_{x^{1/\alpha}}$
and we have:

\begin{equation}
\mathbb{P}\left(\textrm{Syr}_{H,\min}\left(\mathcal{N}_{x}\right)>N_{0}\right)<CN_{0}^{-c\left(\alpha^{\ln\ln x}-1\right)}
\end{equation}
where $C$ and $c$ are independent of $N_{0}$ and $x$. This rate
decays super-logarithmically, so logarithmically, thus giving us (\ref{eq:alt form of main theorem}).

This then implies \textbf{Theorem \ref{thm:almost boundedness}} by
the same argument Tao uses to show his analogous result (\textbf{Theorem
3.1}) implies \textbf{Theorem \ref{thm:(Tao-2019)}}.

Q.E.D.

\vphantom{}Thus, we have reduced the problem to proving \textbf{Lemma
\ref{lem:stabilization of first passage}}. This will be done over
the next two sections.

\section{\label{sec:-adic-distribution-of}$\Lambda$-adic distribution of
iterates \& Geometric Comparisons}

In this section, we prove our \textbf{Geometric Comparison Lemma}
(\textbf{Lemma \ref{lem:Geometric comparison}}). We prove this result
in three steps. The first is establishing a tail bound on the total
number of applications of branches to $\mathcal{N}$'s trajectory.
\vphantom{} 
\begin{prop}
\label{prop:tail bound}Let everything be as given in \textbf{Lemma
\ref{lem:Geometric comparison}}. Then, there is a constant $c>0$
depending only on $c_{0}$ so that: 
\begin{equation}
\max_{0\leq j<p}\mathbb{P}\left(\#_{j}^{\left(n\right)}\left(\mathcal{N}\right)\geq n^{\prime}\right)\ll p^{-cn}\label{eq:tail bound}
\end{equation}
\end{prop}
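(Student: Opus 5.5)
The plan is to follow Tao's proof of his Lemma 4.2 (the analogue of this tail bound) and replace the $2$-adic valuation computations with $\Lambda$-adic ones. The key input is the integrality of $H$. Fix $j$. The event $\#_{j}^{(n)}(\mathcal{N})\geq n^{\prime}$ says that, within the first $n$ runs, the branch $H_{j}$ is applied at least $n^{\prime}$ times. Hence the total number of steps $\Sigma(\vec{\#}^{(n)}(\mathcal{N}))$ is at least $n^{\prime}$.

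\textbf{Step 1: reduce to a long prefix of the path vector.} Since there are only $n$ runs, the event forces the length-$n^{\prime}$ path vector $\mathbf{j}_{n^{\prime}}(\mathcal{N})$ to consist of at most $n$ runs. In particular it has at most $n-1$ positions $k$ with $j_{k+1}\neq j_{k}$.

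\textbf{Step 2: the path vector mod $\Lambda^{n^{\prime}}$ is uniform.} I will show that the map $z\mapsto\mathbf{j}_{n^{\prime}}(z)$ depends only on $z$ mod $\Lambda^{n^{\prime}}$. Moreover, each string of length $n^{\prime}$ is realized by exactly one residue class, so the map is a bijection $\mathcal{O}_{K}/\Lambda^{n^{\prime}}\to\Sigma_{p}^{n^{\prime}}$. This uses integrality together with MH4, which gives $r_{j}=\mu_{j}u_{j}$ with $v_{\Lambda}(\mu_{j})\leq-1$ and $u_{j}$ a $\Lambda$-adic isometry. By induction on the length $m$, the set of $z$ with a prescribed $\mathbf{j}_{m}(z)$ is a single coset of $\Lambda^{m}$. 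The reason is that $H_{\mathbf{j}}^{-1}$ maps $\mathcal{O}_{K}$ into a coset of $\Lambda\cdot(\text{previous modulus})$, and the index multiplies by exactly $p=[\mathcal{O}_{K}:\Lambda]$ at each step. Counting then gives the bijection. I expect this to be the main obstacle. Tao has an explicit formula for the Collatz case, but here the $r_{j}$ are only $\mathbb{Q}$-linear, and I must check that preimages of cosets are again cosets of the correct index. Integrality is what guarantees that $H_{j}$ restricted to $j+\Lambda$ has integral image and no other integral preimages. I would first verify that $H_{j}(j+\Lambda)=\mathcal{O}_{K}$ as a bijection onto its image, using the factorization from MH4, and then induct.

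\textbf{Step 3: counting.} Combining Step 2 with the uniformity hypothesis (\ref{eq:uniformity hypothesis}), the string $\mathbf{j}_{n^{\prime}}(\mathcal{N})$ is within total variation $O(p^{-n^{\prime}})$ of a uniform string in $\Sigma_{p}^{n^{\prime}}$. For a uniform string, the number of change points among the $n^{\prime}-1$ consecutive pairs is $\mathrm{Bin}(n^{\prime}-1,1-1/p)$, which has mean about $(1-1/p)n^{\prime}\geq(1-1/p)(2+c_{0})n\geq(1+c_{0}/2)n$ since $p\geq2$. The probability that this count is at most $n-1$ is therefore exponentially small in $n$, by a Chernoff bound (or Lemma \ref{lem:Chernoff Bound} with $d=1$). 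Adding the $O(p^{-n^{\prime}})$ error gives $\ll p^{-cn}$, with $c$ depending only on $c_{0}$. Taking the maximum over $j$ finishes the proof. If $n^{\prime}$ is very large compared with $n$, I will apply the argument with $n^{\prime}$ replaced by $\lceil(2+c_{0})n\rceil$, since uniformity mod $\Lambda^{n^{\prime}}$ implies uniformity mod every smaller power.
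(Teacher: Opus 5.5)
Your argument is correct and uses the same mechanism as the paper's (Tao's) proof. The length-$n'$ itinerary of $\mathcal{N}$ is pinned down by $\mathcal{N}\bmod\Lambda^{n'}$. Once $n'\geq(2+c_0)n$, strings with at most $n$ runs are an exponentially small fraction of the $p^{n'}$ possibilities.

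The packaging differs:
\begin{itemize}
\item \textbf{The paper:} it splits according to the run $k$ at which $\#_j$ first reaches $n'$, and writes $H^{\circ N_k}(\mathcal{N})$ in closed form. It then clears denominators with a uniformizer in the completion, so that each fixed path confines $\mathcal{N}$ to one class mod $\Lambda^{N_k+1}$, and finally counts with a binomial coefficient and Stirling.
\item \textbf{You:} you prove the itinerary/residue-class bijection by induction one branch at a time, and push the uniformity hypothesis forward to near-uniformity of the itinerary. You then bound one event by a binomial lower tail: ``at most $n-1$ change points among the first $n'$ symbols''. That event contains $\{\Sigma(\vec{\#}^{(n)}(\mathcal{N}))\geq n'\}$, so it handles every $j$ at once.
\end{itemize}
Your version avoids the union over $k$ and the completion. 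Its count also correctly covers all run lengths and the $p-1$ choices of branch at each change point. The paper sums only over the partial counts $\#_j^{(1)},\ldots,\#_j^{(k)}$ of a single branch, and these do not determine the path. The paper's route, in exchange, needs only the ``at most one class per path'' direction, not a full bijection.

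Step 2 needs two corrections.
\begin{itemize}
\item \textbf{A false intermediate claim.} $H_j(j+\Lambda)=\mathcal{O}_K$ fails in general: for the shortened Collatz map, $H_1(1+2\mathbb{Z})=3\mathbb{Z}+2$. What the induction actually needs is weaker. For $z,z'\in j+\Lambda$ and $k\geq0$, you want $z\equiv z'\bmod\Lambda^{k+1}$ if and only if $H_j(z)\equiv H_j(z')\bmod\Lambda^{k}$. Counting then gives a bijection $(j+\Lambda)/\Lambda^{k+1}\to\mathcal{O}_K/\Lambda^{k}$.
\item \textbf{An unstated equality.} That equivalence requires $v_\Lambda(r_j w)=v_\Lambda(w)-1$ exactly, but MH4 only gives $v_\Lambda(\mu_j)\leq-1$. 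The reverse inequality comes from integrality: $r_j(\Lambda)=\{H_j(z)-H_j(z'):z,z'\in j+\Lambda\}\subseteq\mathcal{O}_K$. Since $\Lambda$ contains elements of valuation $1$, this forces $v_\Lambda(\mu_j)\geq-1$. State this explicitly. The paper's claim that $\pi_\Lambda^{N_k}\Pi_{N_k}$ is a unit needs the same equality, and its inequality for $v_\Lambda(\Pi_m)$ does not supply it.
\end{itemize}

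Both your argument and the paper's also rely on membership in $\Lambda^k$ being detected by $v_\Lambda$, that is, on $\Lambda$ being prime. Finally, your replacement of $n'$ by $\lceil(2+c_0)n\rceil$ is harmless but unnecessary, since a larger $n'$ only raises the binomial mean.
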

Proof: Fix $j\in\left\{ 0,\ldots,p-1\right\} $. To begin with, we
use hypothesis \textbf{MH4}: the endomorphisms $r_{0},\ldots,r_{p-1}$
that occur in the branches of $H$ can all be factored as: 
\begin{equation}
r_{j}=\mu_{j}u_{j},\textrm{ }\forall j\in\left\{ 0,\ldots,p-1\right\} 
\end{equation}
where the $\mu_{j}$s are elements of $K^{\times}$ and where the
$u_{j}$s are elements of $\textrm{End}_{\mathbb{Q}}^{\times}K$ which
are $\Lambda$-adic isometries, so that: 
\begin{equation}
v_{\Lambda}\left(r_{j}\left(z\right)\right)=v_{\Lambda}\left(\mu_{j}\right)+v_{\Lambda}\left(z\right),\textrm{ }\forall z\in K,\textrm{ }\forall j\in\left\{ 0,\ldots,p-1\right\} 
\end{equation}
Also, note that while the $u_{j}$s need not commute with one another,
the $\mu_{j}$s do commute, both among themselves, and with the $u_{j}$s.
Given any string $\mathbf{j}=\left(j_{1},\ldots,j_{\left|\mathbf{j}\right|}\right)$
of the integers $\left\{ 0,\ldots,p-1\right\} $, we then have that,
for: 
\begin{equation}
r_{\mathbf{j}}=r_{j_{1}}r_{j_{2}}\cdots r_{j_{m}}=\prod_{h=1}^{m}r_{j_{h}}
\end{equation}
we can write: 
\begin{equation}
r_{\mathbf{j}}=\left(\prod_{h=1}^{m}\mu_{j_{h}}\right)\times\underbrace{\left(u_{j_{1}}\cdots u_{j_{m}}\right)}_{u_{\mathbf{j}}}=\left(\prod_{k=0}^{p-1}\mu_{k}^{\#_{k}\left(\mathbf{j}\right)}\right)\times u_{\mathbf{j}}
\end{equation}
where the exponent of $\mu_{k}$ in the product is the number of $k$s
in $\mathbf{j}$.

Following Tao, we write:

\begin{equation}
\mathbb{P}\left(\#_{j}^{\left(n\right)}\left(\mathcal{N}\right)\geq n^{\prime}\right)=\sum_{k=1}^{n}\mathbb{P}\left(\#_{j}^{\left(k-1\right)}\left(\mathcal{N}\right)<n^{\prime}\le\#_{j}^{\left(k\right)}\left(\mathcal{N}\right)\right)
\end{equation}
from which it follows that our proof will be complete provided we
can demonstrate that: 
\begin{equation}
\max_{1\leq k\leq n}\mathbb{P}\left(\#_{j}^{\left(k-1\right)}\left(\mathcal{N}\right)<n^{\prime}\le\#_{j}^{\left(k\right)}\left(\mathcal{N}\right)\right)\ll p^{-cn}\label{eq:tail bound goal}
\end{equation}
for some constant $c$. We can then take the largest of these upper
bounds with respect to $i\in J^{c}$.

So, let $k\in\left\{ 1,\ldots,n\right\} $ be arbitrary, and let:
\begin{equation}
N_{k}\overset{\textrm{def}}{=}\sum_{h=1}^{k}\nu_{h}\left(\mathcal{N}\right)
\end{equation}
denote the lengths of the $n$-path of $\mathcal{N}$. By a straightforward
computation, we have that: 
\begin{equation}
H^{\circ N_{k}}\left(\mathcal{N}\right)=\underbrace{\left(\prod_{h=0}^{p-1}\mu_{h}^{\#_{h}\left(\mathbf{j}_{N_{k}}\left(\mathcal{N}\right)\right)}\right)}_{\textrm{call this }\Pi_{N_{k}}}u_{\mathbf{j}_{N_{k}}\left(\mathcal{N}\right)}\mathcal{N}+\sum_{m=0}^{N_{k}-1}\underbrace{\left(\prod_{h=0}^{p-1}\mu_{h}^{\#_{h}\left(\mathbf{j}_{m}\left(\mathcal{N}\right)\right)}\right)}_{\textrm{call this }\Pi_{m}}u_{\mathbf{j}_{m}\left(\mathcal{N}\right)}c_{j_{m}\left(\mathcal{N}\right)}\label{eq:rough}
\end{equation}
where, recall: 
\[
\mathbf{j}_{m}\left(\mathcal{N}\right)=\left(\left[\mathcal{N}\right]_{\Lambda},\left[H\left(\mathcal{N}\right)\right]_{\Lambda},\ldots,\left[H^{\circ m-1}\left(\mathcal{N}\right)\right]_{\Lambda}\right)
\]
and: 
\[
j_{m}\left(\mathcal{N}\right)=\left[H^{\circ m}\left(\mathcal{N}\right)\right]_{\Lambda}
\]
Since right-hand side of (\ref{eq:rough}) is an element of $\mathcal{O}_{K}$,
there is an $i\in\left\{ 0,\ldots,p-1\right\} $ so that $H^{\circ N_{k}}\left(\mathcal{N}\right)$
is congruent to $\lambda_{i}$ mod $\Lambda$, where $\lambda_{i}$
is any representative element of the coset $\Lambda_{i}$ that we
have chosen for the occasion: 
\begin{equation}
\Pi_{N_{k}}u_{\mathbf{j}_{N_{k}}\left(\mathcal{N}\right)}\mathcal{N}+\sum_{m=0}^{N_{k}-1}\Pi_{m}u_{\mathbf{j}_{m}\left(\mathcal{N}\right)}c_{j_{m+1}\left(\mathcal{N}\right)}\overset{\Lambda}{\equiv}\lambda_{i}\label{eq:local normality lemma congruence}
\end{equation}

By \textbf{MH4}'s hypothesis, we have: 
\[
v_{\Lambda}\left(\mu_{h}\right)\leq-1,\textrm{ }\forall h\in\left\{ 0,\ldots,p-1\right\} 
\]
As such: 
\begin{equation}
v_{\Lambda}\left(\Pi_{m}\right)\leq-\left|\mathbf{j}_{m}\left(\mathcal{N}\right)\right|,\textrm{ }\forall m\label{eq:lambda valuation of the mu h product}
\end{equation}
That is, the endomorphisms $r_{i}$ of each branch catalogued in the
string $\mathbf{j}_{m}\left(\mathcal{N}\right)$ has the effect of
decreasing their inputs' $\Lambda$-adic valuation by at least $1$.
So, $\Pi_{m}$ has a denominator which is ``divisible'' by $\Lambda^{m}$,
and we could therefore clear denominators on both sides of (\ref{eq:local normality lemma congruence})
by multiplying by an appropriately chosen number. This would be a
so-called \textbf{uniformizer of $\Lambda$ over $\mathcal{O}_{K}$}:
an element $\pi_{\Lambda}\in\mathcal{O}_{K}$ so that $z/\pi_{\Lambda}\in\mathcal{O}_{K}$
and $v_{\Lambda}\left(z/\pi_{\Lambda}\right)=0$ for all $z\in\mathcal{O}_{K}$
with $v_{\Lambda}\left(z\right)=1$. This uniformizer will exist in
$\mathcal{O}_{K}$ only if $\Lambda$ is a principal ideal, however,
this is no trouble; as it is a standard fact of algebraic number theory
that $\mathcal{O}_{K_{\Lambda}}$, the ring of $\Lambda$-adic integers
of $K$ obtained by completing $\mathcal{O}_{K}$ with respect to
$\left|\cdot\right|_{\Lambda}$, always contains a uniformizer $\pi_{\Lambda}$
for $\Lambda$. Because of this, we can embed (\ref{eq:local normality lemma congruence})
in $\mathcal{O}_{K_{\Lambda}}$, which has the effect of changing
the congruence mod $\Lambda$ to one mod the ideal $\pi_{\Lambda}\mathcal{O}_{K_{\Lambda}}$,
which we denote with an $\pi_{\Lambda}$. Multiplying both sides of
(\ref{eq:local normality lemma congruence}) by $\pi_{\Lambda}^{N_{k}}$
and rearranging terms gives us:

\begin{equation}
\pi_{\Lambda}^{N_{k}}\Pi_{N_{k}}u_{\mathbf{j}_{N_{k}}\left(\mathcal{N}\right)}\mathcal{N}\overset{\pi_{\Lambda}^{N_{k}+1}}{\equiv}\pi_{\Lambda}^{N_{k}}\lambda_{j}-\sum_{m=0}^{N_{k}-1}\pi_{\Lambda}^{N_{k}}\Pi_{m}u_{\mathbf{j}_{m}\left(\mathcal{N}\right)}c_{j_{m}\left(\mathcal{N}\right)}\label{eq:rough2}
\end{equation}
The $m=N_{k}$ case of (\ref{eq:lambda valuation of the mu h product})
tells us that $v_{\Lambda}\left(\pi_{\Lambda}^{N_{k}}\Pi_{N_{k}}\right)=0$,
and hence, that $\pi_{\Lambda}^{N_{k}}\Pi_{N_{k}}$ is a unit of $\mathcal{O}_{K_{\Lambda}}$,
which means that multiplying by $\pi_{\Lambda}^{N_{k}}\Pi_{N_{k}}$
is a $\Lambda$-adic isometry, that is, an isometry of $\mathcal{O}_{K_{\Lambda}}$.
Since $u_{\mathbf{j}_{N_{k}}\left(\mathcal{N}\right)}$ is also an
isometry of $\mathcal{O}_{K_{\Lambda}}$, so is $\pi_{\Lambda}^{N_{k}}\Pi_{N_{k}}u_{\mathbf{j}_{N_{k}}\left(\mathcal{N}\right)}$,
and we can apply the inverse of this map to both sides of (\ref{eq:rough2}),
which then shows that $\mathcal{N}$ belongs to a single equivalence
class of $\mathcal{O}_{K_{\Lambda}}$ mod $\pi_{\Lambda}^{N_{k}+1}$.
Since $\mathcal{N}$ is in $\mathcal{O}_{K}$, we see that $\mathcal{N}$
therefore belongs to a single equivalence class of $\Lambda^{N_{k}+1}$.

Suppose that the event $\#_{j}^{\left(k-1\right)}\left(\mathcal{N}\right)<n^{\prime}\le\#_{j}^{\left(k\right)}\left(\mathcal{N}\right)$
occurs. Then $N_{k}\geq\#_{j}^{\left(k\right)}\left(\mathcal{N}\right)$
tells us that we can project down from mod $\Lambda^{N_{k}+1}$ to
mod $\Lambda^{n^{\prime}}$ , the unique coset in $\mathcal{O}_{K}/\Lambda^{N_{k}+1}$
containing $\mathcal{N}$ projects down to a unique coset in $\mathcal{O}_{K}/\Lambda^{n^{\prime}}$.
By the approximate uniform distribution of $\mathcal{N}$ (assumption
(\ref{eq:uniformity hypothesis})), we then have that the probability
that $\mathcal{N}$ is congruent to any given $\lambda\in\mathcal{O}_{K}$
mod $\Lambda^{n^{\prime}}$ satisfies: 
\begin{equation}
\mathbb{P}\left(\mathcal{N}\overset{\Lambda^{n^{\prime}}}{\equiv}\lambda\right)\ll p^{-n^{\prime}}\label{eq:fun-1}
\end{equation}
because the index of $\Lambda$ in $\mathcal{O}_{K}$ being $p$ then
guarantees that the index of $\Lambda^{n^{\prime}}$ in $\mathcal{O}_{K}$
is $p^{n^{\prime}}$, and thus, that the probability of selecting
a given equivalence class of $\mathcal{O}_{K}$ mod $\Lambda^{n^{\prime}}$
is $p^{-n^{\prime}}$. In particular, (\ref{eq:fun-1}) gives this
probability subject to a given value of $k$, and shows that the distribution
of $\mathcal{N}$ mod $\Lambda^{n^{\prime}}$ is independent of $k$.

Finally, in order to get an upper bound for $\mathbb{P}\left(\#_{j}^{\left(k-1\right)}\left(\mathcal{N}\right)<n^{\prime}\le\#_{j}^{\left(k\right)}\left(\mathcal{N}\right)\right)$,
we sum the upper bound (\ref{eq:fun-1}) over all choices of $\mathbf{v}=\left(v_{1},\ldots,v_{k}\right)\in\mathbb{N}_{1}^{k}$
so that the event $\#_{j}^{\left(m\right)}\left(\mathcal{N}\right)=v_{m}$
occurs for all $m\in\left\{ 1,\ldots,k\right\} $. This gives: 
\begin{equation}
\mathbb{P}\left(\#_{j}^{\left(k-1\right)}\left(\mathcal{N}\right)<n^{\prime}\le\#_{j}^{\left(k\right)}\left(\mathcal{N}\right)\right)\ll\sum_{v_{1},\ldots,v_{k}\in\mathbb{N}_{1}:\Sigma\left(\mathbf{v}\right)<n^{\prime}}p^{-n^{\prime}}=\binom{n^{\prime}-1}{k}p^{-n^{\prime}}
\end{equation}
where $\Sigma\left(\mathbf{v}\right)$ is the sum of the entries of
$\mathbf{v}$. Applying Stirling's approximation to the binomial coefficient
yields: 
\begin{equation}
\binom{n^{\prime}-1}{k}p^{-n^{\prime}}\ll p^{-cn}
\end{equation}
for some constant $c>0$. Hence: 
\begin{align*}
\mathbb{P}\left(\#_{j}^{\left(n\right)}\left(\mathcal{N}\right)\geq n^{\prime}\right) & =\sum_{k=1}^{n}\mathbb{P}\left(\#_{j}^{\left(k-1\right)}\left(\mathcal{N}\right)<n^{\prime}\le\#_{j}^{\left(k\right)}\left(\mathcal{N}\right)\right)\\
 & \ll\sum_{k=1}^{n}p^{-cn}\\
 & \ll p^{-cn}
\end{align*}
Taking maximums over $j\in\left\{ 0,\ldots,p-1\right\} $ completes
the proof.

Q.E.D.

\vphantom{}

\textbf{Proof of \ref{lem:Geometric comparison}}: By \textbf{Lemma
\ref{lem:Chernoff Bound}} (the Chernoff-type bound from Tao's paper)
we have that: 
\begin{equation}
\mathbb{P}\left(\mathbf{Run}\left(p\right)^{n}\geq n^{\prime}\right)\ll p^{-cn}
\end{equation}
where $\mathbf{Run}\left(p\right)^{n}\geq n^{\prime}$ means that
the scalar multipliers in each entry of $\mathbf{Run}\left(p\right)^{n}$
is $\geq n^{\prime}$.

Now, as constructed, we have that both $\vec{a}^{\left(n\right)}\left(\mathcal{N}\right)$
and $\mathbf{Run}\left(p\right)^{n}$ takes values in $\left(\mathbb{N}_{1}B\right)^{n}$.
By definition (\ref{eq:def of TV distance}) of the total variation
distance, we then get: 
\begin{equation}
d_{\textrm{TV}}\left(\vec{a}^{\left(n\right)}\left(\mathcal{N}\right),\mathbf{Run}\left(p\right)^{n}\right)=\sum_{\vec{\mathbf{v}}\in B^{n}}\sum_{\mathbf{m}\in\mathbb{N}_{1}^{n}}\left|\mathbb{P}\left(\vec{a}^{\left(n\right)}\left(\mathcal{N}\right)=\mathbf{m}\vec{\mathbf{v}}\right)-\mathbb{P}\left(\mathbf{Run}\left(p\right)^{n}=\mathbf{m}\vec{\mathbf{v}}\right)\right|
\end{equation}
where $\vec{\mathbf{v}}=\left(\mathbf{v}_{1},\ldots,\mathbf{v}_{n}\right)$,
$\mathbf{m}=\left(m_{1},\ldots,m_{n}\right)$, and $\mathbf{m}\vec{\mathbf{v}}=\left(m_{1}\mathbf{v}_{1},\ldots,m_{n}\mathbf{v}_{n}\right)$
where the $m_{k}$s are positive integers and the $\mathbf{v}_{k}$s
are vectors in $B$. By (\ref{eq:PMF of many Runs}), we have: 

\begin{equation}
\mathbb{P}\left(\mathbf{Run}\left(p\right)^{n}=\left(m_{1}\mathbf{v}_{1},\ldots,m_{n}\mathbf{v}_{n}\right)\right)=\frac{p-1}{p}\frac{\left(1-\frac{1}{p}\right)^{\Sigma\left(\mathbf{m}\right)}}{\left(p-1\right)^{2n}}\prod_{k=2}^{n}\left[\mathbf{v}_{k}\neq\mathbf{v}_{k-1}\right]
\end{equation}
where $\Sigma\left(\mathbf{m}\right)$ is the sum of the entries of
$\mathbf{m}$. Hence: 
\begin{align}
d_{\textrm{TV}}\left(\vec{a}^{\left(n\right)}\left(\mathcal{N}\right),\mathbf{Run}\left(p\right)^{n}\right) & =\sum_{\vec{\mathbf{v}}\in B^{n*}}\sum_{\mathbf{m}\in\mathbb{N}_{1}^{n}}\left|\mathbb{P}\left(\vec{a}^{\left(n\right)}\left(\mathcal{N}\right)=\mathbf{m}\vec{\mathbf{v}}\right)-\frac{p-1}{p}\frac{\left(1-\frac{1}{p}\right)^{\Sigma\left(\mathbf{m}\right)}}{\left(p-1\right)^{2n}}\right|\label{eq:second step of final part of comparison lemma proof}
\end{align}
Where: 
\[
B^{n*}\overset{\textrm{def}}{=}\left\{ \vec{\mathbf{v}}=\left(\mathbf{v}_{1},\ldots,\mathbf{v}_{n}\right)\in B^{n}:\mathbf{v}_{1}\neq\mathbf{v}_{2},\mathbf{v}_{2}\neq\mathbf{v}_{3},\ldots,\mathbf{v}_{n-1}\neq\mathbf{v}_{n}\right\} 
\]
Letting $L$ be a large positive integer, we divide the domain of
the $\mathbf{m}$-sum based on the size of $\Sigma\left(\mathbf{m}\right)$:
\begin{equation}
\sum_{\mathbf{m}\in\mathbb{N}_{1}^{n}}=\sum_{\begin{array}{c}
\mathbf{m}\in\mathbb{N}_{1}^{n}\\
\Sigma\left(\mathbf{m}\right)<L
\end{array}}+\sum_{\begin{array}{c}
\mathbf{m}\in\mathbb{N}_{1}^{n}\\
\Sigma\left(\mathbf{m}\right)\geq L
\end{array}}\label{eq:m sum decomposition using L}
\end{equation}
The part of (\ref{eq:second step of final part of comparison lemma proof})
corresponding to the right-most sum in (\ref{eq:m sum decomposition using L})
is: 
\begin{equation}
\sum_{\vec{\mathbf{v}}\in B^{n}}\sum_{\begin{array}{c}
\mathbf{m}\in\mathbb{N}_{1}^{n}\\
\Sigma\left(\mathbf{m}\right)\geq L
\end{array}}\left|\mathbb{P}\left(\vec{a}^{\left(n\right)}\left(\mathcal{N}\right)=\mathbf{m}\vec{\mathbf{v}}\right)-\frac{p-1}{p}\frac{\left(1-\frac{1}{p}\right)^{\Sigma\left(\mathbf{m}\right)}}{\left(p-1\right)^{2n}}\right|\label{eq:ready for break}
\end{equation}
Here, the event $\vec{a}^{\left(n\right)}\left(\mathcal{N}\right)=\mathbf{m}\vec{\mathbf{v}}$
is strictly contained in the event $\vec{\#}^{\left(n\right)}\left(\mathcal{N}\right)=\Sigma\left(\mathbf{m}\vec{\mathbf{v}}\right)$,
as the latter is the image of the former under the $\Sigma$ map,
which is surjective, but injective. As such, by the triangle inequality,
(\ref{eq:ready for break}) is bounded from above by: 
\begin{align}
 & \overbrace{\sum_{\vec{\mathbf{v}}\in B^{n*}}\sum_{\begin{array}{c}
\mathbf{m}\in\mathbb{N}_{1}^{n}\\
\Sigma\left(\mathbf{m}\right)\geq L
\end{array}}\mathbb{P}\left(\vec{\#}^{\left(n\right)}\left(\mathcal{N}\right)=\Sigma\left(\mathbf{m}\vec{\mathbf{v}}\right)\right)}^{\textrm{I}}\label{eq:break into I and II}\\
 & +\frac{p-1}{p}\underbrace{\sum_{\vec{\mathbf{v}}\in B^{n*}}\sum_{\begin{array}{c}
\mathbf{m}\in\mathbb{N}_{1}^{n}\\
\Sigma\left(\mathbf{m}\right)\geq L
\end{array}}\frac{\left(1-\frac{1}{p}\right)^{\Sigma\left(\mathbf{m}\right)}}{\left(p-1\right)^{2n}}}_{\textrm{II}}\nonumber 
\end{align}

For (II), we have: 
\begin{align*}
\textrm{II} & =\sum_{k=L}^{\infty}\sum_{\vec{\mathbf{v}}\in B^{*n}}\sum_{\begin{array}{c}
\mathbf{m}\in\mathbb{N}_{1}^{n}\\
\Sigma\left(\mathbf{m}\right)=k
\end{array}}\frac{\left(1-\frac{1}{p}\right)^{k}}{\left(p-1\right)^{2n}}\\
\left(\left|B^{n*}\right|=p\left(p-1\right)^{n}\right); & =p\sum_{k=L}^{\infty}\sum_{\begin{array}{c}
\mathbf{m}\in\mathbb{N}_{1}^{n}\\
\Sigma\left(\mathbf{m}\right)=k
\end{array}}\frac{\left(1-\frac{1}{p}\right)^{k}}{\left(p-1\right)^{n}}\\
 & =\frac{p}{\left(p-1\right)^{n}}\sum_{k=L}^{\infty}\binom{k-1}{n-1}\left(1-\frac{1}{p}\right)^{k}\\
\left(\textrm{as }L,n\rightarrow+\infty\right); & =O\left(\left(1-\frac{1}{p}\right)^{L-n}\right)
\end{align*}
As for (I), observe that we can write: 
\begin{equation}
\textrm{I}=\sum_{\begin{array}{c}
\mathbf{w}\in\mathbb{N}_{1}^{p}\\
\Sigma\left(\mathbf{w}\right)\geq L
\end{array}}\mathbb{P}\left(\vec{\#}^{\left(n\right)}\left(\mathcal{N}\right)=\mathbf{w}\right)=\sum_{k=L}^{\infty}\sum_{\begin{array}{c}
\mathbf{w}\in\mathbb{N}_{1}^{p}\\
\Sigma\left(\mathbf{w}\right)=k
\end{array}}\mathbb{P}\left(\vec{\#}^{\left(n\right)}\left(\mathcal{N}\right)=\mathbf{w}\right)
\end{equation}
This follows from the fact that, as $\mathbf{m}$ and $\vec{\mathbf{v}}$
vary, $\Sigma\left(\mathbf{m}\vec{\mathbf{v}}\right)$ will take values
in the set of $p$-tuples of non-negative integers, the sum of the
entries of which will be equal to $\Sigma\left(\mathbf{m}\right)$.
Now, given $\Sigma\left(\mathbf{w}\right)=k$, observe that there
must be at least one $j\in\left\{ 0,\ldots,p-1\right\} $ for which
$\#_{j}^{\left(n\right)}\left(\mathcal{N}\right)\geq\left\lfloor k/p\right\rfloor $.
As such: 
\begin{align*}
\textrm{I} & =\sum_{k=L}^{\infty}\sum_{\begin{array}{c}
\mathbf{w}\in\mathbb{N}_{1}^{p}\\
\Sigma\left(\mathbf{w}\right)=k
\end{array}}\mathbb{P}\left(\vec{\#}^{\left(n\right)}\left(\mathcal{N}\right)=\mathbf{w}\right)\leq\sum_{k=L}^{\infty}\sum_{\begin{array}{c}
\mathbf{w}\in\mathbb{N}_{1}^{p}\\
\Sigma\left(\mathbf{w}\right)=k
\end{array}}\max_{0\leq j<p}\mathbb{P}\left(\#_{j}^{\left(n\right)}\left(\mathcal{N}\right)\geq\left\lfloor k/p\right\rfloor \right)\\
\left(\mathbf{Prop.}\textrm{ }\mathbf{\ref{prop:tail bound}}\right); & \ll\sum_{k=L}^{\infty}\sum_{\begin{array}{c}
\mathbf{w}\in\mathbb{N}_{1}^{p}\\
\Sigma\left(\mathbf{w}\right)=k
\end{array}}p^{-c\left\lfloor k/p\right\rfloor }\\
 & =\sum_{k=L}^{\infty}\binom{k-1}{n-1}p^{-c\left\lfloor k/p\right\rfloor }\\
\left(\textrm{as }L,n\rightarrow+\infty\right); & =O\left(\left(\frac{p}{c\ln p}\right)^{n}p^{-cL/p}\right)
\end{align*}
Choosing: 
\begin{equation}
L\gg\max\left\{ 2,\frac{p}{c}\right\} n\label{eq:scaling for comparison lemma}
\end{equation}
then gives: 
\begin{align}
O\left(\left(1-\frac{1}{p}\right)^{L-n}\right) & =O\left(\left(1-\frac{1}{p}\right)^{c^{\prime}n}\right)\\
O\left(\left(\frac{p}{c\ln p}\right)^{n}p^{-cL/p}\right) & =O\left(p^{-c^{\prime\prime}n}\right)
\end{align}
and hence, that both terms are of $O\left(p^{-c^{\prime\prime\prime}n}\right)$
for some constant $c^{\prime\prime\prime}>0$.

Using these findings, along with (\ref{eq:break into I and II}),
and (\ref{eq:second step of final part of comparison lemma proof}),
we can write: 
\begin{align}
d_{\textrm{TV}}\left(\vec{a}^{\left(n\right)}\left(\mathcal{N}\right),\mathbf{Run}\left(p\right)^{n}\right) & \leq\sum_{\vec{\mathbf{v}}\in B^{n*}}\sum_{\begin{array}{c}
\mathbf{m}\in\mathbb{N}_{1}^{n}\\
\Sigma\left(\mathbf{m}\right)<L
\end{array}}\left|\mathbb{P}\left(\vec{a}^{\left(n\right)}\left(\mathcal{N}\right)=\mathbf{m}\vec{\mathbf{v}}\right)-\frac{p-1}{p}\frac{\left(1-\frac{1}{p}\right)^{\Sigma\left(\mathbf{m}\right)}}{\left(p-1\right)^{2n}}\right|\label{eq:almost done with geom comparison proof}\\
 & +O\left(p^{-c^{\prime\prime\prime}n}\right)\nonumber 
\end{align}
where the Big-Oh is for the sum with $\Sigma\left(\mathbf{m}\right)\geq L$.

Next, we get another instance of equation (\ref{eq:rough}) from the
proof of \textbf{Proposition \ref{prop:tail bound}}: 
\begin{equation}
H^{\circ N_{n}}\left(\mathcal{N}\right)=\Pi_{N_{n}}u_{\mathbf{j}_{N_{n}}\left(\mathcal{N}\right)}\mathcal{N}+\sum_{m=0}^{N_{n}-1}\Pi_{m}u_{\mathbf{j}_{m}\left(\mathcal{N}\right)}c_{j_{m}\left(\mathcal{N}\right)}\label{eq:rough-1}
\end{equation}
where: 
\begin{equation}
N_{n}=\sum_{m=1}^{n}\nu_{m}\left(\mathcal{N}\right)
\end{equation}
Proceeding by the same arguments, we then find that $\mathcal{N}$
must be confined to a single coset of $\Lambda^{N_{n}+1}$. Since:
\begin{equation}
N_{n}=\sum_{j=0}^{p-1}\#_{j}^{\left(n\right)}\left(\mathcal{N}\right)
\end{equation}
where $\#_{j}^{\left(n\right)}\left(\mathcal{N}\right)$ is the total
number of times the $j$th branch of $H$ was applied in the first
$n$ runs of $\mathcal{N}$, the event $\vec{a}^{\left(n\right)}\left(\mathcal{N}\right)=\mathbf{m}\vec{\mathbf{v}}$
tells us that: 
\begin{equation}
N_{n}=\sum_{j=0}^{p-1}\#_{j}^{\left(n\right)}\left(\mathcal{N}\right)=\Sigma\left(\mathbf{m}\right)
\end{equation}
As such, (\ref{eq:rough-1}) forces $\mathcal{N}$ to be confined
to a single coset of $\Lambda^{\Sigma\left(\mathbf{m}\right)+1}$.
Since $\Sigma\left(\mathbf{m}\right)<L$, by choosing $n^{\prime}$
large enough, the uniformity hypothesis (\ref{eq:uniformity hypothesis})
tells us that the probability of $\mathcal{N}$ occupying a single
coset of $\Lambda^{\Sigma\left(\mathbf{m}\right)+1}$ is $p^{-\Sigma\left(\mathbf{m}\right)-1}+O\left(p^{-n^{\prime}}\right)$,
where the error term reflects the fact that $\mathcal{N}$ is only
approximately uniform; the $O\left(p^{-n^{\prime}}\right)$ comes
from the approximate uniformity hypothesis and the triangle inequality.

With this, choosing $n$, $L$, and $n^{\prime}$ so that: 
\begin{equation}
n\max\left\{ 2,\frac{p}{c}\right\} \ll L<n^{\prime}
\end{equation}
the right-hand side of (\ref{eq:almost done with geom comparison proof})
is bounded from above by: 
\begin{align*}
 & \leq\sum_{\vec{\mathbf{v}}\in B^{*n}}\sum_{\begin{array}{c}
\mathbf{m}\in\mathbb{N}_{1}^{n}\\
\Sigma\left(\mathbf{m}\right)<L
\end{array}}\left|\mathbb{P}\left(\vec{a}^{\left(n\right)}\left(\mathcal{N}\right)=\mathbf{m}\vec{\mathbf{v}}\right)-\frac{p-1}{p}\frac{\left(1-\frac{1}{p}\right)^{\Sigma\left(\mathbf{m}\right)}}{\left(p-1\right)^{2n}}\right|+O\left(p^{-c^{\prime\prime\prime}n}\right)\\
\left(L<n^{\prime}\right); & \ll\sum_{\vec{\mathbf{v}}\in B^{*n}}\sum_{\begin{array}{c}
\mathbf{m}\in\mathbb{N}_{1}^{n}\\
\Sigma\left(\mathbf{m}\right)<n^{\prime}
\end{array}}\left(p^{-\Sigma\left(\mathbf{m}\right)-1}+O\left(p^{-n^{\prime}}\right)+\frac{p-1}{p}\frac{\left(1-\frac{1}{p}\right)^{\Sigma\left(\mathbf{m}\right)}}{\left(p-1\right)^{2n}}\right)+O\left(p^{-c^{\prime\prime\prime}n}\right)\\
 & \ll\binom{n^{\prime}-1}{n}p^{n-n^{\prime}}+\binom{n^{\prime}-1}{n}O\left(p^{n-n^{\prime}}\right)+\binom{n^{\prime}-1}{n}\frac{\left(1-\frac{1}{p}\right)^{n^{\prime}}}{\left(p-1\right)^{n}}+O\left(p^{-c^{\prime\prime\prime}n}\right)\\
 & =\binom{n^{\prime}-1}{n}O\left(p^{n-n^{\prime}}\right)+\binom{n^{\prime}-1}{n}\frac{\left(1-\frac{1}{p}\right)^{n^{\prime}}}{\left(p-1\right)^{n}}+O\left(p^{-c^{\prime\prime\prime}n}\right)\\
\left(n^{\prime}\geq n\max\left\{ 2,\frac{p}{c}\right\} \right); & =O\left(p^{-c_{1}n}\right)
\end{align*}
for some constant $c_{1}$.

Q.E.D.

\section{\label{sec:Proof-of-Lemma-1}Proof of Lemma \ref{lem:stabilization of first passage} }

To begin, recall the heuristic (\ref{eq:almost boundedness heuristic}):
\begin{equation}
\left\Vert H^{\circ n}\left(\mathcal{N}\right)\right\Vert _{\textrm{Mink}}\ll\left(\prod_{k=0}^{p-1}\left\Vert r_{k}\right\Vert _{\textrm{Mink}}\right)^{pn}\left\Vert \mathcal{N}\right\Vert _{\textrm{Mink}}
\end{equation}
which we obtained in \textbf{Section \ref{sec:Outline-of-the}}, assuming
that the applications of $H$'s branches behaved like sums of i.i.d.
geometric random variables. To that end, let $\mathcal{N}_{y}$ be
a random variable with distribution: 
\begin{equation}
\textrm{Log}\left(\left\{ z\in\mathcal{O}_{K}:y\leq\left\Vert z\right\Vert _{\textrm{Mink}}\leq y^{\alpha}\right\} \right)
\end{equation}
and assume MH1, MH2, and MH3 are satisfied.

\subsection{\label{subsec:Proof-of-Passage-1}Proof of Passage Time Estimate
(Equation \ref{eq:passing time estimate})}

\vphantom{} 
\begin{prop}
\label{prop:equidistribution proposition}Let everything be as in
\textbf{Assumption \ref{assu:initial parameter assumptions}}.\textbf{
}Then: 
\begin{equation}
d_{\textrm{TV}}\left(\mathcal{N}_{y}\textrm{ mod }\Lambda^{cn_{x}},\mathbf{Unif}\left(\mathcal{O}_{K}/\Lambda^{cn_{x}}\right)\right)\ll p^{-cn_{x}}\label{eq:confirmation of approx.  unif.  dist.  hypothesis}
\end{equation}
for some absolute constant of proportionality and some constant $c>0$
depending only on $H$. 
\end{prop}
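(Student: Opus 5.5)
Set $Q=p^{cn_x}=[\mathcal{O}_K:\Lambda^{cn_x}]$, where $c>0$ is a small constant still to be chosen. For each residue class $a+\Lambda^{cn_x}$ we will compare the logarithmic mass of $U_y\cap(a+\Lambda^{cn_x})$ with that of $U_y$. Explicitly, by (\ref{eq:def of Log U}),
\[
\mathbb{P}\left(\mathcal{N}_y\overset{\Lambda^{cn_x}}{\equiv}a\right)=\frac{\sum_{z\in U_y,\,z\in a+\Lambda^{cn_x}}\left\Vert z\right\Vert _{\textrm{Mink}}^{-d}}{\sum_{z\in U_y}\left\Vert z\right\Vert _{\textrm{Mink}}^{-d}}.
\]
The denominator is handled by \textbf{Lemma \ref{lem:lattice sum asymptotic}} with $m=d$, $f(r)=y$, $g(r)=y^{\alpha}$. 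This gives a main term $C_Kd(\alpha-1)\ln y$, and the Abel-summation proof supplies an explicit error $O(1/y)$. For the numerator we redo the same Abel-summation argument, but with the affine lattice $a+\Lambda^{cn_x}$ in place of $\mathcal{O}_K$. By Fact III its image under $M$ is a translate of a full-rank lattice of covolume $\sqrt{|\Delta_K|}\,Q$. The lattice-point count is therefore
\[
N_{a}(R)=\frac{C_K}{Q}R^{d}+O\big(E(R)\big),
\]
and feeding this into Abel summation yields
\[
\frac{C_Kd}{Q}(\alpha-1)\ln y+O\!\left(\int_y^{y^{\alpha}}\frac{E(t)}{t^{d+1}}\,dt+\frac{E(y)}{y^{d}}+\frac{E(y^\alpha)}{y^{\alpha d}}\right).
\]
Dividing numerator by denominator, each class has probability $Q^{-1}\big(1+O(\text{error}\cdot Q/\ln y)\big)$. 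Summing the absolute deviations over all $Q$ classes then bounds the total variation distance by $Q$ times the per-class error term.

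The key step, and the expected obstacle, is controlling $E(R)$ uniformly in the translate $a$ and with explicit dependence on the shape of $\Lambda^{cn_x}$. I would first try the crude bound: count fundamental domains meeting the boundary of the Minkowski box $\{\|w\|_{\textrm{Mink}}\le R\}$, which gives $E(R)\ll (R/\lambda_1)^{d-1}+1$, where $\lambda_1$ is a successive-minimum type scale of $M(\Lambda^{cn_x})$. This scale needs a quantitative bound. Since $p^{n}\mathcal{O}_K$ is contained in a power of $\Lambda$ (indeed $\Lambda\supseteq p\mathcal{O}_K$ because $\mathcal{O}_K/\Lambda$ has order $p$, so $\Lambda^{k}\supseteq p^{k}\mathcal{O}_K$), while conversely $\Lambda^{k}$ has covolume $\asymp p^{k}$, the lattice $M(\Lambda^{cn_x})$ contains $p^{cn_x}M(\mathcal{O}_K)$. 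Its fundamental domain can therefore be chosen with diameter $\ll p^{cn_x}=Q$. This gives
\[
E(R)\ll 1+R^{d-1}Q^{d-1}\cdot\frac{Q}{Q^{d}}\ll 1+R^{d-1},
\]
up to constants, and the integral error becomes $O(1/y)$. Absolute errors up to $O(Q^{d}/y)$ would in any case be acceptable as long as $Q^{d+1}\ll y^{1/2}$.

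With $n_x=\lfloor\ln x/\ln A_0\rfloor$ we have $Q=p^{cn_x}=x^{c\ln p/\ln A_0}$, while $y\ge x^{\alpha}$. Choosing $c$ small enough (depending only on $p,d,A_0$, hence on $H$) makes
\[
d_{\textrm{TV}}\ll \frac{Q^{d+1}}{y\ln y}\ll x^{-1/2}\ll p^{-cn_x}.
\]
This is the claimed bound (\ref{eq:confirmation of approx.  unif.  dist.  hypothesis}), and by shrinking $c$ further if needed it also meets the hypothesis (\ref{eq:uniformity hypothesis}) of \textbf{Lemma \ref{lem:Geometric comparison}} with $n'=cn_x$. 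The only delicate point is making the boundary count for translated lattices uniform in $a$. I would handle it by tiling with translates of a fixed reduced fundamental parallelepiped of $p^{cn_x}M(\mathcal{O}_K)$, which reduces the problem to a count for a single lattice scaled by $Q$.
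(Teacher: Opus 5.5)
Your proof is correct, and it takes a genuinely different route from the paper.

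\textbf{The paper's route.} The paper works on the Fourier side of $G=\mathcal{O}_K/\Lambda^{cn_x}$. It uses Cauchy--Schwarz and Parseval to bound $d_{\textrm{TV}}$ by the nontrivial Fourier coefficients of $\mathcal{N}_y \bmod \Lambda^{cn_x}$. After expanding the square, it uses character orthogonality to reduce to the single global ratio $|G|\sum_{z\in U_y}\|z\|_{\textrm{Mink}}^{-2d}\big/\bigl(\sum_{z\in U_y}\|z\|_{\textrm{Mink}}^{-d}\bigr)^2$, which Lemma \ref{lem:lattice sum asymptotic} handles with $m=2d$ and $m=d$. The attraction is that only the lattice-sum asymptotic for $\mathcal{O}_K$ itself would be needed, never a count inside residue classes.

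\textbf{Where that route breaks.} Orthogonality gives $\sum_{\xi\neq0}e^{2\pi i\xi(w-z)}=|G|\,[w\equiv z \bmod \Lambda^{cn_x}]-1$. So the diagonal term is really $|G|\sum_{\lambda\in G}\bigl(\sum_{z\in U_y\cap(\lambda+\Lambda^{cn_x})}\|z\|_{\textrm{Mink}}^{-d}\bigr)^2$, which is larger than the $z=w$ part the paper keeps. With the correct diagonal, the computation merely re-expresses the $\ell^2$ distance to uniform. One is then back to needing exactly the per-class mass asymptotics that your count provides. The paper's last display also drops the factor $|G|$.

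\textbf{What your route buys.} Your ingredients are:
\begin{enumerate}
\item covolume $\sqrt{|\Delta_K|}\,Q$ from Fact III;
\item a fundamental domain of diameter $\ll Q$, from $p^{cn_x}\mathcal{O}_K\subseteq\Lambda^{cn_x}$;
\item a boundary-cell count.
\end{enumerate}
The boundary-cell count is automatically uniform in the translate $a$, since it depends only on the lattice, so the step you flagged as delicate is not. Together these give $E(R)\ll(R+Q)^{d-1}$, which matches your bound in the range $R\geq y\geq Q$. Hence $d_{\textrm{TV}}\ll Q/(y\ln y)$, i.e. uniformity holds for every modulus with $Q^{2}\ll y$. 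That is more than the statement needs.

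\textbf{A caution about your final sentence.} Lemma \ref{lem:Geometric comparison} also requires $n'\geq(2+c_0)n$, so applying it with $n=n_x$ needs $c>2$. Shrinking $c$ moves you away from this. What you need instead is the range $Q^2\ll y$ together with choosing $A_0$ large relative to $p$, roughly $A_0\geq p^{2(2+c_0)/\alpha}$.
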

Proof: For brevity, write: 
\begin{align}
\mathcal{U} & \overset{\textrm{def}}{=}\mathbf{Unif}\left(\mathcal{O}_{K}/\Lambda^{cn_{x}}\right)\\
G & \overset{\textrm{def}}{=}\mathcal{O}_{K}/\Lambda^{cn_{x}}\\
d_{\textrm{TV}} & \overset{\textrm{def}}{=}d_{\textrm{TV}}\left(\mathcal{N}_{y}\textrm{ mod }\Lambda^{cn_{x}},\mathbf{Unif}\left(\mathcal{O}_{K}/\Lambda^{cn_{x}}\right)\right)
\end{align}
Then: 
\begin{align}
d_{\textrm{TV}} & =\sum_{\lambda\in G}\left|\mathbb{P}\left(\mathcal{N}_{y}\overset{\Lambda^{cn_{x}}}{\equiv}\lambda\right)-\frac{1}{\left|G\right|}\right|
\end{align}
where the probability is of $\mathcal{N}_{y}$ being congruent to
$\lambda$ mod $\Lambda^{cn_{x}}$.

Here, we use the Fourier-analytic approach. Let $\hat{G}$ denote
the Pontryagin dual of $G$, written additively as the set of frequencies
$\xi$ (with $\xi$ being a group homomorphism from $G$ to $\mathbb{R}/\mathbb{Z}$),
so that every unitary character $\chi:G\rightarrow\mathbb{T}$ is
of the form: 
\begin{equation}
\chi\left(\lambda\right)=e^{2\pi i\xi\left(\lambda\right)}
\end{equation}
for some unique $\xi\in\hat{G}$, where we write $\xi=0$ to denote
the trivial frequency that sends all elements of $G$ to $0\in\mathbb{R}/\mathbb{Z}$.

Now, let $\widehat{\mathcal{N}_{y}},\hat{\mathcal{U}}:\hat{G}\rightarrow\mathbb{C}$
denote the Fourier transforms of the probability mass functions of
$\mathcal{N}_{y}$ and $\mathcal{U}$, respectively, with: 
\begin{align}
\widehat{\mathcal{N}_{y}}\left(\xi\right) & \overset{\textrm{def}}{=}\frac{1}{\left|G\right|}\sum_{\lambda\in G}\mathbb{P}\left(\mathcal{N}_{y}\overset{\Lambda^{cn_{x}}}{\equiv}\lambda\right)e^{-2\pi i\xi\left(\lambda\right)}\\
\hat{\mathcal{U}}\left(\xi\right) & \overset{\textrm{def}}{=}\frac{1}{\left|G\right|}\sum_{\lambda\in G}\mathbb{P}\left(\hat{\mathcal{U}}\overset{\Lambda^{cn_{x}}}{\equiv}\lambda\right)e^{-2\pi i\xi\left(\lambda\right)}
\end{align}
Since $G=\mathcal{O}_{K}/\Lambda^{cn_{x}}$ is a finite abelian group,
it is isomorphic to its own dual, the finiteness of which guarantees
that these two Fourier transforms are fully invertible, with: 
\begin{align}
\mathbb{P}\left(\mathcal{N}_{y}\overset{\Lambda^{cn_{x}}}{\equiv}\lambda\right) & =\sum_{\xi\in\hat{G}}\widehat{\mathcal{N}_{y}}\left(\xi\right)e^{2\pi i\xi\left(\lambda\right)}\\
\mathbb{P}\left(\mathcal{U}\overset{\Lambda^{cn_{x}}}{\equiv}\lambda\right) & =\sum_{\xi\in\hat{G}}\hat{\mathcal{U}}\left(\xi\right)e^{2\pi i\xi\left(\lambda\right)}
\end{align}
for all $\lambda\in\mathcal{O}_{K}/\Lambda^{cn_{x}}$. In particular,
since: 
\begin{equation}
\widehat{\mathcal{N}_{y}}\left(0\right)=\frac{1}{\left|G\right|}\underbrace{\sum_{\lambda\in G}\mathbb{P}\left(\mathcal{N}_{y}\overset{\Lambda^{cn_{x}}}{\equiv}\lambda\right)\overbrace{e^{-2\pi i0\left(\lambda\right)}}^{1}}_{1}=\frac{1}{\left|G\right|}
\end{equation}
we have, by Fourier inversion and Cauchy-Schwarz: 
\begin{align*}
d_{\textrm{TV}} & =\sum_{\lambda\in G}\left|\underbrace{\left(\frac{1}{\left|G\right|}+\sum_{\xi\neq0}\widehat{\mathcal{N}_{y}}\left(\xi\right)e^{2\pi i\xi\left(\lambda\right)}\right)}_{\mathbb{P}\left(\mathcal{N}_{y}\overset{\Lambda^{cn_{x}}}{\equiv}\lambda\right)}-\frac{1}{\left|G\right|}\right|=\sum_{\lambda\in G}\left|\sum_{\xi\neq0}\widehat{\mathcal{N}_{y}}\left(\xi\right)e^{2\pi i\xi\left(\lambda\right)}\right|\\
\left(\textrm{CSI}\right); & \leq\sqrt{\left|\hat{G}\right|}\sum_{\lambda\in G}\sqrt{\sum_{\xi\neq0}\left|\widehat{\mathcal{N}_{y}}\left(\xi\right)\right|^{2}}
\end{align*}
Since $\left|\hat{G}\right|=\left|G\right|$, we get: 
\begin{equation}
d_{\textrm{TV}}\leq\sqrt{\left|G\right|}\sqrt{\sum_{\xi\neq0}\left|\widehat{\mathcal{N}_{y}}\left(\xi\right)\right|^{2}}\label{eq:CSI bound on d_TV}
\end{equation}

Next, using the probability mass function of $\mathcal{N}_{y}$, we
can express $\mathcal{N}_{y}$'s Fourier transform as: 
\begin{equation}
\widehat{\mathcal{N}_{y}}\left(\xi\right)=\frac{1}{\left|G\right|}\sum_{\lambda\in G}\frac{\sum_{z\in\left(\lambda+\Lambda^{cn_{x}}\right)\cap U_{y}}\left\Vert z\right\Vert _{\textrm{Mink}}^{-d}}{\sum_{z\in U_{y}}\left\Vert z\right\Vert _{\textrm{Mink}}^{-d}}e^{-2\pi i\xi\left(\lambda\right)}=\frac{1}{\left|G\right|}\frac{\sum_{z\in U_{y}}\left\Vert z\right\Vert _{\textrm{Mink}}^{-d}e^{-2\pi i\xi\left(z\right)}}{\sum_{z\in U_{y}}\left\Vert z\right\Vert _{\textrm{Mink}}^{-d}}\label{eq:N_y-hat simplified}
\end{equation}
This re-write follows from the fact that for any function $f:G\times\mathcal{O}_{K}\rightarrow\mathbb{C}$:
\begin{equation}
\sum_{\lambda\in G}\sum_{z\in\left(\lambda+\Lambda^{cn_{x}}\right)\cap U_{y}}f\left(\lambda,z\right)=\sum_{\lambda\in G:z\in\lambda+\Lambda^{cn_{x}}}\sum_{z\in U_{y}}f\left(\lambda,z\right)=\sum_{z\in U_{y}}f\left(\left[z\right]_{\Lambda^{cn_{x}}},z\right)=\sum_{z\in U_{y}}f\left(z,z\right)
\end{equation}
As such, (\ref{eq:CSI bound on d_TV}) becomes: 
\begin{equation}
d_{\textrm{TV}}\leq\sqrt{\left|G\right|}\sqrt{\sum_{\xi\neq0}\left|\widehat{\mathcal{N}_{y}}\left(\xi\right)\right|^{2}}
\end{equation}

Here, we have: 
\begin{equation}
\sum_{\xi\neq0}\left|\widehat{\mathcal{N}_{y}}\left(\xi\right)\right|^{2}=\sum_{\xi\neq0}\left|\frac{\sum_{z\in U_{y}}\left\Vert z\right\Vert _{\textrm{Mink}}^{-d}e^{-2\pi i\xi\left(z\right)}}{\sum_{z\in U_{y}}\left\Vert z\right\Vert _{\textrm{Mink}}^{-d}}\right|^{2}=\frac{\sum_{\xi\neq0}\left|\sum_{z\in U_{y}}\left\Vert z\right\Vert _{\textrm{Mink}}^{-d}e^{-2\pi i\xi\left(z\right)}\right|^{2}}{\left(\sum_{z\in U_{y}}\left\Vert z\right\Vert _{\textrm{Mink}}^{-d}\right)^{2}}
\end{equation}
Expanding out the numerator and summing over $\xi$, we get: 
\begin{align}
\sum_{\xi\neq0}\left|\sum_{z\in U_{y}}\left\Vert z\right\Vert _{\textrm{Mink}}^{-d}e^{-2\pi i\xi\left(z\right)}\right|^{2} & \leq\sum_{z\in U_{y}}\sum_{w\in U_{y}}\left\Vert z\right\Vert _{\textrm{Mink}}^{-d}\left\Vert w\right\Vert _{\textrm{Mink}}^{-d}\sum_{\xi\neq0}e^{2\pi i\xi\left(w-z\right)}\\
 & =\left|G\right|\sum_{z\in U_{y}}\left\Vert z\right\Vert _{\textrm{Mink}}^{-2d}-\left(\sum_{z\in U_{y}}\left\Vert z\right\Vert _{\textrm{Mink}}^{-d}\right)^{2}
\end{align}
where the passage to the second line follows by \textbf{character
orthogonality}: 
\begin{equation}
\sum_{\xi\neq0}e^{2\pi i\xi\left(w-z\right)}=\left|\hat{G}\right|\left[w\overset{\Lambda^{cn_{x}}}{\equiv}z\right]-1,\textrm{ }\forall z,w\in G
\end{equation}
where we are using Iverson bracket notation: 
\begin{equation}
\left[w\overset{\Lambda^{cn_{x}}}{\equiv}z\right]=\begin{cases}
1 & \textrm{if }w\overset{\Lambda^{cn_{x}}}{\equiv}z\\
0 & \textrm{else}
\end{cases}
\end{equation}
As such 
\begin{equation}
\sum_{\xi\neq0}\left|\widehat{\mathcal{N}_{y}}\left(\xi\right)\right|^{2}=\frac{\left|G\right|\sum_{z\in U_{y}}\left\Vert z\right\Vert _{\textrm{Mink}}^{-2d}-\left(\sum_{z\in U_{y}}\left\Vert z\right\Vert _{\textrm{Mink}}^{-d}\right)^{2}}{\left(\sum_{z\in U_{y}}\left\Vert z\right\Vert _{\textrm{Mink}}^{-d}\right)^{2}}\leq\left|G\right|\frac{\sum_{z\in U_{y}}\left\Vert z\right\Vert _{\textrm{Mink}}^{-2d}}{\left(\sum_{z\in U_{y}}\left\Vert z\right\Vert _{\textrm{Mink}}^{-d}\right)^{2}}
\end{equation}
and so: 
\begin{equation}
d_{\textrm{TV}}\leq\sqrt{\left|G\right|}\sqrt{\sum_{\xi\neq0}\left|\widehat{\mathcal{N}_{y}}\left(\xi\right)\right|^{2}}\leq\left|G\right|\frac{\sum_{z\in U_{y}}\left\Vert z\right\Vert _{\textrm{Mink}}^{-2d}}{\left(\sum_{z\in U_{y}}\left\Vert z\right\Vert _{\textrm{Mink}}^{-d}\right)^{2}}
\end{equation}
Here, \textbf{Lemma \ref{lem:lattice sum asymptotic}} gives us: 
\begin{equation}
\sum_{z\in U_{y}}\left\Vert z\right\Vert _{\textrm{Mink}}^{-d}\sim C_{K}\left(\alpha-1\right)d\ln y\textrm{ as }y\rightarrow\infty\label{eq:lattice sum lower bound}
\end{equation}
and: 
\begin{equation}
\sum_{z\in U_{y}}\left|z\right|_{\infty}^{-2d}\sim C_{K}\left(y^{-d}-y^{-\alpha d}\right)\textrm{ as }y\rightarrow\infty
\end{equation}
for some constant $C_{K}$ depending only on $K$, and hence, that:
\begin{equation}
d_{\textrm{TV}}\leq\left|G\right|\frac{\sum_{z\in U_{y}}\left\Vert z\right\Vert _{\textrm{Mink}}^{-2d}}{\left(\sum_{z\in U_{y}}\left\Vert z\right\Vert _{\textrm{Mink}}^{-d}\right)^{2}}\ll\frac{C_{K}\frac{\left(y^{-d}-y^{-\alpha d}\right)}{d}}{\left(C_{K}\left(\alpha-1\right)d\ln y\right)^{2}}\ll y^{-d}
\end{equation}
which tends to $0$ as $y\rightarrow\infty$.

For $y=x^{\alpha^{k}}$ for $k\in\left\{ 1,2\right\} $, we have:
\begin{equation}
d_{\textrm{TV}}\ll y^{-\alpha d}=x^{-\alpha^{2}d}
\end{equation}
Since $\alpha\in\left(1,2\right)$, and since\textbf{ Assumption \ref{assu:initial parameter assumptions}}
gives: 
\begin{equation}
n_{x}\asymp\frac{\ln x}{\ln A_{0}}
\end{equation}
we have: 
\begin{equation}
x^{-\alpha^{2}d}\asymp A_{0}^{-\alpha^{2}dn_{x}}\ll p^{-cn_{x}}
\end{equation}
for some constant $c>0$ depending only on $H$, $\alpha$, and $A_{0}$.

Q.E.D.

\vphantom{}

\textbf{Proof of (\ref{eq:passing time estimate})}:

Since (\ref{eq:confirmation of approx.  unif.  dist.  hypothesis})
is satisfied, we can apply \textbf{Lemma \ref{lem:Geometric comparison}}
to conclude that: 
\begin{equation}
d_{\textrm{TV}}\left(\vec{a}^{\left(n_{x}\right)}\left(\mathcal{N}_{y}\right),\mathbf{Run}\left(p\right)^{n_{x}}\right)\ll p^{-cn_{x}}\label{eq:geometric comparison conclusion}
\end{equation}
Next, recall that the $n$-path: 
\begin{equation}
\vec{a}^{\left(n\right)}\left(\mathcal{N}_{y}\right)=\left(\nu_{1}\left(\mathcal{N}_{y}\right)\mathbf{e}_{\textrm{Stay}_{1}\left(\mathcal{N}_{y}\right)},\ldots,\nu_{n}\left(\mathcal{N}_{y}\right)\mathbf{e}_{\textrm{Stay}_{n}\left(\mathcal{N}_{y}\right)}\right)
\end{equation}
is the length of the first $n$ runs of branch applications specified
by $H$ to $\mathcal{N}_{y}$. So, given $i\in\left\{ 0,\ldots,p-1\right\} $,
let us write: 
\begin{equation}
\vec{a}_{i}^{\left(n\right)}\left(\mathcal{N}_{y}\right)\overset{\textrm{def}}{=}\left(\nu_{1}\left(\mathcal{N}_{y}\right)\mathbf{e}_{\textrm{Stay}_{1}\left(\mathcal{N}_{y}\right)}\cdot\mathbf{e}_{i},\ldots,\nu_{n}\left(\mathcal{N}_{y}\right)\mathbf{e}_{\textrm{Stay}_{n}\left(\mathcal{N}_{y}\right)}\cdot\mathbf{e}_{i}\right)\in\mathbb{Z}^{n}\label{eq: nth run vector with ith component}
\end{equation}
Note that a given entry of $\vec{a}_{i}^{\left(n\right)}\left(\mathcal{N}_{y}\right)$
is non-zero if and only if the run associated to the corresponding
entry of $\vec{a}^{\left(n\right)}\left(\mathcal{N}_{y}\right)$ is
a run of $i$s. Indeed, recall the identity: 
\begin{equation}
\#_{i}^{\left(n\right)}\left(\mathcal{N}_{y}\right)=\Sigma\left(\vec{a}_{i}^{\left(n\right)}\left(\mathcal{N}_{y}\right)\right)
\end{equation}
Using this, we have that:

\begin{equation}
\mathbb{P}\left(\Sigma\left(\vec{a}^{\left(n_{x}\right)}\left(\mathcal{N}_{y}\right)\right)\leq C_{0}n_{x}\right)\leq\mathbb{P}\left(\Sigma\left(\mathbf{Run}\left(p\right)^{n_{x}}\right)\leq C_{0}n_{x}\right)+O\left(p^{-cn_{x}}\right)
\end{equation}
Here, note that: 
\begin{equation}
\Sigma\left(\vec{a}^{\left(n_{x}\right)}\left(\mathcal{N}_{y}\right)\right)=\vec{\#}^{\left(n_{x}\right)}\left(\mathcal{N}_{y}\right)
\end{equation}
and that $\Sigma\left(\mathbf{Run}\left(p\right)^{n_{x}}\right)$
is a random variable taking values in $\mathbb{Z}^{n_{x}}$. As such:
\begin{align*}
\mathbb{P}\left(\Sigma\left(\mathbf{Run}\left(p\right)^{n_{x}}\right)\leq C_{0}n_{x}\right) & \leq\mathbb{P}\left(\Sigma\left(\Sigma\left(\mathbf{Run}\left(p\right)^{n_{x}}\right)\right)\leq C_{0}n_{x}\right)\\
\left(\mathbf{Lemma\textrm{ }}\mathbf{\ref{lem:Chernoff Bound}}\right); & \ll p^{-cn_{x}}
\end{align*}
where we used our Chernoff bound to deal with the sum of the entries
in $\Sigma\left(\mathbf{Run}\left(p\right)^{n_{x}}\right)$. Because
of this, we have: 
\begin{equation}
\mathbb{P}\left(\vec{\#}^{\left(n_{x}\right)}\left(\mathcal{N}_{y}\right)\leq C_{0}n_{x}\right)\ll p^{-cn_{x}}\ll x^{-c}\label{eq:number vector x to the -c bound}
\end{equation}
where we allow $c$ to vary as we move from left to right along the
inequalities in (\ref{eq:number vector x to the -c bound}). From
this, it follows that:

\begin{equation}
\max_{0\leq j<p}\mathbb{P}\left(\#_{j}^{\left(n_{x}\right)}\left(\mathcal{N}_{y}\right)\leq C_{0}n_{x}\right)\ll p^{-cn_{x}}\ll x^{-c}\label{eq:head bound}
\end{equation}

Next, letting $N_{n_{x}}$ denote $\sum_{k=1}^{n_{x}}\nu_{k}\left(\mathcal{N}_{y}\right)$
(this is the sum of the lengths of the runs), we have: 
\begin{equation}
H^{\circ N_{n_{x}}}\left(\mathcal{N}_{y}\right)=r_{\mathbf{j}_{N_{n_{x}}}\left(\mathcal{N}_{y}\right)}\mathcal{N}_{y}+\sum_{m=0}^{N_{n_{x}}-1}r_{\mathbf{j}_{m}\left(\mathcal{N}_{y}\right)}c_{j_{m}\left(\mathcal{N}_{y}\right)}\label{eq:nu_n_0 affine H of N}
\end{equation}
Here: 
\begin{equation}
\left\Vert r_{\mathbf{j}_{N_{n_{x}}}\left(\mathcal{N}_{y}\right)}z\right\Vert _{\textrm{Mink}}\leq\left(\prod_{k=0}^{p-1}\left\Vert r_{k}\right\Vert _{\textrm{Mink}}^{\#_{k}^{\left(n_{x}\right)}\left(\mathcal{N}_{y}\right)}\right)\left\Vert z\right\Vert _{\textrm{Mink}},\textrm{ }\forall z\in K
\end{equation}
To deal with the $m$-sum on the right-hand side of (\ref{eq:nu_n_0 affine H of N}),
observe that: 
\begin{equation}
\left\Vert r_{\mathbf{j}_{m}\left(\mathcal{N}_{y}\right)}\right\Vert _{\textrm{Mink}}\leq\prod_{j:\left\Vert r_{j}\right\Vert _{\textrm{Mink}}>1}\left\Vert r_{j}\right\Vert _{\textrm{Mink}}^{\#_{j}\left(\mathbf{j}_{m}\left(\mathcal{N}_{y}\right)\right)}\leq R_{H}^{\sum_{j=0}^{p-1}\left[\left\Vert r_{j}\right\Vert _{\textrm{Mink}}>1\right]\#_{j}\left(\mathbf{j}_{m}\left(\mathcal{N}_{y}\right)\right)}
\end{equation}
where the exponent uses Iverson bracket notation. Since $\#_{j}$
counts the numbers of $j$s in $\mathbf{j}_{m}\left(\mathcal{N}_{y}\right)$,
we have that for all $m\leq N_{n_{x}}$, $\#_{j}\left(\mathbf{j}_{m}\left(\mathcal{N}_{y}\right)\right)\leq\#_{j}^{\left(n_{x}\right)}\left(\mathcal{N}_{y}\right)$,
and so:
\begin{align*}
\left\Vert \sum_{m=0}^{N_{n_{x}}-1}r_{\mathbf{j}_{m}\left(\mathcal{N}_{y}\right)}c_{j_{m+1}\left(\mathcal{N}_{y}\right)}\right\Vert _{\textrm{Mink}} & \ll\sum_{m=0}^{N_{n_{x}}-1}R_{H}^{\sum_{j=0}^{p-1}\left[\left\Vert r_{j}\right\Vert _{\textrm{Mink}}>1\right]\#_{j}\left(\mathbf{j}_{m}\left(\mathcal{N}_{y}\right)\right)}\\
 & \leq\sum_{m=0}^{N_{n_{x}}-1}R_{H}^{\sum_{j=0}^{p-1}\left[\left\Vert r_{j}\right\Vert _{\textrm{Mink}}>1\right]\#_{j}^{\left(n_{x}\right)}\left(\mathcal{N}_{y}\right)}\\
 & =N_{n_{x}}R_{H}^{\sum_{j=0}^{p-1}\left[\left\Vert r_{j}\right\Vert _{\textrm{Mink}}>1\right]\#_{j}^{\left(n_{x}\right)}\left(\mathcal{N}_{y}\right)}
\end{align*}
Hence: 
\begin{equation}
\left\Vert H^{\circ N_{n_{x}}}\left(\mathcal{N}_{y}\right)\right\Vert _{\textrm{Mink}}\ll\left(\prod_{k=0}^{p-1}\left\Vert r_{k}\right\Vert _{\textrm{Mink}}^{\#_{k}^{\left(n_{x}\right)}\left(\mathcal{N}_{y}\right)}\right)\left\Vert \mathcal{N}_{y}\right\Vert _{\textrm{Mink}}+N_{n_{x}}R_{H}^{\sum_{j=0}^{p-1}\left[\left\Vert r_{j}\right\Vert _{\textrm{Mink}}>1\right]\#_{j}^{\left(n_{x}\right)}\left(\mathcal{N}_{y}\right)}\label{eq:H nu_n absolute value estimate}
\end{equation}
Now, suppose that: 
\begin{align}
\#_{j}^{\left(n_{x}\right)}\left(\mathcal{N}_{y}\right) & \geq C_{0}n_{x},\textrm{ }\forall j:\left\Vert r_{j}\right\Vert _{\textrm{Mink}}<1\label{eq:contracting branch count}\\
\#_{j}^{\left(n_{x}\right)}\left(\mathcal{N}_{y}\right) & \leq C_{0}n_{x},\textrm{ }\forall j:\left\Vert r_{j}\right\Vert _{\textrm{Mink}}\geq1\label{eq:expanding branch count}
\end{align}
Then, we have: 
\begin{equation}
\prod_{k=0}^{p-1}\left\Vert r_{k}\right\Vert _{\textrm{Mink}}^{\#_{k}^{\left(n_{x}\right)}\left(\mathcal{N}_{y}\right)}\leq\prod_{k=0}^{p-1}\left\Vert r_{k}\right\Vert _{\textrm{Mink}}^{C_{0}n_{x}}=\rho_{H}^{C_{0}n_{x}}
\end{equation}
and: 
\begin{equation}
R_{H}^{\sum_{j=0}^{p-1}\left[\left\Vert r_{j}\right\Vert _{\textrm{Mink}}>1\right]\#_{j}^{\left(n_{x}\right)}\left(\mathcal{N}_{y}\right)}\leq R_{H}^{pC_{0}n_{x}}
\end{equation}
and so: 
\begin{equation}
\left\Vert H^{\circ N_{n_{x}}}\left(\mathcal{N}_{y}\right)\right\Vert _{\textrm{Mink}}\ll\rho_{H}^{C_{0}n_{x}}\left\Vert \mathcal{N}_{y}\right\Vert _{\textrm{Mink}}+N_{n_{x}}R_{H}^{pC_{0}n_{x}}\label{eq:Need N_n_0 bound}
\end{equation}
Here, recall that $N_{n_{x}}=N_{n_{x}}\left(\mathcal{N}_{y}\right)$
is a random variable: it is the total number of iterates of $H$ needed
in order for $\mathcal{N}_{y}$ to go through $n_{x}$ distinct runs
of consecutively applied branches of $H$. In fact, we have: 
\begin{equation}
N_{n_{x}}\left(\mathcal{N}_{y}\right)=\Sigma\left(\vec{\#}^{\left(n_{x}\right)}\left(\mathcal{N}_{y}\right)\right)=\Sigma\left(\Sigma\left(\vec{a}^{\left(n_{x}\right)}\left(\mathcal{N}_{y}\right)\right)\right)
\end{equation}
Consequently, by (\ref{eq:geometric comparison conclusion}), we have
that: 
\begin{equation}
p^{-cn_{x}}\gg d_{\textrm{TV}}\left(N_{n_{x}}\left(\mathcal{N}_{y}\right),\Sigma\left(\Sigma\left(\mathbf{Run}\left(p\right)^{n_{x}}\right)\right)\right)=d_{\textrm{TV}}\left(N_{n_{x}}\left(\mathcal{N}_{y}\right),\Sigma\left(\mathbf{Geom}\left(p\right)^{n_{x}}\right)\right)
\end{equation}
Our Chernoff Bound (\textbf{Lemma \ref{lem:Chernoff Bound}}) gives
us: 
\begin{equation}
\mathbb{P}\left(\Sigma\left(\mathbf{Geom}\left(p\right)^{n_{x}}\right)\geq R_{H}^{pC_{0}n_{x}}\right)\ll p^{-c^{\prime}n_{x}}\textrm{ as }n_{x}\rightarrow\infty
\end{equation}
for some constant $c^{\prime}>0$, and hence: 
\begin{align*}
\mathbb{P}\left(\nu_{n_{x}}\left(\mathcal{N}_{y}\right)\geq R_{H}^{pC_{0}n_{x}}\right) & \leq d_{\textrm{TV}}\left(N_{n_{x}}\left(\mathcal{N}_{y}\right),\Sigma\left(\Sigma\left(\mathbf{Run}\left(p\right)^{n_{x}}\right)\right)\right)+\mathbb{P}\left(\Sigma\left(\mathbf{Geom}\left(p\right)^{n_{x}}\right)\geq R_{H}^{pC_{0}n_{x}}\right)\\
 & \ll p^{-cn_{x}}+p^{-c^{\prime}n_{x}}\\
 & =O\left(p^{-c^{\prime\prime}n_{x}}\right)
\end{align*}
for some constant $c^{\prime\prime\prime}>0$.

Thus, as $x\rightarrow\infty$, $\nu_{n_{x}}\left(\mathcal{N}_{y}\right)$
will be $O\left(R_{H}^{pC_{0}n_{x}}\right)$, and so (\ref{eq:Need N_n_0 bound})
becomes: 
\begin{align*}
\left\Vert H^{\circ N_{n_{x}}}\left(\mathcal{N}_{y}\right)\right\Vert _{\textrm{Mink}} & \ll\rho_{H}^{C_{0}n_{x}}\left\Vert \mathcal{N}_{y}\right\Vert _{\textrm{Mink}}+N_{n_{x}}R_{H}^{pC_{0}n_{x}}=\rho_{H}^{C_{0}n_{x}}\left\Vert \mathcal{N}_{y}\right\Vert _{\textrm{Mink}}+O\left(R_{H}^{pC_{0}n_{x}}\right)\\
\left(\left|\mathcal{N}_{y}\right|_{\infty}\leq y^{\alpha}\leq x^{\alpha^{3}}\right); & \leq\rho_{H}^{C_{0}n_{x}}x^{\alpha^{3}}+O\left(R_{H}^{pC_{0}n_{x}}\right)\\
\left(\textrm{use }(\ref{eq:def of n_0})\right); & \ll x^{\alpha^{3}-C_{0}\frac{\ln\rho_{H}^{-1}}{\ln A_{0}}}+x^{C_{0}\frac{p\ln R_{H}}{\ln A_{0}}}
\end{align*}

By (\ref{eq:C_0 A_0 inequality}), we have: 
\begin{equation}
\max\left\{ \alpha^{3}-C_{0}\frac{\ln\rho_{H}^{-1}}{\ln A_{0}},C_{0}\frac{p\ln R_{H}}{\ln A_{0}}\right\} <1
\end{equation}
and hence: 
\begin{equation}
\left\Vert H^{\circ N_{n_{x}}}\left(\mathcal{N}_{y}\right)\right\Vert _{\textrm{Mink}}\ll x\label{eq:almost boundedness}
\end{equation}
The head bound (\ref{eq:head bound}) then implies that (\ref{eq:contracting branch count})
occurs as $x\rightarrow\infty$, while our tail bound from \textbf{Proposition
\ref{prop:tail bound}} implies that (\ref{eq:expanding branch count})
occurs as $x\rightarrow\infty$ and hence, that: 
\begin{equation}
T_{x}\left(\mathcal{N}_{y}\right)\leq N_{n_{x}}<\infty
\end{equation}
for all sufficiently large $x$. This completes the proof of \textbf{(\ref{eq:passing time estimate})}
from \textbf{Lemma \ref{lem:stabilization of first passage}}.

Q.E.D.

\vphantom{}

\subsection{\label{subsec:Proof-of-Passage}Proof of Passage Location Comparison
(Equation \ref{eq:passage location comparison})}

In order to show that the rest of \textbf{Lemma \ref{lem:stabilization of first passage}}---that
is, \textbf{(\ref{eq:passage location comparison})}---follows from
the Fourier-analytic properties of $X_{H}$, we begin by establishing
an approximate formula for $\mathbb{P}\left(\textrm{Pass}_{x}\left(\mathcal{N}_{y}\right)\in S\right)$,
to generalize \textbf{Proposition 5.2} from Tao's paper.

\vphantom{} 
\begin{lem}[Approximate First Passage Formula]
\label{lem:approximate formula for passage location}Let: 
\begin{equation}
U^{x}\overset{\textrm{def}}{=}\left\{ z\in\mathcal{O}_{K}:1\leq\left\Vert z\right\Vert _{\textrm{Mink}}\leq x\right\} 
\end{equation}
let $S\subseteq U^{x}$, and let $y$ be either $x^{\alpha}$ or $x^{\alpha^{2}}$.
Then, there is a constant $c>0$ so that: 
\begin{equation}
\mathbb{P}\left(\textrm{Pass}_{x}\left(\mathcal{N}_{y}\right)\in S\right)=\sum_{L\in I_{y}}\sum_{\vec{\mathbf{V}}\in\mathcal{A}^{\left(L-m_{x}\right)}}\sum_{z\in S^{\prime}}\mathbb{P}\left(H_{\vec{\mathbf{V}}}\left(\mathcal{N}_{y}\right)=z\right)+O\left(\ln^{-c}x\right)\textrm{ as }x\rightarrow\infty\label{eq:approximate formula for passage location}
\end{equation}
where $S^{\prime}$ is the set of all $z\in\mathcal{O}_{K}$ so that:

i. $T_{x}\left(z\right)=m_{x}$.

ii. $\textrm{Pass}_{x}\left(z\right)\in S$ and: 
\begin{equation}
e^{-\ln^{0.7}x}\rho_{H}^{-m_{x}}x\leq\left\Vert z\right\Vert _{\textrm{Mink}}\leq e^{\ln^{0.7}x}\rho_{H}^{-m_{x}}x\label{eq:restrictions on z}
\end{equation}
\end{lem}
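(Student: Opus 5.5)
The plan mirrors Tao's Proposition 5.2, with every $3$-adic ingredient replaced by the Minkowski-norm estimates of Proposition \ref{prop:elementary properties of X_H of v-arrow}. Fix $S\subseteq U^{x}$. The idea is to decompose the event $\{\textrm{Pass}_{x}(\mathcal{N}_{y})\in S\}$ according to the Syracuse time $L$ at which the orbit first falls below size $e^{\ln^{0.7}x}\rho_{H}^{-m_{x}}x$. We then split this time as $L=(L-m_{x})+m_{x}$: the first $L-m_{x}$ runs carry $\mathcal{N}_{y}$ down to the intermediate point $z=H_{\vec{\mathbf{V}}}(\mathcal{N}_{y})$, and the last $m_{x}$ runs carry $z$ to the passage location. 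In that case $z\in S^{\prime}$ exactly when the remaining trajectory has $T_{x}(z)=m_{x}$ and $\textrm{Pass}_{x}(z)\in S$. Summing over $L$, over $\vec{\mathbf{V}}$ and over $z$ yields the main term; what remains is to show that every other configuration has probability $O(\ln^{-c}x)$.

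First, I will discard non-generic run vectors. Applying Lemma \ref{lem:Geometric comparison}, with its uniformity hypothesis supplied by Proposition \ref{prop:equidistribution proposition} at depth $n_{x}$, we may replace $\vec{a}^{(n_{x})}(\mathcal{N}_{y})$ by $\mathbf{Run}(p)^{n_{x}}$ at a cost of $p^{-cn_{x}}=O(x^{-c})$. Applying the Chernoff bound, Lemma \ref{lem:Chernoff Bound} (part II, together with a union bound over $k\le n_{x}$ and over $j$), the partial branch counts $\Sigma_{k}\cdot\mathbf{e}_{j}$ deviate from $k$ by more than $\frac{1}{p}\ln^{0.6}x$ only with probability $\ll n_{x}\exp(-c\ln^{1.2}x/n_{x})$. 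Since $n_{x}\asymp\ln x$, this is $\ll\exp(-c\ln^{0.2}x)$, which is $O(\ln^{-c}x)$. So we may assume the first $n_{x}$ runs lie in $\mathcal{A}$. The part (I) estimate of Lemma \ref{lem:stabilization of first passage} lets us also assume $T_{x}(\mathcal{N}_{y})\le n_{x}$.

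On this generic event, the estimates (\ref{eq:selective estimate for M_H of v arrow}) and (\ref{eq:selective estimate for X_H of v arrow}) give
\[
\|H_{\vec{\mathbf{V}}_{k}}(\mathcal{N}_{y})\|_{\textrm{Mink}}=\rho_{H}^{k}\|\mathcal{N}_{y}\|_{\textrm{Mink}}\,e^{O(\ln^{0.6}x)}+e^{O(\ln^{0.6}x)},
\]
where $\vec{\mathbf{V}}_{k}$ denotes the first $k$ runs. One caveat is that this uses only an upper bound on $\|M_{H}\|$. For the matching lower bound I will use $\|r_{j}^{-1}\|_{\textrm{Mink}}$ together with the run counts, which is the price of having $\overline{\rho}_{H}$ in the estimates. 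It follows that the first time the orbit drops below $e^{\ln^{0.7}x}\rho_{H}^{-m_{x}}x$ is pinned to within $O(\ln^{0.7}x)$ of $\ln(\|\mathcal{N}_{y}\|/x)/\ln\rho_{H}^{-1}-m_{x}$. Because $\|\mathcal{N}_{y}\|$ is log-distributed on $[y,y^{\alpha}]$, this time lands in the shrunken window $I_{y}$ (shifted by $m_{x}$) except with probability $O(\ln^{0.8}x/\ln x)=O(\ln^{-0.2}x)$. Proposition \ref{prop:gamma inequality proposition} keeps all these indices inside $[m_{x},n_{x}]$, so the Syracuse iterates involved are governed by the generic event above. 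On the same event, after exactly $m_{x}$ further runs the norm has size $\asymp x$ up to $e^{O(\ln^{0.6}x)}$. What still has to be ruled out is crossing below $x$ a run early or late; for that we need the norm to lie in $[x e^{-\ln^{0.7}x},x]$, so as to restrict to $T_{x}(z)=m_{x}$ exactly.

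The main obstacle is handling these boundary layers, together with the exchange of the random stopping index for the deterministic sum over $L$ and $\vec{\mathbf{V}}$. The event that the passage time is off by one is controlled by the probability that $\ln\|\mathcal{N}_{y}\|$ falls in an interval of length $O(\ln^{0.7}x)$ around one of the $\approx\ln x$ critical values. Under the $\textrm{Log}$ distribution, and using Lemma \ref{lem:lattice sum asymptotic}, this probability is $O(\ln^{0.7}x/\ln x)$. Once these errors are removed, the events $\{\vec{a}^{(L-m_{x})}(\mathcal{N}_{y})=\vec{\mathbf{V}},\ H_{\vec{\mathbf{V}}}(\mathcal{N}_{y})=z\}$ for distinct $(L,\vec{\mathbf{V}},z)$ are disjoint. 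Indeed, $L$ is determined as the first entry time into the annulus (\ref{eq:restrictions on z}) and $z$ is determined by $\vec{\mathbf{V}}$. The probability of the event $\{\textrm{Pass}_{x}(\mathcal{N}_{y})\in S\}$ therefore equals the triple sum in (\ref{eq:approximate formula for passage location}). Collecting all the error terms gives $O(\ln^{-c}x)$ with, say, $c=0.1$.
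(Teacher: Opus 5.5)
Your outer steps match the paper: restricting to $\vec{a}^{(n_x)}(\mathcal{N}_y)\in\mathcal{A}^{(n_x)}$ via Lemma \ref{lem:Geometric comparison} and Lemma \ref{lem:Chernoff Bound}, pinning the passage time with Proposition \ref{prop:elementary properties of X_H of v-arrow}, and using the $\textrm{Log}$ distribution to land in $I_y$. The decomposition itself, however, is mis-specified. You let $L$ be the first Syracuse time at which the orbit enters the annulus (\ref{eq:restrictions on z}), and you derive disjointness from that. But the orbit point lying in $S'$ is not the entry point. On the generic event each step contracts by about $\rho_H$, up to a cumulative $e^{O(\ln^{0.6}x)}$, so the orbit spends $\asymp\ln^{0.7}x$ steps inside the annulus. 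Consequently:
\begin{itemize}
\item the entry point has $T_x\approx m_x+\ln^{0.7}x/\ln\rho_H^{-1}\neq m_x$;
\item the point $H_{\vec{\mathbf{V}}}(\mathcal{N}_y)$ with $\vec{\mathbf{V}}\in\mathcal{A}^{(L-m_x)}$ lies $m_x$ steps \emph{before} the entry, hence outside the annulus;
\item $I_y$ is a window for $T_x(\mathcal{N}_y)$ itself, not a shifted one.
\end{itemize}
Your boundary-layer patch to force $T_x(z)=m_x$ also fails numerically. The critical values of $\ln\|\mathcal{N}_y\|_{\textrm{Mink}}$ are $\asymp\ln x$ points spaced $\ln\rho_H^{-1}$ apart. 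Windows of length $O(\ln^{0.7}x)$ around them cover all of $[\ln y,\alpha\ln y]$, so the union bound is of order $\ln^{0.7}x$, not $O(\ln^{0.7}x/\ln x)$, which is vacuous.

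The missing idea is to index by the exact passage time, as the paper does following Tao. Decompose by $T_x(\mathcal{N}_y)=L\in I_y$ and set $z=\textrm{Syr}_H^{\circ L-m_x}(\mathcal{N}_y)$.
\begin{itemize}
\item $T_x(z)=m_x$ and $\textrm{Pass}_x(z)=\textrm{Pass}_x(\mathcal{N}_y)$ then hold by definition.
\item The annulus condition follows because the fluctuations are $e^{O(\ln^{0.6}x)}$ and $\ln^{0.6}x=o(\ln^{0.7}x)$.
\item Conversely, suppose $\textrm{Syr}_H^{\circ L-m_x}(\mathcal{N}_y)\in S'$ on the generic event. Every earlier iterate then has norm at least $e^{O(\ln^{0.6}x)-\ln^{0.7}x}\rho_H^{-m_x}x>x$, since $\rho_H^{-m_x}$ is a positive power of $x$. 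This forces $T_x(\mathcal{N}_y)=L$.
\end{itemize}
Disjointness in $L$ is then automatic and no boundary layers arise: the $\ln^{0.7}$ slack built into $S'$ is there precisely to absorb the $\ln^{0.6}$ fluctuations.

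Separately, your lower bound via $\|r_j^{-1}\|_{\textrm{Mink}}$ only gives the rate $\bigl(\prod_j\|r_j^{-1}\|_{\textrm{Mink}}\bigr)^{-1}\le\rho_H$. Equality holds iff each $r_j$ is a scalar multiple of a Minkowski isometry. Otherwise the upper and lower bounds drift apart by a power of $x$ over $k\asymp\ln x$ steps, which $\overline{\rho}_H^{O(\ln^{0.6}x)}$ cannot absorb. The paper's proof does not resolve this either: it uses (\ref{eq:Tao 5.14 analogue}) in both directions without proving the lower bound, in (\ref{eq:upper and lower bound on N_y}) and in the converse inclusion. Both arguments are sound only in the similarity case, e.g.\ $K=\mathbb{Q}$.
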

\begin{rem}
The inequalities in \textbf{Assumption \ref{assu:initial parameter assumptions}}
ensure that $I_{y}\subset\left[m_{x},n_{x}\right]$ for all sufficiently
large $x$, regardless of whether $y=x^{\alpha}$ or $y=x^{\alpha^{2}}$. 
\end{rem}
Proof: Fix $S$. From (\ref{eq:geometric comparison conclusion})
and \textbf{Lemma \ref{lem:Chernoff Bound}}, we have: 
\begin{equation}
\max_{0\leq j<p}\mathbb{P}\left(\left|\#_{j}^{\left(n\right)}\left(\mathcal{N}\right)-pn\right|\geq\ln^{0.6}x\right)\ll e^{-c\ln^{0.2}x}\label{eq:consequence of chernoff bound}
\end{equation}
for all $n\in\left\{ 0,\ldots,n_{x}\right\} $. Note that: 
\begin{align*}
\mathbb{P}\left(\vec{a}^{\left(n_{x}\right)}\left(\mathcal{N}_{y}\right)\notin\mathcal{A}^{\left(n_{x}\right)}\right) & \leq\sum_{n=0}^{n_{x}}\mathbb{P}\left(\vec{a}^{\left(n\right)}\left(\mathcal{N}_{y}\right)\notin\mathcal{A}^{\left(n\right)}\right)\\
 & \leq\sum_{j=0}^{p-1}\sum_{n=0}^{n_{x}}\mathbb{P}\left(\left|\#_{j}^{\left(n\right)}\left(\mathcal{N}\right)-pn\right|\geq\ln^{0.6}x\right)\\
\left(\textrm{Use }(\ref{eq:consequence of chernoff bound})\right); & \ll\sum_{j=0}^{p-1}\sum_{n=0}^{n_{x}}e^{-c\ln^{0.2}x}\\
 & =pn_{x}e^{-c\ln^{0.2}x}\\
 & \ll e^{-c\ln^{0.2}x}\ln x
\end{align*}
Replacing $x$ with $\exp\left(x^{10}\right)$, the upper bound becomes
$x^{10}e^{-cx^{2}}$, which is $O\left(e^{-x}\right)$ as $x\rightarrow\infty$,
giving us:

\begin{equation}
\mathbb{P}\left(\vec{a}^{\left(n_{x}\right)}\left(\mathcal{N}_{y}\right)\notin\mathcal{A}^{\left(n_{x}\right)}\right)\ll e^{-c\ln^{0.2}x}\ln x\ll e^{-\ln^{0.1}x}\ll\ln^{-c^{\prime}}x\label{eq:Need 3}
\end{equation}
for some constant $c^{\prime}>0$. Thus, writing: 
\begin{align*}
\mathbb{P}\left(\textrm{Pass}_{x}\left(\mathcal{N}_{y}\right)\in S\right) & =\mathbb{P}\left(\textrm{Pass}_{x}\left(\mathcal{N}_{y}\right)\in S\textrm{ \& }\vec{a}^{\left(n_{x}\right)}\left(\mathcal{N}_{y}\right)\in\mathcal{A}^{\left(n_{x}\right)}\right)\\
 & +\underbrace{\mathbb{P}\left(\textrm{Pass}_{x}\left(\mathcal{N}_{y}\right)\in S\textrm{ \& }\vec{a}^{\left(n_{x}\right)}\left(\mathcal{N}_{y}\right)\notin\mathcal{A}^{\left(n_{x}\right)}\right)}_{\leq\mathbb{P}\left(\vec{a}^{\left(n_{x}\right)}\left(\mathcal{N}_{y}\right)\notin\mathcal{A}^{\left(n_{x}\right)}\right)}
\end{align*}
we get:

\begin{equation}
\mathbb{P}\left(\textrm{Pass}_{x}\left(\mathcal{N}_{y}\right)\in S\right)=\mathbb{P}\left(\textrm{Pass}_{x}\left(\mathcal{N}_{y}\right)\in S\textrm{ \& }\vec{a}^{\left(n_{x}\right)}\left(\mathcal{N}_{y}\right)\in\mathcal{A}^{\left(n_{x}\right)}\right)+O\left(\ln^{-c}x\right)\label{eq:Need 4}
\end{equation}
as $x\rightarrow\infty$, for some constant $c>0$.

For brevity, let $\vec{\mathbf{V}}^{\left(n\right)}$ denote $\vec{a}^{\left(n\right)}\left(\mathcal{N}_{y}\right)$,
so that $\Sigma\left(\vec{\mathbf{V}}^{\left(n\right)}\right)=\Sigma\left(\vec{a}^{\left(n\right)}\left(\mathcal{N}_{y}\right)\right)=\vec{\#}^{\left(n\right)}\left(\mathcal{N}_{y}\right)$.
Now, suppose the event $\vec{\mathbf{V}}^{\left(n_{x}\right)}\in\mathcal{A}^{\left(n_{x}\right)}$
occurs, so that (\ref{eq:approximate uniform behavior}) happens.
Here, note that $\Sigma\left(\vec{\#}^{\left(n\right)}\left(\mathcal{N}_{y}\right)\right)=\Sigma\left(\Sigma\left(\vec{\mathbf{V}}^{\left(n\right)}\right)\right)$
implies: 
\begin{equation}
\textrm{Syr}_{H}^{\circ n}\left(\mathcal{N}_{y}\right)=H^{\circ\Sigma\left(\Sigma\left(\vec{\mathbf{V}}^{\left(n\right)}\right)\right)}\left(\mathcal{N}_{y}\right)
\end{equation}
Thus, we have:
\begin{equation}
\textrm{Syr}_{H}^{\circ n}\left(\mathcal{N}_{y}\right)=M_{H}\left(\vec{\mathbf{V}}^{\left(n\right)}\right)\mathcal{N}_{y}+X_{H}\left(\vec{\mathbf{V}}^{\left(n\right)}\right),\textrm{ }\forall n\in\left\{ 0,\ldots,n_{x}\right\} 
\end{equation}
Applying the estimates from \textbf{Proposition \ref{prop:elementary properties of X_H of v-arrow}}
yields: 
\begin{align*}
\left\Vert \textrm{Syr}_{H}^{\circ n}\left(\mathcal{N}_{y}\right)\right\Vert _{\textrm{Mink}} & \ll\overline{\rho}_{H}^{\frac{1}{p}\ln^{0.6}x}\rho_{H}^{n}\left\Vert \mathcal{N}_{y}\right\Vert _{\textrm{Mink}}+\left(R_{H}\overline{\rho}_{H}^{1/p}\right)^{\ln^{0.6}x}R_{H}^{pn}\\
 & =\left(1+R_{H}^{\ln^{0.6}x}\left(R_{H}^{p}\rho_{H}^{-1}\right)^{n}\left\Vert \mathcal{N}_{y}\right\Vert _{\textrm{Mink}}^{-1}\right)\overline{\rho}_{H}^{\frac{1}{p}\ln^{0.6}x}\rho_{H}^{n}\left\Vert \mathcal{N}_{y}\right\Vert _{\textrm{Mink}}
\end{align*}
Here, since $R_{H}^{p}\rho_{H}^{-1}>1$, $n\leq n_{x}$ (where $n_{x}\asymp\frac{\ln x}{\ln A_{0}}$
as $x\rightarrow\infty$), and $\left\Vert \mathcal{N}_{y}\right\Vert _{\textrm{Mink}}\geq y\geq x^{\alpha}$,
we have: 
\begin{equation}
\left\Vert \textrm{Syr}_{H}^{\circ n}\left(\mathcal{N}_{y}\right)\right\Vert _{\textrm{Mink}}\ll\left(1+R_{H}^{\ln^{0.6}x}x^{\frac{\ln\left(R_{H}^{p}\rho_{H}^{-1}\right)}{\ln A_{0}}-\alpha}\right)\overline{\rho}_{H}^{\frac{1}{p}\ln^{0.6}x}\rho_{H}^{n}\left\Vert \mathcal{N}_{y}\right\Vert _{\textrm{Mink}}\label{eq:syr estimate (rough)}
\end{equation}
By (\ref{eq:alpha A_0 inequality}), the exponent of $x$ in (\ref{eq:syr estimate (rough)})
is negative, which gives: 
\begin{equation}
\left\Vert \textrm{Syr}_{H}^{\circ n}\left(\mathcal{N}_{y}\right)\right\Vert _{\textrm{Mink}}\ll\left(1+O\left(x^{-c}\right)\right)\overline{\rho}_{H}^{\frac{1}{p}\ln^{0.6}x}\rho_{H}^{n}\left\Vert \mathcal{N}_{y}\right\Vert _{\textrm{Mink}}
\end{equation}
and so: 
\begin{equation}
\left\Vert \textrm{Syr}_{H}^{\circ n}\left(\mathcal{N}_{y}\right)\right\Vert _{\textrm{Mink}}\ll e^{\frac{1}{p}\left(\ln\overline{\rho}_{H}\right)\ln^{0.6}x}\rho_{H}^{n}\left\Vert \mathcal{N}_{y}\right\Vert _{\textrm{Mink}},\textrm{ }\forall n\in\left\{ 0,\ldots,n_{x}\right\} \label{eq:Tao 5.14 analogue}
\end{equation}
Equation (\ref{eq:Tao 5.14 analogue}), note, is the analogue of Equation
5.14 from Tao's paper.

Recalling that $T_{x}\left(\mathcal{N}_{y}\right)$ is the smallest
$n\geq1$ for which $\left\Vert \textrm{Syr}_{H}^{\circ n}\left(\mathcal{N}_{y}\right)\right\Vert _{\textrm{Mink}}\leq x$,
(\ref{eq:Tao 5.14 analogue}) gives the estimates: 
\begin{equation}
e^{\frac{1}{p}\left(\ln\overline{\rho}_{H}\right)\ln^{0.6}x}\rho_{H}^{T_{x}\left(\mathcal{N}_{y}\right)}\left\Vert \mathcal{N}_{y}\right\Vert _{\textrm{Mink}}\leq x<e^{\frac{1}{p}\left(\ln\overline{\rho}_{H}\right)\ln^{0.6}x}\rho_{H}^{T_{x}\left(\mathcal{N}_{y}\right)-1}\left\Vert \mathcal{N}_{y}\right\Vert _{\textrm{Mink}}\label{eq:upper and lower bound on N_y}
\end{equation}
and hence: 
\begin{equation}
\frac{1}{p}\frac{\ln\overline{\rho}_{H}}{\ln\rho_{H}^{-1}}\ln^{0.6}x+\frac{\ln\frac{\left\Vert \mathcal{N}_{y}\right\Vert _{\textrm{Mink}}}{x}}{\ln\rho_{H}^{-1}}\leq T_{x}\left(\mathcal{N}_{y}\right)<\frac{1}{p}\frac{\ln\overline{\rho}_{H}}{\ln\rho_{H}^{-1}}\ln^{0.6}x+\frac{\ln\frac{\left\Vert \mathcal{N}_{y}\right\Vert _{\textrm{Mink}}}{x}}{\ln\rho_{H}^{-1}}+1\label{eq:Tao 5.15 analogue}
\end{equation}
i.e. 
\begin{equation}
T_{x}\left(\mathcal{N}_{y}\right)=\frac{\ln\frac{\left\Vert \mathcal{N}_{y}\right\Vert _{\textrm{Mink}}}{x}}{\ln\rho_{H}^{-1}}+O\left(\ln^{0.6}x\right)\label{eq:Tao 5.15 analogue restated}
\end{equation}
(This is the analogue of equation 5.15 from Tao's paper.) Here, we
note that: 
\begin{equation}
0<\frac{1}{p}\frac{\ln\overline{\rho}_{H}}{\ln\rho_{H}^{-1}}\ln^{0.6}x+\frac{\ln\frac{\left\Vert \mathcal{N}_{y}\right\Vert _{\textrm{Mink}}}{x}}{\ln\rho_{H}^{-1}}+1<n_{x}<\frac{\ln x}{\ln A_{0}}
\end{equation}
occurs for all sufficiently large $x$, so that the far-right side
of (\ref{eq:Tao 5.15 analogue}) is between $0$ and $n_{x}$ for
all sufficiently large $x$. Since $n_{x}$ is larger than every element
of $I_{y}$, this gives $T_{x}\left(\mathcal{N}_{y}\right)\in\left[0,n_{x}\right]$.
Consequently, given the event $\vec{\mathbf{v}}^{\left(n_{x}\right)}\in\mathcal{A}^{\left(n_{x}\right)}$,
in order for $T_{x}\left(\mathcal{N}_{y}\right)\in I_{y}$, it suffices
that the lower and upper bounds of (\ref{eq:Tao 5.15 analogue}) are
greater than and less than the upper and lower endpoints of $I_{y}$,
respectively; these being: 
\begin{equation}
\frac{\ln\left(y/x\right)}{\ln A_{2}}+\ln^{0.8}x<\frac{1}{p}\frac{\ln\overline{\rho}_{H}}{\ln\rho_{H}^{-1}}\ln^{0.6}x+\frac{\ln\frac{\left|\mathcal{N}_{y}\right|_{\infty}}{x}}{\ln\rho_{H}^{-1}}
\end{equation}
and: 
\begin{equation}
\frac{1}{p}\frac{\ln\overline{\rho}_{H}}{\ln\rho_{H}^{-1}}\ln^{0.6}x+\frac{\ln\frac{\left|\mathcal{N}_{y}\right|_{\infty}}{x}}{\ln\rho_{H}^{-1}}+1<\frac{\ln\left(y^{\alpha}/x\right)}{\ln\rho_{H}^{-1}}-\ln^{0.8}x
\end{equation}
Solving these for an inequality for $\left\Vert \mathcal{N}_{y}\right\Vert _{\textrm{Mink}}$
yields: 
\begin{equation}
ye^{\left(\ln\rho_{H}^{-1}\right)\ln^{0.8}x-\frac{1}{p}\left(\ln\overline{\rho}_{H}\right)\ln^{0.6}x}<\left\Vert \mathcal{N}_{y}\right\Vert _{\textrm{Mink}}<\frac{y^{\alpha}}{\rho_{H}}e^{-\left(\ln\rho_{H}^{-1}\right)\ln^{0.8}x-\frac{1}{p}\left(\ln\overline{\rho}_{H}\right)\ln^{0.6}x}
\end{equation}
From this, given the event $\vec{\mathbf{V}}^{\left(n_{x}\right)}\in\mathcal{A}^{\left(n_{x}\right)}$,
we see that $T_{x}\left(\mathcal{N}_{y}\right)\in I_{y}$ will occur
whenever: 
\begin{equation}
\left\Vert \mathcal{N}_{y}\right\Vert _{\textrm{Mink}}\in I_{x}^{\prime}
\end{equation}
where: 
\begin{equation}
I_{x}^{\prime}\overset{\textrm{def}}{=}\left(yx^{c_{1}\left(x\right)},\rho_{H}^{-p}y^{\alpha}e^{-c_{2}\left(x\right)}\right)
\end{equation}
and where: 
\begin{align}
c_{1}\left(x\right) & \overset{\textrm{def}}{=}\frac{p\ln\rho_{H}^{-1}}{\ln^{0.2}x}-\frac{1}{p}\frac{\ln\overline{\rho}_{H}}{\ln^{0.4}x}\\
c_{2}\left(x\right) & \overset{\textrm{def}}{=}\left(p\ln\rho_{H}^{-1}\right)\ln^{0.8}x+\frac{1}{p}\left(\ln\overline{\rho}_{H}\right)\ln^{0.6}x
\end{align}
Thus: 
\begin{equation}
\mathbb{P}\left(T_{x}\left(\mathcal{N}_{y}\right)\in I_{y}\right)\leq\mathbb{P}\left(\left\Vert \mathcal{N}_{y}\right\Vert _{\textrm{Mink}}\in I_{x}^{\prime}\right)\label{eq:first passage time estimate}
\end{equation}

\begin{claim}
We have: 
\begin{equation}
\mathbb{P}\left(\left\Vert \mathcal{N}_{y}\right\Vert _{\textrm{Mink}}\in I_{x}^{\prime}\right)=1-O\left(\ln^{-c}x\right)\textrm{ as }x\rightarrow\infty\label{eq:first passage time estimate refined}
\end{equation}
for some constant $c>0$.

Proof of Claim: In order for the upper limit of $I_{x}^{\prime}$
to be less than the upper limit of $I_{y}$, we need: 
\begin{equation}
\rho_{H}^{-p}y^{\alpha}e^{-c_{2}\left(x\right)}<y^{\alpha}
\end{equation}
which is true for all sufficiently large $x$, since $c_{2}\left(x\right)\rightarrow\infty$
as $x\rightarrow\infty$. On the other hand, $c_{1}\left(x\right)>0$
for all sufficiently large $x$, giving us: 
\begin{equation}
yx^{c_{1}\left(x\right)}>y
\end{equation}
for all sufficiently large $x$. As such, $I_{x}^{\prime}\cap\left[y,y^{\alpha}\right]=I_{x}^{\prime}$
as $x\rightarrow\infty$, and so, using Iverson bracket notation:
\begin{equation}
\mathbb{P}\left(\left\Vert \mathcal{N}_{y}\right\Vert _{\textrm{Mink}}\in I_{x}^{\prime}\right)=\frac{\sum_{z\in\mathcal{O}_{K}}\overbrace{\left[\left\Vert z\right\Vert _{\textrm{Mink}}\in I_{x}^{\prime}\cap\left[y,y^{\alpha}\right]\right]}^{\left[\left\Vert z\right\Vert _{\textrm{Mink}}\in I_{x}^{\prime}\right]}\left\Vert z\right\Vert _{\textrm{Mink}}^{-d}}{\sum_{z\in\mathcal{O}_{K}}\left[\left\Vert z\right\Vert _{\textrm{Mink}}\in\left[y,y^{\alpha}\right]\right]\left\Vert z\right\Vert _{\textrm{Mink}}^{-d}}
\end{equation}
Here, \textbf{Lemma \ref{lem:lattice sum asymptotic}} yields: 
\begin{equation}
\sum_{z\in\mathcal{O}_{K}}\frac{\left[\left\Vert z\right\Vert _{\textrm{Mink}}\in\left[y,y^{\alpha}\right]\right]}{\left\Vert z\right\Vert _{\textrm{Mink}}^{d}}\sim C_{K}\left(\alpha-1\right)d\ln y\textrm{ as }x\rightarrow\infty
\end{equation}
and: 
\begin{align*}
\sum_{z\in\mathcal{O}_{K}}\frac{\left[\left\Vert z\right\Vert _{\textrm{Mink}}\in I_{x}^{\prime}\right]}{\left\Vert z\right\Vert _{\textrm{Mink}}^{d}} & =\sum_{z\in\mathcal{O}_{K}}\frac{\left[yx^{c_{1}\left(x\right)}\leq\left\Vert z\right\Vert _{\textrm{Mink}}\leq\rho_{H}^{-1/p}y^{\alpha}e^{-c_{2}\left(x\right)}\right]}{\left\Vert z\right\Vert _{\textrm{Mink}}^{d}}\\
 & \sim C_{K}d\left(\ln\left(\rho_{H}^{-p}y^{\alpha}e^{-c_{2}\left(x\right)}\right)-\ln\left(yx^{c_{1}\left(x\right)}\right)\right)\\
 & \sim C_{K}d\left(\ln\rho_{H}^{-p}+\left(\alpha-1\right)\ln y-c_{2}\left(x\right)-c_{1}\left(x\right)\ln x\right)
\end{align*}
So, as $x\rightarrow\infty$: 
\begin{align*}
\mathbb{P}\left(\left\Vert \mathcal{N}_{y}\right\Vert _{\textrm{Mink}}\in I_{x}^{\prime}\right) & \sim\frac{C_{K}d\left(\ln\rho_{H}^{-p}+\left(\alpha-1\right)\ln y-c_{2}\left(x\right)-c_{1}\left(x\right)\ln x\right)}{C_{K}\left(\alpha-1\right)d\ln y}\\
 & =\frac{\ln\rho_{H}^{-p}}{\left(\alpha-1\right)\ln y}+1-\frac{c_{2}\left(x\right)}{\left(\alpha-1\right)\ln y}-\frac{c_{1}\left(x\right)\ln x}{\left(\alpha-1\right)\ln y}
\end{align*}
Since $c_{1}\left(x\right)=O\left(\ln^{-0.2}x\right)$ and $c_{2}\left(x\right)=O\left(\ln^{0.8}x\right)$
as $x\rightarrow\infty$, while $\ln y=O\left(\ln x\right)$ as $x\rightarrow\infty$,
the $c_{1}\left(x\right)$ and $c_{2}\left(x\right)$ terms in this
asymptotic vanish as $x\rightarrow\infty$, giving us: 
\begin{equation}
\mathbb{P}\left(\left\Vert \mathcal{N}_{y}\right\Vert _{\textrm{Mink}}\in I_{x}^{\prime}\right)\sim\frac{p\ln\rho_{H}^{-1}}{\left(\alpha-1\right)\ln y}+1=1+O\left(\ln^{-c}x\right)
\end{equation}
for $c=1$.

This proves the claim. 
\end{claim}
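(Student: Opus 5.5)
The Claim is a statement about the log-distribution alone: it asks how much $\textrm{Log}$-weight $\mathcal{N}_{y}$ puts outside the window $I_{x}^{\prime}$. The plan is to compute this probability as a ratio of two weighted lattice sums and evaluate both with \textbf{Lemma \ref{lem:lattice sum asymptotic}} in the case $m=d$. Writing
\[
\mathbb{P}\left(\left\Vert \mathcal{N}_{y}\right\Vert _{\textrm{Mink}}\in I_{x}^{\prime}\right)=\frac{\sum_{z\in\mathcal{O}_{K}:\left\Vert z\right\Vert _{\textrm{Mink}}\in I_{x}^{\prime}\cap\left[y,y^{\alpha}\right]}\left\Vert z\right\Vert _{\textrm{Mink}}^{-d}}{\sum_{z\in U_{y}}\left\Vert z\right\Vert _{\textrm{Mink}}^{-d}},
\]
the denominator is $\sim C_{K}d\left(\alpha-1\right)\ln y$. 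Since $y$ is $x^{\alpha}$ or $x^{\alpha^{2}}$, we have $\ln y\asymp\ln x$.

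For the numerator, I first check the inclusion $I_{x}^{\prime}\subseteq\left[y,y^{\alpha}\right]$ for large $x$.
\begin{itemize}
\item The lower endpoint satisfies $yx^{c_{1}\left(x\right)}\geq y$, because $c_{1}\left(x\right)>0$ eventually: its leading term $p\ln\rho_{H}^{-1}/\ln^{0.2}x$ is positive by MH1 and dominates the $\ln^{-0.4}x$ term.
\item The upper endpoint is $\rho_{H}^{-p}y^{\alpha}e^{-c_{2}\left(x\right)}\leq y^{\alpha}$, because $c_{2}\left(x\right)\asymp\ln^{0.8}x\rightarrow\infty$ swamps the constant $\rho_{H}^{-p}$.
\end{itemize}
So the numerator is a sum over an annulus $f\leq\left\Vert z\right\Vert _{\textrm{Mink}}\leq g$ with $\ln\left(g/f\right)=\left(\alpha-1\right)\ln y+p\ln\rho_{H}^{-1}-c_{2}\left(x\right)-c_{1}\left(x\right)\ln x$. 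This tends to $\infty$, so \textbf{Lemma \ref{lem:lattice sum asymptotic}} applies and gives $C_{K}d\ln\left(g/f\right)$ asymptotically.

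Dividing, the ratio equals $1$ minus a relative error of order
\[
\frac{c_{2}\left(x\right)+c_{1}\left(x\right)\ln x+O\left(1\right)}{\ln y}=O\left(\ln^{-0.2}x\right).
\]
Here I use $c_{2}\left(x\right)=O\left(\ln^{0.8}x\right)$ and $c_{1}\left(x\right)\ln x=O\left(\ln^{0.8}x\right)$. This yields the Claim with $c=0.2$; the paper's exponent $c=1$ is not what this computation produces, and only some $c>0$ is needed.

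The main obstacle is that an asymptotic $\sim$ alone does not control the error at rate $\ln^{-c}x$. I would instead use the quantitative form from the proof of \textbf{Lemma \ref{lem:lattice sum asymptotic}}: each sum equals $C_{K}d\ln\left(g/f\right)+O\left(1/f\right)$, via Abel summation with $N_{K}\left(R\right)=C_{K}R^{d}+O\left(R^{d-1}\right)$. The $O\left(1/f\right)=O\left(y^{-1}\right)$ errors are negligible, so the ratio is $1-O\left(\ln^{-0.2}x\right)$ exactly. This is the point to handle carefully.
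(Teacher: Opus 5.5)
Your proposal is correct and follows the paper's route: you show $I_{x}^{\prime}\subseteq\left[y,y^{\alpha}\right]$ for large $x$, then evaluate the probability as a ratio of two $m=d$ lattice sums via Lemma~\ref{lem:lattice sum asymptotic}. Both of your refinements are justified. First, $c_{2}\left(x\right)+c_{1}\left(x\right)\ln x=2p\ln\rho_{H}^{-1}\ln^{0.8}x$, so the deficit is genuinely of order $\ln^{-0.2}x$; the paper's stated exponent $c=1$ is wrong, though only some $c>0$ is needed. Second, a bare $\sim$ gives no rate, so the quantitative form $C_{K}d\ln\left(g/f\right)+O\left(1/f\right)$ from the lemma's proof is exactly what is needed to make the error term rigorous.
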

By (\ref{eq:first passage time estimate}), we see that with probability
$1-O\left(\ln^{-c}x\right)$, there is some $n\in I_{y}$ so that
$T_{x}\left(\mathcal{N}_{y}\right)=n$. Because of this, (\ref{eq:Need 4})
above becomes: 
\begin{equation}
\mathbb{P}\left(\textrm{Pass}_{x}\left(\mathcal{N}_{y}\right)\in S\right)=\sum_{n\in I_{y}}\mathbb{P}\left(\left\{ \textrm{Pass}_{x}\left(\mathcal{N}_{y}\right)\in S\right\} \cap\left\{ \vec{\mathbf{V}}^{\left(n_{x}\right)}\in\mathcal{A}^{\left(n_{x}\right)}\right\} \cap\left\{ T_{x}\left(\mathcal{N}_{y}\right)=n\right\} \right)+O\left(\ln^{-c}x\right)\label{eq:Need 4 refined}
\end{equation}
With the crossing time written out exactly, we are now in a position
to relate it to the crossing time of the other iterates. 
\begin{claim}
For each $n\in I_{y}$, the event: 
\begin{equation}
\left\{ \textrm{Pass}_{x}\left(\mathcal{N}_{y}\right)\in S\right\} \cap\left\{ \vec{\mathbf{V}}^{\left(n_{x}\right)}\in\mathcal{A}^{\left(n_{x}\right)}\right\} \cap\left\{ T_{x}\left(\mathcal{N}_{y}\right)=n\right\} \label{eq:first event}
\end{equation}
is equal to the event: 
\begin{equation}
B_{n,y}\overset{\textrm{def}}{=}\left\{ T_{x}\left(\textrm{Syr}_{H}^{\circ n-m_{x}}\left(\mathcal{N}_{y}\right)\right)=m_{x}\right\} \cap\left\{ \textrm{Pass}_{x}\left(\textrm{Syr}_{H}^{\circ n-m_{x}}\left(\mathcal{N}_{y}\right)\right)\in S\right\} \cap\left\{ \vec{\mathbf{V}}^{\left(n_{x}\right)}\in\mathcal{A}^{\left(n_{x}\right)}\right\} \label{eq:def of B_n,y}
\end{equation}

Proof of Claim: Fix $z\in\mathcal{O}_{K}$, and suppose $\mathcal{N}_{y}=z$.

I. Suppose the event $\mathcal{N}_{y}=z$ is contained in (\ref{eq:first event}).
If $T_{x}\left(z\right)=n$, then $\left\Vert \textrm{Syr}_{H}^{\circ n}\left(z\right)\right\Vert _{\textrm{Mink}}\leq\left\Vert z\right\Vert _{\textrm{Mink}}$,
with $\left\Vert \textrm{Syr}_{H}^{\circ m}\left(z\right)\right\Vert _{\textrm{Mink}}>\left\Vert z\right\Vert _{\textrm{Mink}}$
for all $m<n$. This implies that $T_{x}\left(\textrm{Syr}_{H}^{\circ n-m_{x}}\left(z\right)\right)=m_{x}$.
Similarly, since $H$ eventually iterates $\textrm{Syr}_{H}^{\circ n-m_{x}}\left(z\right)$
to $\textrm{Syr}_{H}^{\circ n}\left(z\right)\in S$, it follows that
$\textrm{Pass}_{x}\left(\textrm{Syr}_{H}^{\circ n-m_{x}}\left(\mathcal{N}_{y}\right)\right)\in S$.
Indeed, this first passage cannot occur anywhere else, as that would
imply the first passage \emph{time }was strictly less than $n$. Finally,
since $\vec{\mathbf{V}}^{\left(n_{x}\right)}$ was given to be in
$\mathcal{A}^{\left(n_{x}\right)}$ by (\ref{eq:first event}), we
see that the event $\mathcal{N}_{y}=z$ is contained in $B_{n,y}$.
Since $z$ was arbitrary, we conclude that all events in (\ref{eq:first event})
are contained in $B_{n,y}$.

II. Suppose the event $\mathcal{N}_{y}=z$ is contained in $B_{n,y}$.
Since $T_{x}\left(\textrm{Syr}_{H}^{\circ n-m_{x}}\left(z\right)\right)=m_{x}$
and $\vec{\mathbf{V}}^{\left(n_{x}\right)}\in\mathcal{A}^{\left(n_{x}\right)}$,
we have: 
\begin{equation}
\left\Vert \textrm{Syr}_{H}^{\circ n}\left(z\right)\right\Vert _{\textrm{Mink}}\leq x<\left\Vert \textrm{Syr}_{H}^{\circ n-1}\left(z\right)\right\Vert _{\textrm{Mink}}
\end{equation}
which, by (\ref{eq:Tao 5.14 analogue}), forces: 
\begin{equation}
x<e^{\frac{1}{p}\left(\ln\overline{\rho}_{H}\right)\ln^{0.6}x}\rho_{H}^{n-1}\left\Vert z\right\Vert _{\textrm{Mink}}
\end{equation}
and so: 
\begin{equation}
n<\frac{\ln\frac{\left\Vert z\right\Vert _{\textrm{Mink}}}{x}}{\ln\rho_{H}^{-1}}+\frac{1}{p}\frac{\ln\overline{\rho}_{H}}{\ln\rho_{H}^{-1}}\ln^{0.6}x+1=\frac{\ln\frac{\left\Vert z\right\Vert _{\textrm{Mink}}}{x}}{\ln\rho_{H}^{-1}}+O\left(\ln^{0.6}x\right)
\end{equation}
By (\ref{eq:Tao 5.15 analogue restated}), we have: 
\begin{equation}
T_{x}\left(z\right)=\frac{\ln\frac{\left\Vert z\right\Vert _{\textrm{Mink}}}{x}}{\ln\rho_{H}^{-1}}+O\left(\ln^{0.6}x\right)
\end{equation}
and so, $T_{x}\left(z\right)\geq n$. Thus, $T_{x}\left(z\right)\geq n-m_{x}$,
and hence: 
\begin{equation}
T_{x}\left(z\right)=n-m_{x}+T_{x}\left(\textrm{Syr}_{H}^{\circ n-m_{x}}\left(z\right)\right)=n
\end{equation}
From this, it follows that $\textrm{Pass}_{x}\left(z\right)\in S$,
and thus, that the event $\mathcal{N}_{y}=z$ being in $B_{n,y}$
implies the event $\mathcal{N}_{y}=z$ is in (\ref{eq:first event}).
Since $z$ was arbitrary, we conclude $B_{n,y}$ is equivalent to
(\ref{eq:first event}). This proves the claim. 
\end{claim}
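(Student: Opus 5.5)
The plan is to prove the two inclusions pointwise. Fix $z\in\mathcal{O}_{K}$ with $\mathbb{P}\left(\mathcal{N}_{y}=z\right)>0$. It then suffices to show that $z$ lies in the event (\ref{eq:first event}) if and only if it lies in $B_{n,y}$. The common condition $\vec{\mathbf{V}}^{\left(n_{x}\right)}\in\mathcal{A}^{\left(n_{x}\right)}$ occurs on both sides, so it can be assumed throughout. It is then used only to control the norms of the intermediate iterates $\textrm{Syr}_{H}^{\circ k}\left(z\right)$ for $0\leq k\leq n_{x}$.

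The basic tool is the semigroup identity $\textrm{Syr}_{H}^{\circ\left(a+b\right)}=\textrm{Syr}_{H}^{\circ b}\circ\textrm{Syr}_{H}^{\circ a}$, which holds because run boundaries of $\textrm{Syr}_{H}^{\circ a}\left(z\right)$ are exactly the later run boundaries of $z$. I read $\textrm{Pass}_{x}$ as the Syracuse iterate at time $T_{x}$. From the identity, if $T_{x}\left(z\right)\geq a$ then $T_{x}\left(z\right)=a+T_{x}\left(\textrm{Syr}_{H}^{\circ a}\left(z\right)\right)$ and $\textrm{Pass}_{x}\left(z\right)=\textrm{Pass}_{x}\left(\textrm{Syr}_{H}^{\circ a}\left(z\right)\right)$. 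Take $a=n-m_{x}$, which is nonnegative since $I_{y}\subset\left[m_{x},n_{x}\right]$ by Proposition \ref{prop:gamma inequality proposition}.

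For the forward inclusion, suppose $T_{x}\left(z\right)=n$ and $\textrm{Pass}_{x}\left(z\right)\in S$. Then $T_{x}\left(z\right)\geq n-m_{x}$, so the identity gives $T_{x}\left(\textrm{Syr}_{H}^{\circ n-m_{x}}\left(z\right)\right)=m_{x}$ and the passage location is unchanged, hence lies in $S$. This direction is purely formal.

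For the reverse inclusion, suppose $z\in B_{n,y}$. Then $\left\Vert \textrm{Syr}_{H}^{\circ n}\left(z\right)\right\Vert _{\textrm{Mink}}\leq x$ and $\left\Vert \textrm{Syr}_{H}^{\circ k}\left(z\right)\right\Vert _{\textrm{Mink}}>x$ for $n-m_{x}\leq k<n$. The only remaining point is that no earlier passage occurs, i.e. $\left\Vert \textrm{Syr}_{H}^{\circ k}\left(z\right)\right\Vert _{\textrm{Mink}}>x$ for all $k<n-m_{x}$; granted that, $T_{x}\left(z\right)=n$ and $\textrm{Pass}_{x}\left(z\right)\in S$ follow from the identity.

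This is the main obstacle, because (\ref{eq:Tao 5.14 analogue}) gives only an \emph{upper} bound on $\left\Vert \textrm{Syr}_{H}^{\circ k}\left(z\right)\right\Vert _{\textrm{Mink}}$. My first attempt will be a matching lower bound on $\mathcal{A}^{\left(n_{x}\right)}$, proved in the same way as Proposition \ref{prop:elementary properties of X_H of v-arrow}. Since each $r_{j}$ is invertible, the lower operator bound $\left\Vert r_{j}w\right\Vert _{\textrm{Mink}}\geq\left\Vert r_{j}^{-1}\right\Vert _{\textrm{Mink}}^{-1}\left\Vert w\right\Vert _{\textrm{Mink}}$ holds. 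With the near-equidistribution of branch counts this gives
\[
\left\Vert M_{H}\left(\vec{\mathbf{V}}^{\left(k\right)}\right)z\right\Vert _{\textrm{Mink}}\geq e^{-O\left(\ln^{0.6}x\right)}\tilde{\rho}_{H}^{k}\left\Vert z\right\Vert _{\textrm{Mink}},
\]
where $\tilde{\rho}_{H}=\prod_{j}\left\Vert r_{j}^{-1}\right\Vert _{\textrm{Mink}}^{-1}$. The bound $\left\Vert X_{H}\right\Vert _{\textrm{Mink}}\ll e^{O\left(\ln^{0.6}x\right)}R_{H}^{pk}$ is negligible against $\left\Vert z\right\Vert _{\textrm{Mink}}\geq x^{\alpha}$ by (\ref{eq:alpha A_0 inequality}).

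It would then suffice that $e^{-O\left(\ln^{0.6}x\right)}\tilde{\rho}_{H}^{k}\left\Vert z\right\Vert _{\textrm{Mink}}>x$ for all $k\leq n-m_{x}$. Using $n\leq\ln\left(y^{\alpha}/x\right)/\ln\rho_{H}^{-1}$ from the definition of $I_{y}$, this reduces to a comparison of the exponents $\tilde{\rho}_{H}$ and $\rho_{H}$ against the margin $m_{x}\asymp\left(\alpha-1\right)\ln x/\ln A_{1}$.

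If $\tilde{\rho}_{H}$ and $\rho_{H}$ do not match well enough, I fall back on Tao's route. There one shows, on the generic event, that $\ln\left\Vert \textrm{Syr}_{H}^{\circ k}\left(z\right)\right\Vert _{\textrm{Mink}}=\ln\left\Vert z\right\Vert _{\textrm{Mink}}-k\ln\rho_{H}^{-1}+O\left(\ln^{0.6}x\right)$ two-sidedly. Combined with $\left\Vert \textrm{Syr}_{H}^{\circ n-1}\left(z\right)\right\Vert _{\textrm{Mink}}>x$ and the upper estimate, this yields $T_{x}\left(z\right)\geq n-O\left(\ln^{0.6}x\right)\geq n-m_{x}$, since $m_{x}\gg\ln^{0.6}x$. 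In either case the reverse inclusion closes, and the two inclusions together prove the claim.
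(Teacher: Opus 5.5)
Your forward inclusion is correct and is the same formal argument as the paper's part I. Your reading of $\textrm{Pass}_{x}$ as $\textrm{Syr}_{H}^{\circ T_{x}}$ is also the one the paper actually uses. The gap is in the reverse inclusion, exactly where you flag it: you need $T_{x}(z)\geq n-m_{x}$, and neither of your routes delivers this under MH1--MH4.

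\emph{Route (a).} This needs a quantitative near-conformality of the $r_{j}$. Start from $\Vert z\Vert_{\textrm{Mink}}\geq e^{-O(\ln^{0.6}x)}\rho_{H}^{-(n-1)}x$, which follows from the upper bound at time $n-1$. The requirement is then roughly $(n-m_{x})\ln(\rho_{H}/\tilde{\rho}_{H})\leq m_{x}\ln\rho_{H}^{-1}-O(\ln^{0.6}x)$. Since $n$ ranges up to $\asymp(\alpha^{3}-1)\ln x/\ln\rho_{H}^{-1}$ while $m_{x}\asymp(\alpha-1)\ln x/\ln A_{1}$, this forces $\tilde{\rho}_{H}\geq\rho_{H}^{1+\theta}$ for a fixed $\theta<1/2$ depending on $\alpha$ and $A_{1}$. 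Nothing in the hypotheses relates $\Vert r_{j}^{-1}\Vert_{\textrm{Mink}}^{-1}$ to $\Vert r_{j}\Vert_{\textrm{Mink}}$; the condition holds for $K=\mathbb{Q}$ but not in general.

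\emph{Route (b).} This assumes a two-sided estimate at rate $\rho_{H}$, which is false in this generality. Tao has it only because his $M_{H}(\vec{\mathbf{V}})$ is multiplication by a rational scalar. Here, even when each $r_{j}$ is multiplication by some $\mu_{j}\in K$, $\Vert\prod_{j}\mu_{j}^{\#_{j}}\Vert_{\textrm{Mink}}=\max_{\iota}\prod_{j}|\iota(\mu_{j})|^{\#_{j}}$ can be exponentially smaller than $\prod_{j}\Vert\mu_{j}\Vert_{\textrm{Mink}}^{\#_{j}}$ when the maxima sit at different embeddings. Moreover, $\Vert\mu w\Vert_{\textrm{Mink}}$ depends on which coordinate of $w$ dominates.

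The paper's part II has the same defect. It invokes (\ref{eq:Tao 5.15 analogue restated}), whose lower half comes from reading the one-sided bound (\ref{eq:Tao 5.14 analogue}) as an equality. Even granting that, its conclusion ``$T_{x}(z)\geq n$'' does not follow; your $T_{x}(z)\geq n-O(\ln^{0.6}x)\geq n-m_{x}$ is the correct bookkeeping. So falling back on the paper's or Tao's route does not close the gap.

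The missing idea is that no lower bound is needed. Apply the upper estimate starting from a putative early passage point rather than from $z$. Suppose $\Vert\textrm{Syr}_{H}^{\circ k_{0}}(z)\Vert_{\textrm{Mink}}\leq x$ for some $k_{0}\leq n-m_{x}-1$. On $\mathcal{A}^{(n_{x})}$, the branch counts over runs $k_{0}+1,\ldots,n-1$ are $(n-1-k_{0})\pm\frac{2}{p}\ln^{0.6}x$, and every run has length $<p+2\ln^{0.6}x$. So the proof of Proposition \ref{prop:elementary properties of X_H of v-arrow}, applied to this segment, gives
\[
\left\Vert \textrm{Syr}_{H}^{\circ n-1}(z)\right\Vert _{\textrm{Mink}}\leq\overline{\rho}_{H}^{\frac{2}{p}\ln^{0.6}x}\rho_{H}^{n-1-k_{0}}x+e^{O(\ln^{0.6}x)}\leq\overline{\rho}_{H}^{\frac{2}{p}\ln^{0.6}x}\rho_{H}^{m_{x}}x+e^{O(\ln^{0.6}x)}<x
\]
for large $x$. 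The last inequality holds because $\rho_{H}^{m_{x}}=x^{-c}$ with $c\asymp(\alpha-1)\ln\rho_{H}^{-1}/\ln A_{1}>0$. This contradicts $\Vert\textrm{Syr}_{H}^{\circ n-1}(z)\Vert_{\textrm{Mink}}>x$, which is part of $B_{n,y}$; note $n-1\leq n_{x}$ since $I_{y}\subset[m_{x},n_{x}]$. Hence $T_{x}(z)\geq n-m_{x}$, and your semigroup identity then gives $T_{x}(z)=n$ and $\textrm{Pass}_{x}(z)\in S$.
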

With this claim, (\ref{eq:first passage time estimate refined}) implies
that (\ref{eq:Need 4 refined}) can be written as: 
\begin{equation}
\mathbb{P}\left(\textrm{Pass}_{x}\left(\mathcal{N}_{y}\right)\in S\right)=\sum_{n\in I_{y}}\mathbb{P}\left(B_{n,y}\right)+O\left(\ln^{-c}x\right)
\end{equation}
Using the set $S^{\prime}$, we now have several implications. 
\begin{itemize}
\item If $B_{n,y}$ occurs, then, by (\ref{eq:Tao 5.14 analogue}) and (\ref{eq:Tao 5.15 analogue}),
we have: 
\begin{equation}
\left\Vert \textrm{Syr}_{H}^{\circ n-m_{x}}\left(\mathcal{N}_{y}\right)\right\Vert _{\textrm{Mink}}=e^{O\left(\ln^{0.6}x\right)}\rho_{H}^{T_{x}\left(\mathcal{N}_{y}\right)-m_{x}}\left\Vert \mathcal{N}_{y}\right\Vert _{\textrm{Mink}}=e^{O\left(\ln^{0.6}x\right)}\rho_{H}^{-m_{x}}x
\end{equation}
and hence, the event: 
\begin{equation}
\left(\textrm{Syr}_{H}^{\circ n-m_{x}}\left(\mathcal{N}_{y}\right)\in E^{\prime}\right)\cap\left(\vec{\mathbf{V}}^{\left(n_{x}\right)}\in\mathcal{A}^{\left(n_{x}\right)}\right)\label{eq:Our analogue of Tao 5.17}
\end{equation}
occurs. 
\item Conversely, if (\ref{eq:Our analogue of Tao 5.17}) holds, then from
(\ref{eq:Tao 5.14 analogue}), we have: 
\begin{equation}
\left\Vert \textrm{Syr}_{H}^{\circ n^{\prime}}\left(\mathcal{N}_{y}\right)\right\Vert _{\textrm{Mink}}=e^{O\left(\ln^{0.6}x\right)}\rho_{H}^{n-m_{x}-n^{\prime}}\left\Vert \textrm{Syr}_{H}^{\circ n-m_{x}}\left(\mathcal{N}_{y}\right)\right\Vert _{\textrm{Mink}}\geq e^{O\left(\ln^{0.6}x\right)}\left\Vert \textrm{Syr}_{H}^{\circ n-m_{x}}\left(\mathcal{N}_{y}\right)\right\Vert _{\textrm{Mink}}
\end{equation}
for all $n^{\prime}$ with $0\leq n^{\prime}\leq n-m_{x}$. Thus,
by (\ref{eq:restrictions on z}): 
\begin{equation}
\left\Vert \textrm{Syr}_{H}^{\circ n^{\prime}}\left(\mathcal{N}_{y}\right)\right\Vert _{\textrm{Mink}}>x
\end{equation}
for all $0\leq n^{\prime}\leq n-m_{x}$. We conclude that: 
\begin{equation}
\textrm{Pass}_{x}\left(\mathcal{N}_{y}\right)=n-m_{x}+\textrm{Pass}_{x}\left(\textrm{Syr}_{H}^{\circ n-m_{x}}\left(\mathcal{N}_{y}\right)\right)=n
\end{equation}
thanks to the definition of $S^{\prime}$, and hence also that: 
\begin{equation}
T_{x}\left(\mathcal{N}_{y}\right)=T_{x}\left(\textrm{Syr}_{H}^{\circ n-m_{x}}\left(\mathcal{N}_{y}\right)\right)\in E
\end{equation}
and thus, that the event $B_{n,y}$ occurs. 
\end{itemize}
From these, we conclude that: 
\begin{equation}
B_{n,y}=\left(\textrm{Syr}_{H}^{\circ n-m_{x}}\left(\mathcal{N}_{y}\right)\in E^{\prime}\right)\cap\left(\vec{\mathbf{V}}^{\left(n_{x}\right)}\in\mathcal{A}^{\left(n_{x}\right)}\right)
\end{equation}
Since the event $\vec{\mathbf{V}}^{\left(n_{x}\right)}\in\mathcal{A}^{\left(n_{x}\right)}$
is contained in the event $\vec{\mathbf{V}}^{\left(n-m_{x}\right)}\in\mathcal{A}^{\left(n-m_{x}\right)}$,
we conclude from (\ref{eq:Need 3}) that: 
\begin{align}
\mathbb{P}\left(\textrm{Pass}_{x}\left(\mathcal{N}_{y}\right)\in E\right) & =\sum_{n\in I_{y}}\mathbb{P}\left(\left(\textrm{Syr}_{H}^{\circ n-m_{x}}\left(\mathcal{N}_{y}\right)\in E^{\prime}\right)\cap\left(\vec{\mathbf{V}}^{\left(n-m_{x}\right)}\in\mathcal{A}^{\left(n-m_{x}\right)}\right)\right)\\
 & +O\left(\ln^{-c}x\right)\nonumber 
\end{align}
Finally, let $\vec{\mathbf{W}}$ be a tuple in $\mathcal{A}^{\left(n-m_{x}\right)}$
and let $z\in S^{\prime}$. Then, $\left(\textrm{Syr}_{H}^{\circ n-m_{x}}\left(\mathcal{N}_{y}\right)=z\right)\cap\left(\vec{\mathbf{V}}^{\left(n-m_{x}\right)}=\vec{\mathbf{W}}\right)$
occurs if and only if $H_{\vec{\mathbf{V}}}\left(\mathcal{N}_{y}\right)\in S^{\prime}$,
and we are done.

Q.E.D.

\vphantom{}Just like in Tao's paper, we can demonstrate \textbf{(\ref{eq:passage location comparison})}
by establishing that for any set $S$ with: 
\begin{equation}
S\subseteq\left\{ z\in\mathcal{O}_{K}:1\leq\left\Vert z\right\Vert _{\textrm{Mink}}\leq x\right\} 
\end{equation}
that there is a quantity $Q$ (where $Q$ is allowed to depend on
$x$, $\alpha$, and $S$, but is independent of whether $y$ is $x^{\alpha}$
or $x^{\alpha^{2}}$) so that: 
\begin{align}
\mathbb{P}\left(\textrm{Pass}_{x}\left(\mathcal{N}_{y}\right)\in S\right) & =\left(1+O\left(\ln^{-c}x\right)\right)Q+O\left(\ln^{-c}x\right)\label{eq:Q expression for passage location}
\end{align}
In order to get the $\left(1+O\left(\ln^{-c}x\right)\right)Q$ term,
we will rewrite \textbf{Proposition \ref{prop:approximate formula for passage,  archimedean case}}'s
bound for $\mathbb{P}\left(\textrm{Pass}_{x}\left(\mathcal{N}_{y}\right)\in S\right)$
in terms of $\mathbf{Syrac}_{H}$. At this point in his paper, Tao
re-writes what we expressed as: 
\begin{equation}
\sum_{n\in I_{y}}\sum_{\vec{\mathbf{V}}\in\mathcal{A}^{\left(n-m_{x}\right)}}\sum_{z\in S^{\prime}}\mathbb{P}\left(H_{\vec{\mathbf{V}}}\left(\mathcal{N}_{y}\right)=z\right)
\end{equation}
from the right-hand side of (\ref{eq:approximate formula for passage location})
in terms of the probability mass function of the Syracuse random variables.
In our language, this would mean considering the values of $X_{H}$.
But here, a major distinction arises between Tao's approach and our
own. It is worth discussing this in detail, as it our argument's greatest
point of departure from Tao's.

\vphantom{} 
\begin{example}
In Tao's case, the numen $X_{H}$ would be the function the first
author calls $\chi_{3}$ \cite{my_disseration,my_first_blog_paper},
treated here as a function from $\mathbb{Z}_{2}$ to $\mathbb{Z}_{3}$,
with the defining limit: 
\begin{equation}
\chi_{3}\left(\mathfrak{z}\right)\overset{\textrm{def}}{=}\lim_{n\rightarrow\infty}\chi_{3}\left(\left[\mathfrak{z}\right]_{2^{n}}\right)
\end{equation}
converging pointwise in $\mathbb{Z}_{3}$ for all $\mathfrak{z}\in\mathbb{Z}_{2}$,
where, recall, $\left[\mathfrak{z}\right]_{2^{n}}$ is the unique
integer in $\left\{ 0,\ldots,2^{n}-1\right\} $ which is congruent
to $\mathfrak{z}$ mod $2^{n}$. Recall also that Tao's $n$th Syracuse
Random Variable $\mathbf{Syrac}\left(\mathbb{Z}/3^{n}\mathbb{Z}\right)$
is precisely the random variable $\left[\chi_{3}\right]_{3^{n}}$
obtained by projecting the values of $\chi_{3}$ mod $3^{n}$: 
\begin{equation}
\mathbb{P}\left(\mathbf{Syrac}\left(\mathbb{Z}/3^{n}\mathbb{Z}\right)\overset{3^{n}}{\equiv}k\right)=\mathbb{P}\left(\chi_{3}\overset{3^{n}}{\equiv}k\right)=\textrm{meas}_{\textrm{Haar}}\left(\chi_{3}^{-1}\left(k+3^{n}\mathbb{Z}_{3}\right)\right)
\end{equation}
where the expression on the right is the $2$-adic Haar probability
measure of the pre-image of $k+3^{n}\mathbb{Z}_{3}$ under $\chi_{3}$.

Letting $K=\mathbb{Q}$, so that $\mathcal{N}_{y}$ takes values in
$\mathbb{Z}$, let $H$ be the Shortened Collatz map $H:\mathbb{Z}\rightarrow\mathbb{Z}$:
\begin{equation}
H\left(m\right)=\begin{cases}
\frac{m}{2} & \textrm{if }m\overset{2}{\equiv}0\\
\frac{3m+1}{2} & \textrm{if }m\overset{2}{\equiv}1
\end{cases}
\end{equation}
The congruence: 
\begin{equation}
M=F_{n-m_{x}}\left(\vec{a}\right)\mod3^{n-m_{x}}\label{eq:Tao 5.18}
\end{equation}
in equation (5.18) from Tao's paper arises when Tao chooses $\vec{a}\in\mathcal{A}^{\left(n-m_{x}\right)}$,
an integer $M$, and considers the event he denotes as $\textrm{Aff}_{\vec{a}}\left(\mathcal{N}_{y}\right)=M$.
In our language, $\textrm{Aff}_{\vec{a}}\left(\mathcal{N}_{y}\right)=M$
is the event: 
\begin{equation}
H_{\mathbf{j}}\left(\mathcal{N}_{y}\right)=M\label{eq:Aff a N_y equals M event,  in our language}
\end{equation}
where $\mathbf{j}\in\Sigma_{2}^{*}$ is a finite string of $0$s and
$1$s containing at least $n-m_{x}$ $1$s ($\#_{1}\left(\mathbf{j}\right)\geq n-m_{x}$),
as well as the other conditions required in order to correspond to
$\vec{a}\in\mathcal{A}^{\left(n-m_{x}\right)}$. As: 
\begin{equation}
H_{\mathbf{j}}\left(\mathcal{N}_{y}\right)=\frac{3^{\#_{1}\left(\mathbf{j}\right)}}{2^{\left|\mathbf{j}\right|}}\mathcal{N}_{y}+\chi_{3}\left(\mathbf{j}\right)\label{eq:H_j of N_y for Collatz example}
\end{equation}
$\chi_{3}\left(\mathbf{j}\right)$ is a rational number whose numerator
in simplest form is co-prime to $3$, upon taking congruences mod
$3^{n-m_{x}}$, we see that the event $H_{\mathbf{j}}\left(\mathcal{N}_{y}\right)=M$
forces the congruence $\chi_{3}\left(\mathbf{j}\right)\overset{3^{n-m_{x}}}{\equiv}M$
to hold. This then takes us to considering the $3$-adic distribution
of $\chi_{3}$.

However, we will \emph{not }take the non-archimedean route. Instead,
using (\ref{eq:Aff a N_y equals M event,  in our language}) and (\ref{eq:H_j of N_y for Collatz example}),
we arrive at an archimedean analogue for the congruence condition,
namely: 
\begin{equation}
M-\chi_{3}\left(\mathbf{j}\right)=\frac{3^{\#_{1}\left(\mathbf{j}\right)}}{2^{\left|\mathbf{j}\right|}}\mathcal{N}_{y}
\end{equation}
Note that Tao's congruence is just a statement that $\chi_{3}\left(\mathbf{j}\right)$
is $3$-adically close to $M$: 
\begin{align*}
\chi_{3}\left(\mathbf{j}\right) & \overset{3^{n-m_{x}}}{\equiv}M\\
 & \Updownarrow\\
\left|\chi_{3}\left(\mathbf{j}\right)-M\right|_{3} & \leq\frac{1}{3^{n-m_{x}}}
\end{align*}
and that the $3$-adic distance between $\chi_{3}\left(\mathbf{j}\right)$
and $M$ tends to $0$ as $n\rightarrow\infty$, and hence, as $x\rightarrow\infty$.
This set-up tells us precisely what we need to do: we will change
Tao's $3$-adic closeness condition into an archimedean closeness
condition.
\end{example}
\vphantom{}

So, considering our approximation formula (\ref{eq:approximate formula for passage location}),
let $L\in I_{y}$ (where $A$ is to be determined), let $\vec{\mathbf{V}}\in\mathcal{A}^{\left(L-m_{x}\right)}$,
and let $z\in S^{\prime}$. The event $H_{\vec{\mathbf{V}}}\left(\mathcal{N}_{y}\right)=z$
is equivalent to:

\begin{equation}
z=M_{H}\left(\vec{\mathbf{V}}\right)\mathcal{N}_{y}+\chi_{H}\left(\vec{\mathbf{V}}\right)\label{eq:our aff equals M event}
\end{equation}
where we are using $z$ where Tao used $M$. We now proceed like so:

\vphantom{} 
\begin{prop}
\label{prop:approximate formula for passage,  archimedean case}Let
everything be as given in \textbf{Assumption \ref{assu:initial parameter assumptions}}
and \textbf{Lemma \ref{lem:approximate formula for passage location}}.
Then, there exists a choice of $\alpha$ and parameters $A_{0},A_{1}>1$
satisfying the inequalities of \textbf{Proposition \ref{prop:gamma inequality proposition}}
so that:

\begin{equation}
\mathbb{P}\left(\textrm{Pass}_{x}\left(\mathcal{N}_{y}\right)\in S\right)=\frac{\left(1+O\left(x^{-c}\right)\right)\overline{\rho}_{H}^{\frac{d}{p}\ln^{0.6}x}}{C_{K}\left(\alpha-1\right)d\ln y}\sum_{L\in I_{y}}\rho_{H}^{d\left(L-m_{x}\right)}\sum_{\vec{\mathbf{V}}\in\mathcal{A}^{\left(L-m_{x}\right)}}\sum_{\begin{array}{c}
z\in S^{\prime}\\
\left\Vert X_{H}\left(\vec{\mathbf{V}}\right)-z\right\Vert _{\textrm{Mink}}\leq x^{\textrm{c}_{\alpha}}
\end{array}}\left\Vert z\right\Vert _{\textrm{Mink}}^{-d}\label{eq:approximate formula for passage, ready for Fourier transferrence}
\end{equation}
as $x\rightarrow\infty$, where: 
\begin{equation}
\textrm{c}_{\alpha}\overset{\textrm{def}}{=}\alpha^{3}-\left(\alpha-1\right)\left(1-\frac{\ln\rho_{H}^{-1}}{\ln A_{1}}\right)
\end{equation}
is slightly larger than $1$, where $\frac{\ln\rho_{H}^{-1}}{\ln A_{1}}<1$,
by \textbf{Proposition \ref{prop:gamma inequality proposition}}. 
\end{prop}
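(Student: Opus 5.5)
We start from the Approximate First Passage Formula (\textbf{Lemma \ref{lem:approximate formula for passage location}}) and rewrite each summand $\mathbb{P}\left(H_{\vec{\mathbf{V}}}\left(\mathcal{N}_{y}\right)=z\right)$ explicitly. For fixed $L\in I_{y}$, $\vec{\mathbf{V}}\in\mathcal{A}^{\left(L-m_{x}\right)}$ and $z\in S^{\prime}$, the event $H_{\vec{\mathbf{V}}}\left(\mathcal{N}_{y}\right)=z$ is the event $\mathcal{N}_{y}=w$ with
\[
w=M_{H}\left(\vec{\mathbf{V}}\right)^{-1}\left(z-X_{H}\left(\vec{\mathbf{V}}\right)\right).
\]
This requires $w\in U_{y}$, and $w$ must have $\vec{\mathbf{V}}$ as the initial segment of its run vector. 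Up to the normalisation $\sum_{U_{y}}\left\Vert \cdot\right\Vert _{\textrm{Mink}}^{-d}$, the probability is therefore $\left\Vert w\right\Vert _{\textrm{Mink}}^{-d}$. By \textbf{Lemma \ref{lem:lattice sum asymptotic}}, that normalisation is $\left(1+o(1)\right)C_{K}\left(\alpha-1\right)d\ln y$, which produces the denominator in (\ref{eq:approximate formula for passage, ready for Fourier transferrence}).

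Next we convert $\left\Vert w\right\Vert _{\textrm{Mink}}^{-d}$ into $\left\Vert z\right\Vert _{\textrm{Mink}}^{-d}$. Proposition \ref{prop:elementary properties of X_H of v-arrow}, in (\ref{eq:selective estimate for M_H of v arrow}), gives $\left\Vert M_{H}(\vec{\mathbf{V}})\right\Vert _{\textrm{Mink}}\leq\overline{\rho}_{H}^{\frac{1}{p}\ln^{0.6}x}\rho_{H}^{L-m_{x}}$. Applying the same argument to the inverse maps gives the reverse comparison with the same $\overline{\rho}_{H}$ loss, which is why $\overline{\rho}_{H}$ was defined with $\max\{\|r_j\|,\|r_j\|^{-1}\}$. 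Together these give
\[
\left\Vert w\right\Vert _{\textrm{Mink}}^{-d}=\overline{\rho}_{H}^{\frac{d}{p}\ln^{0.6}x\cdot O(1)}\rho_{H}^{d(L-m_{x})}\left\Vert z-X_{H}(\vec{\mathbf{V}})\right\Vert _{\textrm{Mink}}^{-d}.
\]
By (\ref{eq:selective estimate for X_H of v arrow}) we have $\left\Vert X_{H}(\vec{\mathbf{V}})\right\Vert _{\textrm{Mink}}\ll(R_{H}^{2}\overline{\rho}_{H}^{1/p})^{\ln^{0.6}x}=x^{o(1)}$. Meanwhile (\ref{eq:restrictions on z}) forces $\left\Vert z\right\Vert _{\textrm{Mink}}\approx\rho_{H}^{-m_{x}}x$, which is a positive power of $x$ times $x$. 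Hence $\left\Vert z-X_{H}(\vec{\mathbf{V}})\right\Vert _{\textrm{Mink}}=\left(1+O(x^{-c})\right)\left\Vert z\right\Vert _{\textrm{Mink}}$, and the factor $1+O(x^{-c})$ appears.

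It remains to encode the constraint $w\in U_{y}$ as the archimedean closeness condition $\left\Vert X_{H}(\vec{\mathbf{V}})-z\right\Vert _{\textrm{Mink}}\leq x^{\textrm{c}_{\alpha}}$. The constraint forces $\left\Vert z-X_{H}(\vec{\mathbf{V}})\right\Vert _{\textrm{Mink}}\leq\left\Vert M_{H}(\vec{\mathbf{V}})\right\Vert y^{\alpha}$. Using $y\leq x^{\alpha^{2}}$, $L\geq m_{x}+(\alpha-1)\ln x/\ln\rho_{H}^{-1}$ from the definition of $I_{y}$, and $m_{x}\approx(\alpha-1)\ln x/\ln A_{1}$, we get $\rho_{H}^{L-m_{x}}y^{\alpha}\leq x^{\alpha^{3}-(\alpha-1)(1-\ln\rho_{H}^{-1}/\ln A_{1})}$ up to $x^{o(1)}$ factors. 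This is exactly $x^{\textrm{c}_{\alpha}}$. The $x^{o(1)}$ slack and the $\overline{\rho}_{H}$ factors are absorbed by shrinking $\alpha$ and adjusting $A_{0},A_{1}$ within the inequalities of \textbf{Proposition \ref{prop:gamma inequality proposition}}.

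Conversely, for pairs $(\vec{\mathbf{V}},z)$ satisfying the closeness condition, we must show that $w$ is a genuine element of $U_{y}$ with run vector beginning with $\vec{\mathbf{V}}$, or else that the discrepancy contributes only to the $O(\ln^{-c}x)$ error already present in \textbf{Lemma \ref{lem:approximate formula for passage location}}. The condition $w\in\mathcal{O}_{K}$ is automatic when $H$ is integral, since $z=H_{\vec{\mathbf{V}}}(w)$ with integrality running backwards along the branches. The run-vector condition then holds by the definition of $H_{\vec{\mathbf{V}}}$ together with the integrality property.

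I expect the main obstacle to be this two-sided matching of the constraint $w\in U_{y}$ with the ball condition. The difficulty is that $M_{H}$ is only norm-controlled, not a scalar, so the image of the annulus $U_{y}$ under $M_{H}(\vec{\mathbf{V}})$ is a distorted region rather than a ball. I would first handle this by sandwiching: bound the image between balls of radii $\overline{\rho}_{H}^{\pm\frac{1}{p}\ln^{0.6}x}\rho_{H}^{L-m_{x}}y^{\alpha}$, with a matching sandwich at the inner radius $y$. The boundary discrepancy would then be shown to be an $O(\ln^{-c}x)$ proportion of the log-weighted mass, using \textbf{Lemma \ref{lem:lattice sum asymptotic}} with $\ln(g/f)\to\infty$. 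In the stated formula this loss is absorbed into the chosen exponent $\textrm{c}_{\alpha}$ and the $(1+O(x^{-c}))$ factor.
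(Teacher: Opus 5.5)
Your forward direction is the paper's argument. From $w\in U_{y}$ you get the ball condition using $\left\Vert M_{H}(\vec{\mathbf{V}})\right\Vert _{\textrm{Mink}}\leq\overline{\rho}_{H}^{\frac{1}{p}\ln^{0.6}x}\rho_{H}^{L-m_{x}}$ and $\left\Vert \mathcal{N}_{y}\right\Vert _{\textrm{Mink}}\leq x^{\alpha^{3}}$. You then bound $\left\Vert w\right\Vert _{\textrm{Mink}}^{-d}\leq\left\Vert M_{H}(\vec{\mathbf{V}})\right\Vert _{\textrm{Mink}}^{d}\left\Vert z-X_{H}(\vec{\mathbf{V}})\right\Vert _{\textrm{Mink}}^{-d}$, replace $\left\Vert z-X_{H}(\vec{\mathbf{V}})\right\Vert _{\textrm{Mink}}$ by $\left(1+O(x^{-c})\right)\left\Vert z\right\Vert _{\textrm{Mink}}$, and take the normaliser from Lemma \ref{lem:lattice sum asymptotic}. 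This only yields ``$\leq$''. The paper's own proof likewise stops at that inequality (it replaces $[w\in U_{y}]$ by the ball indicator) and then writes ``$=$''.

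The gap is in your converse: the claim that $w=M_{H}(\vec{\mathbf{V}})^{-1}(z-X_{H}(\vec{\mathbf{V}}))\in\mathcal{O}_{K}$ is automatic by ``integrality running backwards'' is false. Integrality constrains $H_{j}$ only on inputs already in $\mathcal{O}_{K}$; it says nothing about preimages of integers. For $T_{3}$, for example, $H_{1}(1/3)=1$. In general $w\in\mathcal{O}_{K}$ if and only if $z-X_{H}(\vec{\mathbf{V}})\in M_{H}(\vec{\mathbf{V}})\mathcal{O}_{K}$, which is a genuine lattice condition on $z$. For $T_{3}$, with $M_{H}(\vec{\mathbf{V}})=3^{a}2^{-b}$ and $a=\#_{1}\approx L-m_{x}$, it is exactly Tao's congruence $z\equiv X_{H}(\vec{\mathbf{V}})\bmod 3^{a}$. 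That congruence holds for only a $3^{-a}$ proportion of $z$.

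This cannot be repaired inside the displayed formula, because the ball condition is vacuous on $S^{\prime}$. For $z\in S^{\prime}$ one has $\left\Vert z\right\Vert _{\textrm{Mink}}\leq x^{1+(\alpha-1)\frac{\ln\rho_{H}^{-1}}{\ln A_{1}}+o(1)}$. On the other hand, $\textrm{c}_{\alpha}=1+(\alpha-1)\frac{\ln\rho_{H}^{-1}}{\ln A_{1}}+\alpha(\alpha^{2}-1)$ and $\left\Vert X_{H}(\vec{\mathbf{V}})\right\Vert _{\textrm{Mink}}=x^{o(1)}$. So your converse would make every pair $(\vec{\mathbf{V}},z)$ an atom of $\mathcal{N}_{y}$. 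Put differently, the right-hand side omits the indicator $\left[M_{H}(\vec{\mathbf{V}})^{-1}(z-X_{H}(\vec{\mathbf{V}}))\in\mathcal{O}_{K}\right]$ and overcounts by its inverse density. For $T_{3}$ that factor is about $3^{L-m_{x}}$, a positive power of $x$ on $I_{y}$. Hence the right-hand side exceeds $1$ whenever $\sum_{z\in S^{\prime}}\left\Vert z\right\Vert _{\textrm{Mink}}^{-d}$ is not polynomially small.

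With the ball condition alone you can prove the upper bound and nothing more. An asymptotic would require keeping that indicator and proving equidistribution of $X_{H}(\vec{\mathbf{V}})$ modulo $M_{H}(\vec{\mathbf{V}})\mathcal{O}_{K}$. That is exactly the Fourier-type input the paper set out to avoid.

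Your two-sided norm comparison is also unsound. $\overline{\rho}_{H}$ controls $\left\Vert r_{j}\right\Vert _{\textrm{Mink}}^{\pm1}$, not $\left\Vert r_{j}^{-1}\right\Vert _{\textrm{Mink}}$, and these differ for non-conformal maps. For instance, multiplication by $1+\sqrt{2}$ on $\mathbb{Q}(\sqrt{2})$ and its inverse both have norm $1+\sqrt{2}$. So for $d>1$ the lower bound on $\left\Vert w\right\Vert _{\textrm{Mink}}^{-d}$ can lose a factor exponential in $L-m_{x}$. Even for $d=1$, the true weight $\prod_{j}\left\Vert r_{j}\right\Vert _{\textrm{Mink}}^{d\#_{j}}$ varies with $\vec{\mathbf{V}}$ within $\overline{\rho}_{H}^{\pm\frac{d}{p}\ln^{0.6}x}\rho_{H}^{d(L-m_{x})}$. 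Factoring out the fixed prefactor $\overline{\rho}_{H}^{\frac{d}{p}\ln^{0.6}x}$ with only a $1+O(x^{-c})$ error is therefore again just an upper bound.
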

Proof: Let everything be as given, and suppose the event $H_{\vec{\mathbf{V}}}\left(\mathcal{N}_{y}\right)=z$
occurs. Since: 
\begin{align*}
H_{\vec{\mathbf{V}}}\left(\mathcal{N}_{y}\right) & =z\\
 & \Updownarrow\\
M_{H}\left(\vec{\mathbf{V}}\right)\mathcal{N}_{y} & =z-X_{H}\left(\vec{\mathbf{V}}\right)
\end{align*}
By (\ref{eq:selective estimate for M_H of v arrow}), since $\vec{\mathbf{V}}\in\mathcal{A}^{\left(L-m_{x}\right)}$,
we have that: 
\begin{equation}
\left\Vert M_{H}\left(\vec{\mathbf{V}}\right)\right\Vert _{\textrm{Mink}}\leq\overline{\rho}_{H}^{\frac{1}{p}\ln^{0.6}x}\rho_{H}^{L-m_{x}}\leq\overline{\rho}_{H}^{\frac{1}{p}\ln^{0.6}x}x^{-\left(\alpha-1\right)\left(1-\frac{\ln\rho_{H}^{-1}}{\ln A_{1}}\right)-\frac{\ln\rho_{H}^{-1}}{\ln^{0.2}x}}
\end{equation}
Combining this with the fact that: 
\begin{equation}
\left\Vert \mathcal{N}_{y}\right\Vert _{\textrm{Mink}}\leq y^{\alpha}\leq x^{\alpha^{3}}
\end{equation}
we obtain the estimate: 
\begin{equation}
\left\Vert X_{H}\left(\vec{\mathbf{V}}\right)-z\right\Vert _{\textrm{Mink}}=\left\Vert M_{H}\left(\vec{\mathbf{V}}\right)\mathcal{N}_{y}\right\Vert _{\textrm{Mink}}\leq\overline{\rho}_{H}^{\frac{1}{p}\ln^{0.6}x}x^{\overbrace{\alpha^{3}-\left(\alpha-1\right)\left(1-\frac{\ln\rho_{H}^{-1}}{\ln A_{1}}\right)}^{\textrm{c}_{\alpha}}}\textrm{ as }x\rightarrow\infty
\end{equation}
The exponent of $x$ on the right is a positive real number ever-so-slightly
larger than $1$. As such, we have that $H_{\vec{\mathbf{V}}}\left(\mathcal{N}_{y}\right)=z$
implies: 
\begin{equation}
\left\Vert X_{H}\left(\vec{\mathbf{V}}\right)-z\right\Vert _{\textrm{Mink}}\leq x^{\textrm{c}_{\alpha}}
\end{equation}
for all sufficiently large $x$. Now, using the logarithmic distribution
of $\mathcal{N}_{y}$, we have: 
\begin{equation}
\mathbb{P}\left(H_{\vec{\mathbf{V}}}\left(\mathcal{N}_{y}\right)=z\right)=\mathbb{P}\left(\mathcal{N}_{y}=M_{H}\left(\vec{\mathbf{V}}\right)^{-1}\left(X_{H}\left(\vec{\mathbf{V}}\right)-z\right)\right)=\frac{\sum_{w\in U_{y}\cap\left\{ M_{H}\left(\vec{\mathbf{V}}\right)^{-1}\left(X_{H}\left(\vec{\mathbf{V}}\right)-z\right)\right\} }\left\Vert w\right\Vert _{\textrm{Mink}}^{-d}}{\sum_{w\in U_{y}}\left\Vert w\right\Vert _{\textrm{Mink}}^{-d}}\label{eq:affine probability}
\end{equation}
As $y\rightarrow\infty$, the denominator is given by our formula
from \textbf{Proposition \ref{lem:lattice sum asymptotic}}. As for
the numerator, since $\mathcal{N}_{y}$ takes values in $\mathcal{O}_{K}$,
the event $\mathcal{N}_{y}=M_{H}\left(\vec{\mathbf{V}}\right)^{-1}\left(X_{H}\left(\vec{\mathbf{V}}\right)-z\right)$
forces $M_{H}\left(\vec{\mathbf{V}}\right)^{-1}\left(X_{H}\left(\vec{\mathbf{V}}\right)-z\right)$
to be in $\mathcal{O}_{K}$, which gives us: 
\begin{equation}
\sum_{w\in U_{y}\cap\left\{ M_{H}\left(\vec{\mathbf{V}}\right)^{-1}\left(X_{H}\left(\vec{\mathbf{V}}\right)-z\right)\right\} }\left\Vert w\right\Vert _{\textrm{Mink}}^{-d}=\frac{\left[M_{H}\left(\vec{\mathbf{V}}\right)^{-1}\left(X_{H}\left(\vec{\mathbf{V}}\right)-z\right)\in U_{y}\right]}{\left\Vert M_{H}\left(\vec{\mathbf{V}}\right)^{-1}\left(X_{H}\left(\vec{\mathbf{V}}\right)-z\right)\right\Vert _{\textrm{Mink}}^{d}}
\end{equation}
As we saw, $M_{H}\left(\vec{\mathbf{V}}\right)^{-1}\left(X_{H}\left(\vec{\mathbf{V}}\right)-z\right)\in U_{y}$
implies $\left\Vert X_{H}\left(\vec{\mathbf{V}}\right)-z\right\Vert _{\textrm{Mink}}\leq x$.
Hence: 
\begin{align*}
\mathbb{P}\left(H_{\vec{\mathbf{V}}}\left(\mathcal{N}_{y}\right)=z\right) & \leq\left[M_{H}\left(\vec{\mathbf{V}}\right)^{-1}\left(X_{H}\left(\vec{\mathbf{V}}\right)-z\right)\in U_{y}\right]\frac{\left\Vert M_{H}\left(\vec{\mathbf{V}}\right)^{-1}\left(X_{H}\left(\vec{\mathbf{V}}\right)-z\right)\right\Vert _{\textrm{Mink}}^{-d}}{C_{K}\left(\alpha-1\right)d\ln y}\\
 & \leq\frac{\left\Vert M_{H}\left(\vec{\mathbf{V}}\right)\right\Vert _{\infty}^{d}}{C_{K}\left(\alpha-1\right)d\ln y}\frac{\left[M_{H}\left(\vec{\mathbf{V}}\right)^{-1}\left(X_{H}\left(\vec{\mathbf{V}}\right)-z\right)\in U_{y}\right]}{\left\Vert X_{H}\left(\vec{\mathbf{V}}\right)-z\right\Vert _{\textrm{Mink}}^{d}}\\
 & \leq\frac{\left\Vert M_{H}\left(\vec{\mathbf{V}}\right)\right\Vert _{\textrm{Mink}}^{d}}{C_{K}\left(\alpha-1\right)d\ln y}\frac{\left[\left\Vert X_{H}\left(\vec{\mathbf{V}}\right)-z\right\Vert _{\textrm{Mink}}\leq x\right]}{\left\Vert X_{H}\left(\vec{\mathbf{V}}\right)-z\right\Vert _{\textrm{Mink}}^{d}}
\end{align*}
Next, as: 
\begin{equation}
\left\Vert M_{H}\left(\vec{\mathbf{V}}\right)\right\Vert _{\textrm{Mink}}^{d}\leq\prod_{j=0}^{p-1}\left\Vert r_{j}\right\Vert _{\textrm{Mink}}^{d\Sigma\left(\vec{\mathbf{V}}\right)\cdot\mathbf{e}_{j}}\leq\overline{\rho}_{H}^{\frac{d}{p}\ln^{0.6}x}\rho_{H}^{d\left(L-m_{x}\right)}
\end{equation}
we have: 
\begin{equation}
\mathbb{P}\left(H_{\vec{\mathbf{V}}}\left(\mathcal{N}_{y}\right)=z\right)\leq\frac{\overline{\rho}_{H}^{\frac{d}{p}\ln^{0.6}x}\rho_{H}^{d\left(L-m_{x}\right)}}{C_{K}\left(\alpha-1\right)d\ln y}\frac{\left[\left\Vert X_{H}\left(\vec{\mathbf{V}}\right)-z\right\Vert _{\textrm{Mink}}\leq x^{\textrm{c}_{\alpha}}\right]}{\left\Vert X_{H}\left(\vec{\mathbf{V}}\right)-z\right\Vert _{\textrm{Mink}}^{d}}\textrm{ as }x\rightarrow\infty\label{eq:halfway done with refining the approximate formula}
\end{equation}

Thus: 
\begin{align*}
\mathbb{P}\left(\textrm{Pass}_{x}\left(\mathcal{N}_{y}\right)\in S\right) & \leq\sum_{L\in I_{y}}\sum_{\vec{\mathbf{V}}\in\mathcal{A}^{\left(L-m_{x}\right)}}\sum_{z\in S^{\prime}}\frac{\overline{\rho}_{H}^{\frac{d}{p}\ln^{0.6}x}\rho_{H}^{d\left(L-m_{x}\right)}}{C_{K}\left(\alpha-1\right)d\ln y}\frac{\left[\left\Vert X_{H}\left(\vec{\mathbf{V}}\right)-z\right\Vert _{\textrm{Mink}}\leq x^{\textrm{c}_{\alpha}}\right]}{\left\Vert X_{H}\left(\vec{\mathbf{V}}\right)-z\right\Vert _{\textrm{Mink}}^{d}}\\
 & =\sum_{L\in I_{y}}\frac{\overline{\rho}_{H}^{\frac{d}{p}\ln^{0.6}x}\rho_{H}^{d\left(L-m_{x}\right)}}{C_{K}\left(\alpha-1\right)d\ln y}\sum_{\vec{\mathbf{V}}\in\mathcal{A}^{\left(L-m_{x}\right)}}\sum_{\begin{array}{c}
z\in S^{\prime}\\
\left\Vert X_{H}\left(\vec{\mathbf{V}}\right)-z\right\Vert _{\textrm{Mink}}\leq x^{\textrm{c}_{\alpha}}
\end{array}}\left\Vert X_{H}\left(\vec{\mathbf{V}}\right)-z\right\Vert _{\textrm{Mink}}^{-d}
\end{align*}
Meanwhile, using (\ref{eq:restrictions on z}), we get: 
\begin{equation}
\left\Vert z\right\Vert _{\textrm{Mink}}=O\left(x^{1+\frac{\ln\rho_{H}^{-1}}{\ln A_{1}}}\right)
\end{equation}
Combining this with (\ref{eq:selective estimate for X_H of v arrow}),
we get: 
\begin{align}
\left\Vert \frac{X_{H}\left(\vec{\mathbf{V}}\right)}{z}\right\Vert _{\textrm{Mink}} & \ll\frac{\left(R_{H}^{2}\overline{\rho}_{H}^{1/p}\right)^{\ln^{0.6}x}}{x^{1+\frac{\ln\rho_{H}^{-1}}{\ln A_{1}}}}
\end{align}
which will decay to $0$ as $x\rightarrow\infty$. This gives us:
\begin{equation}
\left\Vert X_{H}\left(\vec{\mathbf{V}}\right)-z\right\Vert _{\textrm{Mink}}^{-d}=\left\Vert z\right\Vert _{\textrm{Mink}}^{d}\left(1+O\left(x^{-1}\right)\right)\textrm{ as }x\rightarrow\infty\label{eq:X - z equals z times 1 plus O}
\end{equation}
from which (\ref{eq:halfway done with refining the approximate formula})
becomes: 
\begin{equation}
\mathbb{P}\left(H_{\vec{\mathbf{V}}}\left(\mathcal{N}_{y}\right)=z\right)\ll\left[\left\Vert X_{H}\left(\vec{\mathbf{V}}\right)-z\right\Vert _{\textrm{Mink}}\leq x^{\textrm{c}_{\alpha}}\right]\frac{1+O\left(x^{-c}\right)}{C_{K}\left(\alpha-1\right)d\ln y}\frac{\overline{\rho}_{H}^{\frac{d}{p}\ln^{0.6}x}\rho_{H}^{d\left(L-m_{x}\right)}}{\left\Vert z\right\Vert _{\textrm{Mink}}^{d}}\textrm{ as }y\rightarrow\infty\label{eq:affine probability asymptotic}
\end{equation}
Using this estimate on the right-hand side of our approximate formula
(\ref{eq:approximate formula for passage location}) then yields (\ref{eq:approximate formula for passage, ready for Fourier transferrence}).

Q.E.D.

\vphantom{}The key ramification of our archimedean argument is that
the analogue of Tao's \textbf{Lemma 5.3} ends up holding without any
assumptions on oscillation or Fourier decay. Next, we define the analogue
of the function Tao calls $c_{n}\left(X\right)$.

\vphantom{} 
\begin{defn}
Let everything be as given in \textbf{Proposition \ref{prop:approximate formula for passage,  archimedean case}}.
Let $f_{x,L}:K\rightarrow\left[0,\infty\right)$ be defined by: 
\begin{equation}
f_{x,L}\left(w\right)\overset{\textrm{def}}{=}\sum_{\begin{array}{c}
z\in S^{\prime}\\
\left\Vert z-w\right\Vert _{\textrm{Mink}}\leq x^{\textrm{c}_{\alpha}}
\end{array}}\frac{\omega_{x}\left(L\right)}{\left\Vert z\right\Vert _{\textrm{Mink}}^{d}},\textrm{ }\forall w\in K\label{eq:def of f_x}
\end{equation}
where: 
\begin{equation}
\omega_{x}\left(L\right)\overset{\textrm{def}}{=}\frac{p}{p-1}\left(\frac{p}{p-1}\right)^{p+\ln^{0.6}x}\left(\rho_{H}^{d}\left(p-1\right)^{2}\right)^{L-m_{x}}\overline{\rho}_{H}^{\frac{d}{p}\ln^{0.6}x}\label{eq:def of omega_x weight}
\end{equation}

\vphantom{} 
\end{defn}
Next, we have the following identities which we will use to facilitate
the analogue of expression 5.22 from Tao's paper.

\vphantom{} 
\begin{prop}
Let everything be as given in \textbf{Proposition \ref{prop:approximate formula for passage,  archimedean case}}.
Then, for any $L\geq1$ and any $\mathbf{n}\in\mathbb{N}_{0}^{p}$:
\begin{equation}
\mathbb{E}\left(f_{x,L}\left(\mathbf{S}_{H}\left(L-m_{x}\right)\right)\right)=\sum_{\vec{\mathbf{V}}\in\left(\mathbb{N}_{1}B\right)^{L-m_{x}}}\mathbb{P}\left(\mathbf{Run}\left(p\right)^{L-m_{x}}=\vec{\mathbf{V}}\right)f_{x,L}\left(X_{H}\left(\vec{\mathbf{V}}\right)\right)\label{eq:expectation identity}
\end{equation}
\begin{equation}
\mathbb{E}\left(f_{x,L}\left(\mathbf{S}_{H}\left(L-m_{x}\right)\right)\right)=\sum_{z\in S^{\prime}}\frac{\omega_{x}\left(L\right)}{\left\Vert z\right\Vert _{\textrm{Mink}}^{d}}\mathbb{P}\left(\left\Vert \mathbf{S}_{H}\left(L-m_{x}\right)-z\right\Vert _{\textrm{Mink}}\leq x^{\textrm{c}_{\alpha}}\right)\label{eq:Tao page 28 transformation}
\end{equation}
\end{prop}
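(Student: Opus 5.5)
Both identities are bookkeeping consequences of the definitions of $\mathbf{S}_{H}$ and $f_{x,L}$; no estimates are needed. Throughout, fix $L$ and write $n=L-m_{x}$.

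For (\ref{eq:expectation identity}), we unpack the definition (\ref{eq:our syracuse random variables}). By construction $\mathbf{S}_{H}\left(n\right)=X_{H}\left(\mathbf{Run}\left(p\right)^{n}\right)$ is the pushforward of the discrete random variable $\mathbf{Run}\left(p\right)^{n}$, which takes countably many values in $\left(\mathbb{N}_{1}B\right)^{n}$, under the map $\vec{\mathbf{V}}\mapsto X_{H}\left(\vec{\mathbf{V}}\right)$ of Definition \ref{def:v-arrow formalism}. For any nonnegative function $g$ on $K$, the change-of-variables formula for discrete random variables gives
\[
\mathbb{E}\left(g\left(X_{H}\left(\mathbf{Run}\left(p\right)^{n}\right)\right)\right)=\sum_{\vec{\mathbf{V}}}\mathbb{P}\left(\mathbf{Run}\left(p\right)^{n}=\vec{\mathbf{V}}\right)g\left(X_{H}\left(\vec{\mathbf{V}}\right)\right).
\]
To justify this, group the outcomes $\vec{\mathbf{V}}$ according to the value $w=X_{H}\left(\vec{\mathbf{V}}\right)$. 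This matches the formula for $\mathbb{P}\left(\mathbf{S}_{H}\left(n\right)\in E\right)$ recorded right after the definition of the SRVs. Since $f_{x,L}\geq0$, Tonelli permits the regrouping even when the sums are infinite. Taking $g=f_{x,L}$ then yields (\ref{eq:expectation identity}). Tuples outside $\left(\mathbb{N}_{1}B\right)^{n*}$ contribute zero, by Remark \ref{rem:PMF of run}.

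For (\ref{eq:Tao page 28 transformation}), we substitute the definition (\ref{eq:def of f_x}) of $f_{x,L}$ into the right-hand side of (\ref{eq:expectation identity}), writing the constraint $\left\Vert z-w\right\Vert _{\textrm{Mink}}\leq x^{\textrm{c}_{\alpha}}$ as an Iverson bracket:
\[
\sum_{\vec{\mathbf{V}}}\mathbb{P}\left(\mathbf{Run}\left(p\right)^{n}=\vec{\mathbf{V}}\right)\sum_{z\in S^{\prime}}\frac{\omega_{x}\left(L\right)}{\left\Vert z\right\Vert _{\textrm{Mink}}^{d}}\left[\left\Vert z-X_{H}\left(\vec{\mathbf{V}}\right)\right\Vert _{\textrm{Mink}}\leq x^{\textrm{c}_{\alpha}}\right].
\]
All terms are nonnegative, so we may interchange the $\vec{\mathbf{V}}$-sum and the $z$-sum. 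We then pull out $\omega_{x}\left(L\right)\left\Vert z\right\Vert _{\textrm{Mink}}^{-d}$, which does not depend on $\vec{\mathbf{V}}$. The inner sum that remains is
\[
\sum_{\vec{\mathbf{V}}}\mathbb{P}\left(\mathbf{Run}\left(p\right)^{n}=\vec{\mathbf{V}}\right)\left[\left\Vert X_{H}\left(\vec{\mathbf{V}}\right)-z\right\Vert _{\textrm{Mink}}\leq x^{\textrm{c}_{\alpha}}\right],
\]
which is $\mathbb{P}\left(\left\Vert \mathbf{S}_{H}\left(n\right)-z\right\Vert _{\textrm{Mink}}\leq x^{\textrm{c}_{\alpha}}\right)$ by the same pushforward identity applied to an indicator function. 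Here we use that the Minkowski norm is symmetric, so $\left\Vert z-w\right\Vert _{\textrm{Mink}}=\left\Vert w-z\right\Vert _{\textrm{Mink}}$.

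The only point needing care is the justification of the interchange of summation and of the regrouping by fibres of $X_{H}$. Nonnegativity handles both, so no convergence issue arises. The hypothesis ``for any $\mathbf{n}\in\mathbb{N}_{0}^{p}$'' plays no role in either identity. I expect no genuine obstacle; the main risk is a notational mismatch, namely that $X_{H}\mid_{\mathbb{N}_{0}}$ in (\ref{eq:our syracuse random variables}) should be read as $X_{H}$ evaluated on run tuples as in (\ref{eq:def of X_v-arrow}), and I would adopt that reading explicitly.
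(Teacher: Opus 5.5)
Your proposal is correct and is essentially the paper's argument. Both identities come from reading $\mathbf{S}_{H}\left(L-m_{x}\right)$ as the pushforward of $\mathbf{Run}\left(p\right)^{L-m_{x}}$ under $X_{H}$, substituting the definition of $f_{x,L}$, and interchanging the nonnegative sums; the only difference is order, since the paper first proves the second identity by summing over values $w\in K$ and then derives the first from it, while you prove the first directly and deduce the second.
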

Proof: To prove (\ref{eq:Tao page 28 transformation}), we write:
\begin{align*}
\mathbb{E}\left(f_{x,L}\left(\mathbf{S}_{H}\left(L-m_{x}\right)\right)\right) & =\sum_{w\in K}f_{x,L}\left(w\right)\mathbb{P}\left(\mathbf{S}_{H}\left(L-m_{x}\right)=w\right)\\
 & =\sum_{w\in K}\left(\sum_{\begin{array}{c}
z\in S^{\prime}\\
\left|w-z\right|_{\infty}\leq x
\end{array}}\frac{\omega_{x}\left(L\right)}{\left\Vert z\right\Vert _{\textrm{Mink}}^{d}}\right)\mathbb{P}\left(\mathbf{S}_{H}\left(L-m_{x}\right)=w\right)\\
\left(\textrm{swap sums}\right); & =\sum_{z\in S^{\prime}}\frac{\omega_{x}\left(L\right)}{\left\Vert z\right\Vert _{\textrm{Mink}}^{d}}\sum_{w\in K}\left[\left\Vert w-z\right\Vert _{\textrm{Mink}}\leq x^{\textrm{c}_{\alpha}}\right]\mathbb{P}\left(\mathbf{S}_{H}\left(L-m_{x}\right)=w\right)\\
 & =\sum_{z\in S^{\prime}}\frac{\omega_{x}\left(L\right)}{\left\Vert z\right\Vert _{\textrm{Mink}}^{d}}\mathbb{P}\left(\left\Vert \mathbf{S}_{H}\left(L-m_{x}\right)-z\right\Vert _{\textrm{Mink}}\leq x^{\textrm{c}_{\alpha}}\right)
\end{align*}
To prove (\ref{eq:expectation identity}), observe that: 
\begin{eqnarray*}
 & \sum_{z\in S^{\prime}}\frac{\omega_{x}\left(L\right)}{\left\Vert z\right\Vert _{\textrm{Mink}}^{d}}\mathbb{P}\left(\left\Vert \mathbf{S}_{H}\left(L-m_{x}\right)-z\right\Vert _{\textrm{Mink}}\leq x^{\textrm{c}_{\alpha}}\right)\\
 & =\\
 & \sum_{z\in S^{\prime}}\frac{\omega_{x}\left(L\right)}{\left\Vert z\right\Vert _{\textrm{Mink}}^{d}}\mathbb{P}\left(\left\Vert X_{H}\left(\mathbf{Run}\left(p\right)^{L-m_{x}}\right)-z\right\Vert _{\textrm{Mink}}\leq x^{\textrm{c}_{\alpha}}\right)\\
 & =\\
 & \sum_{\vec{\mathbf{V}}\in\left(\mathbb{N}_{1}B\right)^{L-m_{x}}}\mathbb{P}\left(\mathbf{Run}\left(p\right)^{L-m_{x}}=\vec{\mathbf{V}}\right)\sum_{\begin{array}{c}
z\in S^{\prime}\\
\left\Vert X_{H}\left(\vec{\mathbf{V}}\right)-z\right\Vert _{\textrm{Mink}}\leq x^{\textrm{c}_{\alpha}}
\end{array}}\frac{\omega_{x}\left(L\right)}{\left\Vert z\right\Vert _{\textrm{Mink}}^{d}}\\
 & =\\
 & \sum_{\vec{\mathbf{V}}\in\left(\mathbb{N}_{1}B\right)^{L-m_{x}}}\mathbb{P}\left(\mathbf{Run}\left(p\right)^{L-m_{x}}=\vec{\mathbf{V}}\right)f_{x,L}\left(X_{H}\left(\vec{\mathbf{V}}\right)\right)
\end{eqnarray*}

Q.E.D.

\vphantom{}

Our next proposition gives our analogue of expression 5.22 from Tao's
paper:

\vphantom{} 
\begin{prop}
\label{prop:Fourier transference step 1}Let everything be as given
in \textbf{Proposition \ref{prop:approximate formula for passage,  archimedean case}}.
Let $\vec{\mathcal{V}}=\left(\mathcal{V}_{1},\ldots,\mathcal{V}_{L-m_{x}}\right)$
be a random variable of type $\mathbf{Run}\left(p\right)^{L-m_{x}}$,
where $L\in I_{y}$. Then, there is a constant $c>0$ so that: 
\begin{equation}
\sum_{\vec{\mathbf{V}}\in\mathcal{A}^{\left(L-m_{x}\right)}}\sum_{\begin{array}{c}
z\in S^{\prime}\\
\left\Vert X_{H}\left(\vec{\mathbf{V}}\right)-z\right\Vert _{\textrm{Mink}}\leq x^{\textrm{c}_{\alpha}}
\end{array}}\frac{\overline{\rho}_{H}^{\frac{d}{p}\ln^{0.6}x}\rho_{H}^{d\left(L-m_{x}\right)}}{\left\Vert z\right\Vert _{\textrm{Mink}}^{d}}=\mathbb{E}\left(\left[\vec{\mathcal{V}}\in\mathcal{A}^{\left(L-m_{x}\right)}\right]f_{x,L}\left(X_{H}\left(\vec{\mathcal{V}}\right)\right)\right)+O\left(x^{-c}\right)\label{eq:Fourier transference step 1}
\end{equation}
as $x\rightarrow\infty$.
\end{prop}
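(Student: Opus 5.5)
The plan is to expand the right-hand side of (\ref{eq:Fourier transference step 1}) with (\ref{eq:expectation identity}) and compare it with the left-hand side term by term in $\vec{\mathbf{V}}$. Set $n=L-m_{x}$. By (\ref{eq:expectation identity}) and the definition (\ref{eq:def of f_x}) of $f_{x,L}$, restricted to the event $\vec{\mathcal{V}}\in\mathcal{A}^{(n)}$,
\[
\mathbb{E}\left(\left[\vec{\mathcal{V}}\in\mathcal{A}^{(n)}\right]f_{x,L}\left(X_{H}(\vec{\mathcal{V}})\right)\right)=\sum_{\vec{\mathbf{V}}\in\mathcal{A}^{(n)}}\mathbb{P}\left(\mathbf{Run}(p)^{n}=\vec{\mathbf{V}}\right)\omega_{x}(L)\sum_{\substack{z\in S^{\prime}\\ \left\Vert X_{H}(\vec{\mathbf{V}})-z\right\Vert _{\textrm{Mink}}\leq x^{\textrm{c}_{\alpha}}}}\left\Vert z\right\Vert _{\textrm{Mink}}^{-d}.
\]
The inner $z$-sum here is exactly the inner sum on the left-hand side of (\ref{eq:Fourier transference step 1}). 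So everything reduces to comparing, for each $\vec{\mathbf{V}}=\mathbf{m}\vec{\mathbf{v}}\in\mathcal{A}^{(n)}$, the weight $\mathbb{P}(\mathbf{Run}(p)^{n}=\vec{\mathbf{V}})\,\omega_{x}(L)$ with $\overline{\rho}_{H}^{\frac{d}{p}\ln^{0.6}x}\rho_{H}^{dn}$.

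\textbf{Weight computation.} I would insert the PMF (\ref{eq:PMF of many Runs}); the Iverson brackets equal $1$ because elements of $\mathcal{A}^{(n)}$ that actually arise as run vectors lie in $(\mathbb{N}_{1}B)^{n*}$. This gives
\[
\mathbb{P}\left(\mathbf{Run}(p)^{n}=\vec{\mathbf{V}}\right)\omega_{x}(L)=\left(1-\tfrac{1}{p}\right)^{\Sigma(\mathbf{m})-p-\ln^{0.6}x}\rho_{H}^{dn}\,\overline{\rho}_{H}^{\frac{d}{p}\ln^{0.6}x}.
\]
The factors $(p-1)^{2n}$ cancel, and so do the two prefactors $\frac{p-1}{p}$ and $\frac{p}{p-1}$. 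The normalisations $\frac{p}{p-1}$ and $(p/(p-1))^{p+\ln^{0.6}x}$ in (\ref{eq:def of omega_x weight}) were evidently chosen to produce this cancellation. Next, (\ref{eq:sum of m_h inequality}) from the proof of Proposition \ref{prop:elementary properties of X_H of v-arrow} gives $\Sigma(\mathbf{m})=pn+\theta$ with $|\theta|\leq\ln^{0.6}x$. The deviation $\theta$ is therefore exactly the part of the exponent that the $\ln^{0.6}x$ normalisation is designed to absorb. I would organise the comparison as (weight on the right) $=(1+\eta_{\vec{\mathbf{V}}})\cdot$(weight on the left) and then bound the total discrepancy
\[
\sum_{\vec{\mathbf{V}}\in\mathcal{A}^{(n)}}|\eta_{\vec{\mathbf{V}}}|\,\rho_{H}^{dn}\,\overline{\rho}_{H}^{\frac{d}{p}\ln^{0.6}x}\sum_{z}\left\Vert z\right\Vert _{\textrm{Mink}}^{-d}.
\]

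\textbf{Bounding the discrepancy.} Here I would use the constraint (\ref{eq:restrictions on z}) on $S^{\prime}$, which gives $\left\Vert z\right\Vert _{\textrm{Mink}}^{-d}\ll e^{d\ln^{0.7}x}\rho_{H}^{dm_{x}}x^{-d}$. I would combine this with the count of lattice points in the ball $\left\Vert z-X_{H}(\vec{\mathbf{V}})\right\Vert _{\textrm{Mink}}\leq x^{\textrm{c}_{\alpha}}$, which is $O(x^{d\textrm{c}_{\alpha}})$ by the lattice-point estimate used in Lemma \ref{lem:lattice sum asymptotic}. I would also use $|\mathcal{A}^{(n)}|\leq|(\mathbb{N}_{1}B)^{n*}|$ restricted to $\Sigma(\mathbf{m})\leq pn+\ln^{0.6}x$, which is at most $p(p-1)^{n-1}\binom{pn+\ln^{0.6}x}{n}$. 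Together with the factor $\rho_{H}^{dn}$, the dimension constraint MH3 $(p-1)^{2}\rho_{H}^{d}<1$ is what should make the growth in $n$ from this count, roughly $((p-1)^{2}\rho_{H}^{d})^{n}$ up to binomial factors, decay geometrically in $n$. Since $n\geq\min I_{y}-m_{x}\gg\ln x$, that decay is a negative power of $x$.

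\textbf{Main obstacle.} The hard step is keeping every subexponential factor, namely $e^{O(\ln^{0.7}x)}$, $\overline{\rho}_{H}^{O(\ln^{0.6}x)}$ and $(p/(p-1))^{O(\ln^{0.6}x)}$, as well as the polynomial factor $x^{d(\textrm{c}_{\alpha}-1)}$ coming from $\textrm{c}_{\alpha}$ being slightly above $1$, dominated by the geometric decay $\left((p-1)^{2}\rho_{H}^{d}\right)^{n}$ with $n\asymp\ln x$. The binomial count of $\mathbf{m}$ must also be absorbed into that decay. If it is not, I would first try tightening the choice of $A_{1}$ (and hence $m_{x}$) so that $\textrm{c}_{\alpha}-1$ is as small as needed, since Proposition \ref{prop:gamma inequality proposition} leaves that freedom. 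Failing that, I would split $\mathcal{A}^{(n)}$ according to $\Sigma(\mathbf{m})$ and use the Chernoff bound, Lemma \ref{lem:Chernoff Bound}, to discard atypical $\mathbf{m}$ at cost $O(x^{-c})$. I expect this exponent bookkeeping to be the delicate part of the proof.
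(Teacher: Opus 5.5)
Your weight computation is correct, but you have misread it. Put $n=L-m_x$ and $\Sigma(\mathbf m)=pn+\theta$ with $|\theta|\le\ln^{0.6}x$. The ratio of the right-hand weight to the left-hand weight is then $\left(1-\tfrac1p\right)^{\Sigma(\mathbf m)-p-\ln^{0.6}x}=\left(1-\tfrac1p\right)^{pn}e^{O(\ln^{0.6}x)}$. Since $n\gg\ln x$ for $L\in I_y$, this is $O(x^{-c})$. The factor $\left(\tfrac{p}{p-1}\right)^{p+\ln^{0.6}x}$ in $\omega_x(L)$ cancels only the $p+\ln^{0.6}x$ part of the exponent, not the main term $pn$. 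So $\eta_{\vec{\mathbf V}}=-1+O(x^{-c})$, not $o(1)$.

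Moreover, $\eta_{\vec{\mathbf V}}=-1$ exactly on the tuples of $\mathcal A^{(n)}$ with some $\mathbf v_k=\mathbf v_{k+1}$. These do occur on the left, which is a sum over the fixed index set $\mathcal A^{(n)}\subseteq(\mathbb N_1B)^n$, not over realised run vectors, so the Iverson brackets are not all $1$.

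The right side is $O(x^{-c})$ anyway under MH3. It is at most $\omega_x(L)\sum_{z\in S'}\Vert z\Vert_{\textrm{Mink}}^{-d}$, the weight $\omega_x(L)$ carries $((p-1)^2\rho_H^d)^n$, and the $z$-sum is $\ll\ln^{0.7}x$ by Lemma \ref{lem:lattice sum asymptotic}. So the statement is equivalent to the left-hand side itself being $O(x^{-c})$, and your ``discrepancy'' is essentially that whole left-hand side.

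That is where the argument fails, and MH3 cannot rescue it. On $S'$ the constraint $\Vert X_H(\vec{\mathbf V})-z\Vert_{\textrm{Mink}}\le x^{\textrm{c}_\alpha}$ is vacuous. By (\ref{eq:restrictions on z}), $\Vert z\Vert_{\textrm{Mink}}\le x^{\textrm{c}_\alpha-(\alpha^3-\alpha)+o(1)}$, while $\Vert X_H(\vec{\mathbf V})\Vert_{\textrm{Mink}}=x^{o(1)}$ on $\mathcal A^{(n)}$. Hence the left side equals $\overline{\rho}_H^{\frac dp\ln^{0.6}x}\rho_H^{dn}\,|\mathcal A^{(n)}|\sum_{z\in S'}\Vert z\Vert_{\textrm{Mink}}^{-d}$.

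$|\mathcal A^{(n)}|$ is exponentially large. $\mathbf{Run}(p)^n$ lies in $\mathcal A^{(n)}$ with probability $1-o(1)$ (Chernoff, Lemma \ref{lem:Chernoff Bound}, as the paper itself uses later). Its mass function on $\mathcal A^{(n)}$ is at most $\frac{p-1}{p}\left(\frac{p-1}{p}\right)^{pn-\ln^{0.6}x}(p-1)^{-2n}$. Therefore $\rho_H^{dn}|\mathcal A^{(n)}|\gg\bigl((p-1)^2(\tfrac{p}{p-1})^p\rho_H^d\bigr)^n e^{-O(\ln^{0.6}x)}$.

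Your own count $p(p-1)^{n-1}\binom{pn+\ln^{0.6}x}{n}$ has exactly this exponential order, because $\binom{pn}{n}\approx(p^p/(p-1)^{p-1})^n$. The binomial factor is not a lower-order correction: it contributes an extra $(\tfrac{p}{p-1})^{pn}$ that MH3 does not control. For $T_3$ ($p=2$, $d=1$, $\rho_H=3/4$) MH3 holds, but this base is $3$. So the left side grows like a positive power of $x$ unless $\sum_{z\in S'}|z|^{-1}$ is itself polynomially small.

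Your fallbacks do not help. Shrinking $\textrm{c}_\alpha-1$ only affects the polynomial loss in your lattice-ball count, which you should avoid anyway by summing $\Vert z\Vert_{\textrm{Mink}}^{-d}$ over the annulus (\ref{eq:restrictions on z}) via Lemma \ref{lem:lattice sum asymptotic}. The Chernoff bound controls $\mathbf{Run}$-probabilities, whereas the left side weights every $\vec{\mathbf V}\in\mathcal A^{(n)}$ equally, and the exponential count sits at typical $\Sigma(\mathbf m)\approx pn$.

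The paper's proof takes the same term-by-term route and breaks at the same point. It finds that each per-term difference is the full weight $\overline{\rho}_H^{\frac dp\ln^{0.6}x}\rho_H^{d(L-m_x)}=O(x^{-c})$, then concludes ``$\Delta=0$'' without summing over $\vec{\mathbf V}$ and $z$. A correct argument needs either a stronger hypothesis, at least $(p-1)^2(\tfrac{p}{p-1})^p\rho_H^d<1$, or must keep the integrality condition $M_H(\vec{\mathbf V})^{-1}(z-X_H(\vec{\mathbf V}))\in\mathcal O_K$ that was discarded in deriving the archimedean approximate passage formula.
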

Proof: Like with (\ref{eq:expectation identity}), the right-hand
side of (\ref{eq:Fourier transference step 1}) can be written as:

\begin{equation}
\sum_{\vec{\mathbf{V}}\in\mathcal{A}^{\left(L-m_{x}\right)}}\mathbb{P}\left(\mathbf{Run}\left(p\right)^{L-m_{x}}=\vec{\mathbf{V}}\right)\rho_{H}^{d\left(L-m_{x}\right)}\sum_{\begin{array}{c}
z\in S^{\prime}\\
\left\Vert X_{H}\left(\vec{\mathbf{V}}\right)-z\right\Vert _{\textrm{Mink}}\leq x^{\textrm{c}_{\alpha}}
\end{array}}\left\Vert z\right\Vert _{\textrm{Mink}}^{-d}\label{eq:expectation to be transformed}
\end{equation}
Now, let $\Delta$ denote the real absolute value of the difference
of (\ref{eq:expectation to be transformed}) and the left-hand side
of (\ref{eq:Fourier transference step 1}). Then: 
\begin{equation}
\Delta\leq\sum_{\vec{\mathbf{V}}\in\mathcal{A}^{\left(L-m_{x}\right)}}\sum_{\begin{array}{c}
z\in S^{\prime}\\
\left\Vert X_{H}\left(\vec{\mathbf{V}}\right)-z\right\Vert _{\textrm{Mink}}\leq x^{\textrm{c}_{\alpha}}
\end{array}}\frac{\left|\omega_{x}\left(L\right)\mathbb{P}\left(\mathbf{Run}\left(p\right)^{L-m_{x}}=\vec{\mathbf{V}}\right)-\overline{\rho}_{H}^{\frac{d}{p}\ln^{0.6}x}\rho_{H}^{d\left(L-m_{x}\right)}\right|}{\left\Vert z\right\Vert _{\textrm{Mink}}^{d}}\label{eq:big delta upper bound}
\end{equation}
Here: 
\begin{equation}
\mathbb{P}\left(\mathbf{Run}\left(p\right)^{L-m_{x}}=\vec{\mathbf{V}}\right)=\frac{p-1}{p}\frac{\left(1-\frac{1}{p}\right)^{\Sigma\left(\Sigma\left(\vec{\mathbf{V}}\right)\right)}}{\left(p-1\right)^{2\left(L-m_{x}\right)}}\prod_{k=2}^{L-m_{x}}\left[\mathbf{v}_{k}\neq\mathbf{v}_{k-1}\right]
\end{equation}
where, recall, $\vec{\mathbf{V}}$ is of the form $\mathbf{m}\vec{\mathbf{v}}$,
for $\vec{\mathbf{v}}=\left(\mathbf{v}_{1},\ldots,\mathbf{v}_{L-m_{x}}\right)$.
As such: 
\begin{equation}
\omega_{x}\left(L\right)\mathbb{P}\left(\mathbf{Run}\left(p\right)^{L-m_{x}}=\vec{\mathbf{V}}\right)-\overline{\rho}_{H}^{\frac{d}{p}\ln^{0.6}x}\rho_{H}^{d\left(L-m_{x}\right)}
\end{equation}
becomes: 
\begin{equation}
\left(\left(\frac{p}{p-1}\right)^{p-1+\ln^{0.6}x}\left(1-\frac{1}{p}\right)^{\Sigma\left(\Sigma\left(\vec{\mathbf{V}}\right)\right)}\prod_{k=2}^{L-m_{x}}\left[\mathbf{v}_{k}\neq\mathbf{v}_{k-1}\right]-1\right)\overline{\rho}_{H}^{\frac{d}{p}\ln^{0.6}x}\rho_{H}^{d\left(L-m_{x}\right)}
\end{equation}
If $\prod_{k=2}^{L-m_{x}}\left[\mathbf{v}_{k}\neq\mathbf{v}_{k-1}\right]=0$,
then the above goes to $0$ as $x\rightarrow\infty$, because $\rho_{H}^{d}<1$.
So, suppose $\prod_{k=2}^{L-m_{x}}\left[\mathbf{v}_{k}\neq\mathbf{v}_{k-1}\right]=1$.
By (\ref{eq:condition on sum of n-sum of v-arrow}), we have: 
\begin{equation}
p\left(L-m_{x}\right)-\ln^{0.6}x<\Sigma\left(\Sigma\left(\vec{\mathbf{V}}\right)\right)<p\left(L-m_{x}\right)+\ln^{0.6}x
\end{equation}
and so: 
\begin{equation}
\left(\frac{p}{p-1}\right)^{p-1}\left(\frac{p-1}{p}\right)^{p\left(L-m_{x}\right)}<\left(\frac{p}{p-1}\right)^{p-1+\ln^{0.6}x}\left(1-\frac{1}{p}\right)^{\Sigma\left(\Sigma\left(\vec{\mathbf{V}}\right)\right)}<\left(\frac{p}{p-1}\right)^{p-1-\ln^{0.6}x}\left(\frac{p-1}{p}\right)^{p\left(L-m_{x}\right)}\label{eq:sandwich}
\end{equation}
Since: 
\begin{equation}
\sup_{L\in I_{y}}\left(\frac{p-1}{p}\right)^{p\left(L-m_{x}\right)}=O\left(x^{-c}\right)\textrm{ as }x\rightarrow\infty
\end{equation}
for some $c>0$, we see that the central expression of (\ref{eq:sandwich})
tends to $0$ as $x\rightarrow\infty$, giving us: 
\begin{equation}
\left|\omega_{x}\left(L\right)\mathbb{P}\left(\mathbf{Run}\left(p\right)^{L-m_{x}}=\vec{\mathbf{V}}\right)-\overline{\rho}_{H}^{\frac{d}{p}\ln^{0.6}x}\rho_{H}^{d\left(L-m_{x}\right)}\right|=\underbrace{\overline{\rho}_{H}^{\frac{d}{p}\ln^{0.6}x}\rho_{H}^{d\left(L-m_{x}\right)}}_{O\left(x^{-c}\right)}\textrm{ as }x\rightarrow\infty
\end{equation}
for some $c>0$. Hence, $\Delta=0$, from which (\ref{eq:Fourier transference step 1})
follows.

Q.E.D.

\vphantom{}

Finally, we have our analogue of Tao's \textbf{Lemma 5.3}:

\vphantom{} 
\begin{prop}
\label{prop:boundedness of the omega_x sum}If: 
\begin{equation}
\left(p-1\right)^{2}\rho_{H}^{d}<1
\end{equation}
then: 
\begin{equation}
\sup_{L\in I_{y}}\sum_{z\in S^{\prime}}\frac{\omega_{x}\left(L\right)}{\left\Vert z\right\Vert _{\textrm{Mink}}^{d}}\ll x^{-\left(\alpha-1\right)c}\label{eq:Tao Lemma 5.3 analogue}
\end{equation}
for both $y=x^{\alpha}$ and $y=x^{\alpha^{2}}$. 
\end{prop}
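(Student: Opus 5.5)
The bound factors as $\omega_{x}(L)\cdot\sum_{z\in S'}\|z\|_{\textrm{Mink}}^{-d}$, because the $z$-sum does not depend on $L$. The plan is to bound the two factors separately: the weight $\omega_{x}(L)$ is small because of the dimension constraint, while the $z$-sum is only polylogarithmic. Throughout, set $\theta\overset{\textrm{def}}{=}(p-1)^{2}\rho_{H}^{d}<1$, which is exactly the hypothesis.

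\emph{The weight.} From (\ref{eq:def of omega_x weight}),
\[
\omega_{x}(L)=\frac{p}{p-1}\Big(\frac{p}{p-1}\Big)^{p+\ln^{0.6}x}\overline{\rho}_{H}^{\frac{d}{p}\ln^{0.6}x}\,\theta^{L-m_{x}}=e^{O(\ln^{0.6}x)}\theta^{L-m_{x}}.
\]
Since $\theta<1$, the supremum over $L\in I_{y}$ is attained at the left endpoint of $I_{y}$. There
\[
L-m_{x}\geq\frac{\ln(y/x)}{\ln\rho_{H}^{-1}}+\ln^{0.8}x-(\alpha-1)\frac{\ln x}{\ln A_{1}}.
\]
For $y=x^{\alpha}$ this equals $(\alpha-1)\ln x\,\big(\tfrac{1}{\ln\rho_{H}^{-1}}-\tfrac{1}{\ln A_{1}}\big)+\ln^{0.8}x$. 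This is $\gg(\alpha-1)\ln x$ because $\rho_{H}^{-1}<A_{1}$ by Proposition \ref{prop:gamma inequality proposition}. For $y=x^{\alpha^{2}}$ the quantity is larger still. Hence
\[
\sup_{L\in I_{y}}\omega_{x}(L)\ll e^{O(\ln^{0.6}x)}x^{-(\alpha-1)c_{1}},\qquad c_{1}=\ln\theta^{-1}\Big(\frac{1}{\ln\rho_{H}^{-1}}-\frac{1}{\ln A_{1}}\Big)>0.
\]
The $e^{O(\ln^{0.6}x)}$ factor is subpolynomial, so it is absorbed by shrinking the exponent.

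\emph{The $z$-sum.} By condition (\ref{eq:restrictions on z}), every $z\in S'$ lies in the shell
\[
e^{-\ln^{0.7}x}\rho_{H}^{-m_{x}}x\leq\|z\|_{\textrm{Mink}}\leq e^{\ln^{0.7}x}\rho_{H}^{-m_{x}}x.
\]
Drop all other constraints defining $S'$ and sum over this shell. The ratio of the outer to the inner radius is $e^{2\ln^{0.7}x}\to\infty$, so Lemma \ref{lem:lattice sum asymptotic} with $m=d$ applies and gives
\[
\sum_{z\in S'}\|z\|_{\textrm{Mink}}^{-d}\ll C_{K}\,d\cdot2\ln^{0.7}x.
\]
This factor is again subpolynomial in $x$.

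\emph{Conclusion and main obstacle.} Multiplying the two estimates gives $\ll x^{-(\alpha-1)c_{1}}e^{O(\ln^{0.6}x)}\ln^{0.7}x\ll x^{-(\alpha-1)c}$ for any $0<c<c_{1}$, uniformly in $L\in I_{y}$ and for both choices of $y$.

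The only delicate point is the lower bound for $L-m_{x}$ at $y=x^{\alpha}$. It must be a fixed positive multiple of $(\alpha-1)\ln x$, which forces strict use of $\rho_{H}^{-1}<A_{1}$. The resulting gain must then dominate the $\overline{\rho}_{H}^{\frac{d}{p}\ln^{0.6}x}(p/(p-1))^{\ln^{0.6}x}$ losses. These losses are $e^{O(\ln^{0.6}x)}$, so they are negligible against a power of $x$ once $x$ is large; I would verify this by taking logarithms.
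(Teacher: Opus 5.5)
Your proposal is correct and follows the paper's proof essentially line for line. The paper also factors out $\omega_x(L)$, bounds the $z$-sum by $O(\ln^{0.7}x)$ via Lemma~\ref{lem:lattice sum asymptotic} applied to the shell (\ref{eq:restrictions on z}), and uses the left endpoint of $I_y$ together with $\rho_H^{-1}<A_1$ to get $((p-1)^{2}\rho_{H}^{d})^{L-m_{x}}\ll x^{-(\alpha-1)c}$. The only difference is that you spell out how the $e^{O(\ln^{0.6}x)}$ and $\ln^{0.7}x$ factors are absorbed into the power saving, which the paper leaves implicit.
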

Proof: We begin by writing: 
\begin{align*}
\sum_{z\in S^{\prime}}\frac{\omega_{x}\left(L\right)}{\left\Vert z\right\Vert _{\textrm{Mink}}^{d}} & =\left(\frac{p}{p-1}\right)^{p=1+\ln^{0.6}x}\left(\left(p-1\right)^{2}\rho_{H}^{d}\right)^{L-m_{x}}\overline{\rho}_{H}^{\frac{d}{p}\ln^{0.6}x}\sum_{z\in S^{\prime}}\left\Vert z\right\Vert _{\textrm{Mink}}^{-d}\\
 & \ll\left(\frac{p}{p-1}\overline{\rho}_{H}^{d/p}\right)^{\ln^{0.6}x}\left(\left(p-1\right)^{2}\rho_{H}^{d}\right)^{L-m_{x}}\ln^{0.7}x
\end{align*}
where we used \textbf{Proposition \ref{lem:lattice sum asymptotic}
}to bound the $z$ sum by $\ln^{0.7}x$, using the bounds on $z$
from (\ref{eq:restrictions on z}). If $\left(p-1\right)^{2}\rho_{H}^{d}<1$,
then: 
\begin{equation}
\left(\left(p-1\right)^{2}\rho_{H}^{d}\right)^{L-m_{x}}\ll\left(\left(p-1\right)^{2}\rho_{H}^{d}\right)^{\left(\alpha-1\right)\left(\frac{1}{\ln\rho_{H}^{-1}}-\frac{1}{\ln A_{1}}\right)\ln x+\ln^{0.8}x}\ll x^{-\left(\alpha-1\right)c}\textrm{ as }x\rightarrow\infty
\end{equation}
for some $c>0$, and we get: 
\begin{equation}
\sum_{z\in S^{\prime}}\frac{\omega_{x}\left(L\right)}{\left\Vert z\right\Vert _{\textrm{Mink}}^{d}}\ll x^{-\left(\alpha-1\right)c}
\end{equation}

Q.E.D.

\vphantom{}

With this bound in hand, we can do as Tao does on page 28 immediately
after his proof of \textbf{Lemma 5.3} and rewrite expression 5.22.

\vphantom{} 
\begin{prop}
\label{prop:removing the indicator function from the expectation}If
$\left(p-1\right)^{2}\rho_{H}^{d}<1$, then, for all $L\in I_{y}$,
we have: 
\begin{equation}
\mathbb{E}\left(\left[\vec{\mathcal{V}}\in\mathcal{A}^{\left(L-m_{x}\right)}\right]f_{x,L}\left(X_{H}\left(\vec{\mathcal{V}}\right)\right)\right)=\mathbb{E}\left(f_{x,L}\left(X_{H}\left(\vec{\mathcal{V}}\right)\right)\right)+O\left(x^{-\left(\alpha-1\right)c}\right)\label{eq:removing the indicator function from the expectation}
\end{equation}
as $x\rightarrow\infty$, for some $c>0$. 
\end{prop}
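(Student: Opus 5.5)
The difference between the two expectations is exactly
\[
\mathbb{E}\left(\left[\vec{\mathcal{V}}\notin\mathcal{A}^{\left(L-m_{x}\right)}\right]f_{x,L}\left(X_{H}\left(\vec{\mathcal{V}}\right)\right)\right),
\]
and $f_{x,L}\geq0$, so I only need an upper bound for this nonnegative quantity. The plan is to bound $f_{x,L}$ pointwise by a constant that does not depend on the argument, and then use the fact that the complementary event has probability at most $1$ (or smaller).

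\textbf{Step 1.} For every $w\in K$, drop the constraint $\left\Vert z-w\right\Vert _{\textrm{Mink}}\leq x^{\textrm{c}_{\alpha}}$ in the definition (\ref{eq:def of f_x}). This gives
\[
0\leq f_{x,L}\left(w\right)\leq\sum_{z\in S^{\prime}}\frac{\omega_{x}\left(L\right)}{\left\Vert z\right\Vert _{\textrm{Mink}}^{d}}.
\]
The right-hand side no longer depends on $w$. By \textbf{Proposition \ref{prop:boundedness of the omega_x sum}}, which uses the hypothesis $\left(p-1\right)^{2}\rho_{H}^{d}<1$, it is $\ll x^{-\left(\alpha-1\right)c}$ uniformly in $L\in I_{y}$, for both choices $y=x^{\alpha}$ and $y=x^{\alpha^{2}}$.

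\textbf{Step 2.} Pull this uniform bound outside the expectation:
\[
0\leq\mathbb{E}\left(f_{x,L}\left(X_{H}\left(\vec{\mathcal{V}}\right)\right)\right)-\mathbb{E}\left(\left[\vec{\mathcal{V}}\in\mathcal{A}^{\left(L-m_{x}\right)}\right]f_{x,L}\left(X_{H}\left(\vec{\mathcal{V}}\right)\right)\right)\ll x^{-\left(\alpha-1\right)c}\,\mathbb{P}\left(\vec{\mathcal{V}}\notin\mathcal{A}^{\left(L-m_{x}\right)}\right)\leq x^{-\left(\alpha-1\right)c}.
\]
This is already the claimed estimate (\ref{eq:removing the indicator function from the expectation}). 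One could also improve it by a factor $e^{-c\ln^{0.2}x}$, bounding $\mathbb{P}(\vec{\mathcal{V}}\notin\mathcal{A})$ with \textbf{Lemma \ref{lem:Chernoff Bound}} as in (\ref{eq:consequence of chernoff bound}); this is not needed.

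\textbf{Main obstacle.} The one step to check carefully is that the bound in Step 1 really is uniform in $w$ and in $L\in I_{y}$. That requires the bound of \textbf{Proposition \ref{prop:boundedness of the omega_x sum}} to hold with the worst-case exponent $L-m_{x}\geq\min I_{y}-m_{x}$, where $\min I_{y}-m_{x}$ is a positive multiple of $\ln x$ by \textbf{Proposition \ref{prop:gamma inequality proposition}}. It also requires that the subexponential factors $\left(\tfrac{p}{p-1}\overline{\rho}_{H}^{d/p}\right)^{\ln^{0.6}x}\ln^{0.7}x$ in $\omega_{x}$ are absorbed into the power saving in $x$. Both points are already part of that proposition's proof, so here they only need to be cited.
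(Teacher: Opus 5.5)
Your argument is the same as the paper's: the paper also writes the difference as $\mathbb{E}\bigl([\vec{\mathcal{V}}\notin\mathcal{A}^{(L-m_x)}]f_{x,L}(X_H(\vec{\mathcal{V}}))\bigr)$, bounds $f_{x,L}$ uniformly by dropping the constraint $\Vert z-w\Vert_{\textrm{Mink}}\le x^{\textrm{c}_\alpha}$ and invoking Proposition~\ref{prop:boundedness of the omega_x sum}, and then multiplies by the probability of the complementary event. The only difference is that the paper also bounds that probability by $\ln^{-10}x$, citing (\ref{eq:Need 3}), which is stated for $\vec{a}^{(n_x)}(\mathcal{N}_y)$ rather than for $\mathbf{Run}(p)$; your trivial bound by $1$ already gives the stated estimate and avoids relying on that citation.
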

Proof: By (\ref{eq:Need 3}), we know that: 
\begin{equation}
\mathbb{P}\left(\mathbf{Run}\left(p\right)^{L-m_{x}}\notin\mathcal{A}^{\left(L-m_{x}\right)}\right)\ll\ln^{-10}x\textrm{ as }x\rightarrow\infty
\end{equation}
Now, write: 
\begin{equation}
\mathbb{E}\left(\left[\vec{\mathcal{V}}\in\mathcal{A}^{\left(L-m_{x}\right)}\right]f_{x,L}\left(X_{H}\left(\vec{\mathcal{V}}\right)\right)\right)=\mathbb{E}\left(f_{x,L}\left(X_{H}\left(\vec{\mathcal{V}}\right)\right)\right)-\mathbb{E}\left(\left[\vec{\mathcal{V}}\notin\mathcal{A}^{\left(L-m_{x}\right)}\right]f_{x,L}\left(X_{H}\left(\vec{\mathcal{V}}\right)\right)\right)
\end{equation}
The term on the far right is: 
\begin{equation}
\mathbb{E}\left(\left[\vec{\mathcal{V}}\notin\mathcal{A}^{\left(L-m_{x}\right)}\right]f_{x,L}\left(X_{H}\left(\vec{\mathcal{V}}\right)\right)\right)=\sum_{\vec{\mathbf{V}}\in\left(\mathbb{N}_{1}B\right)^{L-m_{x}}\backslash\mathcal{A}^{\left(L-m_{x}\right)}}\mathbb{P}\left(\mathbf{Run}\left(p\right)^{L-m_{x}}=\vec{\mathbf{V}}\right)f_{x,L}\left(X_{H}\left(\vec{\mathbf{V}}\right)\right)
\end{equation}
Here, we use \textbf{Proposition \ref{prop:boundedness of the omega_x sum}}
to get the bound: 
\begin{equation}
f_{x,L}\left(X_{H}\left(\vec{\mathbf{V}}\right)\right)=\sum_{\begin{array}{c}
z\in S^{\prime}\\
\left\Vert X_{H}\left(\vec{\mathbf{V}}\right)-z\right\Vert _{\textrm{Mink}}\leq x^{\textrm{c}_{\alpha}}
\end{array}}\frac{\omega_{x}\left(L\right)}{\left\Vert z\right\Vert _{\textrm{Mink}}^{d}}\leq\sum_{z\in S^{\prime}}\frac{\omega_{x}\left(L\right)}{\left\Vert z\right\Vert _{\textrm{Mink}}^{d}}\ll x^{-\left(\alpha-1\right)c}\textrm{ as }x\rightarrow\infty
\end{equation}
as such: 
\begin{align*}
\mathbb{E}\left(\left[\vec{\mathcal{V}}\notin\mathcal{A}^{\left(L-m_{x}\right)}\right]f_{x,L}\left(X_{H}\left(\vec{\mathcal{V}}\right)\right)\right) & \ll x^{-\left(\alpha-1\right)c}\sum_{\vec{\mathbf{V}}\in\left(\mathbb{N}_{1}B\right)^{L-m_{x}}\backslash\mathcal{A}^{\left(L-m_{x}\right)}}\mathbb{P}\left(\mathbf{Run}\left(p\right)^{L-m_{x}}=\vec{\mathbf{V}}\right)\\
 & =x^{-\left(\alpha-1\right)c}\mathbb{P}\left(\mathbf{Run}\left(p\right)^{L-m_{x}}\notin\mathcal{A}^{\left(L-m_{x}\right)}\right)\\
 & \ll\frac{\ln^{-10}x}{x^{\left(\alpha-1\right)c}}
\end{align*}

Q.E.D.

\vphantom{}We now have everything needed to complete the proof of
the second half of \textbf{Lemma \ref{lem:stabilization of first passage}}.

\vphantom{}\textbf{Proof of} \textbf{Equation \ref{eq:passage location comparison}}:\textbf{
}Using the parameters as specified by \textbf{Assumption \ref{assu:initial parameter assumptions}},
\textbf{Proposition \ref{prop:approximate formula for passage,  archimedean case}
}yields: 
\begin{align*}
\mathbb{P}\left(\textrm{Pass}_{x}\left(\mathcal{N}_{y}\right)\in S\right) & =\frac{\left(1+O\left(x^{-c}\right)\right)\overline{\rho}_{H}^{\frac{d}{p}\ln^{0.6}x}}{C_{K}\left(\alpha-1\right)d\ln y}\sum_{L\in I_{y}}\rho_{H}^{d\left(L-m_{x}\right)}\sum_{\vec{\mathbf{V}}\in\mathcal{A}^{\left(L-m_{x}\right)}}\sum_{\begin{array}{c}
z\in S^{\prime}\\
\left\Vert X_{H}\left(\vec{\mathbf{V}}\right)-z\right\Vert _{\textrm{Mink}}\leq x^{\textrm{c}_{\alpha}}
\end{array}}\left\Vert z\right\Vert _{\textrm{Mink}}^{-d}\\
\left(\mathbf{Prop.\textrm{ }\ref{prop:Fourier transference step 1}}\right); & =\frac{1+O\left(x^{-c}\right)}{C_{K}\left(\alpha-1\right)d\ln y}\sum_{L\in I_{y}}\left(\mathbb{E}\left(\left[\vec{\mathcal{V}}\in\mathcal{A}^{\left(L-m_{x}\right)}\right]f_{x,L}\left(X_{H}\left(\vec{\mathcal{V}}\right)\right)\right)+O\left(x^{-c^{\prime}}\right)\right)\\
\left(\mathbf{Prop.\textrm{ }\ref{prop:removing the indicator function from the expectation}}\right); & =\frac{1+O\left(x^{-c}\right)}{C_{K}\left(\alpha-1\right)d\ln y}\left(\sum_{L\in I_{y}}\mathbb{E}\left(f_{x,L}\left(X_{H}\left(\vec{\mathcal{V}}\right)\right)\right)+O\left(\left|I_{y}\right|x^{-c^{\prime}}\right)\right)
\end{align*}
where the application of \textbf{Proposition \ref{prop:removing the indicator function from the expectation}}
is allowed due to the dimension constraint MH3 from \textbf{Theorem
\ref{thm:almost boundedness}}. Here: 
\begin{equation}
\left|I_{y}\right|\leq\left(\alpha-1\right)\frac{\ln y}{\ln\rho_{H}^{-1}}-2\ln^{0.8}x\ll\frac{\alpha-1}{\ln\rho_{H}^{-1}}\ln y\textrm{ as }x\rightarrow\infty
\end{equation}
By (\ref{eq:Tao page 28 transformation}), we can write:

\begin{equation}
\mathbb{E}\left(f_{x,L}\left(X_{H}\left(\vec{\mathcal{V}}\right)\right)\right)=\sum_{z\in S^{\prime}}\frac{\omega_{x}\left(L\right)}{\left\Vert z\right\Vert _{\textrm{Mink}}^{d}}\mathbb{P}\left(\left\Vert \mathbf{S}_{H}\left(L-m_{x}\right)-z\right\Vert _{\textrm{Mink}}\leq x^{\textrm{c}_{\alpha}}\right)
\end{equation}
Hence: 
\begin{align*}
\mathbb{P}\left(\textrm{Pass}_{x}\left(\mathcal{N}_{y}\right)\in S\right) & =\frac{1+O\left(x^{-c}\right)}{C_{K}\left(\alpha-1\right)d\ln y}\left(\sum_{L\in I_{y}}\sum_{z\in S^{\prime}}\frac{\omega_{x}\left(L\right)}{\left\Vert z\right\Vert _{\textrm{Mink}}^{d}}\mathbb{P}\left(\left\Vert \mathbf{S}_{H}\left(L-m_{x}\right)-z\right\Vert _{\textrm{Mink}}\leq x^{\textrm{c}_{\alpha}}\right)+O\left(\left|I_{y}\right|x^{-c^{\prime}}\right)\right)\\
 & \leq\frac{1+O\left(x^{-c}\right)}{C_{K}\left(\alpha-1\right)d\ln y}\left(\left|I_{y}\right|\sup_{L\in I_{y}}\sum_{z\in S^{\prime}}\frac{\omega_{x}\left(L\right)}{\left\Vert z\right\Vert _{\textrm{Mink}}^{d}}+O\left(\left|I_{y}\right|x^{-c^{\prime}}\right)\right)\\
\left(\left|I_{y}\right|\ll\frac{\alpha-1}{\ln\rho_{H}^{-1}}\ln y\right); & \ll\frac{1+O\left(x^{-c}\right)}{C_{K}d\ln\rho_{H}^{-1}}\left(\sup_{L\in I_{y}}\sum_{z\in S^{\prime}}\frac{\omega_{x}\left(L\right)}{\left\Vert z\right\Vert _{\textrm{Mink}}^{d}}+O\left(x^{-c^{\prime}}\right)\right)\\
\left(\left(p-1\right)^{2}\rho_{H}^{d}<1\right); & =\frac{1+O\left(x^{-c}\right)}{C_{K}d\ln\rho_{H}^{-1}}\left(O\left(x^{-\left(\alpha-1\right)c^{\prime\prime}}\right)+O\left(x^{-c^{\prime}}\right)\right)\\
 & \ll x^{-\left(\alpha-1\right)c}
\end{align*}
for some constant $c$. Thus: 
\begin{equation}
\max\left\{ \mathbb{P}\left(\textrm{Pass}_{x}\left(\mathcal{N}_{x^{\alpha}}\right)\in S\right),\mathbb{P}\left(\textrm{Pass}_{x}\left(\mathcal{N}_{x^{\alpha^{2}}}\right)\in S\right)\right\} \ll x^{-\left(\alpha-1\right)c}
\end{equation}
Since $S\subseteq\left\{ z\in\mathcal{O}_{K}:\left|z\right|_{\infty}\leq x\right\} $
was arbitrary, we conclude that: 
\begin{align*}
d_{\textrm{TV}}\left(\textrm{Pass}_{x}\left(\mathcal{N}_{x^{\alpha}}\right),\textrm{Pass}_{x}\left(\mathcal{N}_{x^{\alpha^{2}}}\right)\right) & \leq\sup_{S}\left|\mathbb{P}\left(\textrm{Pass}_{x}\left(\mathcal{N}_{x^{\alpha}}\right)\in S\right)-\mathbb{P}\left(\textrm{Pass}_{x}\left(\mathcal{N}_{x^{\alpha^{2}}}\right)\in S\right)\right|\\
 & \ll x^{-\left(\alpha-1\right)c}
\end{align*}
which proves Equation (\ref{eq:passage location comparison}), and
with it, the rest of \textbf{Lemma \ref{lem:stabilization of first passage}}.

Q.E.D.

\section{Conclusion}

Whether together or independently, both authors intend to further
explore the material presented here. In that respect, the framework
we present for generalizing Tao's method to number fields will be
foundational for these future inquiries. Topics the first author plans
on exploring include generalizing Tao's non-archimedean arguments
to produce yet more proofs of almost-boundedness, as well as exploring
how Tao's methods may be applied to generalized Hydra maps whose branches
are fractional linear maps, rather than affine linear maps; the latter
direction would seem natural, given the great importance that fractional
linear transformations have played in the context of Fourier decay
for self-similar measures, such as in the Bourgain-Dyatlov paper \cite{Bourgain Dyatlov};
see also \cite{Sahlsten survey paper}.

\end{document}